\numberwithin{equation}{section}
\theoremstyle{plain}
\newtheorem{theorem}{Theorem}[section]
\newtheorem{lemma}[theorem]{Lemma}
\newtheorem{corollary}[theorem]{Corollary}
\newtheorem{proposition}[theorem]{Proposition}
\theoremstyle{definition}
\newtheorem{remark}[theorem]{Remark}
\newtheorem{example}[theorem]{Example}
\newtheorem*{assumption*}{\assumptionnumber}\providecommand{\assumptionnumber}{}
\newenvironment{assumptionRegTh}[2]{%
  \stepcounter{theorem}
  \renewcommand{\assumptionnumber}{Assumption (\arabic{section}.\arabic{theorem}, Regularity, $#1$, $#2$)}%
\begin{assumption*}
\protected@edef\@currentlabel{\arabic{section}.\arabic{theorem}}
 }{%
  \end{assumption*}
}
\newenvironment{assumptionInitial}[2]{%
  \stepcounter{theorem}
  \renewcommand{\assumptionnumber}{Assumption (\arabic{section}.\arabic{theorem}, Initial, $#1$, $#2$)}%
\begin{assumption*}
\protected@edef\@currentlabel{\arabic{section}.\arabic{theorem}}
 }{%
  \end{assumption*}
}
\newenvironment{assumptionKernel}[1]{%
  \stepcounter{theorem}
  \renewcommand{\assumptionnumber}{Assumption (\arabic{section}.\arabic{theorem}, Kernel, $#1$)}%
\begin{assumption*}
\protected@edef\@currentlabel{\arabic{section}.\arabic{theorem}}
 }{%
  \end{assumption*}
}
\newenvironment{assumptionApprox}[2]{%
  \stepcounter{theorem}
  \renewcommand{\assumptionnumber}{Assumption (A.\arabic{theorem}, Approx, $#1$, $#2$)}%
\begin{assumption*}
\protected@edef\@currentlabel{A.\arabic{theorem}}
 }{%
  \end{assumption*}
}
\theoremstyle{remark}
\newcounter{casecount}
\newtheorem{case}[casecount]{Case}
\newcounter{stepcount}
\newtheorem{step}[stepcount]{Step}
\newcommand\restr[2]{{
  \left.\kern-\nulldelimiterspace 
  #1 
  \vphantom{\big|} 
  \right|_{#2} 
  }}
\newcommand{\fv}{\omega} 
\newcommand{\locu}{\mathcal{L}_{\delta}^{C}} 
\newcommand{\locv}{\mathcal{L}_{\delta}^{S}} 
\newcommand{\asskernel}[1][$K$]
    {\hyperref[asskernel]{\normalfont \textbf{(\ref{asskernel}, Kernel, {\color{black} #1})}}}
\newcommand{\assth}
    {\hyperref[assumption:regularity_wave_speed]{\normalfont \textbf{(\ref{assumption:regularity_wave_speed}, Regularity, {\color{black} $\vartheta$}, {\color{black} $\alpha$})}}}
\newcommand{\assini}
    {\hyperref[assumption:initial_conditions]{\normalfont \textbf{(\ref{assumption:initial_conditions}, Initial, {\color{black} $u_0$}, {\color{black} $v_0$})}}}
\newcommand{\asskernelapprox}[2][$w^{\delta}$, $z$]
    {\hyperref[asskernel_approx]{\normalfont \textbf{(\ref{asskernel_approx}, Approx,  {\color{black} #1}, {\color{black} #2})}}}
\begin{document}
\author{Eric Ziebell\\ Humboldt-Universit\"at zu Berlin, Germany\\Email: ziebelle@hu-berlin.de}
\date{}
\title{Non-parametric estimation for the stochastic wave equation}
\maketitle

\begin{abstract}
  The spatially dependent wave speed of a stochastic wave equation driven by space-time white noise is estimated using the local observation scheme. Given a fixed time horizon, we prove asymptotic normality for an augmented maximum likelihood estimator as the resolution level of the observations tends to zero. We show that the expectation and variance of the observed Fisher information are intrinsically related to the kinetic energy within an associated deterministic wave equation and prove an asymptotic equipartition of energy principle using the notion of asymptotic Riemann-Lebesgue operators.
  \medskip

  \noindent\textit{MSC 2020 subject classification}: Primary: 60H15;\ Secondary: 62G05

  \noindent\textit{Keywords:} Stochastic wave equation, non-parametric estimation, equipartition of energy, local measurements.

\end{abstract}

\section{Introduction}
\subsection{Motivation}
Let $A_{\vartheta} z \coloneqq \mathrm{div}(\vartheta \nabla z )$, $\vartheta:\Lambda \rightarrow (0, \infty)$, be a weighted Laplacian in divergence form satisfying Dirichlet boundary conditions on an open and bounded domain $\Lambda \subset \mathbb{R}^{d}$ with $C^{2}$-boundary. We consider the stochastic wave equation
\begin{equation}
\label{eq:SPDE_introduction}
\partial_{tt}^{2}u(t)=A_{\vartheta}u(t) +  \dot{W}(t), \quad t \in [0,T],
\end{equation} 
driven by Gaussian space-time white noise $(\dot{W}(t),t \in [0,T])$. This work is devoted to the non-parametric estimation of the spatially varying wave speed $\vartheta:\Lambda \rightarrow (0,\infty)$ based on  local observations. The stochastic wave equation is of interest both from a theoretical and applied point of view, see e.g. \cite{dalangStochasticWaveEquation2009, conusNonLinearStochasticWave2008, charousDynamicallyOrthogonalDifferential2021, dorflerWavePhenomenaMathematical2023, galkaDatadrivenModelGeneration2008} and the references therein.
\par This work is related to \citet*{liuEstimatingSpeedDamping2008} as well as \citet*{delgado-vencesStatisticalInferenceStochastic2023}, who inferred the parametric wave speed, i.e. assuming $\vartheta$ to be constant, of a one-dimensional stochastic wave equation based on spectral observations up to a finite time horizon $T>0$. The spectral approach is, however, restricted to operators $A_{\vartheta}$ whose leading order eigenfunctions are independent of $\vartheta$ and is not suitable for estimating a spatially varying wave speed.
The constant wave speed of a stochastic wave equation was also inferred using the following two different large-time observation schemes.
Using the ergodicity of the solution, \citet*{janakParameterEstimationStochastic2020, janakParameterEstimationStochastic2021} estimated the wave speed and damping of a strongly damped stochastic wave equation based on an observation window but required the asymptotic regime $T \rightarrow \infty$. Furthermore, \citet*{markussenLikelihoodInferenceDiscretely2003} proposed an approximate likelihood approach and analysed asymptotic properties of the associated maximum likelihood estimator based on discrete observations at $t=\Delta, 2\Delta, \dots, n\Delta$, $\Delta>0$, as $n \rightarrow \infty$. Other works on statistics for the stochastic wave equation, e.g. \citet*{shevchenkoQuadraticVariationsFractionalwhite2023} and \citet{shevchenkoGeneralizedKvariationsHurst2020}, concern estimators of the unknown Hurst coefficient of a fractional stochastic wave equation using Malliavin calculus. \par The local observation scheme was introduced in the parabolic case of the stochastic heat equation by \citet*{altmeyerNonparametricEstimationLinear2021} and was used to construct rate optimal estimators of a spatially varying diffusivity. A first extension to semilinear parabolic SPDEs was achieved by \citet*{altmeyerParameterEstimationSemilinear2023}. Based on multiple local measurements, \citet*{altmeyerOptimalParameterEstimation2022} obtained optimal rates for estimating lower-order coefficients. Multiple local measurements were then used in \citet*{reissChangePointEstimation2023} to detect a change point within the diffusivity and by \citet{strauchNonparametricVelocityEstimation2024} to estimate the non-parametric velocity of a stochastic heat equation. Based on real data, the local observation scheme identified parameters in the Meinhardt model for cell repolarisation, see \citet*{altmeyerParameterEstimationSPDE2022}. \citet*{janakParameterEstimationStochastic2023} show that local measurements can also be used to estimate the diffusivity in the case of multiplicative noise. \citet{aiharaIdentificationDiscontinuousParameter1994,aiharaMaximumLikelihoodEstimate1994} took a first step towards the estimation of a spatially varying parameter of an elastic operator in a damped stochastic hyperbolic system under noisy partial observations by considering a Kalman filtering problem and using the methods of sieves. Consistency, however, is only achieved for global observations in the large time regime $T \rightarrow \infty$. 

To our knowledge, the statistical inference for a spatially varying wave speed $\vartheta$ has not yet been explored for a fixed time horizon $T<\infty$. In the parabolic case, to control the asymptotic behaviour of the observed Fisher information, \citet{altmeyerNonparametricEstimationLinear2021} use the fact that the heat semigroup naturally decays in time. It is a priori unclear if the local measurement approach is also applicable to the undamped stochastic wave equation, which is inherently energy-preserving.
Intriguingly, we will show that the local observation scheme is a suitable tool for studying stochastic hyperbolic equations by relating the observed Fisher information to the energetic behaviour of an associated deterministic system.

More specifically, given a fixed finite time horizon $T > 0$, we will estimate the spatially dependent wave speed $\vartheta$ at $x_0 \in \Lambda$ using the augmented maximum likelihood approach (MLE) based on local observations $\langle u(t), K_{\delta, x_0} \rangle$, where the solution process $u$ is tested against a kernel localised around $x_0$. The augmented MLE is consistent with rate $\delta$ whilst satisfying a central limit theorem. The asymptotic variance of the limiting normal distribution depends on the fixed time horizon $T$ through $T^{-2}$, which is specific to the hyperbolic case and different from the parabolic dependence $T^{-1}$. This discrepancy highlights the difference between the energetically dissipative heat equation and the energetically stable wave equation. A similar influence of the dissipative behaviour of the underlying equation is also observed in the MLE's rate of convergence for the ordinary Ornstein-Uhlenbeck process or the harmonic oscillator. Indeed, the rate is $\sqrt{T}$ in the ergodic and $T$ in the energetically stable case; see \citet*[Proposition 3.46]{kutoyantsStatisticalInferenceErgodic2004} and \citet*{linUndampedHarmonicOscillator2011}.

In the hyperbolic case of the stochastic wave equation, the covariance structure of the local measurements involves rescaled operator cosine and sine families, which generalise the concept of semigroups to the hyperbolic case. Thus, we can analyse the asymptotic behaviour of the augmented MLE, which is linked through the observed Fisher information to the energetic behaviour within a scaled deterministic wave equation, by showing a scaled version of the equipartition of energy principle. The equipartition of energy was studied for abstract hyperbolic Cauchy problems by several works, including \cite{goldsteinAbstractEquipartitionEnergy1979, goldsteinAsymptoticEquipartitionEnergy1976, goldsteinAsymptoticPropertySolutions1969, goldsteinAsymptoticPropertySolutions1970, goldsteinAutocorrelationEquipartitionEnergy1982, picardAsymptoticEquipartitionEnergy1987, shinbrotAsymptoticBehaviorSolutions1968}, and relies on the concept of Riemann-Lebesgue operators. In particular, we show that the scaled weighted Laplace operator $A_{\vartheta(\delta \cdot)}$ behaves asymptotically as $\delta \rightarrow 0$ like a Riemann-Lebesgue operator. In fact, our analysis of the observed Fisher information is based on the weak operator topology and not as in the parabolic case on norm bounds of the heat semigroup. This is accomplished using functional calculus and convergence results for the resolution of the identity associated with the scaled weighted Laplace operator on the growing spatial domain $\delta^{-1}(\Lambda-x_0)$. 

The probabilistic backbone of the asymptotic results is a scaling limit for the observation processes as $\delta \rightarrow 0$. In the parametric case, this scaling limit can even be achieved in finite time due to the finite propagation speed of the wave equation.
\subsection{An overview of the main results}
We briefly discuss the main results of this work. The stochastic partial differential equation and all objects related to its solution are rigorously introduced in \Cref{section:model}. \par In the local observation scheme, for a given resolution $\delta>0$, we assume to observe continuous time processes of the form
\begin{equation}
\label{eq:local_measurements_overview}
u_{\delta}(t)= \langle u(t), K_{\delta, x_0}\rangle_{L^{2}(\Lambda)}, \quad u_{\delta}^{\Delta}(t) = \langle u(t), \Delta K_{\delta, x_0}\rangle_{L^{2}(\Lambda)}, \quad t \in [0,T],
\end{equation}
where $K_{\delta, x_0}(x)=\delta^{-d/2}K(\delta^{-1}(x-x_0))$ is a kernel localised around $x_0 \in \Lambda$. Abbreviate by $v_{\delta}(t)\coloneqq \dot{u}_{\delta}(t)$ for $t \in [0,T]$ local measurements of the velocity field, which are retrieved as the time derivative of the continuously observed differentiable process $u_{\delta}(t)$. The formal relationship between the velocity field $v$ and the amplitude $u$ is analysed in \Cref{section:well-posedness-main}. \par In analogy to \citet{altmeyerNonparametricEstimationLinear2021}, we consider the augmented maximum likelihood estimator
\begin{equation}
\label{eq:augmented_MLE_overview}
\hat{\vartheta}_\delta(x_0) \coloneqq \frac{\int_{0}^{T}u_\delta^{\Delta}(t)\mathrm{d}v_{\delta}(t)}{\int_{0}^{T}(u_{\delta}^{\Delta}(t))^{2}\mathrm{d}t}, \quad \delta > 0,
\end{equation}
This estimator can be motivated using the Girsanov theorem or a least squares criterion, c.f. \citet[Section 4.1]{altmeyerNonparametricEstimationLinear2021}. In \Cref{result:asymptotic_normality} of \Cref{section:asymptotics_augmented_MLE_main}, we prove, under smoothness assumptions on the kernel $K$, the initial conditions and the wave speed $\vartheta$, that the augmented MLE satisfies for $\delta \rightarrow 0$:
\begin{equation}
\label{eq:asymptotic_normality_aMLE_overview}
\delta^{-1}(\hat{\vartheta}_\delta(x_0) - \vartheta(x_0)) \xrightarrow{d} \mathcal{N}\left(  \frac{\langle \nabla K, \nabla \beta^{(0)}\rangle_{L^{2}(\mathbb{R}^{d})}}{\Vert \nabla K \Vert^{2}_{L^{2}(\mathbb{R}^{d})}}, \frac{ 4 \vartheta(x_0)\Vert K \Vert^{2}_{L^{2}(\mathbb{R}^{d})}}{T^{2} \Vert \nabla K \Vert^{2}_{L^{2}(\mathbb{R}^{d})}} \right), 
\end{equation}
with $\beta^{(0)}$ given by \eqref{eq:beta_delta}. 
In \Cref{result:confidence_interval}, we provide reasonable assumptions for the kernel $K$ such that the asymptotic bias of the augmented MLE vanishes, allowing us to construct asymptotic confidence intervals. 
Crucial in proving the asymptotic normality is the asymptotic behaviour of the observed Fisher information $I_{\delta}=\int_{0}^{T}u_{\delta}^{\Delta}(t)^{2}\mathrm{d}t$ involved in the error decomposition \eqref{eq:error_decomposition}:
\begin{equation*}
\hat{\vartheta}_{\delta}(x_0) - \vartheta(x_0) = \Vert K \Vert_{L^{2}(\mathbb{R}^{d})} {I}_{\delta}^{-1}{M}_{\delta}+ I_{\delta}^{-1}R_{\delta},
\end{equation*}
where $M_{\delta}$ is a martingale term with quadratic variation $I_{\delta}$, and $R_{\delta}$ characterises the remaining bias. The quadratic variation of the martingale $M_{\delta}/({\mathbb{E}[I_{\delta}]})^{1/2}$ is given by $I_{\delta}/\mathbb{E}[I_{\delta}]$. For the asymptotic normality \eqref{eq:asymptotic_normality_aMLE_overview}, we require the convergence $I_{\delta}/\mathbb{E}[I_{\delta}] \xrightarrow{\mathbb{P}} 1$ as $\delta \rightarrow 0$, as the martingale central limit theorem then implies $M_{\delta}/({\mathbb{E}[I_{\delta}]})^{1/2} \xrightarrow{d} \mathcal{N}(0,1)$. Therefore, we will analyse the expectation and variance of the observed Fisher information $I_\delta$ and the bias $R_\delta$, which is due to the variability of the wave speed $\vartheta(\cdot)$ in space.

In the case of zero initial conditions, the observed Fisher information satisfies
\begin{equation*}
\mathbb{E}[\delta^{2}I_{\delta}]=\int_{0}^{T} \int_{0}^{t}\Vert S_{\vartheta, \delta}(\delta^{-1}r) \Delta K \Vert^{2}_{L^{2}(\Lambda_{\delta})} \mathrm{d}r \mathrm{d}t,
\end{equation*}
where $(S_{\vartheta,\delta}(t), t \in \mathbb{R})$ is the rescaled operator sine function associated with the operator $A_{\vartheta,\delta}=A_{\vartheta(\delta \cdot)}$, which converges asymptotically to $\vartheta(0)\Delta$ on $H^{2}(\mathbb{R}^{d})$. Using functional calculus, we can represent the operator sine function through
\begin{equation*}
S_{\vartheta, \delta}(\delta^{-1}r) = (-A_{\vartheta,\delta})^{-1/2} \sin(\delta^{-1}r(-A_{\vartheta,\delta})^{1/2}), \quad r \geq 0.
\end{equation*}
Precise definitions of the operator cosine and sine are provided in \Cref{section:operator_cosine_and_sine}, and the associated scaling properties are introduced in \Cref{result:rescaling_trigonometric_operator_families} of \Cref{section:scaling_limit}. We further show fixed-time scaling limits for both the deterministic and stochastic wave equation, c.f. \Cref{result:Limits_operator_cosine_sine} and \Cref{result:scaling_limit}.

A key result in harmonic analysis is the Riemann-Lebesgue lemma that states that the Fourier transform of an $L^{1}$-function vanishes at infinity, c.f. \citet{kahaneGeneralizationsRiemannLebesgueCantorLebesgue1980}. This result is important for studying the wave equation as it provides a tool for studying the long-term behaviour of the energy within the hyperbolic system. Using the Riemann-Lebesgue lemma, it can be shown that the limiting operator $\vartheta(x_0)\Delta$ on $H^{2}(\mathbb{R}^{d}) \subset L^{2}(\mathbb{R}^{d})$ is a so-called Riemann-Lebesgue operator, satisfying the convergence 
\begin{equation*}
\sin(\delta^{-1}t (-\Delta)^{1/2}) \xrightarrow{w} 0, \quad \delta \rightarrow 0,
\end{equation*}
in the weak operator topology. In particular, if the generator is a Riemann-Lebesgue operator, long term, the kinetic and potential energy contribute equally to the total energy within the system. This effect is called the equipartition of energy.

In \Cref{section:asymptotic_energetic_behaviour}, we formally introduce Riemann-Lebesgue operators and show that asymptotically $A_{\vartheta,\delta}$ satisfies
\begin{equation*}
\label{eq:overview_main_results}
\sin(\delta^{-1}t (-A_{\vartheta,\delta})^{1/2}) \xrightarrow{w} 0, \quad \delta \rightarrow 0,
\end{equation*}
see \Cref{result:approximate_riemann_lebesgue} for the precise notion of convergence associated with varying domains. Thus, we prove that $A_{\vartheta,\delta}$ inherits the Riemann-Lebesgue property of the limiting operator $\vartheta(0)\Delta$ and we obtain the convergence
\begin{equation*}
\Vert S_{\vartheta, \delta}(\delta^{-1}t) \Delta K \Vert_{L^{2}(\Lambda_{\delta})}^{2} \rightarrow \frac{1}{2 \vartheta(x_0)} \Vert \nabla K \Vert^{2}_{L^{2}(\mathbb{R}^{d})}, \quad \delta \rightarrow 0,  \quad t \in [0,T]. 
\end{equation*}
The scaling of the local observations as $\delta \rightarrow 0$, therefore, corresponds to the long-term energetic behaviour within a deterministic wave equation with first-order initial condition $\Delta K$. \Cref{example:unbounded_domain_main_example} of \Cref{section:asymptotic_energetic_behaviour}, illustrates how the Riemann-Lebesgue lemma emerges when analysing local measurements of a stochastic wave equation on the unbounded spatial domain $\mathbb{R}^{d}$ given a constant wave speed $\vartheta$. It provides the blueprint on how the asymptotic properties of the statistical quantities in \Cref{section:asymptotics_augmented_MLE_main} are derived based on the analytical results of \Cref{section:asymptotic_energetic_behaviour}.

The finite-sample properties of the augmented MLE, which are demonstrated in the simulations of \Cref{section:numerical_example}, are in line with our theoretical findings. Surprisingly, we can even detect the hyperbolic dependence $T^{-2}$ within the empirical asymptotic variance of the augmented MLE. The appendix is devoted to more technical proofs of the well-posedness of the SPDE in \Cref{section:well-posedness}, spectral asymptotics in \Cref{section:spectral_asymptotics}, the asymptotic energetic behaviour in \Cref{section:energy_results} and the asymptotic properties of the observed Fisher information $I_{\delta}$ and the bias $R_{\delta}$ in the error decomposition for the augmented MLE in \Cref{section:augmented_MLE}.

\section{The model}
\label{section:model}
\subsection{Notation}
Let $\Lambda \subset \mathbb{R}^{d}$ be an open bounded and convex set with $C^{2}$-boundary and consider the standard $L^2$-space $(L^2(\Lambda), \langle \cdot, \cdot\rangle_{L^{2}(\Lambda)})$ equipped with the usual inner product. Let $H^k(\Lambda)$ be the $L^2$-Sobolev spaces of order $k \in \mathbb{N}$ and define $H^1_0(\Lambda)$ as the closure of $C_c^{\infty} (\overline{\Lambda})$ in $H^1(\Lambda)$. The notation $H^{-1}_0(\Lambda)=H_0^{1}(\Lambda)^{*}$ denotes the dual space of $H_0^{1}(\Lambda)$. Similarly, we will abbreviate by $H_0^{\alpha}(\Lambda)$ and $\alpha \in \mathbb{R}$ the associated fractional Sobolev spaces introduced in \citet[Section 3]{kovacsFiniteElementApproximation2010}. The space of $L^{2}(\Lambda)$ will be identified with its dual space $L^{2}(\Lambda)^{*}$. We write $\langle \cdot,\cdot \rangle_{\mathbb{R}^{d}}$ for the Euclidean inner product and $|\cdot|_{\mathbb{R}^{d}}$ for the norm. If $U$ and $H$ are two Hilbert spaces, we abbreviate the space of bounded linear operators by $\mathcal{L}(U, H)$ and the Hilbert-Schmidt norm by $\Vert \cdot \Vert_{\mathrm{HS}(U, H)}$. Let $\Delta$ be the Laplace operator on $L^{2}(\mathbb{R}^{d})$ defined on the domain $D(\Delta)=H^{2}(\mathbb{R}^{d})$. For $f \in L^{1}(\mathbb{R}^{d})$, we define the Fourier transform by $\mathcal{F}[f](\fv)=\int_{\mathbb{R}^{d}} f(x)e^{i \langle \fv, x\rangle_{\mathbb{R}^{d}}} \mathrm{d}x$ for $\fv \in \mathbb{R}^{d}$. We further use the usual extension of the Fourier transform to the isomorphism $ \mathcal{F}:L^{2}(\mathbb{R}^{d}) \rightarrow L^{2}(\mathbb{R}^{d})$. 
\subsection{The SPDE model}
Throughout this work, we consider a fixed time horizon $T < \infty$. Consider the general second-order operator $A_{\vartheta}:H_0^{1}(\Lambda) \rightarrow H_0^{-1}(\Lambda)$ defined through
\begin{equation}
\label{eq:weak_laplace}
\langle A_{\vartheta}z_1, z_2\rangle_{H_0^{-1}(\Lambda),H_0^{1}(\Lambda)} \coloneqq -\langle \vartheta \nabla z_1, \nabla z_2\rangle_{L^{2}(\Lambda)}, \quad z_1,z_2 \in H_0^{1}(\Lambda).
\end{equation}
For $z \in H_0^{1}(\Lambda) \cap H^{2}(\Lambda) \subset L^{2}(\Lambda)$ we recover the usual second order elliptic operator $A_\vartheta z \coloneqq \mathrm{div}(\vartheta \nabla z) =\sum_{i=1}^d \partial_i(\vartheta \partial_i z)$ with Dirichlet boundary conditions. The aim of this work is the non-parametric estimation of the wave speed of the stochastic wave equation
\begin{equation}
\label{eq:stochastic_wave_equation}
\begin{cases}
\partial_{tt}^{2}u(t)=A_{\vartheta}u(t)+ \dot{W}(t), \quad &t \in (0,T],\\
\restr{u(t)}{\partial \Lambda} = 0,  \quad &t \in (0,T],\\
u(0) = u_0, \dot{u}(0) =v_0,\\
\end{cases}
\end{equation}
with the wave speed $\vartheta$ satisfying \assth{} throughout, a cylindrical Wiener process $(W(t), t \in [0,T])$ on $L^{2}(\Lambda)$ and initial conditions $(u_0,v_0) \in L^{2}(\Lambda) \times H_0^{-1}(\Lambda)$.
\begin{assumptionRegTh}{\vartheta}{\alpha}\label{assumption:regularity_wave_speed}
  Suppose that $\vartheta \in C^{1+\alpha}(\overline{\Lambda})$ for $\alpha>0$ describes the space-dependent wave speed, satisfying $\mathrm{min}_{x \in \overline{\Lambda}} \vartheta(x)>0$. If $d=1$, we further assume $\alpha>1/2$.
\end{assumptionRegTh}
\begin{assumptionInitial}{u_0}{v_0}\label{assumption:initial_conditions} Suppose $u_0 \in H_0^{1}(\Lambda) \cap H^{2}(\Lambda)$ and $v_0 \in H_0^{1}(\Lambda)$.
 \end{assumptionInitial}
\begin{remark}
\label{remark:regularity}
\begin{enumerate}
  \item[]
  \item The regularity Assumption \assth{} is analogous to the assumption on the diffusivity in \citet*{altmeyerNonparametricEstimationLinear2021} because we require similar results based on the hypercontractivity of the heat semigroup. In particular, since we have 
  \begin{equation*}
  \Vert e^{t \Delta}u \Vert_{L^{2}(\mathbb{R}^{d})} \lesssim \mathrm{min}(\Vert u \Vert_{L^{2}(\mathbb{R}^{d})}, t^{-d/4} \Vert u \Vert_{L^{1}(\mathbb{R}^{d})}), \quad u \in L^{1}(\mathbb{R}^{d}) \cap L^{2}(\mathbb{R}^{d}), 
  \end{equation*}
  the decay in time of the heat semigroup $(e^{t\Delta}, t \geq 0)$ becomes stronger as the spatial dimension increases. The resulting decay is minimal in dimension one. Hence, we require a slightly higher order of regularity for the wave speed $\vartheta$ for $d=1$.
  \item By the Assumption \assth{} the wave speed $\vartheta$ is lower and upper bounded on the domain $\Lambda$. Thus, the standard inner product on $H_{0}^{1}(\Lambda)$ can be replaced by the inner product $\langle \vartheta \nabla \cdot, \nabla \cdot\rangle_{L^{2}(\Lambda)}$ weighted with the wave speed $\vartheta$. By \citet*[Remark 24]{brezisFunctionalAnalysisSobolev2011}, the weighted Dirichlet problem is as regular as the usual Dirichlet problem provided that $\vartheta \in C^{1}(\overline{\Lambda})$. The operator \eqref{eq:weak_laplace}, incorporating the spatially varying wave speed $\vartheta$, is bijective, and the associated existence results for the stochastic wave equation \eqref{eq:stochastic_wave_equation} will essentially reduce to the parametric case in \Cref{section:well-posedness}.
\end{enumerate}
\end{remark}
\subsection{Operator cosine and sine functions}
\label{section:operator_cosine_and_sine}
Consider the deterministic abstract wave equation
\begin{equation*}
\label{eq:deterministic_wave_equation}
\partial_{tt}^{2}u(t)=A_{\vartheta}u(t), \quad u(0)=u_0, \quad \dot{u}(0)=v_0, \quad \quad t \in [0,T].
\end{equation*}
Setting $v=\partial_t u$, we may rewrite \eqref{eq:deterministic_wave_equation} system of two equations
\begin{equation*}
\label{eq:deterministic_system}
\partial_t u(t)=v(t), \quad \partial_t v(t)=A_{\vartheta}u(t), \quad u(0)=u_0, \quad v(0)=v_0, \quad t \in [0,T].
\end{equation*}
In particular, the system \eqref{eq:deterministic_system} gives rise to the first-order Cauchy problem
\begin{equation*}
\label{eq:deterministic_matrix_system}
\partial_t \begin{pmatrix}
u(t)\\
v(t)
\end{pmatrix}= \begin{pmatrix}
0 & I \\
A_{\vartheta} & 0
\end{pmatrix}\begin{pmatrix}
u(t)\\v(t)
\end{pmatrix}, \quad (u(0), v(0))^{\top}=(u_0, v_0)^{\top},
\end{equation*}
on a suitable product space between Hilbert spaces called phase space. In the rest of this section, we will find a representation of the strongly continuous group associated with \eqref{eq:deterministic_matrix_system} on a suitable phase-space through operator cosine and sine functions, which are the hyperbolic counterpart to the strongly continuous semigroups associated with parabolic Cauchy problems. For an introduction to the theory of operator sine and cosine function, we refer to \citet{arendtVectorvaluedLaplaceTransforms2001}. \par 
Using the operator \eqref{eq:weak_laplace}, we may interpret $H_0^{-1}(\Lambda)$ itself as a Hilbert space by defining the inner product
\begin{equation}
\label{eq:dual_inner}
\langle l,l' \rangle_{H_0^{-1}(\Lambda)} \coloneqq \langle A_{\vartheta}^{-1}l, A_{\vartheta}^{-1}l'\rangle_{H_0^{1}(\Lambda)}, \quad l,l' \in H_0^{-1}(\Lambda).
\end{equation}
By \Cref{remark:regularity}, we obtain the Gelfand triple 
\begin{equation*}
(H_{0}^{1}(\Lambda), L^{2}(\Lambda), H_{0}^{-1}(\Lambda)).
\end{equation*}
\citet*[Proposition 7.1.5]{arendtVectorvaluedLaplaceTransforms2001} show that with \assth{} the operator $A_{\vartheta}$ defined in \eqref{eq:weak_laplace} is a bounded, self-adjoint operator and that $L^{2}(\Lambda)\times H_{0}^{-1}(\Lambda)$ is the phase space associated with the cosine function generated by $A_{\vartheta}$ on $H_{0}^{-1}(\Lambda)$ with the inner product \eqref{eq:dual_inner}. \par
 Given \citet*[Theorem 3.14.11]{arendtVectorvaluedLaplaceTransforms2001}, let $(C_{\vartheta}(t), t \in\mathbb{R})$ and $(S_{\vartheta}(t), t \in \mathbb{R})$, with $S(t)\coloneqq\int_{0}^{t}C(s)\mathrm{d}s$ for $t \in [0,T]$, be the operator cosine and sine-functions associated with the operator $A_{\vartheta}$. We abbreviate by $\mathcal{A}_{\vartheta}$ the associated generator on the phase space $L^{2}(\Lambda)\times H_0^{-1}(\Lambda)$, given by
 \begin{equation}
 \label{eq:matrix_generator}
 \begin{aligned}
 D(\mathcal{A}_{\vartheta})&=D(A_{\vartheta}) \times L^{2}(\Lambda) = H_0^{1}(\Lambda)\times L^{2}(\Lambda),\\
 \mathcal{A}_{\vartheta}\begin{pmatrix}z_1\\z_2
\end{pmatrix} &= \begin{pmatrix}
0 & I\\
A_{\vartheta} & 0
\end{pmatrix}\begin{pmatrix}z_1\\z_2
\end{pmatrix} = \begin{pmatrix}
z_2 \\ A_{\vartheta}z_1
\end{pmatrix}.
\end{aligned}
 \end{equation}
The operator sine and cosine functions take the following values
\begin{equation*}
\begin{aligned}
\label{eq:regularity_cosine_sine}
 S_{\vartheta}(\cdot)l &\in C(\mathbb{R},L^{2}(\Lambda)), \quad l \in H_0^{-1}(\Lambda),\\
 S_{\vartheta}(\cdot)z &\in C(\mathbb{R},H_0^{1}(\Lambda)), \quad z \in L^{2}(\Lambda),\\
 C_{\vartheta}(\cdot)z &\in C^{1}(\mathbb{R},H^{-1}_0(\Lambda))\cap C(\mathbb{R},L^{2}(\Lambda)), \quad z \in L^{2}(\Lambda).
\end{aligned}
\end{equation*}
In particular, by \citet[Theorem 3.14.11]{arendtVectorvaluedLaplaceTransforms2001}, the operator $\mathcal{A}_{\vartheta}$ defined through \eqref{eq:matrix_generator} generates a $C_0$-group $\mathcal{J}_{\vartheta}$ on the phase-space $L^{2}(\Lambda)\times H_0^{-1}(\Lambda)$ given by 
\begin{equation}
\label{eq:operator_semigroup_wave_equation}
\mathcal{J}_{\vartheta}(t)=\begin{pmatrix}
C_{\vartheta}(t)& S_{\vartheta}(t)\\
C'_{\vartheta}(t)& C_{\vartheta}(t)
\end{pmatrix} = \begin{pmatrix}
C_{\vartheta}(t)& S_{\vartheta}(t)\\
A_{\vartheta} S_{\vartheta}(t) & C_{\vartheta}(t)
\end{pmatrix}, \quad t \in \mathbb{R}.
\end{equation}
\begin{remark}
The operator cosine function is precisely the cosine applied to the square root of the negative of the generator by the functional calculus. In contrast, the operator sine is defined through $S(t)=\int_{0}^{t}C(s)\mathrm{d}s$ for $t \in[0,T]$ and does not correspond to the operator sine as induced by the functional calculus. The operator sine $S(t)$ is only used because it emerges naturally in d'Alembert's formula and is therefore involved directly in the solution of a second-order abstract Cauchy problem. For an overview of the different types of notations for operator cosine and sine functions, we refer to \citet*{pandolfiSystemsPersistentMemory2021}.
\end{remark}
\subsection{Well-posedness for the SPDE}
\label{section:well-posedness-main}
Throughout this section, we assume zero-initial conditions for simplicity:
\begin{equation}
\label{eq:zero_initial_condition}
u_0(x) = v_0(x) = 0, \quad x \in \Lambda.
\end{equation}
All the results in this section immediately extend to initial conditions satisfying \assini{}. We begin by considering the particular case $d=1$.
The following result provides the existence and uniqueness of a mild function-valued solution to the stochastic wave equation on a bounded spatial domain. 
\begin{theorem}[Existence of the one-dimensional stochastic wave equation]
\label{theorem:existence_stochastic_wave_equation}
Assume zero initial conditions \eqref{eq:zero_initial_condition} in \eqref{eq:stochastic_wave_equation}. Let $\Lambda \subset \mathbb{R}$ be an open and bounded one-dimensional spatial domain. Suppose that $(W(t), t \in [0,T])$ is a cylindrical Wiener process on $L^{2}(\Lambda)$ on a filtered probability space $(\Omega, \mathcal{F}, \mathcal{F}_{t \geq 0}, \mathbb{P})$. Then, \eqref{eq:stochastic_wave_equation} has a unique mild solution given by the variations of constants formula
\begin{equation}
\label{eq:mild_solution_stochastic_wave_equation}
X(t)=\int_{0}^{t}\mathcal{J}_{\vartheta}(t-s)B\mathrm{d}W(s), \quad t \geq 0,
\end{equation}
where $(\mathcal{J}_{\vartheta}(t), t \in [0,T])$ is given by $\eqref{eq:operator_semigroup_wave_equation}$ and $B:L^{2}(\Lambda) \rightarrow L^{2}(\Lambda)\times H^{-1}_0(\Lambda)$ with $Bu=(0, u) \in L^{2}(\Lambda) \times  (L^{2}(\Lambda))^{'} \subset  L^{2}(\Lambda)\times H^{-1}_0(\Lambda)$.
\end{theorem}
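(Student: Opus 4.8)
The plan is to read \eqref{eq:stochastic_wave_equation} as the abstract linear stochastic Cauchy problem
\[
\mathrm{d}X(t)=\mathcal{A}_{\vartheta}X(t)\,\mathrm{d}t+B\,\mathrm{d}W(t),\qquad X(0)=0,
\]
on the phase space $\mathcal{H}\coloneqq L^{2}(\Lambda)\times H_0^{-1}(\Lambda)$, where $\mathcal{A}_{\vartheta}$ is the generator \eqref{eq:matrix_generator} of the $C_0$-group $\mathcal{J}_{\vartheta}$ from \eqref{eq:operator_semigroup_wave_equation} and $B\in\mathcal{L}(L^{2}(\Lambda),\mathcal{H})$ is the embedding $Bu=(0,u)$. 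By the standard theory of stochastic convolutions with additive noise, the process \eqref{eq:mild_solution_stochastic_wave_equation} is the unique mild solution provided the integrand $\mathcal{J}_{\vartheta}(\cdot)B$ is stochastically integrable, that is
\[
\int_{0}^{T}\Vert \mathcal{J}_{\vartheta}(s)B\Vert_{\mathrm{HS}(L^{2}(\Lambda),\mathcal{H})}^{2}\,\mathrm{d}s<\infty.
\]
Once this is established, existence and uniqueness follow at once, so the whole content of the theorem reduces to verifying the displayed Hilbert--Schmidt bound.

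To check it I would diagonalise $A_{\vartheta}$, which by \Cref{remark:regularity} is self-adjoint, negative and has compact resolvent. Writing $(-\lambda_k,\phi_k)_{k\ge1}$ for its eigenpairs with $(\phi_k)$ an orthonormal basis of $L^{2}(\Lambda)$, functional calculus gives $C_{\vartheta}(s)\phi_k=\cos(\sqrt{\lambda_k}\,s)\phi_k$ and $S_{\vartheta}(s)\phi_k=\lambda_k^{-1/2}\sin(\sqrt{\lambda_k}\,s)\phi_k$, so that $\mathcal{J}_{\vartheta}(s)B\phi_k=(S_{\vartheta}(s)\phi_k,\,C_{\vartheta}(s)\phi_k)$. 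Using the wave-speed--weighted $H_0^{1}$-inner product from \Cref{remark:regularity} together with \eqref{eq:dual_inner}, one computes $\Vert\phi_k\Vert_{H_0^{-1}(\Lambda)}^{2}=1/\lambda_k$, and hence the $k$-th summand contributes $\lambda_k^{-1}\sin^{2}(\sqrt{\lambda_k}s)+\lambda_k^{-1}\cos^{2}(\sqrt{\lambda_k}s)=\lambda_k^{-1}$. The Pythagorean identity removes the time dependence completely, yielding $\Vert\mathcal{J}_{\vartheta}(s)B\Vert_{\mathrm{HS}(L^{2}(\Lambda),\mathcal{H})}^{2}=\sum_k\lambda_k^{-1}$ for every $s$, and therefore the integral above equals $T\sum_k\lambda_k^{-1}$.

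It remains to show $\sum_k\lambda_k^{-1}<\infty$, and this is where I expect the only genuine obstacle. In one space dimension the Dirichlet eigenvalues of the weighted Laplacian on the bounded interval $\Lambda$ obey the Weyl asymptotics $\lambda_k\asymp k^{2}$ (the spectral asymptotics proved in the appendix), so $\sum_k\lambda_k^{-1}\lesssim\sum_k k^{-2}<\infty$. This summability, and not any technical convenience, is the reason the statement is confined to $d=1$: in dimension $d$ Weyl's law reads $\lambda_k\asymp k^{2/d}$, and $\sum_k\lambda_k^{-1}$ converges precisely at the threshold $d=1$, mirroring the borderline regularity of the stochastic heat equation. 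Uniqueness then needs no separate argument, since by definition any mild solution must coincide with the stochastic convolution \eqref{eq:mild_solution_stochastic_wave_equation}.
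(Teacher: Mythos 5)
Your proposal is correct, but it takes a genuinely different route from the paper, which disposes of this theorem in a single line by citing \citet[Theorem 3.1 and Remark 3.2]{kovacsFiniteElementApproximation2010}; the only $\vartheta$-specific input in the paper is the observation (\Cref{remark:regularity}) that, with the weighted inner product $\langle \vartheta \nabla\cdot,\nabla\cdot\rangle_{L^{2}(\Lambda)}$ on $H_0^{1}(\Lambda)$, the variable-coefficient operator $A_{\vartheta}$ fits the same abstract framework as the parametric case. What you have done is, in effect, reprove the cited result directly: the reduction of existence and uniqueness to the stochastic integrability condition $\int_{0}^{T}\Vert \mathcal{J}_{\vartheta}(s)B\Vert^{2}_{\mathrm{HS}(L^{2}(\Lambda),\mathcal{H})}\,\mathrm{d}s<\infty$ is exactly the Da Prato--Zabczyk condition (5.3) that the paper itself invokes when explaining why the construction fails for $d>1$, and your spectral computation is sound --- note that $\Vert\phi_k\Vert^{2}_{H_0^{-1}(\Lambda)}=\lambda_k^{-1}$ does require the $\vartheta$-weighted $H_0^{1}$-inner product in \eqref{eq:dual_inner}, which you correctly use. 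Your Pythagorean cancellation is in fact a shadow of a structural feature the paper proves separately: $\mathcal{J}_{\vartheta}$ is unitary on the phase space (\Cref{result:unitary_group}), so $\Vert\mathcal{J}_{\vartheta}(s)B\Vert_{\mathrm{HS}}=\Vert B\Vert_{\mathrm{HS}}$ for all $s$, with $\Vert B\Vert^{2}_{\mathrm{HS}}=\sum_k\lambda_k^{-1}=\mathrm{tr}\bigl((-A_{\vartheta})^{-1}\bigr)$; your approach buys transparency, exhibiting the trace-class mechanism and the dimension threshold that the paper's citation hides. Two small repairs: first, the Weyl bound $\lambda_k\asymp k^{2}$ is \emph{not} what the paper's appendix on spectral asymptotics establishes (\Cref{section:spectral_asymptotics} concerns strong convergence of resolutions of identity for the rescaled operators, not eigenvalue growth), so you should instead derive it from the min--max principle by comparing with $\vartheta_{\min}(-\Delta)$ and $\vartheta_{\max}(-\Delta)$, which is legitimate under \assth{}; second, you should say explicitly that $A_{\vartheta}$ has compact resolvent (hence a discrete spectrum and an eigenbasis) because $H_0^{1}(\Lambda)\hookrightarrow L^{2}(\Lambda)$ is compact on the bounded domain. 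With these standard facts supplied, your argument is a complete, self-contained alternative to the paper's citation.
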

\begin{proof}[Proof of \Cref{theorem:existence_stochastic_wave_equation}] This is an immediate corollary of \citet*[Theorem 3.1 and Remark 3.2]{kovacsFiniteElementApproximation2010}.
\end{proof}
\noindent
By \citet*[Theorem 5.4]{dapratoStochasticEquationsInfinite2014}, the mild solution \eqref{eq:mild_solution_stochastic_wave_equation} is also a weak solution and satisfies
\begin{equation}
\label{eq:weak_solution_stochastic_wave_equation}
\begin{aligned}
&\langle X(t), U \rangle_{L^{2}(\Lambda)\times H^{-1}_0(\Lambda)} \\ &= \int_{0}^{t}\langle X(s),\mathcal{A}^{*}_{\vartheta}U \rangle_{L^{2}(\Lambda)\times H^{-1}_0(\Lambda)}  \mathrm{d}s+ \langle BW(t), U\rangle_{L^{2}(\Lambda)\times H^{-1}_0(\Lambda)}, \quad U \in D(\mathcal{A}^{*}_{\vartheta}).
\end{aligned}
\end{equation}
Let us denote $(u(t),v(t))^{\top}\coloneqq X(t)$. 
We obtain the following important dynamical representation by determining the adjoint of $\mathcal{A}_{\vartheta}$ and carefully 
differentiating between the inner product and dual-pairing associated with $H_0^{-1}(\Lambda)$.
\begin{proposition}[Dynamic representation of the weak solution]
\label{result:system_of_equations_from_weak_solution}
Assume zero initial conditions \eqref{eq:zero_initial_condition} in \eqref{eq:stochastic_wave_equation}. Let $\Lambda \subset \mathbb{R}$ be an open and bounded one-dimensional spatial domain. For every function $z \in H_0^{1}(\Lambda)\cap H^{2}(\Lambda)$, we have
\begin{equation}
\label{eq:dynamic_representation_d_1}
\begin{aligned}
\langle u(t), z \rangle_{L^{2}(\Lambda)} &= \int_{0}^{t} \langle v(s),z \rangle_{H^{-1}_0(\Lambda), H^{1}_0(\Lambda)}\mathrm{d}s,\\
\langle v(t), z\rangle_{H^{-1}_0(\Lambda), H^{1}_0(\Lambda)} &= \int_{0}^{t} \langle u(s),A_{\vartheta}z \rangle_{L^{2}(\Lambda)} \mathrm{d}s + \langle W(t),z \rangle_{L^{2}(\Lambda)},
\end{aligned}
\end{equation}
for all $t \in [0,T]$ on the same set of probability one. 
\end{proposition}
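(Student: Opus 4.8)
The plan is to insert two carefully chosen test functions into the weak-solution identity \eqref{eq:weak_solution_stochastic_wave_equation} and to simplify the resulting pairings, the whole difficulty being the systematic separation of the Hilbert-space inner product $\langle\cdot,\cdot\rangle_{H_0^{-1}(\Lambda)}$ from \eqref{eq:dual_inner} and the duality pairing $\langle\cdot,\cdot\rangle_{H_0^{-1}(\Lambda),H_0^{1}(\Lambda)}$. First I would record the conversion identity
\begin{equation*}
\langle l, z\rangle_{H_0^{-1}(\Lambda),H_0^{1}(\Lambda)} = -\langle l, A_\vartheta z\rangle_{H_0^{-1}(\Lambda)}, \qquad l \in H_0^{-1}(\Lambda),\ z \in H_0^{1}(\Lambda),
\end{equation*}
which follows from \eqref{eq:weak_laplace} and \eqref{eq:dual_inner} by writing $l=A_\vartheta(A_\vartheta^{-1}l)$ and using that $A_\vartheta$ is an isometry from $(H_0^{1}(\Lambda),\langle\vartheta\nabla\cdot,\nabla\cdot\rangle_{L^2(\Lambda)})$ onto $(H_0^{-1}(\Lambda),\langle\cdot,\cdot\rangle_{H_0^{-1}(\Lambda)})$. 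Specialising to $l\in L^2(\Lambda)\subset H_0^{-1}(\Lambda)$ and integrating by parts (Green's identity, using $z|_{\partial\Lambda}=0$) yields the companion identity $\langle A_\vartheta\phi,\psi\rangle_{H_0^{-1}(\Lambda)}=-\langle\phi,\psi\rangle_{L^2(\Lambda)}$ for $\phi\in H_0^{1}(\Lambda)$, $\psi\in L^2(\Lambda)$.

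Next I would identify $\mathcal{A}_\vartheta^{*}$ on the test functions I need. Applying the companion identity to both off-diagonal terms, a direct computation gives, for $\Phi=(\phi_1,\phi_2)$ and $\Psi=(\psi_1,\psi_2)$ in $D(\mathcal{A}_\vartheta)=H_0^{1}(\Lambda)\times L^2(\Lambda)$,
\begin{equation*}
\langle\mathcal{A}_\vartheta\Phi,\Psi\rangle_{L^2(\Lambda)\times H_0^{-1}(\Lambda)} = \langle\phi_2,\psi_1\rangle_{L^2(\Lambda)}-\langle\phi_1,\psi_2\rangle_{L^2(\Lambda)} = -\langle\Phi,\mathcal{A}_\vartheta\Psi\rangle_{L^2(\Lambda)\times H_0^{-1}(\Lambda)},
\end{equation*}
so that $\mathcal{A}_\vartheta$ is skew-symmetric. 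In particular, for any $U\in D(\mathcal{A}_\vartheta)$ the map $\Phi\mapsto\langle\mathcal{A}_\vartheta\Phi,U\rangle$ is continuous, whence $U\in D(\mathcal{A}_\vartheta^{*})$ and $\mathcal{A}_\vartheta^{*}U=-\mathcal{A}_\vartheta U$; no full skew-adjointness statement is required for the specific $U$ below.

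The two equations then fall out by the right choices. For the first, take $U=(z,0)$, so that $\mathcal{A}_\vartheta^{*}U=(0,-A_\vartheta z)$, the noise term vanishes, and the conversion identity turns $-\langle v(s),A_\vartheta z\rangle_{H_0^{-1}(\Lambda)}$ back into $\langle v(s),z\rangle_{H_0^{-1}(\Lambda),H_0^{1}(\Lambda)}$, giving the $u$-equation. For the second, take $U=(0,-A_\vartheta z)$, which forces $z\in H_0^{1}(\Lambda)\cap H^{2}(\Lambda)$ so that $A_\vartheta z\in L^2(\Lambda)$; here $\mathcal{A}_\vartheta^{*}U=(A_\vartheta z,0)$, the drift integral becomes $\int_0^{t}\langle u(s),A_\vartheta z\rangle_{L^2(\Lambda)}\mathrm{d}s$, the left-hand side reduces to $\langle v(t),z\rangle_{H_0^{-1}(\Lambda),H_0^{1}(\Lambda)}$, and the noise term simplifies via $\langle W(t),-A_\vartheta z\rangle_{H_0^{-1}(\Lambda)}=\langle W(t),z\rangle_{L^2(\Lambda)}$, using the conversion identity together with the compatibility of the pairing and the $L^2$-product on $L^2(\Lambda)\times H_0^{1}(\Lambda)$.

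For the measure-theoretic wrap-up, \eqref{eq:weak_solution_stochastic_wave_equation} holds for each fixed $U$ almost surely; since both sides of each derived equation are continuous in $t$ by the path regularity of $X$ from \Cref{theorem:existence_stochastic_wave_equation} and linear and continuous in $z$ with respect to the $H^{2}$-norm, I would fix a countable set dense in $H_0^{1}(\Lambda)\cap H^{2}(\Lambda)$, intersect the corresponding null sets, and extend to all $t\in[0,T]$ and all $z$ by continuity, producing a single exceptional null set on which both equations hold. I expect the only genuinely delicate point to be keeping the conversion and companion identities—their signs and the exact role of $A_\vartheta$—straight, since a slip there silently corrupts both equations; the adjoint computation and the probability-one bookkeeping are then routine.
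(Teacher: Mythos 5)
Your proposal is correct and takes essentially the same route as the paper: your conversion and companion identities are precisely parts (iv) and (ii) of \Cref{result:gelfand_properties}, your skew-symmetry computation reproduces \Cref{lemma:adjoint_generator_wave_equation}, and the paper likewise tests \eqref{eq:weak_solution_stochastic_wave_equation} with $U=(z,0)^{\top}$ and $U=(0,-A_{\vartheta}z)^{\top}$ (obtained by linear separation of $U=(z,-A_{\vartheta}z)^{\top}$), simplifying the noise term exactly as you do. The only differences are cosmetic: you avoid invoking full skew-adjointness, needing only $\mathcal{A}_{\vartheta}^{*}U=-\mathcal{A}_{\vartheta}U$ on $D(\mathcal{A}_{\vartheta})$, and you spell out the null-set bookkeeping that the paper leaves implicit.
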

\begin{proof}[Proof of \Cref{result:system_of_equations_from_weak_solution}]
The result is proved on page \pageref{proof:system_of_equations_from_weak_solution}.
\end{proof}
\noindent We now turn to $d>1$. The arguments employed for \Cref{theorem:existence_stochastic_wave_equation} and \Cref{result:system_of_equations_from_weak_solution} cannot be applied because 
\begin{equation*}
\int_{0}^{T}\Vert \mathcal{J}_{\vartheta}(t)B \Vert^{2}_{\mathrm{HS}(L^{2}(\Lambda), L^{2}(\Lambda) \times H^{-1}_0(\Lambda))} \mathrm{d}t= \infty,
\end{equation*}
violates condition (5.3) of \citet*[Theorem 5.2]{dapratoStochasticEquationsInfinite2014}, required for an informative version of Itô's isometry. In this case, the stochastic integral \eqref{eq:mild_solution_stochastic_wave_equation} is only well-defined as a distribution, and there does not exist any standard function-valued solution.\par 
This issue is also prevalent in the case of the stochastic heat equation and was resolved in \citet[Proposition 2.1]{altmeyerNonparametricEstimationLinear2021} by passing to a Gaussian process that preserves the covariance structure induced by the associated strongly continuous semigroup. The following result extends this argument to the setting of the stochastic wave equation.
\begin{proposition}[Properties of the Gaussian process solution]
\label{result:Gaussian_process_solution}
There is a centred Gaussian process $(\mathcal{V}(t, U), t \in [0,T], U \in L^{2}(\Lambda)\times H_0^{-1}(\Lambda))$ given by \eqref{eq:gaussian_process_joint_solution} with the covariance function
\begin{equation}
\label{eq:covariance_gaussian_process_solution}
\mathrm{Cov}(\mathcal{V}(t, U), \mathcal{V}(t', U')) = \int_{0}^{t \land t'} \langle B^{*}\mathcal{J}^{*}_{\vartheta}(t-s)U,B^{*}\mathcal{J}^{*}_{\vartheta}(t'-s)U' \rangle_{L^{2}(\Lambda)} \mathrm{d}s,
\end{equation}
for $U, \tilde{U} \in L^{2}(\Lambda)\times H_0^{-1}(\Lambda)$ and $t, t' \in [0,T]$. 
The process $(\mathcal{V}(t, U), U \in L^{2}(\Lambda)\times H_0^{-1}(\Lambda))$ is given by the sum of two Gaussian processes, $(u_{\mathcal{V}}(t, z), t \in [0,T], z \in L^{2}(\Lambda))$ and $(v_{\mathcal{V}}(t, z), t \in [0,T], z \in L^{2}(\Lambda))$, satisfying the dynamic
\begin{align}
\label{eq:system_of_equations_gaussian_process_solution_eq_1}
u_{\mathcal{V}}(t, z) &= \int_{0}^{t}v_{\mathcal{V}}(s, z) \mathrm{d}s\\
\label{eq:system_of_equations_gaussian_process_solution_eq_2}
v_{\mathcal{V}}(t, z) &= \int_{0}^{t}u_{\mathcal{V}}(s, A_{\vartheta} z) \mathrm{d}s + \langle W(t), z\rangle_{L^{2}(\Lambda)}, \quad z \in H_0^{1}(\Lambda) \cap H^{2}(\Lambda).
\end{align}
\end{proposition}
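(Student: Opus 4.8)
The plan is to realise $\mathcal{V}(t,U)$ not as the distribution-valued stochastic convolution \eqref{eq:mild_solution_stochastic_wave_equation}, but as a genuine scalar Wiener integral against the same cylindrical Wiener process $W$. Concretely, for fixed $t \in [0,T]$ and $U \in L^{2}(\Lambda)\times H_0^{-1}(\Lambda)$ I would set $\Phi_U(t,s) \coloneqq B^{*}\mathcal{J}^{*}_{\vartheta}(t-s)U \in L^{2}(\Lambda)$ for $s \in [0,t]$ and define
\[
\mathcal{V}(t,U) \coloneqq \int_{0}^{t} \langle \Phi_U(t,s), \mathrm{d}W(s)\rangle_{L^{2}(\Lambda)}.
\]
Once this integral is shown to be well defined, that is, once $\Phi_U(t,\cdot) \in L^{2}([0,t];L^{2}(\Lambda))$, the random variable $\mathcal{V}(t,U)$ is automatically centred and Gaussian, its realisation on the probability space carrying $W$ renders the whole family jointly Gaussian, and the It\^o isometry for Wiener integrals produces exactly the covariance \eqref{eq:covariance_gaussian_process_solution}. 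I prefer this route to an abstract Kolmogorov extension precisely because it couples $\mathcal{V}$ to $W$, which is what is needed to obtain the driving term $\langle W(t), z\rangle_{L^{2}(\Lambda)}$ in \eqref{eq:system_of_equations_gaussian_process_solution_eq_2}.

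Writing $U = (U_1,U_2)$ and using $Bz=(0,z)$ together with \eqref{eq:operator_semigroup_wave_equation}, the identity $\mathcal{J}_{\vartheta}(r)Bz = (S_{\vartheta}(r)z, C_{\vartheta}(r)z)$ gives
\[
\langle \Phi_U(t,s), z\rangle_{L^{2}(\Lambda)} = \langle U_1, S_{\vartheta}(t-s)z\rangle_{L^{2}(\Lambda)} + \langle U_2, C_{\vartheta}(t-s)z\rangle_{H_0^{-1}(\Lambda)}, \quad z \in L^{2}(\Lambda),
\]
where I have used \eqref{eq:dual_inner} for the second component. Taking $U=(z,0)$ and $U=(0,A_{\vartheta}z)$ with $z \in H_0^{1}(\Lambda)\cap H^{2}(\Lambda)$ respectively, and invoking self-adjointness of $S_{\vartheta}$ and $C_{\vartheta}$ as functions of the self-adjoint $A_{\vartheta}$, this identifies the two building blocks
\[
u_{\mathcal{V}}(t,z) \coloneqq \int_{0}^{t} \langle S_{\vartheta}(t-s)z, \mathrm{d}W(s)\rangle_{L^{2}(\Lambda)}, \qquad v_{\mathcal{V}}(t,z) \coloneqq \int_{0}^{t} \langle C_{\vartheta}(t-s)z, \mathrm{d}W(s)\rangle_{L^{2}(\Lambda)},
\]
and exhibits $\mathcal{V}(t,U)$ as their sum over the two components of $U$. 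The finiteness $\Phi_U(t,\cdot)\in L^{2}([0,t];L^{2}(\Lambda))$ is exactly where the dimension restriction of \Cref{theorem:existence_stochastic_wave_equation} is circumvented: the obstruction there is the divergence of $\int_{0}^{T}\Vert \mathcal{J}_{\vartheta}(t)B\Vert^{2}_{\mathrm{HS}}\,\mathrm{d}t$, equivalently of $\sum_{k}\lambda_k^{-1}$ by Weyl's law, which is a statement about summing over an orthonormal basis; testing against one fixed $U$ instead only requires $r\mapsto S_{\vartheta}(r)U_1$ and $r\mapsto C_{\vartheta}(r)$ applied to the $L^{2}$-representative of $U_2$ to be bounded on the compact interval $[0,t]$, which follows from strong continuity and the uniform boundedness of the cosine and sine families there.

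Finally I would verify the dynamic equations \eqref{eq:system_of_equations_gaussian_process_solution_eq_1}--\eqref{eq:system_of_equations_gaussian_process_solution_eq_2}, mirroring \Cref{result:system_of_equations_from_weak_solution}, by the stochastic Fubini theorem. For \eqref{eq:system_of_equations_gaussian_process_solution_eq_1} one interchanges the time integral and the Wiener integral in $\int_{0}^{t}v_{\mathcal{V}}(s,z)\,\mathrm{d}s$ and uses $S_{\vartheta}(\tau)=\int_{0}^{\tau}C_{\vartheta}(u)\,\mathrm{d}u$ to recover $u_{\mathcal{V}}(t,z)$. For \eqref{eq:system_of_equations_gaussian_process_solution_eq_2} the same interchange applied to $\int_{0}^{t}u_{\mathcal{V}}(s,A_{\vartheta}z)\,\mathrm{d}s$, combined with the identity $\int_{0}^{\tau}A_{\vartheta}S_{\vartheta}(u)\,\mathrm{d}u = C_{\vartheta}(\tau)-I$ read off from the bottom-left entry $C'_{\vartheta}=A_{\vartheta}S_{\vartheta}$ of \eqref{eq:operator_semigroup_wave_equation}, produces $v_{\mathcal{V}}(t,z)$ minus the constant-integrand Wiener integral $\int_{0}^{t}\langle z,\mathrm{d}W(s)\rangle_{L^{2}(\Lambda)}=\langle W(t),z\rangle_{L^{2}(\Lambda)}$, which rearranges to \eqref{eq:system_of_equations_gaussian_process_solution_eq_2}. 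I expect the main obstacle to be the rigorous handling of the $H_0^{-1}$-valued second component: computing $B^{*}\mathcal{J}^{*}_{\vartheta}(r)U$ explicitly forces one to disentangle the $H_0^{-1}$-inner product \eqref{eq:dual_inner} from the $H_0^{-1}$--$H_0^{1}$ dual pairing, to confirm that the result is a bona fide element of $L^{2}(\Lambda)$ with norm uniformly bounded on $[0,t]$ rather than a mere distribution, and to check the joint measurability and integrability hypotheses underlying the stochastic Fubini step. These are precisely the points at which the construction genuinely extends \citet[Proposition 2.1]{altmeyerNonparametricEstimationLinear2021} from the parabolic semigroup to the hyperbolic cosine and sine setting.
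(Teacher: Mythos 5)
Your construction coincides with the paper's in its core: the paper likewise defines $\mathcal{V}(t,U)$ as the scalar Wiener integral $\int_{0}^{t}\langle B^{*}\mathcal{J}^{*}_{\vartheta}(t-s)U,\mathrm{d}W(s)\rangle_{L^{2}(\Lambda)}$ in \eqref{eq:gaussian_process_joint_solution}, obtains \eqref{eq:covariance_gaussian_process_solution} from It\^o's isometry, and identifies $u_{\mathcal{V}}$ and $v_{\mathcal{V}}$ by the same adjoint computation, using $B^{*}(z,l)^{\top}=-A_{\vartheta}^{-1}l$ (\Cref{result:adjoint_component_inclusion}) and the unitarity of $\mathcal{J}_{\vartheta}$ (\Cref{result:unitary_group}) to arrive at $\mathcal{V}(t,U)=u_{\mathcal{V}}(t,z)+v_{\mathcal{V}}(t,-A_{\vartheta}^{-1}l)$. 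Where you genuinely diverge is the verification of the dynamics \eqref{eq:system_of_equations_gaussian_process_solution_eq_1}--\eqref{eq:system_of_equations_gaussian_process_solution_eq_2}: the paper does not compute these directly but invokes the proof of \citet[Proposition 6.7]{hairerIntroductionStochasticPDEs2023} to assert the abstract weak dynamic $\mathcal{V}(t,U)=\int_{0}^{t}\mathcal{V}(s,\mathcal{A}^{*}_{\vartheta}U)\,\mathrm{d}s+\langle BW(t),U\rangle$ for $U\in D(\mathcal{A}^{*}_{\vartheta})$, combines it with the skew-adjointness $\mathcal{A}^{*}_{\vartheta}=-\mathcal{A}_{\vartheta}$ (\Cref{lemma:adjoint_generator_wave_equation}), and then specialises to $U=(z,0)^{\top}$ and $U=(0,-A_{\vartheta}z)^{\top}$. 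Your stochastic-Fubini derivation, interchanging the time and Wiener integrals and using $S_{\vartheta}(\tau)=\int_{0}^{\tau}C_{\vartheta}(u)\,\mathrm{d}u$ and $C_{\vartheta}(\tau)-I=\int_{0}^{\tau}A_{\vartheta}S_{\vartheta}(u)\,\mathrm{d}u$, is more elementary and self-contained; its hypotheses are easily met since the integrands are deterministic and, by the functional calculus, uniformly bounded in $L^{2}(\Lambda)$ on compact time intervals, which is also exactly why your square-integrability check for the Wiener integral works and why the Hilbert--Schmidt obstruction is irrelevant for a single fixed $U$. What the paper's route buys instead is the weak-solution identity for every $U$ in the domain of the generator, with the Fubini-type interchanges imported wholesale from the cited mild-to-weak machinery; what yours buys is a direct, checkable proof of precisely the two component equations the proposition asserts. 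One small sign slip to fix: by \Cref{result:gelfand_properties}(ii) one has $\langle A_{\vartheta}z, C_{\vartheta}(r)w\rangle_{H_0^{-1}(\Lambda)}=-\langle w, C_{\vartheta}(r)z\rangle_{L^{2}(\Lambda)}$, so your choice $U=(0,A_{\vartheta}z)$ produces $-v_{\mathcal{V}}(t,z)$; the correct test element is $U=(0,-A_{\vartheta}z)$, consistent with the formula for $B^{*}$. This is harmless for your argument, since you verify the dynamics directly on the Wiener-integral representations of $u_{\mathcal{V}}$ and $v_{\mathcal{V}}$ rather than through this specialisation.
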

\begin{proof}[Proof of \Cref{result:Gaussian_process_solution}]
The result is proved on page \pageref{proof:Gaussian_process_solution}.
\end{proof}
Justified by this result, we will write $\langle u(t), z\rangle_{L^{2}(\Lambda)}$ and $\langle v(t), z\rangle$ throughout instead of $(u_{\mathcal{V}}(t, z))$ and $(v_{\mathcal{V}}(t, z))$ for $z \in L^{2}(\Lambda)$ and any $t \in [0,T]$ and consider the Gaussian process from \eqref{result:Gaussian_process_solution} as the solution to the stochastic wave equation \eqref{eq:stochastic_wave_equation}.
\begin{remark}[Solution concepts]
\begin{enumerate}
\item[]
\item Notice that the inclusion mappings $\iota_{r,s}:H^{r}_0(\Lambda) \rightarrow H_{0}^{s}(\Lambda)$ are Hilbert-Schmidt provided that $r-s > d/2$ for any $r>s$ and $r,s \in \mathbb{R}$, where $H^{-s}_0(\Lambda)$ are fractional Sobolev spaces of the negative order. In particular, there is a Hilbert-Schmidt embedding $\tilde{\iota}:L^{2}(\Lambda)\times H_0^{-1}(\Lambda) \rightarrow H_0^{-s}(\Lambda)\times H_0^{-(s+1)}(\Lambda)$, and we have 
    \begin{equation*}
    \int_{0}^{T}\Vert \tilde{\iota} \mathcal{J}(t)B \Vert^{2}_{\mathrm{HS}(L^{2}(\Lambda), H_0^{-s}(\Lambda)\times H_0^{-(s+1)}(\Lambda))} \mathrm{d}t< \infty.
    \end{equation*}
    Consequently, (c.f. Remark 5.7 in \citet*{hairerIntroductionStochasticPDEs2023}) the process $(X(t), t \in [0,T])$ defined by \eqref{eq:mild_solution_stochastic_wave_equation} takes values in $H_0^{-s}(\Lambda)\times H_0^{-(s+1)}(\Lambda)$. Note that the process $\mathcal{V}$ from \Cref{result:Gaussian_process_solution} is well-defined independently of the embedding space for $(X(t),t \in [0,T])$ and extends the linear form $U \mapsto \langle X(t), U\rangle$ from $C_c^{\infty}(\overline{\Lambda}) \times C_c^{\infty}(\overline{\Lambda})$ to $L^{2}(\Lambda) \times H_0^{-1}(\Lambda)$.
\item For the unbounded spatial domain $\Lambda=\mathbb{R}$, function-valued solutions exist in a weighted $L^{2}_{\rho}$-space where $\rho$ is an integrable weight function, see \citet*{karczewskaStochasticPDEsFunctionvalued1999}. In this case, \Cref{result:system_of_equations_from_weak_solution} is not immediate as the change in the underlying norm also changes partial integration properties crucial to the behaviour of the Laplace operator. Instead, a random-field approach for the stochastic wave equation similar to the approach described in \citet*{walshIntroductionStochasticPartial1986} can also lead to dynamic representations analogous to \eqref{eq:dynamic_representation_d_1}, see \citet*{delerueNormalApproximationSolution2021}.
\item For the case of space-time white noise, there is neither a function-valued solution \eqref{eq:stochastic_wave_equation} nor a random field solution to the stochastic wave equation for $d>1$, see for instance \citet*{foondunLocaltimeCorrespondenceStochastic2010}. As we are only interested in the covariance structure of the process and a representation of the form \eqref{eq:dynamic_representation_d_1}, it is sufficient for our purpose to understand the behaviour of the distribution valued solution evaluated through more regular test functions.
\end{enumerate}
\end{remark}
\begin{corollary}[Covariance structure of the contributing processes]
\label{result:covariance_structure}
For $z, z' \in L^{2}(\Lambda)$ and $t,s \in [0,T]$, we have
\begin{align*}
\mathrm{Cov}(\langle u(t), z\rangle_{L^{2}(\Lambda)}, \langle u(s), z'\rangle_{L^{2}(\Lambda)}) &= \int_{0}^{t \land s} \langle S_{\vartheta}(t-r) z, S_{\vartheta}(s-r)z'\rangle_{L^{2}(\Lambda)} \mathrm{d}r,\\
\mathrm{Cov}(\langle v(t), z\rangle, \langle v(s), z'\rangle)  &= \int_{0}^{t \land s}  \langle C_{\vartheta}(t-r) z, C_{\vartheta}(s-r)z'\rangle_{L^{2}(\Lambda)} \mathrm{d}r.
\end{align*}
\end{corollary}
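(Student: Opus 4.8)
The plan is to derive both identities directly from the general covariance formula \eqref{eq:covariance_gaussian_process_solution} of \Cref{result:Gaussian_process_solution} by choosing test elements $U \in L^{2}(\Lambda)\times H_0^{-1}(\Lambda)$ that isolate the amplitude and velocity marginals, and then evaluating the operator $B^{*}\mathcal{J}^{*}_{\vartheta}(\tau)$ explicitly through the block structure \eqref{eq:operator_semigroup_wave_equation}. The starting observation is that $Bz=(0,z)$, so applying the group gives
\begin{equation*}
\mathcal{J}_{\vartheta}(\tau) B z = \begin{pmatrix} C_{\vartheta}(\tau) & S_{\vartheta}(\tau) \\ A_{\vartheta} S_{\vartheta}(\tau) & C_{\vartheta}(\tau) \end{pmatrix} \begin{pmatrix} 0 \\ z \end{pmatrix} = \begin{pmatrix} S_{\vartheta}(\tau) z \\ C_{\vartheta}(\tau) z \end{pmatrix}, \quad z \in L^{2}(\Lambda).
\end{equation*}
This already signals that the sine family governs the amplitude and the cosine family the velocity.

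First I would treat the amplitude. Choosing $U=(z,0)$ and $U'=(z',0)$ reproduces $\mathcal{V}(t,U)=\langle u(t),z\rangle_{L^{2}(\Lambda)}$ by the decomposition in \Cref{result:Gaussian_process_solution}. To compute $B^{*}\mathcal{J}^{*}_{\vartheta}(\tau)(z,0)$ I would pass to the adjoint via the defining relation $\langle B^{*}\mathcal{J}^{*}_{\vartheta}(\tau)(z,0), w\rangle_{L^{2}(\Lambda)} = \langle (z,0), \mathcal{J}_{\vartheta}(\tau) B w\rangle_{L^{2}(\Lambda)\times H_0^{-1}(\Lambda)}$ and insert the display above; only the $L^{2}$-slot survives, leaving $\langle z, S_{\vartheta}(\tau) w\rangle_{L^{2}(\Lambda)}$. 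Since $A_{\vartheta}$ is self-adjoint, $S_{\vartheta}(\tau)=(-A_{\vartheta})^{-1/2}\sin(\tau(-A_{\vartheta})^{1/2})$ is a bounded self-adjoint operator on $L^{2}(\Lambda)$, whence $B^{*}\mathcal{J}^{*}_{\vartheta}(\tau)(z,0)=S_{\vartheta}(\tau) z$, and substituting into \eqref{eq:covariance_gaussian_process_solution} yields the first identity.

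Then the velocity. The analogous computation for a test element in the second slot produces the factor $C_{\vartheta}(\tau) z$, using the self-adjointness of $C_{\vartheta}(\tau)=\cos(\tau(-A_{\vartheta})^{1/2})$, and substituting into \eqref{eq:covariance_gaussian_process_solution} gives the second identity. I expect the main obstacle to be precisely this velocity step: the velocity marginal lives in the dual space $H_0^{-1}(\Lambda)$ equipped with the inner product \eqref{eq:dual_inner}, so one must carefully distinguish the $H_0^{-1}$-inner product from the $L^{2}$-dual pairing in order that the clean factor $C_{\vartheta}(\tau) z$ emerges rather than a spurious $A_{\vartheta}^{-1}$-twisted variant. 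The safest route is to use the stochastic-integral representation $v(t)=\int_{0}^{t} C_{\vartheta}(t-r)\,\mathrm{d}W(r)$ underlying \Cref{result:Gaussian_process_solution}, to interpret $\langle v(t),z\rangle$ as the continuous extension to $z \in L^{2}(\Lambda)$ of the functional $v_{\mathcal{V}}(t,z)$ of \eqref{eq:system_of_equations_gaussian_process_solution_eq_2}, and then to read off the covariance from the It\^o isometry, the self-adjointness of $C_{\vartheta}(\tau)$ again letting the operator act on the test function. All remaining points—measurability, integrability over $[0,t\wedge s]$, and the interchange of integration with the inner product—are routine given the regularity of the cosine and sine families recorded in \Cref{section:operator_cosine_and_sine}.
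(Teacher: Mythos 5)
Your proposal is correct and takes essentially the same route as the paper: the paper's one-line proof likewise combines It\^{o}'s isometry (\citet[Proposition 4.28]{dapratoStochasticEquationsInfinite2014}) with \Cref{result:unitary_group} and the explicit representations $u_{\mathcal{V}}(t,z)=\int_{0}^{t}\langle S_{\vartheta}(t-s)z,\mathrm{d}W(s)\rangle_{L^{2}(\Lambda)}$ and $v_{\mathcal{V}}(t,z)=\int_{0}^{t}\langle C_{\vartheta}(t-s)z,\mathrm{d}W(s)\rangle_{L^{2}(\Lambda)}$ from the proof of \Cref{result:Gaussian_process_solution}, which is precisely what your adjoint computation $B^{*}\mathcal{J}^{*}_{\vartheta}(\tau)(z,0)=S_{\vartheta}(\tau)z$ and your velocity step reconstruct. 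Your only cosmetic inaccuracy is that no continuous extension is needed on the velocity side, since $v_{\mathcal{V}}(t,z)$ is defined directly for all $z \in L^{2}(\Lambda)$ in Step 2 of that proof (only the dynamic \eqref{eq:system_of_equations_gaussian_process_solution_eq_2} requires the more regular $z \in H_0^{1}(\Lambda)\cap H^{2}(\Lambda)$).
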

\begin{proof}[Proof of \Cref{result:covariance_structure}] The result follows from immediately from \citet[Proposition 4.28]{dapratoStochasticEquationsInfinite2014}, \Cref{result:unitary_group} and the definition of the processes $u_{\mathcal{V}}(t, z)$ and $v_{\mathcal{V}}(t, z)$.
\end{proof}
\section{Scaling limits}
\label{section:scaling_limit}
This section is devoted to deriving a scaling limit for the stochastic wave equation. As introduced in \citet*{altmeyerNonparametricEstimationLinear2021}, we fix $\delta>0$ and define for any $z \in L^{2}(\mathbb{R}^{d})$ the rescaling
\begin{equation}
\label{eq:rescaling}
\begin{aligned}
\Lambda_{\delta} &\coloneqq \delta^{-1}\Lambda = \{\delta^{-1}x \colon x \in \Lambda\}, \quad \Lambda_{0}\coloneqq \mathbb{R}^{d},\\
\quad z_{\delta}(x)&\coloneqq \delta^{-d/2}z(\delta^{-1}x), \quad x \in \mathbb{R}^{d}. 
\end{aligned}
\end{equation}
For convenience only, we consider a localisation around zero and estimate the unknown wave speed $\vartheta$ at $\vartheta(0)$. If we wish to estimate $\vartheta$ at some different $x_0 \in \Lambda$ the rescaling \eqref{eq:rescaling} has to be shifted by $x_0$ as introduced in
\citet{altmeyerNonparametricEstimationLinear2021}. The normalisation of the rescaling is arbitrary and satisfies $\Vert z_{\delta} \Vert_{L^{2}(\Lambda)}=\Vert z \Vert_{L^{2}(\mathbb{R}^{d})}$ for convenience.\par
The rescaled generator $A_{\vartheta, \delta} \coloneqq A_{\vartheta(\delta \cdot)}$ with $D(A_{\vartheta,\delta})=H_0^{1}(\Lambda_{\delta})\cap H^{2}(\Lambda_{\delta})$ induces operator sine and cosine functions $(C_{\vartheta, \delta}(t), t \in [0,T])$ and $(S_{\vartheta,\delta}(t), t \in [0,T])$. Analogous to \citet[Lemma 3.1]{altmeyerNonparametricEstimationLinear2021}, the following result characterises the rescaling behaviour of operator cosine and sine functions acting on localised functions.
\begin{lemma}[Rescaling of operator cosine and sine functions]
\label{result:rescaling_trigonometric_operator_families}
 For $\delta>0$:
\begin{enumerate}
\item If $z \in H_0^{1}(\Lambda_{\delta})\cap H^{2}(\Lambda_\delta)$, then $A_{\vartheta}z_{\delta}=\delta^{-2}(A_{\vartheta,\delta}z)_{\delta}$.
\item \label{enum:rescaling} If $z \in L^{2}(\Lambda_{\delta})$, then $S_{\vartheta}(t)z_{\delta}= \delta(S_{\vartheta,\delta}(\delta^{-1}t)z)_{\delta}$ and $C_{\vartheta}(t)z_{\delta}=(C_{\vartheta,\delta}(\delta^{-1}t)z)_{\delta}$.
\end{enumerate}
\end{lemma}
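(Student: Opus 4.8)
The plan is to establish the two claims in sequence, using the first as the algebraic engine for the second. For the first claim I would simply unwind the divergence-form operator under the rescaling $z_\delta(x) = \delta^{-d/2}z(\delta^{-1}x)$. Writing $y = \delta^{-1}x$, the chain rule gives $\nabla z_\delta(x) = \delta^{-d/2-1}(\nabla z)(y)$, so that $\vartheta(x)\nabla z_\delta(x) = \delta^{-d/2-1}\vartheta(\delta y)(\nabla z)(y)$; taking a second divergence produces another factor $\delta^{-1}$ together with the coefficient $\vartheta$ evaluated at $x = \delta y$, which is exactly the coefficient $\vartheta(\delta\,\cdot)$ defining $A_{\vartheta,\delta}$. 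Collecting the two derivative factors yields $A_\vartheta z_\delta = \delta^{-2}(A_{\vartheta,\delta}z)_\delta$. The only non-computational point is to check that the rescaling maps $H_0^{1}(\Lambda_\delta)\cap H^{2}(\Lambda_\delta)$ onto $H_0^{1}(\Lambda)\cap H^{2}(\Lambda)$, in particular that the Dirichlet condition is preserved because $x \mapsto \delta^{-1}x$ sends $\partial\Lambda$ to $\partial\Lambda_\delta$.

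For the cosine identity in the second claim I would argue by uniqueness of the associated second-order Cauchy problem rather than by manipulating series. Recall that for $z$ in the domain, $t \mapsto C_{\vartheta,\delta}(t)z$ is the unique solution of $w''(t) = A_{\vartheta,\delta}w(t)$ with $w(0)=z$, $w'(0)=0$. Setting $F(t) \coloneqq (C_{\vartheta,\delta}(\delta^{-1}t)z)_\delta$ and differentiating twice in $t$ (each derivative contributing a factor $\delta^{-1}$) gives $F''(t) = \delta^{-2}(A_{\vartheta,\delta}C_{\vartheta,\delta}(\delta^{-1}t)z)_\delta$, and applying the first claim to the inner function turns this into $F''(t) = A_\vartheta F(t)$. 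Since $F(0) = z_\delta$ and $F'(0) = \delta^{-1}(C'_{\vartheta,\delta}(0)z)_\delta = 0$, the function $F$ solves the same Cauchy problem as $t\mapsto C_\vartheta(t)z_\delta$; uniqueness for the cosine family generated by $A_\vartheta$, cf. \citet{arendtVectorvaluedLaplaceTransforms2001}, then forces $C_\vartheta(t)z_\delta = F(t)$, which is the claimed identity. I would first establish this for $z \in D(A_{\vartheta,\delta}) = H_0^{1}(\Lambda_\delta)\cap H^{2}(\Lambda_\delta)$, where all derivatives are classical, and then extend to arbitrary $z \in L^{2}(\Lambda_\delta)$ by density, using that $C_\vartheta(t)$, $C_{\vartheta,\delta}(\delta^{-1}t)$ and the rescaling are all bounded.

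The sine identity follows by integrating the cosine identity. Using $S_\vartheta(t) = \int_0^t C_\vartheta(s)\,\mathrm{d}s$ and the relation just proved, the substitution $r = \delta^{-1}s$ gives
\[
S_\vartheta(t)z_\delta = \int_0^t \bigl(C_{\vartheta,\delta}(\delta^{-1}s)z\bigr)_\delta\,\mathrm{d}s = \delta\int_0^{\delta^{-1}t}\bigl(C_{\vartheta,\delta}(r)z\bigr)_\delta\,\mathrm{d}r.
\]
Because the rescaling $(\cdot)_\delta$ is a bounded linear map it commutes with the (Bochner) integral, so the right-hand side equals $\delta\bigl(\int_0^{\delta^{-1}t}C_{\vartheta,\delta}(r)z\,\mathrm{d}r\bigr)_\delta = \delta(S_{\vartheta,\delta}(\delta^{-1}t)z)_\delta$, as asserted; the extra factor $\delta$ is precisely the Jacobian of the time substitution, which explains the asymmetry between the cosine and sine scalings.

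I expect the main obstacle to be the careful bookkeeping of domains and function spaces in the second step, rather than any single hard estimate: one must ensure that $C_{\vartheta,\delta}(\delta^{-1}t)z$ stays in $D(A_{\vartheta,\delta})$ so that the first claim applies pointwise in $t$, and that uniqueness is invoked in the correct phase space $L^{2}(\Lambda)\times H_0^{-1}(\Lambda)$. An equivalent and perhaps cleaner route is to observe that the first claim says the rescaling $R_\delta : z \mapsto z_\delta$, a unitary map $L^{2}(\Lambda_\delta)\to L^{2}(\Lambda)$, conjugates $A_\vartheta$ to $\delta^{-2}A_{\vartheta,\delta}$, i.e. $R_\delta^{-1}A_\vartheta R_\delta = \delta^{-2}A_{\vartheta,\delta}$, so that the functional calculus immediately yields $R_\delta^{-1}\cos(t\sqrt{-A_\vartheta})R_\delta = \cos(\delta^{-1}t\sqrt{-A_{\vartheta,\delta}})$; identifying $\cos(t\sqrt{-A_\vartheta})$ with the operator cosine $C_\vartheta(t)$ then reproduces the identity without tracking time-derivatives, at the cost of verifying that $R_\delta$ is a unitary equivalence of the relevant self-adjoint realisations.
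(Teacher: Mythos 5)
Your proposal is correct and follows essentially the same route as the paper: the paper likewise verifies that the rescaled family solves the second-order abstract Cauchy problem $\ddot{w}(t)=A_{\vartheta}w(t)$ (using part (i) to convert the $\delta^{-2}(A_{\vartheta,\delta}\,\cdot)_{\delta}$ terms), checks the initial conditions, and concludes by the uniqueness result of Arendt et al.\ (Corollary 3.14.8), working with $z \in C_c^{\infty}(\overline{\Lambda}_{\delta})$ and extending by density just as you do. The only cosmetic difference is that the paper treats cosine and sine simultaneously via the combined function $w(t)=\delta(S_{\vartheta,\delta}(\delta^{-1}t)z_1)_{\delta} + (C_{\vartheta,\delta}(\delta^{-1}t)z_2)_{\delta}$ and separates the two identities at the end by setting $z_1$ or $z_2$ to zero, whereas you handle the cosine alone and then obtain the sine identity by integrating it with the substitution $r=\delta^{-1}s$ — an equivalent and equally valid piece of bookkeeping (your unitary-conjugation aside via $R_{\delta}^{-1}A_{\vartheta}R_{\delta}=\delta^{-2}A_{\vartheta,\delta}$ and the functional calculus is also sound, and is consistent with the paper's remark that $C_{\vartheta}(t)=\cos(t(-A_{\vartheta})^{1/2})$ while $S_{\vartheta}(t)=\int_0^t C_{\vartheta}(s)\,\mathrm{d}s$).
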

\begin{proof}[Proof of \Cref{result:rescaling_trigonometric_operator_families}]
It suffices to prove the result for any $z_1, z_2 \in C_c^{\infty}(\overline{\Lambda}_{\delta})$. In particular, (i) is a special case of (i) in \citet[Lemma 3.1]{altmeyerNonparametricEstimationLinear2021}. For (ii), we define
\begin{equation*}
w(t)\coloneqq \delta(S_{\vartheta,\delta}(\delta^{-1}t)z_1)_{\delta} + (C_{\vartheta,\delta}(\delta^{-1}t)z_2)_{\delta}, \quad t \in [0,T].
\end{equation*}
The first and second derivatives of $w$ are given by
\begin{align*}
\dot{w}(t) &= (C_{\vartheta,\delta}(\delta^{-1}t)z_1)_{\delta} + \delta^{-1}(A_{\vartheta,\delta}S_{\vartheta,\delta}(\delta^{-1}t)z_2)_{\delta}\\
\ddot{w}(t)&= \delta^{-1}(A_{\vartheta,\delta}S_{\vartheta,\delta}(\delta^{-1}t)z_1)_{\delta} + \delta^{-2}(A_{\vartheta, \delta} C_{\vartheta,\delta}(\delta^{-1}t)z_2)_{\delta}\\
&= A_{\vartheta} \delta (S_{\vartheta,\delta}(\delta^{-1}t)z_1)_{\delta} + A_{\vartheta}(C_{\vartheta,\delta}(\delta^{-1}t)z_2)_{\delta} = A_{\vartheta}w(t), \quad t \in [0,T].
\end{align*}
Given \citet[Corollary 3.14.8]{arendtVectorvaluedLaplaceTransforms2001}, we conclude from $w(0)=(z_1)_{\delta}$, $\dot{w}(0)=(z_2)_{\delta}$ and $w''(t)=A_{\vartheta}w(t)$ the identity $w(t)=S_{\vartheta}(t)(z_1)_\delta + C_{\vartheta}(t)(z_2)_{\delta}$ as the function $w$ is the unique solution to the second-order abstract Cauchy problem
\begin{equation*}
\partial_{tt}^{2}g(t)=A_{\vartheta}g(t), \quad g(0)=(z_1)_{\delta},\quad \dot{g}(0)= (z_2)_{\delta}, \quad t \in [0,T]. 
\end{equation*}
The result follows by setting $z_1$ or $z_2$ to zero, respectively. 
\end{proof}
The following example illustrates the extension of the rescaling \Cref{result:rescaling_trigonometric_operator_families} to an unbounded spatial domain in the one-dimensional case. 
\begin{example}[Parametric rescaling on the unbounded domain]
\label{example:parametric_rescaling}
\Cref{result:rescaling_trigonometric_operator_families} extends naturally to the setting of an unbounded domain. In that case, we extend our notation for $\delta=0$ and suppose that $(C_{\vartheta,0}(t), t \in [0,T])$ and $(S_{\vartheta,0}(t), t \in [0,T])$ are the operator cosine and sine functions associated with the operator $\vartheta(0)\Delta$ defined on $H^{2}(\mathbb{R}^{d})$. In order to simplify our notation, we will write $(S_{\vartheta(0)}(t), t \in [0,T])$ and $(C_{\vartheta (0)}(t), t \in [0,T])$ instead of $(C_{\vartheta,0}(t), t \in [0,T])$ and $(S_{\vartheta,0}(t), t \in [0,T])$, respectively.\par
Assume for simplicity that $d=1$ and consider the left translation group $(T_{\vartheta(0)}(t)z)(x)=z(x+ \vartheta(0)t)$ for $t \in [0,T]$, $z \in L^{2}(\mathbb{R})$ and $x \in \mathbb{R}$. Using \citet[Example 3.14.15]{arendtVectorvaluedLaplaceTransforms2001}, we can represent the operator cosine function through $C_{\vartheta(0)}(t)z \coloneqq (T_{\vartheta(0)}(t)z + T_{\vartheta(0)}(-t)z)/2$ for $z \in L^{2}(\mathbb{R})$ and $t \in [0,T]$. The operator sine function associated with $(C_{\vartheta(0)}(t), t \in [0,T])$ is then given by
\begin{equation}
\label{eq:representation_operator_sine_unbounded}
\begin{aligned}
[S_{\vartheta(0)}(t)z](x) &\coloneqq \int_{0}^{t}[C_{\vartheta(0)}(s)z](x) \mathrm{d}s \\&= \frac{1}{2}\int_{0}^{t}z(x-\vartheta(0)s)+z(x+\vartheta(0)s)\mathrm{d}s, \quad t \in [0,T], \quad x \in \mathbb{R}.
\end{aligned}
\end{equation}
We recover the rescaling \Cref{result:rescaling_trigonometric_operator_families} (ii) in the case of the unbounded one-dimensional spatial domain by applying the operator cosine to a localised function
\begin{align*}
[C_{\vartheta(0)}(t)z_{\delta}](x) &= \frac{1}{2}(z_{\delta}(x+\vartheta(0)t) + z_{\delta}(x-\vartheta(0)t)) \\
&= \frac{\delta^{-1/2}}{2}(z(\delta^{-1}x+\delta^{-1}\vartheta(0)t ) + z(\delta^{-1}x - \delta^{-1}\vartheta(0)t)) \\
&= [C_{\vartheta(0)}(\delta^{-1}t)z]_{\delta}(x), \quad t \in [0,T],\quad  x \in \mathbb{R}.
\end{align*}
Similarly, using \eqref{eq:representation_operator_sine_unbounded}, the rescaling of the operator sine function is obtained using the transformation theorem:
\begin{align*}
[S_{\vartheta(0)}(t)z_{\delta}](x)&=\delta^{-1/2}\int_{0}^{t} z(\delta^{-1}x+\delta^{-1}\vartheta(0)s)+ z(\delta^{-1}x - \delta^{-1}\vartheta(0)s)\mathrm{d}s
\\&= \delta^{1/2} \int_{0}^{\delta^{-1}t}z(\delta^{-1}x+\vartheta(0)r)+ z (\delta^{-1}x-\vartheta(0)r)\mathrm{d}r\\
&= \delta \delta^{-1/2} [S_{\vartheta(0)}(\delta^{-1}t)z](\delta^{-1}x)= \delta [S_{\vartheta(0)}(\delta^{-1}t)z]_{\delta}(x), 
\end{align*}
for $t \in [0,T]$ and $x \in \mathbb{R}$.
As expected, \Cref{result:rescaling_trigonometric_operator_families} still holds in the parametric case on the unbounded spatial domain $\mathbb{R}$, only that the rescaled families of operators do not depend themselves on the scaling parameter $\delta>0$ through the unknown function. 
\end{example}
\noindent 
Let us denote by 
\begin{equation}
\label{eq:orthogonal_projection}
P_{\delta}z = \mathbbm{1}_{\Lambda_{\delta}}z, \quad P_{\delta}:L^{2}(\mathbb{R}^{d}) \rightarrow L^{2}(\Lambda_{\delta}),
\end{equation}
the orthogonal projection from $L^{2}(\mathbb{R}^{d})$ onto $L^{2}(\Lambda_{\delta})$. For more details on the conventions associated with this projection, see also \Cref{remark:subspace_projections}.\par
Using the asymptotic behaviour of the partition of unity associated with $A_{\vartheta,\delta}$ analysed in \Cref{section:spectral_asymptotics}, we obtain the following scaling behaviour for the operator cosine and sine functions $(C_{\vartheta,\delta}(t),t  \in [0,T])$ and $(S_{\vartheta,\delta}(t),t  \in [0,T])$.
\begin{proposition}[Deterministic scaling limits for the operator sine and cosine]
\label{result:Limits_operator_cosine_sine}
Grant \assth{}. Then, for any $z \in L^{2}(\mathbb{R}^{d})$, we have 
\begin{equation}
\label{eq:operator_convergence_varying_domain}
\begin{aligned}
S_{\vartheta,\delta}(t)P_{\delta}z \rightarrow S_{\vartheta(0)}(t)z, \quad C_{\vartheta,\delta}(t)P_{\delta}z \rightarrow C_{\vartheta(0)}(t)z, \quad \delta \rightarrow 0, \quad t \in [0,T],
\end{aligned}
\end{equation}
where $(C_{\vartheta(0)}(t), t \in [0,T])$ and $(S_{\vartheta(0)}(t), t \in [0,T])$ are the operator cosine and sine functions associated with the operator $\vartheta(0)\Delta$, see also page \pageref{example:parametric_rescaling}.
\end{proposition}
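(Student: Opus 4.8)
The plan is to represent both families through the functional calculus of the negative, self-adjoint generators and then to transfer the spectral convergence established in \Cref{section:spectral_asymptotics} through bounded continuous functions. Since $-A_{\vartheta,\delta}\ge 0$ is self-adjoint on $L^{2}(\Lambda_{\delta})$ and $-\vartheta(0)\Delta\ge 0$ is self-adjoint on $L^{2}(\mathbb{R}^{d})$, the defining relations $C_{\vartheta,\delta}(t)=\cos\!\big(t(-A_{\vartheta,\delta})^{1/2}\big)$ and $S_{\vartheta,\delta}(t)=(-A_{\vartheta,\delta})^{-1/2}\sin\!\big(t(-A_{\vartheta,\delta})^{1/2}\big)$ exhibit both operators as $g(-A_{\vartheta,\delta})$ for one of the two bounded continuous functions $\psi_{t}(\lambda)=\cos(t\sqrt{\lambda})$ and $\phi_{t}(\lambda)=\sin(t\sqrt{\lambda})/\sqrt{\lambda}$ on $[0,\infty)$ (with $\phi_{t}(0):=t$), and likewise for the limiting operator $-\vartheta(0)\Delta$. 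In particular the uniform bounds $\Vert C_{\vartheta,\delta}(t)\Vert_{\mathcal{L}(L^{2}(\Lambda_{\delta}))}\le 1$ and $\Vert S_{\vartheta,\delta}(t)\Vert_{\mathcal{L}(L^{2}(\Lambda_{\delta}))}\le t\le T$ hold, with the same bounds for $C_{\vartheta(0)}(t)$ and $S_{\vartheta(0)}(t)$.

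First I would record that it suffices to prove both convergences for $z$ in the dense class $C_{c}^{\infty}(\mathbb{R}^{d})\subset L^{2}(\mathbb{R}^{d})$: since $\Vert P_{\delta}\Vert\le 1$ and the operator bounds above are uniform in $\delta$, a three-term ($\varepsilon/3$) estimate reduces the general case to a dense set. Here the target vectors are compared in $L^{2}(\mathbb{R}^{d})$ after identifying $L^{2}(\Lambda_{\delta})$ with its zero-extension into $L^{2}(\mathbb{R}^{d})$, as in \Cref{remark:subspace_projections}; note that $P_{\delta}z\to z$ in $L^{2}(\mathbb{R}^{d})$ because $\Lambda_{\delta}\uparrow\mathbb{R}^{d}$.

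The core step is to feed the spectral asymptotics of \Cref{section:spectral_asymptotics} into the functional calculus. Concretely, the convergence of the resolution of the identity (partition of unity) of $A_{\vartheta,\delta}$ toward that of $\vartheta(0)\Delta$ on the growing domains $\Lambda_{\delta}$ yields a generalized strong resolvent convergence: for every $\zeta>0$,
\[
(-A_{\vartheta,\delta}+\zeta)^{-1}P_{\delta}z\;\longrightarrow\;(-\vartheta(0)\Delta+\zeta)^{-1}z,\qquad \delta\to 0,\quad z\in L^{2}(\mathbb{R}^{d}),
\]
in $L^{2}(\mathbb{R}^{d})$. By the varying-space analogue of the classical equivalence for self-adjoint operators---strong resolvent convergence is equivalent to $g(-A_{\vartheta,\delta})P_{\delta}\to g(-\vartheta(0)\Delta)$ strongly for every bounded continuous $g$, where one additionally uses $P_{\delta}z\to z$ to handle the identification of the varying spaces---applying this to $g=\psi_{t}$ and $g=\phi_{t}$ gives exactly $C_{\vartheta,\delta}(t)P_{\delta}z\to C_{\vartheta(0)}(t)z$ and $S_{\vartheta,\delta}(t)P_{\delta}z\to S_{\vartheta(0)}(t)z$ for each fixed $t\in[0,T]$.

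The main obstacle lies in justifying the passage through bounded continuous, \emph{non-decaying} functions across the varying domains. The spectral type changes in the limit: $A_{\vartheta,\delta}$ has purely discrete spectrum on the bounded domain $\Lambda_{\delta}$, whereas $\vartheta(0)\Delta$ has purely continuous spectrum on $\mathbb{R}^{d}$, so the convergence of the resolutions of the identity must be read as weak convergence of the scalar spectral measures associated with the vectors $P_{\delta}z$ toward the limiting absolutely continuous spectral measure. Since $\psi_{t}$ and $\phi_{t}$ are bounded and continuous but oscillate at infinity rather than vanish, upgrading the $C_{0}$-functional-calculus convergence to these functions requires ruling out loss of mass at infinity; this tightness is supplied by the uniform operator bounds together with the strong convergence $P_{\delta}z\to z$, and it is precisely the point where the spectral estimates of \Cref{section:spectral_asymptotics} are essential. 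Once this is in place, the two limits in \eqref{eq:operator_convergence_varying_domain} follow simultaneously.
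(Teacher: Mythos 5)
Your proposal is correct and follows essentially the same route as the paper: the paper likewise writes $C_{\vartheta,\delta}(t)$ and $S_{\vartheta,\delta}(t)$ through the functional calculus with the bounded continuous spectral functions $\cos(t\sqrt{\lambda})$ and $\sin(t\sqrt{\lambda})/\sqrt{\lambda}$, and then feeds in the varying-domain spectral convergence of \Cref{section:spectral_asymptotics} (Weidmann's strong resolvent and resolution-of-identity theorems, upgraded in \Cref{result:weak_convergence_partitions_of_unity} to weak convergence of the scalar spectral measures, where the convergence $P_{\delta}z \to z$ supplies the total-mass control you call tightness). Your $\varepsilon/3$ reduction to $C_c^{\infty}(\mathbb{R}^{d})$ is harmless but unnecessary, since \Cref{result:weak_convergence_partitions_of_unity} already applies to every $z \in L^{2}(\mathbb{R}^{d})$.
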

\begin{proof}[Proof of \Cref{result:Limits_operator_cosine_sine}] The result is proved on page \pageref{proof:Limits_operator_cosine_sine}.
\end{proof}
Similar to the scaling limit for the stochastic heat equation, see \citet[Theorem 3.6]{altmeyerNonparametricEstimationLinear2021}, we also obtain a scaling limit for the stochastic wave equation \eqref{eq:stochastic_wave_equation}. Suppose we consider the scaled localised process up until a finite time horizon $T>0$. Then, in the parametric case, as the wave equation has a finite speed of propagation. There exists some $\delta_T>0$ such that for $\delta \in (0,\delta_T)$, the localised process associated with the bounded domain cannot be differentiated from the process associated with the unbounded spatial domain upon testing against a function compactly supported in $\Lambda$. These insights are summarised in the following result if we assume zero initial conditions. 
\begin{proposition}[Scaling limit for the stochastic wave equation]
\label{result:scaling_limit}
Assume that $u_0 = 0$ and $v_0 = 0$.  Consider the process $Z_{\delta}(t, z) \coloneqq \delta^{- 3/2}\langle u (\delta t), (P_{\delta}z)_{\delta} \rangle_{L^{2}(\Lambda)}$ for  $z \in L^{2}(\mathbb{R}^{d})$ and $t \geq 0$. For $t \in [0,T]$ and $z \in L^{2}(\mathbb{R}^{d})$, let $\underline{Z}(t,z)=\langle \underline{u}(t), z\rangle_{L^{2}(\mathbb{R}^{d})}$ be the scaled localisation of the limiting process $(\underline{u}(t), t \in [0,T])$ solving the stochastic wave equation 
\begin{equation*}
\label{eq:limiting_SPDE}
\partial_{tt}^{2}\underline{u}(t)=\vartheta(0)\Delta\underline{u}(t)+ \dot{\underline{W}}(t), \quad \underline{u}(0) = \dot{\underline{u}}(0) = 0, \quad t \geq 0,
\end{equation*}
as a Gaussian process in the sense of \Cref{result:Gaussian_process_solution} with the space-time white-noise $(\dot{\underline{W}}(t), t \in [0,T])$ on $L^{2}(\mathbb{R}^{d})$.
\begin{enumerate}
\item Then, the finite-dimensional distributions of the process $(Z_{\delta}(t, z), t \geq 0, z \in L^{2}(\mathbb{R}^{d}))$ converge to those of $(\underline{Z}(t, z), t \geq 0, z \in L^{2}(\mathbb{R}^{d}))$.
\item Let $T>0$ be some fixed time horizon and assume that $\vartheta$ is constant. Let $z \in L^{2}(\mathbb{R}^{d})$ be compactly supported in $\Lambda$. Then, there exists some $\delta_T=\delta_T(\vartheta_*, z)>0$ such that the finite-dimensional distributions of the process $(Z_{\delta}(t, z), t \in [0,T])$ are identical to those of $(\underline{Z}(t, z), t \in [0,T])$, for any $0 < \delta < \delta_T$. 
\end{enumerate}
\end{proposition}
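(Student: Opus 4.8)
The plan is to reduce both statements to the covariance structure of the two processes. By \Cref{result:Gaussian_process_solution} both $(Z_\delta(t,z))$ and $(\underline Z(t,z))$ are linear images of centred Gaussian processes, hence themselves centred and jointly Gaussian; convergence (respectively equality) of their finite-dimensional distributions is therefore equivalent to convergence (respectively equality) of all covariances. First I would compute, for fixed $t,s\geq 0$ and $z,z'\in L^2(\mathbb{R}^d)$, the covariance of $Z_\delta$. Starting from \Cref{result:covariance_structure} applied to the test functions $(P_\delta z)_\delta,(P_\delta z')_\delta$ at times $\delta t,\delta s$,
\[
\mathrm{Cov}(Z_\delta(t,z),Z_\delta(s,z'))=\delta^{-3}\int_0^{\delta(t\wedge s)}\langle S_\vartheta(\delta t-r)(P_\delta z)_\delta,\,S_\vartheta(\delta s-r)(P_\delta z')_\delta\rangle_{L^2(\Lambda)}\,\mathrm{d}r,
\]
I would apply the rescaling identity \Cref{result:rescaling_trigonometric_operator_families}(\ref{enum:rescaling}), $S_\vartheta(\tau)w_\delta=\delta(S_{\vartheta,\delta}(\delta^{-1}\tau)w)_\delta$, together with the isometry $\langle f_\delta,g_\delta\rangle_{L^2(\Lambda)}=\langle f,g\rangle_{L^2(\Lambda_\delta)}$ of the rescaling \eqref{eq:rescaling} and the substitution $\rho=\delta^{-1}r$. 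The powers of $\delta$ cancel exactly, which is precisely what dictates the normalisation $\delta^{-3/2}$, leaving
\[
\mathrm{Cov}(Z_\delta(t,z),Z_\delta(s,z'))=\int_0^{t\wedge s}\langle S_{\vartheta,\delta}(t-\rho)P_\delta z,\,S_{\vartheta,\delta}(s-\rho)P_\delta z'\rangle_{L^2(\Lambda_\delta)}\,\mathrm{d}\rho.
\]

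For part (i) I would pass to the limit $\delta\to0$ under the integral. By \Cref{result:Limits_operator_cosine_sine} one has $S_{\vartheta,\delta}(t-\rho)P_\delta z\to S_{\vartheta(0)}(t-\rho)z$ in $L^2(\mathbb{R}^d)$ for each fixed $\rho$, and likewise for the primed term; since $L^2$-convergence of both arguments implies convergence of inner products, the integrand converges pointwise to $\langle S_{\vartheta(0)}(t-\rho)z,S_{\vartheta(0)}(s-\rho)z'\rangle_{L^2(\mathbb{R}^d)}$. To interchange limit and integral I would use the uniform bound $\Vert S_{\vartheta,\delta}(\tau)w\Vert_{L^2}\leq|\tau|\,\Vert w\Vert_{L^2}$, which follows from the functional-calculus representation $S_{\vartheta,\delta}(\tau)=(-A_{\vartheta,\delta})^{-1/2}\sin(\tau(-A_{\vartheta,\delta})^{1/2})$ and the elementary inequality $|\lambda^{-1/2}\sin(\tau\sqrt\lambda)|\leq|\tau|$; this dominates the integrand by the integrable $|t-\rho|\,|s-\rho|\,\Vert z\Vert\,\Vert z'\Vert$ on $[0,t\wedge s]$, so dominated convergence applies. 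The resulting limit is exactly the covariance of $\underline Z$ obtained from \Cref{result:covariance_structure} for the operator $\vartheta(0)\Delta$, completing (i).

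For part (ii), with $\vartheta$ constant equal to $\vartheta(0)=\vartheta_*$, the operator $A_{\vartheta,\delta}$ is the whole-space operator $\vartheta_*\Delta$ subject only to Dirichlet conditions on $\partial\Lambda_\delta$. Convexity of $\Lambda$ with $0\in\Lambda$ gives $\delta x=\delta x+(1-\delta)\,0\in\Lambda$ for $x\in\Lambda$ and $\delta\leq1$, i.e.\ $\Lambda\subseteq\Lambda_\delta$, so that $P_\delta z=z$ and $P_\delta z'=z'$; moreover $\mathrm{dist}(\mathrm{supp}\,z,\partial\Lambda_\delta)\to\infty$ as $\delta\to0$, since the scaled boundary $\delta^{-1}\partial\Lambda$ recedes while $\mathrm{supp}\,z$ stays fixed. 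I would then invoke the finite propagation speed of the constant-coefficient wave operator: a solution started from data supported in $\mathrm{supp}\,z$ reaches $\partial\Lambda_\delta$ only after a time proportional to $\mathrm{dist}(\mathrm{supp}\,z,\partial\Lambda_\delta)$, so choosing $\delta_T$ small enough that this time exceeds $T$ for all $\delta<\delta_T$ forces the Dirichlet evolution $S_{\vartheta,\delta}(\tau)z$ to coincide with the free-space evolution $S_{\vartheta(0)}(\tau)z$ for every $|\tau|\leq T$. Substituting this identity into the covariance formula of the first paragraph makes the two covariances coincide identically for all $t,s\in[0,T]$ and $\delta<\delta_T$, yielding the asserted equality of finite-dimensional distributions.

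The main obstacle is the finite-propagation-speed step in (ii). In dimension one it is transparent from the explicit d'Alembert representation \eqref{eq:representation_operator_sine_unbounded} of \Cref{example:parametric_rescaling}, where the support of $S_{\vartheta(0)}(\tau)z$ is manifestly contained in a fixed-speed neighbourhood of $\mathrm{supp}\,z$; in higher dimensions one must appeal to the domain-of-dependence property of the wave operator, which I would establish through a standard local energy estimate showing that the Dirichlet evolution and the whole-space evolution solve the same problem on the relevant backward light cone and hence agree there. Care is also needed to confirm that the inner products over $L^2(\Lambda_\delta)$ and $L^2(\mathbb{R}^d)$ genuinely agree once both factors are supported inside $\Lambda_\delta$.
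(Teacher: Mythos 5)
Your proposal is correct and follows essentially the same route as the paper's proof: reduce both claims to the covariance functions of the centred Gaussian processes, use \Cref{result:rescaling_trigonometric_operator_families}\ref{enum:rescaling} to obtain the rescaled covariance representation, combine \Cref{result:Limits_operator_cosine_sine} with dominated convergence for (i), and invoke finite propagation speed plus uniqueness of the abstract wave equation to get the identity $S_{\vartheta_*,\delta}(\tau)z = S_{\vartheta_*}(\tau)z$ for $\delta<\delta_T$ in (ii). Your dominating bound $\Vert S_{\vartheta,\delta}(\tau)\Vert_{\mathcal{L}(L^{2}(\Lambda_{\delta}))}\leq|\tau|$ via the functional calculus is interchangeable with the paper's bound $T\Vert z\Vert_{L^{2}(\mathbb{R}^{d})}$ obtained from $S_{\vartheta,\delta}(\tau)=\int_{0}^{\tau}C_{\vartheta,\delta}(s)\,\mathrm{d}s$, and your local-energy justification of the domain-of-dependence step matches the paper's citation of finite propagation speed together with \citet[Corollary 3.14.8]{arendtVectorvaluedLaplaceTransforms2001}.
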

\begin{proof}[Proof of \Cref{result:scaling_limit}]
\label{proof:scaling_limit}
\begin{step}
Clearly, by \Cref{result:covariance_structure} and \Cref{result:rescaling_trigonometric_operator_families} the process $Z_{\delta}$ is a centred Gaussian process with the covariance function
\begin{equation}
\label{eq:covariance_structure_scaling_limit}
\begin{aligned}
&\mathbb{E}[Z_{\delta}(t,z_1)Z_{\delta}(s,z_2)]\\
&= \delta^{-3} \int_{0}^{\delta(t \land s)} \langle S_{\vartheta}(\delta t-r)(P_{\delta}z_1)_{\delta},S_{\vartheta}(\delta s-r)(P_{\delta}z_2)_{\delta} \rangle_{L^{2}(\Lambda)} \mathrm{d}r\\
&= \delta^{-3}\int_{0}^{\delta (t \land s)} \langle \delta S_{\vartheta,\delta}(\delta^{-1}(\delta t-r)) P_{\delta}z_1,\delta S_{\vartheta,\delta}(\delta^{-1}(\delta s-r))P_{\delta} z_2 \rangle_{L^{2}(\Lambda_{\delta})} \mathrm{d}r\\
&= \int_{0}^{t \land s}\langle S_{\vartheta,\delta}(t-r)P_{\delta} z_1, S_{\vartheta,\delta}(s-r)P_{\delta}z_2\rangle_{L^{2}(\Lambda_{\delta})} \mathrm{d}r, \quad t,s \geq 0, \quad z_1,z_2 \in L^{2}(\mathbb{R}^{d}).\\
\end{aligned}
\end{equation}
Similarly, the covariance function of the process $\underline{Z}$ is given by
\begin{equation}
\label{eq:covariance_structure_scaling_limit_unbounded}
\begin{aligned}
&\mathbb{E}[\underline{Z}(t,z_1)\underline{Z}(s,z_2)]\\
&= \int_{0}^{t \land s}\langle S_{\vartheta(0)}(t-r)z_1, S_{\vartheta(0)}(s-r)z_2\rangle_{L^{2}(\mathbb{R}^{d})} \mathrm{d}r, \quad t,s \geq 0, \quad z,z' \in L^{2}(\mathbb{R}^{d}).
\end{aligned}
\end{equation}
By \eqref{eq:operator_convergence_varying_domain} in \Cref{result:Limits_operator_cosine_sine}, we obtain the convergence $S_{\vartheta,\delta}(\tau) P_{\delta}z_1 \rightarrow S_{\vartheta(0)}(\tau)z_1$ in $L^{2}(\mathbb{R}^{d})$ for any $\tau \geq 0$ as $\delta \rightarrow 0$. The same is true for $z_2$. In particular, by the representation $S_{\vartheta,\delta}(\tau)z=\int_{0}^{\tau}C_{\vartheta,\delta}(s)z \mathrm{d}s$ and the boundedness of the cosine, we have with the functional calculus the upper bound 
\begin{equation*}
\sup_{0 < \delta \leq 1} \sup_{0 \leq \tau \leq T} \Vert S_{\vartheta,\delta}(\tau)z \Vert_{L^{2}(\Lambda_{\delta})} \leq T \Vert 
z \Vert_{L^{2}(\mathbb{R}^{d})}< \infty.
\end{equation*}
The result follows as \eqref{eq:covariance_structure_scaling_limit} converges to \eqref{eq:covariance_structure_scaling_limit_unbounded} by the dominated convergence theorem.
\end{step}
\begin{step}
As we have assumed $\vartheta(x) = \vartheta_*$ for some constant $\vartheta_*>0$ and all $x \in \Lambda$, it makes sense write $S_{\vartheta, \delta}=S_{\vartheta_*,\delta}$. As the wave equation has a finite propagation speed, see \citet*{evansPartialDifferentialEquations2010}, and $z$ is compactly supported, there exists some $\delta_T =\delta_{T}(\vartheta_*, z)>0$ such that for all $0 < \delta < \delta_{T}$ we have
\begin{equation}
\label{eq:finite_time_horizon_identitiy}
S_{\vartheta_*,\delta}(\tau)z = S_{\vartheta_*}(\tau)z,\quad \tau \in [0,T].
\end{equation}
Indeed, if $\delta_{T}>0$ is chosen so small that the outer edge of the light cone induced by $S_{\vartheta_*,\delta}(\tau)z$ does not reach the boundary of the domain $\Lambda_{\delta}$ up until the finite time horizon $T$, $S_{\vartheta_*,\delta}(\tau)z$ cannot be differentiated from $S_{\vartheta_*}(\tau)z$ for any $\tau \in [0,T]$. The identity \eqref{eq:finite_time_horizon_identitiy} then follows by the uniqueness of solutions to the wave equation; see \citet*[Corollary 3.14.8]{arendtVectorvaluedLaplaceTransforms2001}. As a consequence, combining the representations \eqref{eq:covariance_structure_scaling_limit} and \eqref{eq:covariance_structure_scaling_limit_unbounded} with the identity \eqref{eq:finite_time_horizon_identitiy}, we have
\begin{equation}
\label{eq:identity_for_covariance_kernels}
\mathbb{E}[Z_{\delta}(t,z)Z_{\delta}(s,z)] = \mathbb{E}[\underline{Z}(t,z)\underline{Z}(s,z)], \quad t,s \in [0,T], \quad 0 < \delta < \delta_{T}.
\end{equation}
The result follows from \eqref{eq:identity_for_covariance_kernels} as two centred Gaussian processes with the same covariance function have the same law. \qedhere
\end{step}
\end{proof}
\begin{remark}[Finite propagation speed]
\begin{enumerate}
\item[]
\item Note that in the proof of \Cref{result:scaling_limit} (ii) the identity \eqref{eq:finite_time_horizon_identitiy} does not hold in the non-parametric situation. Even if the outer edges of the associated light cones do not reach the boundary of the domain $\Lambda_{\delta}$, the associated PDEs are only identical in the limit as $\delta \rightarrow 0$ in general as $\vartheta(\delta \cdot)$ approximates $\vartheta(0)$ but is never quite identical to it provided that $\vartheta$ is not locally constant at zero.
\item In the parametric case $\vartheta \equiv \vartheta_0>0$, \Cref{result:scaling_limit} (ii) implies that the Hellinger-distance between the laws of processes $(Z_{\delta}(t,z), t \in [0,T])$ and $(\underline{Z}(t, z), t \in [0,T])$ is zero for $\delta \in (0, \delta_T)$. Note that $\delta_T>0$ depends on the time horizon. Thus, the result is no longer accessible if, after rescaling, the time horizon depends on the resolution level $\delta$ itself. Thus, it is important to also understand the energetic behaviour of the rescaled trigonometric operator families as time increases, which will be the topic of the next section.
\end{enumerate}
\end{remark}

\section{Asymptotic energetic behaviour}
\label{section:asymptotic_energetic_behaviour}
This section is devoted to understanding the energetic behaviour of the stochastic wave equation under rescaling. 
The remaining proofs and more technical results are postponed to \Cref{section:energy_results}. \par
Consider a complex Hilbert space $(\mathscr{H}, \langle \cdot, \cdot\rangle_{\mathscr{H}})$ carrying a self-adjoint, negative, linear unbounded operator $(\mathscr{A}, D(\mathscr{A}))$.  \citet{goldsteinAsymptoticPropertySolutions1969} studied the energetic behaviour of abstract wave equations of the form
\begin{equation}
\label{eq:abstract_cauchy}
w''(t)=\mathscr{A}w(t), \quad w(0)=w_0 \in D(\mathscr{A}), \quad \dot{w}(0)= w_1 \in D((-\mathscr{A})^{1/2}).
\end{equation}
The abstract Cauchy problem \eqref{eq:abstract_cauchy} is well-posed, and the total energy
\begin{equation}
\label{eq:equipartition_of_energy}
\mathscr{E}_{\mathscr{A}} \equiv P(t) + K(t) \coloneqq \Vert (-\mathscr{A})^{1/2}w(t) \Vert^{2}_{\mathscr{H}} + \Vert \dot{w}(t) \Vert^{2}_{\mathscr{H}}, \quad t \in \mathbb{R}. 
\end{equation}
is preserved and does not depend on time $t \in \mathbb{R}$. \citet{goldsteinAsymptoticPropertySolutions1969} discovered that the asymptotic equipartition of energy
\begin{equation}
\label{eq:abstract_equipartition}
\lim_{|t| \rightarrow \infty} P(t)= \lim_{|t| \rightarrow \infty} K(t)=\frac{\mathscr{E}_{\mathscr{A}}}{2}
\end{equation}
holds if $\mathscr{A}$ satisfies the Riemann-Lebesgue property defined through
\begin{equation}
\label{eq:riemann_lebesgue}
\langle e^{it(-\mathscr{A})^{1/2}}z_1, z_2\rangle_{\mathscr{H}} \rightarrow 0, \quad |t|\rightarrow \infty, \quad z_1, z_2 \in \mathscr{H},
\end{equation}
where the convergence is exactly the convergence $e^{it(-\mathscr{A})^{1/2}} \xrightarrow{w} 0$ in the weak operator topology as $|t| \rightarrow \infty$. Thus, asymptotically, the kinetic and potential energy contribute equally to the entire energy within the system. Operators satisfying condition \eqref{eq:riemann_lebesgue} are called Riemann-Lebesgue operators.
\citet{goldsteinAsymptoticPropertySolutions1970} noticed that assumption \eqref{eq:riemann_lebesgue} is a condition on the reso\-lution of the identity $(E(\lambda), \lambda \in \mathbb{R})$ of $\mathscr{A}$, which lies strictly between continuity and absolute continuity of ${E}(\cdot)$. Thus, an operator like the Laplace operator on the unbounded spatial domain, which has a fully absolutely continuous spectrum as defined in \citet*[Chapter 9.1]{schmudgenUnboundedSelfadjointOperators2012}, is a Riemann-Lebesgue operator. \par The concept of Riemann-Lebesgue operators is not specific to the abstract wave equation and also extends to a large class of energy-preserving and hyperbolic equations like the abstract Schrödinger equation, see 
for instance \citet*{goldsteinAbstractEquipartitionEnergy1979, goldsteinEquipartitionEnergyHigher1982,sandefurAsymptoticEquipartitionEnergy1983,picardAsymptoticEquipartitionEnergy1987,picardAsymptoticPartitionEnergy1991}. By \citet{goldsteinAutocorrelationEquipartitionEnergy1982}, the equipartition of energy \eqref{eq:equipartition_of_energy} also holds in the sense of Cèsaro limits and for auto-correlations.  \citet*{marcellodabbiccoEquipartitionEnergyNonautonomous2021} and \citet*{bilerPartitionEnergyStrongly1990} employed similar techniques in the analysis of the behaviour of energies within abstract wave equations with certain types of damping.
\begin{remark}
Depending on the context, sometimes the Riemann-Lebesgue operator property \eqref{eq:riemann_lebesgue} is defined directly using the operator $e^{it\mathscr{A}}$ and not using the root $(-\mathscr{A})^{1/2}$, see for instance \citet{goldsteinAbstractEquipartitionEnergy1979}. We will use the above convention because we consider the stochastic wave equation, and the operator root turns up naturally.
\end{remark}
\noindent Consider the Laplace operator $\Delta$ on $L^{2}(\mathbb{R}^{d})$ with the domain $D(\Delta)=H^{2}(\mathbb{R}^{d})$. Using the Riemann-Lebesgue lemma, we show that the Laplace operator $\Delta$ is a Riemann-Lebesgue operator. 
\begin{lemma}[The Laplace operator is a Riemann-Lebesgue operator]
  \label{result:riemann_lebesgue_lemma}
We have $e^{it(-\Delta)^{1/2}} \xrightarrow{w} 0$ as $|t|\rightarrow \infty$, i.e.,
\begin{equation}
  \label{eq:riemann_lebesque_laplace_operator}
  \langle e^{it(-\Delta)^{1/2}}z_1, z_2\rangle_{L^{2}(\mathbb{R}^{d})} \rightarrow 0, \quad |t| \rightarrow \infty, \quad z_1,z_2 \in L^{2}(\mathbb{R}^{d}).
  \end{equation}
  \end{lemma}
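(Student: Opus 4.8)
The plan is to diagonalise the operator $e^{it(-\Delta)^{1/2}}$ by means of the Fourier transform and thereby reduce the claim to a one-dimensional instance of the classical Riemann-Lebesgue lemma. Since $-\Delta$ is self-adjoint and non-negative on $D(\Delta)=H^{2}(\mathbb{R}^{d})$, the operator $(-\Delta)^{1/2}$ is self-adjoint with $\mathcal{F}[(-\Delta)^{1/2}z](\fv)=|\fv|_{\mathbb{R}^{d}}\,\mathcal{F}[z](\fv)$, so that by the functional calculus $e^{it(-\Delta)^{1/2}}$ corresponds under $\mathcal{F}$ to multiplication by the unimodular symbol $\fv \mapsto e^{it|\fv|_{\mathbb{R}^{d}}}$; in particular $e^{it(-\Delta)^{1/2}}$ is unitary. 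Using that $\mathcal{F}$ is, up to a fixed dimensional constant, an isometry of $L^{2}(\mathbb{R}^{d})$, I would rewrite, for $z_1,z_2 \in L^{2}(\mathbb{R}^{d})$,
\[
\langle e^{it(-\Delta)^{1/2}}z_1, z_2\rangle_{L^{2}(\mathbb{R}^{d})} = c_d \int_{\mathbb{R}^{d}} e^{it|\fv|_{\mathbb{R}^{d}}}\, \mathcal{F}[z_1](\fv)\,\overline{\mathcal{F}[z_2](\fv)}\,\mathrm{d}\fv,
\]
with some constant $c_d>0$. This exhibits the inner product as an oscillatory integral of the function $g \coloneqq \mathcal{F}[z_1]\,\overline{\mathcal{F}[z_2]}$, which lies in $L^{1}(\mathbb{R}^{d})$ by the Cauchy-Schwarz inequality, since $\mathcal{F}[z_1], \mathcal{F}[z_2] \in L^{2}(\mathbb{R}^{d})$.

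The principal difficulty is that the phase $\fv \mapsto |\fv|_{\mathbb{R}^{d}}$ is not linear, so the classical Riemann-Lebesgue lemma does not apply directly to the $d$-dimensional integral. To circumvent this, I would pass to polar coordinates $\fv = r\theta$ with $r = |\fv|_{\mathbb{R}^{d}} \ge 0$ and $\theta \in S^{d-1}$, obtaining
\[
\int_{\mathbb{R}^{d}} e^{it|\fv|_{\mathbb{R}^{d}}} g(\fv)\,\mathrm{d}\fv = \int_0^\infty e^{itr} G(r)\,\mathrm{d}r, \qquad G(r) \coloneqq r^{d-1}\int_{S^{d-1}} g(r\theta)\,\mathrm{d}\theta.
\]
By Tonelli's theorem, $\int_0^\infty |G(r)|\,\mathrm{d}r \le \int_{\mathbb{R}^{d}}|g(\fv)|\,\mathrm{d}\fv = \Vert g\Vert_{L^{1}(\mathbb{R}^{d})} < \infty$, so that $G$ is well-defined for almost every $r$ and $G \in L^{1}([0,\infty))$. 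The problem is thereby reduced to showing that $\int_0^\infty e^{itr}G(r)\,\mathrm{d}r \to 0$ as $|t| \to \infty$ for $G \in L^{1}([0,\infty))$, which is precisely the one-dimensional Riemann-Lebesgue lemma, cf. \citet{kahaneGeneralizationsRiemannLebesgueCantorLebesgue1980}.

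The main obstacle is thus the nonlinearity of the phase, which is resolved by the radial decomposition above; everything else is bookkeeping, namely the Plancherel identity, the Cauchy-Schwarz bound placing $g$ in $L^{1}(\mathbb{R}^{d})$, and the Tonelli argument guaranteeing $G \in L^{1}([0,\infty))$. Should one prefer to avoid polar coordinates, an alternative is a density argument: for $z_1, z_2$ whose Fourier transforms are smooth and compactly supported away from the origin one obtains decay directly (for instance by a single integration by parts in the radial variable), and one then extends to general $z_1, z_2 \in L^{2}(\mathbb{R}^{d})$ using that $e^{it(-\Delta)^{1/2}}$ is a contraction, indeed unitary, on $L^{2}(\mathbb{R}^{d})$. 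I would nonetheless favour the polar-coordinate reduction, since it invokes the Riemann-Lebesgue lemma in exactly the form advertised in the surrounding discussion.
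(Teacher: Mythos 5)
Your proposal is correct, and its skeleton coincides with the paper's: both proofs diagonalise $e^{it(-\Delta)^{1/2}}$ via the Fourier transform, use Plancherel to rewrite the inner product as the oscillatory integral $\int_{\mathbb{R}^{d}} e^{it|\fv|} g(\fv)\,\mathrm{d}\fv$ with an $L^{1}$ density $g$, and then appeal to a Riemann--Lebesgue-type statement. The differences are in the two finishing moves. First, the paper reduces to $z_1=z_2$ by polarisation, so that $g=|\mathcal{F}(z)|^{2}\in L^{1}(\mathbb{R}^{d})$ is automatic, whereas you keep general $z_1,z_2$ and place $g=\mathcal{F}[z_1]\overline{\mathcal{F}[z_2]}$ in $L^{1}$ by Cauchy--Schwarz; both are equally valid, and yours is marginally more direct. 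Second, and more substantively, the paper treats the nonlinear phase $\fv\mapsto|\fv|$ by citing the generalised Riemann--Lebesgue lemma of \citet{kahaneGeneralizationsRiemannLebesgueCantorLebesgue1980} as a black box, while you prove exactly the instance needed: passing to polar coordinates, checking via Tonelli that $G(r)=r^{d-1}\int_{S^{d-1}}g(r\theta)\,\mathrm{d}\theta$ lies in $L^{1}([0,\infty))$, and invoking only the classical one-dimensional lemma. Your route is thus more self-contained and elementary, at the cost of a Fubini/Tonelli bookkeeping step; the paper's route is shorter and, by quoting the generalised lemma, matches the tool it reuses later (e.g.\ in \Cref{example:unbounded_domain_main_example}), where phases of the form $|\fv|$ recur. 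Your fallback density argument is also sound, since the operators $e^{it(-\Delta)^{1/2}}$ are unitary and hence the sesquilinear forms are uniformly bounded, so convergence on a dense class extends to all of $L^{2}(\mathbb{R}^{d})$.
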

  \begin{proof}[Proof of \Cref{result:riemann_lebesgue_lemma}] 
  Since $\sigma(-\Delta)=[0, \infty)$ we observe with $g_t(x)=e^{it\sqrt{x}}$, that $e^{it(-\Delta)^{1/2}}z = \mathcal{F}^{-1}( M_{g_t \circ p}\mathcal{F}(z))$, where $p(x)=|x|^{2}$ and 
  \begin{equation*}
     M_{g_t \circ p}[\tilde{z}](\fv)\coloneqq (g_t \circ p)(\fv) \tilde{z}(\fv), \quad \fv \in \mathbb{R}^{d}, \quad \tilde{z}\in L^{2}(\mathbb{R}^{d}),
  \end{equation*}
  is the standard multiplication operator; see \citet[Proposition 8.2]{schmudgenUnboundedSelfadjointOperators2012}. By polarisation, it is sufficient to prove the result for $z = z_1 = z_2 \in L^{2}(\mathbb{R}^{d})$. Using Plancherel's identity, we obtain
  \begin{equation}
  \begin{aligned}
\langle e^{it(-\Delta)^{1/2}}z,z \rangle_{L^{2}(\mathbb{R}^{d})}&=(2\pi)^{d}\langle M_{g_t \circ p}\mathcal{F}(z), \mathcal{F}(z) \rangle_{L^{2}(\mathbb{R}^{d})} \\&= \int_{\mathbb{R}^{d}} e^{it|\fv|} |\mathcal{F}(z)|^{2}(\fv) \mathrm{d}\fv \rightarrow 0, \quad |t|\rightarrow \infty,
  \end{aligned}
  \end{equation}
  where the last convergence follows from the generalised Riemann-Lebesgue Lemma in \citet{kahaneGeneralizationsRiemannLebesgueCantorLebesgue1980} as $|\mathcal{F}(z)|^{2}(\cdot) \in L^{1}(\mathbb{R}^{d})$. 
\end{proof}
\begin{remark}[Spectrum of Riemann-Lebesgue operators]
\begin{enumerate}
\item[]
\item We have already mentioned that for a self-adjoint operator to be a Riemann Lebesgue operator, it is sufficient that it has a purely absolutely continuous spectrum. Any self-adjoint differential operator on the unbounded domain $\mathbb{R}^{d}$ that can be diagonalised using the Fourier transform has a purely absolutely continuous spectrum; see \citet*[Corollary 9.4]{schmudgenUnboundedSelfadjointOperators2012}. However, any Riemann-Lebesgue operator must have at least a fully continuous spectrum. The Laplace operator on a bounded spatial domain $\Lambda$ naturally has eigenvalues. At these eigenvalues, the resolution of the identity has jumps and is discontinuous. Therefore, the spectrum is not fully continuous, and the weak convergence cannot hold for all elements of $L^{2}(\Lambda)$. Thus, the Laplace operator on a bounded spatial domain is not a Riemann-Lebesgue operator in the sense of \eqref{eq:riemann_lebesgue}. See also the proof of (iii) in \citet{goldsteinAsymptoticPropertySolutions1969}.
\item Even if a generator $\mathscr{A}$, for example, the Laplace operator on a bounded domain, is not a Riemann-Lebesgue operator, the equipartition of energy may still hold in the Cesaro sense
\begin{equation*}
\lim_{|T| \rightarrow \infty}\frac{1}{T}\int_{0}^{T}P(t)\mathrm{d}t = \lim_{|T|\rightarrow \infty} \frac{1}{T}\int_{0}^{T}K(t)\mathrm{d}t = \frac{\mathscr{E}_{\mathscr{A}}}{2},
\end{equation*}
see for instance \citet{goldsteinAsymptoticPropertySolutions1970}. This is intuitive as the potential and kinetic energy may oscillate and may never converge, while these oscillations do not interfere with a Cesaro limit.  
\end{enumerate}
\end{remark}
Since the Laplace operator on $L^{2}(\mathbb{R}^{d})$ with domain $D(\Delta)=H^{2}(\mathbb{R}^{d})$ is a Riemann-Lebesgue operator, \citet{goldsteinAsymptoticPropertySolutions1969} shows for $\mathscr{A}=\Delta$ in \eqref{eq:abstract_cauchy} and \eqref{eq:abstract_equipartition} that we have the asymptotic equipartition of energy
\begin{equation}
\label{eq:asymptotic_equipartition_laplace}
\lim_{|t|\rightarrow \infty}\Vert (-\Delta)^{1/2}w(t) \Vert^{2}_{L^{2}(\mathbb{R}^{d})}=\lim_{|t|\rightarrow \infty}\Vert w'(t) \Vert^{2}_{L^{2}(\mathbb{R}^{d})}=\frac{\mathscr{E}_{\Delta}}{2}.
\end{equation}
\begin{remark}[Different notions of energy]
\label{remark:notions_of_energy}
In \eqref{eq:equipartition_of_energy}, the total energy (in $\mathscr{H}$) within the system of interest is preserved and can be decomposed into the total potential energy $\Vert (-\mathscr{A})^{1/2}w_0 \Vert_{\mathscr{H}}^{2}$ and the total kinetic energy $\Vert w_1 \Vert_{\mathscr{H}}^{2}$. If zero belongs to the resolvent set of the operator $\mathscr{A}$, one can also define the energy in the dual space $D((-\mathscr{A})^{1/2})^{*}$:
\begin{equation}
\label{eq:modified_energy}
\tilde{\mathscr{E}}_{\mathscr{A}} = \Vert w(t) \Vert^{2}_{\mathscr{H}} + \Vert (-\mathscr{A})^{-1/2}\dot{w}(t) \Vert^{2}_{\mathscr{H}} \equiv \tilde{P}(t) + \tilde{K}(t),
\end{equation}
which also does not depend on $t \in \mathbb{R}$ and leads to the same type of asymptotic equipartition of energy.
\end{remark}
By Euler's formula, the function $z \mapsto e^{iz}$ can be represented using the cosine and the sine functions, leading in turn to representations of the sine and cosine in terms of the exponential function. The functional calculus then reveals a natural relation between operator cosine and sine functions and the Riemann-Lebesgue property  \eqref{eq:riemann_lebesgue}, given by
\begin{equation}
\label{eq:eulers_formula}
\cos(t (-\mathscr{A})^{1/2})= \frac{e^{it(-\mathscr{A})^{1/2}} + e^{-it(-\mathscr{A})^{1/2}}}{2}, \quad t \geq 0,
\end{equation}
and 
\begin{equation*}
(-\mathscr{A})^{-1/2}\sin(t (-\mathscr{A})^{1/2})= \frac{(-\mathscr{A})^{-1/2}(e^{it(-\mathscr{A})^{1/2}} - e^{-it(-\mathscr{A})^{1/2}})}{2i}, \quad t \geq 0.
\end{equation*}
This realisation is fundamental to the proof of the equipartition result presented in \citet{goldsteinAsymptoticPropertySolutions1969} and is also essential in this work. \par 
The rescaled versions of the operator cosine and sine functions characterised by \Cref{result:rescaling_trigonometric_operator_families} incorporate the scaling $\delta^{-1}$ in time and are associated with the rescaled generator $A_{\vartheta,\delta}$. Therefore, it seems natural to expect an energetic behaviour similar to \eqref{eq:riemann_lebesgue} and \eqref{eq:asymptotic_equipartition_laplace} for the trigonometric families acting on rescaled functions. To this end, we show the following asymptotic version of the Riemann-Lebesgue property. 
\begin{proposition}[Approximate Riemann-Lebesgue property]
\label{result:approximate_riemann_lebesgue} 
For any $z_1, z_2 \in L^{2}(\mathbb{R}^{d})$, we have 
\begin{equation*}
\langle e^{i \delta^{-1}t(-A_{\vartheta,\delta})^{1/2}}P_{\delta} z_1, P_{\delta} z_2 \rangle_{L^{2}(\Lambda_{\delta})} \rightarrow 0, \quad \delta \rightarrow 0, \quad t \in \mathbb{R},
\end{equation*}
where the orthogonal projection $P_{\delta}$ is defined through \eqref{eq:orthogonal_projection}.
\end{proposition}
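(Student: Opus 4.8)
The plan is to pass to the spectral side of the non-negative self-adjoint operator $-A_{\vartheta,\delta}$ and to compare it with the free operator $-\vartheta(0)\Delta$ on $\mathbb{R}^{d}$, for which the required decay is exactly the Riemann--Lebesgue \Cref{result:riemann_lebesgue_lemma}. By polarisation it suffices to treat $z_{1}=z_{2}=z$. Denoting by $(E_{\delta}(\lambda))_{\lambda\geq 0}$ the resolution of the identity of $-A_{\vartheta,\delta}$ on $L^{2}(\Lambda_{\delta})$ and by $\mu_{\delta}(\cdot)=\langle E_{\delta}(\cdot)P_{\delta}z,P_{\delta}z\rangle_{L^{2}(\Lambda_{\delta})}$ the associated finite positive spectral measure of total mass $\Vert P_{\delta}z\Vert_{L^{2}(\Lambda_{\delta})}^{2}\leq\Vert z\Vert_{L^{2}(\mathbb{R}^{d})}^{2}$, the functional calculus gives
\begin{equation*}
\langle e^{i\delta^{-1}t(-A_{\vartheta,\delta})^{1/2}}P_{\delta}z,P_{\delta}z\rangle_{L^{2}(\Lambda_{\delta})}=\int_{[0,\infty)}e^{i\delta^{-1}t\sqrt{\lambda}}\,\mathrm{d}\mu_{\delta}(\lambda).
\end{equation*}
Fix $t\in\mathbb{R}$; we may assume $t\neq 0$, so that the frequency $\delta^{-1}t$ diverges as $\delta\to 0$. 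The essential difficulty is already visible: because $\Lambda_{\delta}$ is bounded, $-A_{\vartheta,\delta}$ has purely discrete spectrum and $\mu_{\delta}$ is a countable sum of atoms, so for each fixed $\delta$ the oscillatory integral is almost periodic in $t$ and does not decay. The vanishing is thus a genuinely joint phenomenon, in which the domain grows, $\Lambda_{\delta}\uparrow\mathbb{R}^{d}$, while the time $\delta^{-1}t$ increases.

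Next I would introduce the limiting object $\mu_{0}(\cdot)=\langle E_{0}(\cdot)z,z\rangle_{L^{2}(\mathbb{R}^{d})}$, the spectral measure of $-\vartheta(0)\Delta$ on $L^{2}(\mathbb{R}^{d})$. Diagonalising by the Fourier transform exactly as in the proof of \Cref{result:riemann_lebesgue_lemma} shows that $\mu_{0}$ is absolutely continuous, so that
\begin{equation*}
\int_{[0,\infty)}e^{i\delta^{-1}t\sqrt{\lambda}}\,\mathrm{d}\mu_{0}(\lambda)=\langle e^{i\delta^{-1}t(-\vartheta(0)\Delta)^{1/2}}z,z\rangle_{L^{2}(\mathbb{R}^{d})}\to 0,\quad\delta\to 0,
\end{equation*}
by \Cref{result:riemann_lebesgue_lemma}. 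It therefore remains to establish the spectral comparison
\begin{equation*}
\int_{[0,\infty)}e^{i\delta^{-1}t\sqrt{\lambda}}\,\bigl(\mathrm{d}\mu_{\delta}(\lambda)-\mathrm{d}\mu_{0}(\lambda)\bigr)\to 0,\quad\delta\to 0.
\end{equation*}

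The hard part is precisely this comparison at the diverging frequency $\delta^{-1}t$. The fixed-time convergence of \Cref{result:Limits_operator_cosine_sine}, i.e.\ $C_{\vartheta,\delta}(\tau)P_{\delta}z\to C_{\vartheta(0)}(\tau)z$ for \emph{fixed} $\tau$, is not sufficient, since the relevant time here is $\tau=\delta^{-1}t\to\infty$; nor does plain weak convergence $\mu_{\delta}\to\mu_{0}$ help, as a single atom of $\mu_{\delta}$ located near a matched frequency produces a term that does not decay although $\mu_{0}$ is absolutely continuous. To close the gap I would use the convergence of the resolution of the identity of the scaled operator on the growing domain $\Lambda_{\delta}$ from \Cref{section:spectral_asymptotics}. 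Substituting $\nu=\sqrt{\lambda}$ and integrating by parts in the Stieltjes integral, the difference is controlled by the distribution functions $F_{\delta}(\nu)=\mu_{\delta}([0,\nu^{2}])$ and $F_{0}(\nu)=\mu_{0}([0,\nu^{2}])$: the boundary terms vanish because the total masses match in the limit, and the remaining integral is bounded by $\delta^{-1}|t|\int_{0}^{\infty}|F_{\delta}(\nu)-F_{0}(\nu)|\,\mathrm{d}\nu$. The crux is therefore to show that the spectral asymptotics force $F_{\delta}\to F_{0}$ strongly enough---quantitatively, $\int_{0}^{\infty}|F_{\delta}-F_{0}|\,\mathrm{d}\nu=o(\delta)$, or equivalently a quantitative Riemann--Lebesgue bound on the converging spectral-density difference---to absorb the diverging factor $\delta^{-1}t$. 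This is where the dimension-dependent regularity demanded in \assth{} is expected to enter, the one-dimensional case being borderline precisely because the available spectral decay is weakest there.
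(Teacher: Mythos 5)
Your skeleton coincides exactly with the paper's proof: polarisation to $z_{1}=z_{2}=z$, the spectral representation $\int_{[0,\infty)}e^{i\delta^{-1}t\sqrt{\lambda}}\,\mathrm{d}\mu_{\delta}(\lambda)$, the absolutely continuous limiting measure $\mu_{0}$ dispatched by \Cref{result:riemann_lebesgue_lemma}, and the reduction to $\int_{[0,\infty)}e^{i\delta^{-1}t\sqrt{\lambda}}\,(\mathrm{d}\mu_{\delta}-\mathrm{d}\mu_{0})\rightarrow 0$. But your proposal stops precisely where a proof is required: the ``crux'' $\int_{0}^{\infty}|F_{\delta}-F_{0}|\,\mathrm{d}\nu=o(\delta)$ is announced, not established, so this is not a proof. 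Worse, that route cannot be completed: bounding the integrated-by-parts term by $\delta^{-1}|t|\int_{0}^{\infty}|F_{\delta}-F_{0}|\,\mathrm{d}\nu$ discards all oscillation in the phase derivative, and the $o(\delta)$ rate is generically false. On $\Lambda_{\delta}$ the Dirichlet frequencies near a fixed $\nu$ are spaced of order $\delta$ (for $d=1$ and $\vartheta\equiv\vartheta_{*}$ they are exactly $\sqrt{\vartheta_{*}}\,k\pi\delta/|\Lambda|$), so $F_{\delta}$ is a step function with mesh and jumps of order $\delta$, whence $\int_{0}^{\infty}|F_{\delta}-F_{0}|\,\mathrm{d}\nu\asymp\delta$ and your bound yields $O(|t|)$ rather than $o(1)$. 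Your closing guess that the dimension-dependent regularity in \assth{} enters here is also misplaced: in the paper that assumption feeds the heat-semigroup bounds behind \Cref{result:fractional_and_inverse_limits} (via hypercontractivity), while the proof of \Cref{result:approximate_riemann_lebesgue} is entirely qualitative and rate-free.

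For comparison, the paper closes the step you balked at softly: from strong resolvent convergence (Weidmann) it obtains strong convergence of the resolutions of identity, $\Vert E_{\delta}(\lambda)P_{\delta}z-E(\lambda)z\Vert_{L^{2}(\mathbb{R}^{d})}\rightarrow 0$ (\Cref{result:convergence:partition_of_unity}), hence weak convergence of the spectral measures (\Cref{result:weak_convergence_partitions_of_unity}), and then integrates the bounded function $e^{i\delta^{-1}t\sqrt{\lambda}}$ against $\mathrm{d}\langle E_{\delta}(\lambda)P_{\delta}z-E(\lambda)z,P_{\delta}z\rangle$, invoking that corollary together with $\Vert P_{\delta}z-z\Vert_{L^{2}(\mathbb{R}^{d})}\rightarrow 0$ (\Cref{remark:subspace_projections}). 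You are right to be uneasy about this, since the corollary is stated for a \emph{fixed} bounded continuous $g$ while here $g$ varies with $\delta$; and your ``atom at a matched frequency'' scenario is not hypothetical: for constant $\vartheta_{*}$ in $d=1$ the rescaled phases $\delta^{-1}t\sqrt{\lambda_{k}^{(\delta)}}=t\sqrt{\vartheta_{*}}\,k\pi/|\Lambda|$ are $\delta$-independent, so at the resonant times $t\in(2|\Lambda|/\sqrt{\vartheta_{*}})\mathbb{Z}\setminus\{0\}$ the rescaled group is the identity and the pairing tends to $\langle z_{1},z_{2}\rangle_{L^{2}(\mathbb{R}^{d})}$, not to zero --- the familiar time-periodicity of the wave group on an interval. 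The convergence can therefore only hold outside such exceptional times (Lebesgue-almost every $t$ suffices for every downstream use, where the statement is integrated in $t$ and $s$, as in the proof of \Cref{result:asymptotics_observed_fisher_information}). In short: you correctly isolated the genuinely delicate step that the paper passes over lightly, but your proposal neither proves it nor can as formulated, because the crude modulus bound you suggest is strictly weaker than the oscillatory cancellation the statement requires.
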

\begin{proof}[Proof of \Cref{result:approximate_riemann_lebesgue}] The result is proved on page \pageref{proof:asymptotic_riemann_lebesgue}.
\end{proof}
\begin{remark}[Strongly continuous groups associated with varying domains]
\label{remark:behaviour_strongly_continuous_groups}
Analysing strongly continuous families of operators associated with varying domains leads to many technical challenges, which were solved by \citet{altmeyerNonparametricEstimationLinear2021} using the Feynman-Kac theorem. Given a fixed Hilbert space, the Trotter-Kato approximation theorem, for instance \citet[Theorem 4.8]{engelOneparameterSemigroupsLinear2000}, characterises the relation between the convergence of semigroups, generators and resolvents. In \citet*{itoTrotterKatoTheoremApproximation1998} and  \citet*[Chapter 4]{itoEvolutionEquationsApproximations2002}, a Trotter-Kato approximation theorem is proved for varying Banach spaces. These results can also be applied to the convergence of semigroups associated with varying domains. Indeed, \citet*{itoTrotterKatoTheoremApproximation1998} show that the convergence of semigroups associated with a varying spatial domain is equivalent to an adapted version of the strong resolvent convergence. \citet*[Proposition 3.1]{itoEvolutionEquationsApproximations2002} and \citet[Theorem 1]{weidmannStrongOperatorConvergence1997} provide concrete conditions under which this resolvent convergence holds.
\end{remark}
Now that we have shown an asymptotic version of the Riemann-Lebesgue property, we are ready to analyse the asymptotic behaviour of the operator cosine and sine under rescaling. In our situation, the analysis is more involved as the spatial domain $\Lambda_\delta$ also grows in parallel to the temporal increase as $\delta \rightarrow 0$, and the asymptotic Riemann-Lebesgue property \Cref{result:approximate_riemann_lebesgue} is required to prove an asymptotic characterisation of the energetic behaviour within the wave equation. The following result is an instructive example of how to extend the results by \citet{goldsteinAsymptoticPropertySolutions1969} to varying domains. 
\begin{proposition}[Asymptotic energetic behaviour for the scaled cosine]
\label{result:presentable_equipartition_of_energy}
Consider $z_1, z_2 \in L^{2}(\mathbb{R}^{d})$. Then, we have
\begin{equation}
\label{eq:presentable_equipartition_of_energy}
\langle C_{\vartheta, \delta}(\delta^{-1}t) P_{\delta} z_1, C_{\vartheta, \delta}(\delta^{-1}t) P_{\delta} z_2 \rangle_{L^{2}(\Lambda_{\delta})} \rightarrow \frac{1}{2}\langle z_1, z_2 \rangle_{L^{2}(\mathbb{R}^{d})}, \quad \delta \rightarrow 0, \quad t \in \mathbb{R},
\end{equation}
and
\begin{equation*}
\langle C_{\vartheta,\delta}(\delta^{-1}t) P_{\delta} z, C_{\vartheta,\delta}(\delta^{-1}s) P_{\delta} z\rangle_{L^{2}(\Lambda_{\delta})} \rightarrow 0, \quad \delta \rightarrow 0, \quad t \neq s \in \mathbb{R}\setminus \{0\},
\end{equation*}
where the orthogonal projection $P_{\delta}$ is defined through \eqref{eq:orthogonal_projection}.
\end{proposition}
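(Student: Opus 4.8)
The plan is to reduce the entire statement to the Approximate Riemann-Lebesgue property of Proposition~\ref{result:approximate_riemann_lebesgue} by way of Euler's formula \eqref{eq:eulers_formula}. Since $-A_{\vartheta,\delta}$ is self-adjoint and nonnegative on $L^{2}(\Lambda_{\delta})$, functional calculus identifies the operator cosine with $C_{\vartheta,\delta}(\delta^{-1}t)=\cos(\delta^{-1}t(-A_{\vartheta,\delta})^{1/2})=\tfrac12\big(U_{\delta}(t)+U_{\delta}(-t)\big)$, where $U_{\delta}(t)\coloneqq e^{i\delta^{-1}t(-A_{\vartheta,\delta})^{1/2}}$ is unitary and obeys the group law $U_{\delta}(a)U_{\delta}(b)=U_{\delta}(a+b)$ together with $U_{\delta}(a)^{*}=U_{\delta}(-a)$.

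First I would expand the product of the two cosines into four terms $\langle U_{\delta}(\pm t)P_{\delta}z_1, U_{\delta}(\pm s)P_{\delta}z_2\rangle_{L^{2}(\Lambda_{\delta})}$; moving one exponential across the inner product via its adjoint and collapsing with the group law reduces each term to a single factor, giving
\begin{equation*}
\langle C_{\vartheta,\delta}(\delta^{-1}t)P_{\delta}z_1, C_{\vartheta,\delta}(\delta^{-1}s)P_{\delta}z_2\rangle_{L^{2}(\Lambda_{\delta})} = \frac{1}{4}\sum_{\tau\in\{\pm(t-s),\,\pm(t+s)\}}\langle U_{\delta}(\tau)P_{\delta}z_1, P_{\delta}z_2\rangle_{L^{2}(\Lambda_{\delta})}.
\end{equation*}
Every summand with $\tau\neq0$ tends to $0$ as $\delta\to0$ by Proposition~\ref{result:approximate_riemann_lebesgue}, while for $\tau=0$ one has $U_{\delta}(0)=I$, so that $\langle P_{\delta}z_1, P_{\delta}z_2\rangle_{L^{2}(\Lambda_{\delta})}\to\langle z_1,z_2\rangle_{L^{2}(\mathbb{R}^{d})}$ by dominated convergence, since $\Lambda_{\delta}=\delta^{-1}\Lambda\nearrow\mathbb{R}^{d}$ and $|z_1 z_2|\in L^{1}(\mathbb{R}^{d})$.

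For the diagonal statement (with $t\neq0$), the two difference terms have $\tau=0$ and each converges to $\langle z_1,z_2\rangle_{L^{2}(\mathbb{R}^{d})}$, whereas the two sum terms have $\tau=\pm2t\neq0$ and vanish; collecting the four contributions yields the limit $\tfrac12\langle z_1,z_2\rangle_{L^{2}(\mathbb{R}^{d})}$. For the off-diagonal statement the difference terms carry $\tau=\pm(t-s)\neq0$ and the sum terms $\tau=\pm(t+s)$, so all four vanish and the expression tends to $0$ as soon as $|t|\neq|s|$; this is the case relevant to integrating the velocity covariance of Corollary~\ref{result:covariance_structure} over times of the same sign in $[0,T]$.

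The analytic substance is already encapsulated in Proposition~\ref{result:approximate_riemann_lebesgue}, so what remains is essentially bookkeeping, and the main points of care are twofold. First, the functional-calculus identities (unitarity, the group law, the passage from $\cos$ to exponentials) all live on the $\delta$-dependent Hilbert space $L^{2}(\Lambda_{\delta})$ with the $\delta$-dependent operator $A_{\vartheta,\delta}$, so they must be applied for each fixed $\delta$ before taking $\delta\to0$. Second, the genuinely nontrivial conclusions require $\tau\neq0$ throughout: the diagonal limit is $\tfrac12\langle z_1,z_2\rangle_{L^{2}(\mathbb{R}^{d})}$ rather than $\langle z_1,z_2\rangle_{L^{2}(\mathbb{R}^{d})}$ precisely because $t\neq0$ forces the sum terms to die, and the off-diagonal limit needs $|t|\neq|s|$ (not merely $t\neq s$) so that the $\tau=t+s$ term is also killed by the Riemann-Lebesgue effect.
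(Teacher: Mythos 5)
Your proposal is correct and takes essentially the same route as the paper's proof: Euler's formula writes each cosine as a half-sum of the unitaries $e^{\pm i\delta^{-1}t(-A_{\vartheta,\delta})^{1/2}}$, the adjoint/group-law manipulation collapses the product to $\tfrac{1}{4}\sum_{\tau\in\{\pm(t-s),\pm(t+s)\}}\langle U_{\delta}(\tau)P_{\delta}z_1,P_{\delta}z_2\rangle_{L^{2}(\Lambda_{\delta})}$, the nonzero-$\tau$ terms vanish by \Cref{result:approximate_riemann_lebesgue}, and the $\tau=0$ terms converge via $P_{\delta}z\rightarrow z$. Your bookkeeping is in fact slightly sharper than the paper's: you correctly observe that the diagonal limit needs $t\neq 0$ and the off-diagonal limit needs $|t|\neq|s|$ rather than merely $t\neq s$ (for $s=-t$ evenness of the cosine forces the limit $\tfrac{1}{2}\langle z_1,z_2\rangle_{L^{2}(\mathbb{R}^{d})}$), whereas the paper's proof asserts for $t\neq s$ that the group is evaluated only at non-zero times --- harmless in the paper's applications, where $t,s\in[0,T]$ have the same sign, but strictly an oversight that your version repairs.
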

\begin{proof}[Proof of \Cref{result:presentable_equipartition_of_energy}] 
We abbreviate by $R_{\delta}(t)=e^{it(-A_{\vartheta,\delta})^{1/2}}$ the unitary group on the complex Hilbert space $L^{2}(\Lambda_{\delta})$ for $t \in \mathbb{R}$. As in \eqref{eq:eulers_formula}, the operator cosine can be represented using $R_\delta(t)$ through
\begin{equation*}
C_{\vartheta, \delta}(\delta^{-1} t)P_{\delta}z=\frac{1}{2}(R_{\delta}(\delta^{-1}t) + R_{\delta}(-\delta^{-1}t))P_{\delta}z, \quad t \in \mathbb{R}.
\end{equation*}
Let $t,s \in \mathbb{R}\setminus \{0\}$ be arbitrary. Then, we observe
\begin{equation*}
\begin{aligned}
&\langle C_{\vartheta,\delta}(\delta^{-1}t) P_{\delta} z_1, C_{\vartheta,\delta}(\delta^{-1}s) P_{\delta} z_2\rangle_{L^{2}(\Lambda_{\delta})}\\
&= \frac{1}{4} \langle (R_{\delta}(\delta^{-1}t) + R_{\delta}(-\delta^{-1}t))P_{\delta}z_1, (R_{\delta}(\delta^{-1}s) + R_{\delta}(-\delta^{-1}s))P_{\delta}z_2\rangle_{L^{2}(\Lambda_{\delta})}\\
&=\frac{1}{4} \langle (R_{\delta}(\delta^{-1}s) + R_{\delta}(-\delta^{-1}s))^{*} \circ(R_{\delta}(\delta^{-1}t) + R_{\delta}(-\delta^{-1}t))P_{\delta}z_1, P_{\delta}z_2\rangle_{L^{2}(\Lambda_{\delta})}\\
&= \frac{1}{4}\langle (R_{\delta}(-\delta^{-1}s) + R_{\delta}(\delta^{-1}s)) \circ(R_{\delta}(\delta^{-1}t) + R_{\delta}(-\delta^{-1}t))P_{\delta}z_1, P_{\delta}z_2\rangle_{L^{2}(\Lambda_{\delta})}\\
&= \frac{1}{4}\Big( \langle R_{\delta}(\delta^{-1}(t-s))P_{\delta}z_1, P_{\delta}z_2\rangle_{L^{2}(\Lambda_{\delta})} + \langle R_{\delta}(\delta^{-1}(t+s))P_{\delta}z_1, P_{\delta}z_2\rangle_{L^{2}(\Lambda_{\delta})} \\
&\quad + \langle R_{\delta}(-\delta^{-1}(t+s))P_{\delta}z_1, P_{\delta}z_2\rangle_{L^{2}(\Lambda_{\delta})}+ \langle R_{\delta}(\delta^{-1}(s-t))P_{\delta}z_1, P_{\delta}z_2\rangle_{L^{2}(\Lambda_{\delta})}\Big). 
\end{aligned}
\end{equation*}
Notice that for $t \neq s$, the group $R_{\delta}(\cdot)$ is evaluated at non-zero times. Hence, by \Cref{result:approximate_riemann_lebesgue}, each summand converges to zero as $\delta \rightarrow 0$. If $t = s$, we have
\begin{equation*}
\begin{aligned}
&\langle C_{\vartheta,\delta}(\delta^{-1}t) P_{\delta} z_1, C_{\vartheta,\delta}(\delta^{-1}s) P_{\delta} z_2\rangle_{L^{2}(\Lambda_{\delta})} 
= \frac{1}{2} \langle P_{\delta}z_1, P_{\delta}z_2\rangle_{L^{2}(\Lambda_\delta)} \\
&\quad + \langle R_{\delta}(-2t\delta^{-1})P_{\delta}z_1, P_{\delta}z_2\rangle_{L^{2}(\Lambda_{\delta})} + \langle R_{\delta}(2t\delta^{-1})P_{\delta}z_1, P_{\delta}z_2\rangle_{L^{2}(\Lambda_{\delta})} \\&\rightarrow \frac{1}{2}\langle z_1, z_2\rangle_{L^{2}(\mathbb{R}^{d})}, \quad \delta \rightarrow 0, 
\end{aligned}
\end{equation*}
Thus, we have shown both of the desired convergences. 
\end{proof}
Similar results to \Cref{result:presentable_equipartition_of_energy} for the operator cosine are proved for the operator sine in \Cref{section:energy_results}. \par
The covariance structure of the stochastic wave equation is characterised using the operator cosine and sine functions, see \Cref{result:covariance_structure}.
The following example illustrates how the Riemann-Lebesgue lemma and the asymptotic behaviour of energy within the associated deterministic equations emerge naturally when analysing the covariance structure of the stochastic wave equation tested against localised functions. 
\begin{example}[Wave equation on the unbounded domain]
\label{example:unbounded_domain_main_example}
For some $\vartheta_0>0$ and a fixed time horizon $T>0$, consider the one-dimensional stochastic wave equation
\begin{equation}
\label{SOL1}
\partial^{2}_{tt}\underline{u}(t,x)=\vartheta_0 \partial_{xx}^{2}\underline{u}(t,x)+\mathscr{W}(t,x), \quad t \in (0,T], \quad x \in \mathbb{R},
\end{equation}
with zero initial conditions, driven by space-time white noise $\mathscr{W}$ on a complete filtered probability space $(\Omega, \mathcal{F}, (\mathcal{F}_t)_{t \in [0,T]}, \mathbb{P})$. We interpret the driving white noise as a centred Gaussian random field $(\mathscr{W}(A), A \in \mathcal{B}_b([0,T]\times \mathbb{R}))$ with the covariance structure $\mathbb{E}[\mathscr{W}(A)\mathscr{W}(B)]=\lambda_{[0,T]\times \mathbb{R}}(A \cap B)$, where $\mathcal{B}_b([0,T]\times \mathbb{R})$ denotes the bounded Borel subsets  and $\lambda_{[0,T]\times \mathbb{R}}$ the Lebesgue measure of $[0,T]\times \mathbb{R}$. In view of Walsh \cite{walshIntroductionStochasticPartial1986}
a continuous mild solution to \eqref{SOL1} exists as an $(\mathcal{F}_t)_{t \in [0,T]}$-predictable random field given by 
\begin{equation*}
\underline{u}(t,x)=\int_{0}^{t}\int_{\mathbb{R}}G_{t-s}(x-y)\mathscr{W}(\mathrm{d}s,\mathrm{d}y), \quad t \in[0,T], \quad x \in \mathbb{R},
\end{equation*}
where 
\begin{equation}
\label{eq:green_function}
G_{t}(x)\coloneqq \frac{1}{2\sqrt{\vartheta_0}}\mathbbm{1}(|x| \leq \sqrt{\vartheta_0} t), \quad x \in \mathbb{R},\quad  t \in [0,T],
\end{equation}
is the fundamental solution (Green's function) to the associated deterministic problem. The Fourier transform of the Green's function \eqref{eq:green_function} is given by
\begin{equation*}
\mathcal{F}(G_t)(\fv)=\frac{\sin(t \sqrt{\vartheta_0} |\fv|)}{\sqrt{\vartheta_0}|\fv|}, \quad t \in [0,T], \quad \fv \in \mathbb{R}\setminus \{0\}.
\end{equation*}
For some kernel $K \in C_c^{\infty}(\mathbb{R})$, we have
\begin{align}
&\mathbb{E}[\langle \underline{u}(t), \delta^{-2}(K'')_{\delta} \rangle_{L^{2}(\mathbb{R})} \langle \underline{u}(s),\delta^{-2}(K'')_{\delta}\rangle_{L^{2}(\mathbb{R})}] \nonumber\\
&= \delta^{-4}\int_{0}^{t \land s} \int_{\mathbb{R}} \langle G_{t-r}(\cdot - y), (K'')_{\delta} \rangle_{L^{2}(\mathbb{R})}\langle G_{s-r}(\cdot - y), (K'')_{\delta} \rangle_{L^{2}(\mathbb{R})} \mathrm{d}y \mathrm{d}r \nonumber\\
&=  \frac{\delta^{-4}}{2\pi}\int_{0}^{t \land s} \int_{\mathbb{R}} \frac{\sin((t-r)\sqrt{\vartheta_0}|\fv|)\sin((s-r)\sqrt{\vartheta_0}|\fv|)}{\vartheta_0|\fv|^{2}}|\mathcal{F}((K'')_{\delta})|^{2}(\fv) \mathrm{d}\fv\mathrm{d}r \nonumber\\
&= \label{eq:fourier_inner_integral} \frac{\delta^{-2}}{2 \pi \vartheta_0}\int_{0}^{t \land s}\int_{\mathbb{R}} \sin(\delta^{-1}(t-r)\sqrt{\vartheta_0}|\fv|)\sin(\delta^{-1}(s-r)\sqrt{\vartheta_0}|\fv|) |\mathcal{F}(K')|^{2}(\fv)\mathrm{d}\fv \mathrm{d}r
\end{align}
where we have used the stochastic Fubini theorem and Plancherel's identity. Note that rescaling in this way via the Fourier transform and the transformation theorem is analogous to \Cref{example:parametric_rescaling}. Note that $|\mathcal{F}(K')|^{2} \in L^{1}(\mathbb{R}^{d})$. We observe
\begin{equation}
\label{eq:post_fubini_expression}
\begin{aligned}
\int_{0}^{t \land s} &\sin(\delta^{-1}(t-r)\sqrt{\vartheta_0}|\fv|)\sin(\delta^{-1}(s-r)\sqrt{\vartheta_0}|\fv|)\mathrm{d}r \\ 
&= \frac{(t \land s)}{2}\cos(\delta ^{-1}\left(t-s\right) \sqrt{\vartheta_0}\left|\fv\right|)\\
&+ \delta\dfrac{\sin\left(\delta^{-1}|t-s|\sqrt{\vartheta_0}\left|\fv\right|\right)-\sin\left(\delta^{-1}\left(t+s\right)\sqrt{\vartheta_0}\left|\fv\right|\right)}{4 \sqrt{\vartheta_0}\left|\fv\right|}. \\
\end{aligned}
\end{equation}
Thus, if $t\neq s$ the generalised Riemann-Lebesgue lemma in \citet{kahaneGeneralizationsRiemannLebesgueCantorLebesgue1980} applied to the inner integral in \eqref{eq:fourier_inner_integral}, shows 
\begin{equation*}
\delta^{2}\mathbb{E}[\langle \underline{u}(t), \delta^{-2}(K'')_{\delta} \rangle_{L^{2}(\mathbb{R})} \langle \underline{u}(s),\delta^{-2}(K'')_{\delta}\rangle_{L^{2}(\mathbb{R})}] \rightarrow 0.
\end{equation*}
Otherwise, if $t=s$ Fubini's theorem yields with \eqref{eq:post_fubini_expression}:
\begin{equation*}
\delta^{2}\mathbb{E}[\langle \underline{u}(t), \delta^{-2}(K'')_{\delta} \rangle_{L^{2}(\mathbb{R})} \langle \underline{u}(s),\delta^{-2}(K'')_{\delta}\rangle_{L^{2}(\mathbb{R})}]  \rightarrow \frac{t}{2 \vartheta_0} \Vert K' \Vert_{L^{2}(\mathbb{R})}^{2}. 
\end{equation*}
Consequently, the observed Fisher information satisfies
\begin{equation}
\label{eq:exp_obs_fisher_unbounded}
\delta^{2}\mathbb{E}\left(\int_{0}^{T}\langle \underline{u}(t), \delta^{-2}(K'')_{\delta} \rangle_{L^{2}(\mathbb{R})}^{2}\mathrm{d}t\right) \rightarrow \frac{T^{2}\Vert K' \Vert_{L^{2}(\mathbb{R})}^{2}}{4 \vartheta_0}, \quad \delta \rightarrow 0,
\end{equation}
and 
\begin{equation}
\label{eq:var_obs_fisher_unbounded}
\mathrm{Var}\left(\delta^{2}\int_{0}^{T}\langle \underline{u}(t), \delta^{-2}(K'')_{\delta} \rangle_{L^{2}(\mathbb{R})}^{2}\mathrm{d}t \right)\rightarrow 0, \quad \delta \rightarrow 0.
\end{equation}
The convergences \eqref{eq:exp_obs_fisher_unbounded} and \eqref{eq:var_obs_fisher_unbounded} are already sufficient for obtaining a central limit theorem for an augmented MLE in the parametric case on an unbounded domain. 
\end{example}
In the next section, we formalise the estimation procedure and extend \Cref{example:unbounded_domain_main_example} to a bounded spatial domain and the spatially varying wave-speed $\vartheta$. 

\section{Estimating the wave speed}
\label{section:asymptotics_augmented_MLE_main}
Based on \citet{altmeyerNonparametricEstimationLinear2021}, we will study an augmented maximum likelihood estimator $\hat{\vartheta}_{\delta}$, which was adapted to the setting of the stochastic wave equation. In contrast to the stochastic heat equation, the augmented MLE will involve both local measurements of the amplitude and the velocity. \par
We begin by stating the following regularity assumption. 
\begin{assumptionKernel}{z}
\label{asskernel}
Suppose that $z \in H^{2}(\mathbb{R}^{d})$ has compact support in $\Lambda_\delta$  for some $\delta>0$ and satisfies $\int_{\mathbb{R}^{d}}z(x)\mathrm{d}x=0$ for $d=1$. 
\end{assumptionKernel}
Fix some kernel $K$ satisfying \asskernel{}. By inserting the localised kernel $K_{\delta}$ into the dynamic representation of the weak solution from \Cref{result:Gaussian_process_solution}, we obtain the real-valued processes $(u_{\delta}(t), t \in [0,T])$,  $(u_{\delta}^{\Delta}(t), t \in [0,T])$ and $(v_{\delta}(t), t \in [0,T])$, called local measurements:
\begin{equation}
\label{eq:local_measurements_definition}
\begin{aligned}
u_\delta(t)&\coloneqq \langle u(t),K_{\delta} \rangle_{L^{2}(\Lambda)}, \quad v_{\delta}(t)\coloneqq \langle v(t), K_{\delta}\rangle,\\
u^{\Delta}_{\delta}(t)&\coloneqq \langle u(t),\delta^{-2}(\Delta K)_{\delta} \rangle_{L^{2}(\Lambda)}, \quad t \in [0,T].
\end{aligned}
\end{equation}
We observe $u_{\delta}(t)$ continuously in time $t \in [0,T]$ for a known kernel $K$ chosen by the statistician. By \Cref{result:Gaussian_process_solution}, the local measurements \eqref{eq:local_measurements_definition} satisfy the dynamic
\begin{equation}
\label{eq:dynamic_equations_local_measurements}
u_{\delta}(t)=\int_{0}^{t}v_\delta(s)\mathrm{d}s, \quad v_\delta(t)=\int_{0}^{t}\langle u(s),A_{\vartheta}K_{\delta} \rangle_{L^{2}(\Lambda)}\mathrm{d}s+ \Vert K \Vert_{L^{2}(\mathbb{R}^{d})} \overline{W}(t),
\end{equation}
where $(\overline{W}(t), t \in [0,T])$ defined through $\overline{W}(t)=\langle W(t), K_{\delta}\rangle_{L^{2}(\Lambda)}/ \Vert K_\delta \Vert_{L^{2}(\Lambda)}$ is a scalar Brownian motion and $t \in [0,T]$. 
\begin{remark}
Note that the local observations of the velocity, i.e.\ $(v_{\delta}(t), t \in [0,T])$ do not need to be observed as $(v_{\delta}(t), t \in [0,T])$ can be recovered as a limit of the corresponding difference quotient of $(u_{\delta}(t),t \in [0,T])$ because $u_\delta(t)=\int_{0}^{t}v_\delta(s)\mathrm{d}s$. Furthermore, we can approximate the measurements $(u_{\delta}^{\Delta}(t), t \in [0,T])$ based on $(u_{\delta}(t),t \in [0,T])$ observed in a neighbourhood of zero, as discussed in the introduction of the local observations in \citet*{altmeyerNonparametricEstimationLinear2021}.
\end{remark} 
In the deterministic parametric situation $A_{\vartheta}=\vartheta \Delta$ of \eqref{eq:dynamic_equations_local_measurements} without any noise, the parameter could be recovered through the local measurements satisfying $\dot{v}_{\delta}(t)=\vartheta u_{\delta}^{\Delta}(t)$ for $t \in [0,T]$. Thus, in the situation with noise, a least squares ansatz suggests the heuristic minimisation problem
\begin{equation*}
\hat{\vartheta}_{\delta}=\mathrm{argmin}_{\vartheta} \int_{0}^{T}(\dot{v}_{\delta}(t)- \vartheta u_{\delta}^{\Delta}(t))^{2}\mathrm{d}t,
\end{equation*}
leading via the corresponding normal equations to the estimator
\begin{equation}
\label{eq:augmented_MLE}
\hat{\vartheta}_\delta \coloneqq \frac{\int_{0}^{T}u_\delta^{\Delta}(t)\mathrm{d}v_{\delta}(t)}{\int_{0}^{T}(u_{\delta}^{\Delta}(t))^{2}\mathrm{d}t}, \quad \delta > 0.
\end{equation}
The estimator can also be motivated by the Girsanov-type arguments as outlined in \citet[Section 4.1]{altmeyerNonparametricEstimationLinear2021} and will therefore also be called augmented MLE. Using the dynamic representation \eqref{eq:dynamic_equations_local_measurements}, the error decomposition for the augmented MLE is given by
\begin{equation}
\label{eq:error_decomposition}
\hat{\vartheta}_{\delta} - \vartheta(0) = \Vert K \Vert_{L^{2}(\mathbb{R}^{d})} {I}_{\delta}^{-1}{M}_{\delta}+ I_{\delta}^{-1}R_{\delta},
\end{equation}
where we introduce the following notations.
\begin{itemize}
\item \textit{Observed Fisher information}: We define the observed Fisher information as
\begin{equation}
\label{eq:observed_fisher_information}
I_{\delta} \coloneqq \int_{0}^{T}u_{\delta}^{\Delta}(t)^{2}\mathrm{d}t, \quad \delta>0.
\end{equation}
\item \textit{Martingale part}: The martingale part of $\hat{\vartheta}_{\delta}$ is given by
\begin{equation*}
M_{\delta}\coloneqq \int_{0}^{T}u_{\delta}^{\Delta}(t) \mathrm{d}\overline{W}(t), \quad \delta>0.
\end{equation*}
\item \textit{Remaining bias:} The remaining bias is given by
\begin{equation}
\label{eq:remaining_bias}
R_{\delta} \coloneqq \int_{0}^{T}u_{\delta}^{\Delta}(t)\langle u(t),(A_{\vartheta}-\vartheta(0)\Delta)K_{\delta} \rangle_{L^{2}(\Lambda)} \mathrm{d}t. 
\end{equation}
\end{itemize}
Next, we show that the observed Fisher information and the remaining bias converge to deterministic constants. All propositions are proved in \Cref{section:asymptotics_augmentedMLE}.
\begin{proposition}[Asymptotics for the observed Fisher information]
\label{result:asymptotics_observed_fisher_information}
Grant Assumptions \asskernel{} and \assini{}. The expectation and covariance of the observed Fisher information satisfy
\begin{align*}
\mathbb{E}[\delta^{2}I_{\delta}] &\rightarrow \frac{T^{2} }{4 \vartheta(0)}\Vert \nabla K \Vert_{L^{2}(\mathbb{R}^{d})}^{2}, \quad
\quad\mathrm{Var}(\delta^{2}I_{\delta}) \rightarrow 0, \quad \delta \rightarrow 0. 
\end{align*}
\end{proposition}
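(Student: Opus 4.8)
The plan is to split the measurement $u_\delta^\Delta(t)=\langle u(t),\delta^{-2}(\Delta K)_\delta\rangle_{L^2(\Lambda)}=\langle u(t),\Delta K_\delta\rangle_{L^2(\Lambda)}$ (using $\delta^{-2}(\Delta K)_\delta=\Delta K_\delta$) into its deterministic mean $m_\delta(t):=\langle C_\vartheta(t)u_0+S_\vartheta(t)v_0,\Delta K_\delta\rangle_{L^2(\Lambda)}$, coming from the initial conditions, and a centred Gaussian part whose covariance is governed by \Cref{result:covariance_structure}. Writing $X_t:=u_\delta^\Delta(t)$ and $c_\delta(t,s):=\mathrm{Cov}(X_t,X_s)$, this gives $\mathbb{E}[X_t^2]=m_\delta(t)^2+c_\delta(t,t)$, so that $\mathbb{E}[\delta^2 I_\delta]=\delta^2\int_0^T m_\delta(t)^2\,\mathrm{d}t+\delta^2\int_0^T c_\delta(t,t)\,\mathrm{d}t$, and I would treat the two contributions separately.

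First I would extract the main term from the stochastic part. Inserting $z=z'=\Delta K_\delta$ into \Cref{result:covariance_structure} and applying the sine rescaling of \Cref{result:rescaling_trigonometric_operator_families}\eqref{enum:rescaling}, one obtains
\[
\delta^2 c_\delta(t,s)=\int_0^{t\wedge s}\langle S_{\vartheta,\delta}(\delta^{-1}(t-r))\Delta K, S_{\vartheta,\delta}(\delta^{-1}(s-r))\Delta K\rangle_{L^2(\Lambda_\delta)}\,\mathrm{d}r,
\]
where $P_\delta\Delta K=\Delta K$ because $\Delta K$ is compactly supported in $\Lambda_\delta$ by \asskernel{}. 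For $s=t$ the integrand converges, by the operator-sine counterpart of \Cref{result:presentable_equipartition_of_energy} established in \Cref{section:energy_results}, to $\tfrac{1}{2\vartheta(0)}\|\nabla K\|_{L^2(\mathbb{R}^d)}^2$ for every $r<t$. The decisive uniform bound enabling dominated convergence is the energy estimate $\|S_{\vartheta,\delta}(\tau)\Delta K\|_{L^2(\Lambda_\delta)}\le\|(-A_{\vartheta,\delta})^{-1/2}\Delta K\|_{L^2(\Lambda_\delta)}$, valid for all $\tau$ by the functional calculus since $\sin^2\le 1$; its right-hand side converges to $\vartheta(0)^{-1}\|\nabla K\|_{L^2(\mathbb{R}^d)}^2$ and is therefore bounded in $\delta$. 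Dominated convergence then yields $\delta^2 c_\delta(t,t)\to\tfrac{t}{2\vartheta(0)}\|\nabla K\|_{L^2(\mathbb{R}^d)}^2$, and integrating over $t\in[0,T]$ produces the claimed constant $\tfrac{T^2}{4\vartheta(0)}\|\nabla K\|_{L^2(\mathbb{R}^d)}^2$.

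Next I would show the deterministic contribution is negligible. Integrating by parts moves the Laplacian onto the solution, $m_\delta(t)=\langle\Delta u_{\mathrm{det}}(t),K_\delta\rangle_{L^2(\Lambda)}$ with $u_{\mathrm{det}}(t)=C_\vartheta(t)u_0+S_\vartheta(t)v_0\in H^2(\Lambda)\cap H_0^1(\Lambda)$, bounded uniformly in $t$ by the mapping properties of the operator cosine and sine families together with \assini{}; no boundary terms appear since $K_\delta$ is compactly supported in $\Lambda$. Cauchy--Schwarz and $\|K_\delta\|_{L^2(\Lambda)}=\|K\|_{L^2(\mathbb{R}^d)}$ give $|m_\delta(t)|\le\|\Delta u_{\mathrm{det}}(t)\|_{L^2(\Lambda)}\|K\|_{L^2(\mathbb{R}^d)}$, uniformly bounded in $t$, so that $\delta^2\int_0^T m_\delta(t)^2\,\mathrm{d}t=O(\delta^2)\to 0$. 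This completes the expectation claim.

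For the variance I would use that $(X_t,X_s)$ is Gaussian, whence $\mathrm{Cov}(X_t^2,X_s^2)=2c_\delta(t,s)^2+4m_\delta(t)m_\delta(s)c_\delta(t,s)$ and
\[
\mathrm{Var}(\delta^2 I_\delta)=\int_0^T\!\!\int_0^T 2\big(\delta^2 c_\delta(t,s)\big)^2\,\mathrm{d}t\,\mathrm{d}s+\delta^2\int_0^T\!\!\int_0^T 4\,m_\delta(t)m_\delta(s)\big(\delta^2 c_\delta(t,s)\big)\,\mathrm{d}t\,\mathrm{d}s.
\]
The cross term is $O(\delta^2)$ because $m_\delta$ and $\delta^2 c_\delta$ are uniformly bounded (the latter again by the energy estimate above). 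The first term is where the Riemann--Lebesgue effect is essential: for $t\ne s$ the two sine operators in $\delta^2 c_\delta(t,s)$ are evaluated at arguments differing by $\delta^{-1}(t-s)\to\infty$, so the off-diagonal sine asymptotics of \Cref{section:energy_results} (the $t\ne s$ analogue of \Cref{result:presentable_equipartition_of_energy}) force $\delta^2 c_\delta(t,s)\to 0$ for almost every $(t,s)$; combined with the same uniform bound, dominated convergence gives $\int\int(\delta^2 c_\delta)^2\,\mathrm{d}t\,\mathrm{d}s\to 0$. The main obstacle is thus the off-diagonal control of the rescaled operator sine on the growing domain $\Lambda_\delta$, where the asymptotic Riemann--Lebesgue property is exactly what annihilates the covariances between distinct times; the diagonal equipartition supplies the nontrivial expectation limit, and both are inherited from the energetic analysis of \Cref{section:asymptotic_energetic_behaviour} and \Cref{section:energy_results}.
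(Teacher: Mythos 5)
Your proposal is correct and follows essentially the same route as the paper: the same decomposition of $u_\delta^\Delta$ into the centred part plus the deterministic initial-condition mean (the paper's $\locu+\locv$), the same sine-rescaled covariance representation, the asymptotic equipartition on the diagonal and slow-fast orthogonality off the diagonal from \Cref{result:asymptotics_emerging_energetic_expressions}, the uniform dominant $\Vert S_{\vartheta,\delta}(\delta^{-1}\tau)\Delta K\Vert \le \Vert(-A_{\vartheta,\delta})^{-1/2}\Delta K\Vert$ from \Cref{result:uniform_upperbounds_operator_sine}, and the Gaussian second-moment (Wick-type) identity for the variance. The only deviation is your treatment of the mean term: you integrate by parts with the flat Laplacian and claim a uniform-in-$t$ bound on $\Vert \Delta(C_\vartheta(t)u_0+S_\vartheta(t)v_0)\Vert_{L^2(\Lambda)}$, whereas the paper (\Cref{result:boundedness_initial_object}) rescales onto $\Lambda_\delta$ and uses the uniform bounds on $(-A_{\vartheta,\delta})^{-1}\Delta K$ and $(-A_{\vartheta,\delta})^{-1/2}\Delta K$ from \Cref{result:fractional_and_inverse_limits}; your variant is fine but the uniform $H^{2}$-bound you gesture at with ``mapping properties'' does require an argument — commute the trigonometric families with the generator ($A_\vartheta C_\vartheta(t)u_0 = C_\vartheta(t)A_\vartheta u_0$, $A_\vartheta S_\vartheta(t)v_0 = -\sin(t(-A_\vartheta)^{1/2})(-A_\vartheta)^{1/2}v_0$, using \assini{}) and then invoke elliptic regularity for the weighted Dirichlet problem as in \Cref{remark:regularity} to pass from $\Vert A_\vartheta\,\cdot\,\Vert_{L^2(\Lambda)}$ to $\Vert\,\cdot\,\Vert_{H^2(\Lambda)}$.
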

\begin{proof}[Proof of \Cref{result:asymptotics_observed_fisher_information}] The result is proved on page \pageref{proof:obs_fisher}.
\end{proof}
\begin{remark}
Consider the second-order abstract Cauchy problem
\begin{equation*}
\label{eq:abstract_cauchy_problem}
w(t)=A_{\vartheta,\delta}w(t), \quad  w(0)=0, \quad \dot{w}(0)=\Delta K, \quad t \in \mathbb{R}.
\end{equation*}
As the first-order initial condition is zero, the associated total energy in $H_0^{-1}(\Lambda)$ from \Cref{remark:notions_of_energy} corresponds to the total kinetic energy in $H_0^{-1}(\Lambda)$, satisfying
\begin{equation*}
\tilde{\mathscr{E}}_{A_{\vartheta,\delta}}= \Vert (-A_{\vartheta,\delta})^{-1/2} \Delta K \Vert^{2}_{L^{2}(\Lambda_{\delta})} \rightarrow \frac{1}{\vartheta(0)} \Vert \nabla K \Vert^{2}_{L^{2}(\mathbb{R}^{d})}, \quad \delta \rightarrow 0,
\end{equation*}
where the convergence follows from \Cref{result:fractional_and_inverse_limits} as $K$ satisfies \asskernel{}. 
Thus, the limiting expectation of the observed Fisher information scaled by $\delta^{2}$ is proportional to the limiting total kinetic energy in $H^{-1}(\mathbb{R}^{d})$ within the Cauchy problem \eqref{eq:abstract_cauchy_problem}.
\end{remark}
\begin{proposition}[Asymptotics for the remaining bias]
\label{result:remaining_bias}
Grant Assumptions \asskernel{} and \assini{}. The remaining bias satisfies
\begin{equation}
\label{eq:asymptotic_bias}
\delta^{-1}(I_{\delta})^{-1}R_{\delta} \xrightarrow{\mathbb{P}}  \frac{\langle \nabla K, \nabla\beta^{(0)}\rangle_{L^{2}(\mathbb{R}^{d})}}{\Vert \nabla K \Vert^{2}_{L^{2}(\mathbb{R}^{d})}}, \quad \delta \rightarrow 0,
\end{equation}
where $\beta^{(0)}$ is defined through \eqref{eq:beta_delta}.
\end{proposition}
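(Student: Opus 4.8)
The plan is to factor out the already-understood observed Fisher information and reduce everything to the first moment of $\delta R_{\delta}$. By \Cref{result:asymptotics_observed_fisher_information} together with Chebyshev's inequality, $\delta^{2}I_{\delta}\xrightarrow{\mathbb{P}}\frac{T^{2}}{4\vartheta(0)}\Vert\nabla K\Vert_{L^{2}(\mathbb{R}^{d})}^{2}=:c_{I}>0$. Writing
\begin{equation*}
\delta^{-1}I_{\delta}^{-1}R_{\delta}=\frac{\delta R_{\delta}}{\delta^{2}I_{\delta}},
\end{equation*}
Slutsky's theorem reduces the proposition to proving $\delta R_{\delta}\xrightarrow{\mathbb{P}}\frac{T^{2}}{4\vartheta(0)}\langle\nabla K,\nabla\beta^{(0)}\rangle_{L^{2}(\mathbb{R}^{d})}=:L$, since the ratio then converges to $L/c_{I}$, which is exactly the right-hand side of \eqref{eq:asymptotic_bias}. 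As in the treatment of $I_{\delta}$, I would establish this convergence in probability by showing $\mathbb{E}[\delta R_{\delta}]\to L$ and $\mathrm{Var}(\delta R_{\delta})\to 0$.

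For the first moment, I first rewrite the mismatch operator at the rescaled level. By scaling identity (i) of \Cref{result:rescaling_trigonometric_operator_families}, $(A_{\vartheta}-\vartheta(0)\Delta)K_{\delta}=\delta^{-1}(\psi_{\delta})_{\delta}$ with $\psi_{\delta}:=\delta^{-1}(A_{\vartheta,\delta}-\vartheta(0)\Delta)K$. A first-order Taylor expansion of $\vartheta$ about $0$ shows that, in $L^{2}(\Lambda_{\delta})$, $\psi_{\delta}$ converges to the profile $\psi_{0}(x)=\nabla\vartheta(0)\cdot\nabla K(x)+(\nabla\vartheta(0)\cdot x)\Delta K(x)$ defining $\beta^{(0)}$ through \eqref{eq:beta_delta}, the Taylor remainder being $O(\delta^{\alpha})$ precisely because of the Hölder regularity $\vartheta\in C^{1+\alpha}$ granted by \assth{}. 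Since both factors in the integrand of $R_{\delta}$ are centred Gaussian, $\mathbb{E}[R_{\delta}]$ is the time integral of their covariance, which I evaluate with \Cref{result:covariance_structure}; applying the sine scaling (ii) of \Cref{result:rescaling_trigonometric_operator_families} and substituting $\sigma=\delta^{-1}(t-r)$ collapses the $\delta$-powers and yields
\begin{equation*}
\mathbb{E}[\delta R_{\delta}]=\delta\int_{0}^{T}\int_{0}^{\delta^{-1}t}\langle S_{\vartheta,\delta}(\sigma)\Delta K,\, S_{\vartheta,\delta}(\sigma)\psi_{\delta}\rangle_{L^{2}(\Lambda_{\delta})}\,\mathrm{d}\sigma\,\mathrm{d}t.
\end{equation*}
The operator-sine analogue of \Cref{result:presentable_equipartition_of_energy}, driven by the approximate Riemann-Lebesgue property of \Cref{result:approximate_riemann_lebesgue}, gives the equipartition limit $\langle S_{\vartheta,\delta}(\sigma)\Delta K,S_{\vartheta,\delta}(\sigma)\psi_{\delta}\rangle\to\frac{1}{2\vartheta(0)}\langle\nabla K,\nabla\beta^{(0)}\rangle$ for large $\sigma$; a Cesàro argument then turns $\delta\int_{0}^{\delta^{-1}t}$ into multiplication by $t$, and $\int_{0}^{T}t\,\mathrm{d}t=T^{2}/2$ produces the hyperbolic $T^{2}$-dependence, so that $\mathbb{E}[\delta R_{\delta}]\to L$.

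For the second moment, $\delta R_{\delta}$ is a quadratic functional of the centred Gaussian amplitude field, so $\mathrm{Var}(\delta R_{\delta})$ is given by Isserlis' (Wick's) formula as a double time integral of products of the covariances from \Cref{result:covariance_structure}. I would bound these by Cauchy--Schwarz and the operator-sine estimates already used for $I_{\delta}$, in particular the uniform bound $\sup_{0<\delta\le1}\sup_{0\le\tau\le T}\Vert S_{\vartheta,\delta}(\tau)z\Vert_{L^{2}(\Lambda_{\delta})}\le T\Vert z\Vert_{L^{2}(\mathbb{R}^{d})}$ from the proof of \Cref{result:scaling_limit}, together with $\mathrm{Var}(\delta^{2}I_{\delta})\to 0$, to conclude $\mathrm{Var}(\delta R_{\delta})\to 0$; this mirrors the variance computation already carried out in \Cref{result:asymptotics_observed_fisher_information}. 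Combining the two moments gives $\delta R_{\delta}\xrightarrow{\mathbb{P}}L$ and hence the claim.

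I expect the main obstacle to be the joint passage to the limit in the inner $\sigma$-integral of the first moment: the integrand $\langle S_{\vartheta,\delta}(\sigma)\Delta K,S_{\vartheta,\delta}(\sigma)\psi_{\delta}\rangle$ depends on $\delta$ both through the growing domain $\Lambda_{\delta}$ (hence through $S_{\vartheta,\delta}$) and through $\psi_{\delta}$, while the upper limit $\delta^{-1}t$ simultaneously tends to infinity. Controlling this double limit requires combining the approximate Riemann-Lebesgue property \Cref{result:approximate_riemann_lebesgue} with the $L^{2}$-convergence $\psi_{\delta}\to\psi_{0}$ and the inverse-operator convergence $\Vert(-A_{\vartheta,\delta})^{-1/2}\Delta K\Vert_{L^{2}(\Lambda_{\delta})}\to\vartheta(0)^{-1/2}\Vert\nabla K\Vert_{L^{2}(\mathbb{R}^{d})}$ of \Cref{result:fractional_and_inverse_limits}, and then exhibiting a $\sigma$- and $\delta$-uniform dominating function so that dominated convergence applies across the outer time integral. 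The second delicate point is keeping the Taylor remainder in $\psi_{\delta}$ uniformly negligible, which is exactly where the sharper regularity demanded for $d=1$ in \assth{} is used.
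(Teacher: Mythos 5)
There is a genuine gap: your proposal never addresses the initial conditions. The proposition is stated under \assini{}, which permits non-zero $u_0 \in H_0^{1}(\Lambda)\cap H^{2}(\Lambda)$ and $v_0 \in H_0^{1}(\Lambda)$; then $u(t)$ is \emph{not} centred, your premise that ``both factors in the integrand of $R_{\delta}$ are centred Gaussian'' fails, and your two-moment computation only proves the claim for the zero-initial-condition quantity $\tilde{R}_{\delta}$. The paper's proof spends its Steps 2--5 on exactly this point: it decomposes $\delta R_{\delta}=\delta\tilde{R}_{\delta}+V_{1,\delta}+V_{2,\delta}+V_{3,\delta}$, with deterministic and cross terms built from $\locu$ and $\locv$, and kills them with a fractional-power rescaling for which your proposal has no substitute: choosing $1/2>\gamma>3/4-\alpha/2$, one writes $\delta^{-1}\langle C_{\vartheta}(t)u_0,\beta^{(\delta)}_{\delta}\rangle_{L^{2}(\Lambda)}=\delta^{1-2\gamma}\langle((-A_{\vartheta})^{1-\gamma}u_0)_{\delta^{-1}},\,C_{\vartheta,\delta}(\delta^{-1}t)(-A_{\vartheta,\delta})^{-1+\gamma}\beta^{(\delta)}\rangle_{L^{2}(\Lambda_{\delta})}$, where $1-2\gamma>0$ supplies the decay while \Cref{result:fractional_and_inverse_limits}\ref{enum_limits:3} (valid only for $\gamma>1-d/4-\alpha/2$) keeps the second factor bounded; the admissible window for $\gamma$ is nonempty precisely because of \assth{}. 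Without this (or an equivalent) device, the initial-condition contributions are merely $O(1)$, not $o(1)$, so this is a missing idea rather than a routine omission — and it shows that the sharpened regularity for $d=1$ is consumed here too, not only in the Taylor remainder of $\beta^{(\delta)}$ as you suggest.

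For the zero-initial-condition core, your route is essentially the paper's: the same Slutsky reduction, the same Wick/covariance representations (\Cref{result:variance_and_covariance_representations_remaining_bias}), and your substituted integral $\delta\int_{0}^{T}\int_{0}^{\delta^{-1}t}\langle S_{\vartheta,\delta}(\sigma)\Delta K,S_{\vartheta,\delta}(\sigma)\beta^{(\delta)}\rangle\,\mathrm{d}\sigma\,\mathrm{d}t$ is \eqref{eq:expectation_representation_remaining_bias} in disguise, so your ``Cesàro argument'' is exactly the fixed-time equipartition limit \eqref{eq:emerging_equipartition_2} plus dominated convergence. Two corrections there. First, the dominating function must come from $\Vert S_{\vartheta,\delta}(\delta^{-1}r)z\Vert_{L^{2}(\Lambda_{\delta})}\le\Vert(-A_{\vartheta,\delta})^{-1/2}z\Vert_{L^{2}(\Lambda_{\delta})}$ (\Cref{result:uniform_upperbounds_operator_sine}); the bound $T\Vert z\Vert_{L^{2}(\mathbb{R}^{d})}$ you quote from the proof of \Cref{result:scaling_limit} turns into $\delta^{-1}T\Vert z\Vert$ at the dilated times and is useless. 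Second, $L^{2}$-convergence of $\psi_{\delta}=\beta^{(\delta)}$ does not suffice: the equipartition for the sine needs convergence of $(-A_{\vartheta,\delta})^{-1/2}\beta^{(\delta)}$, and $(-A_{\vartheta,\delta})^{-1/2}$ is not uniformly bounded on the growing domains, which is why the paper routes this through the $L^{1}\cap L^{2}$ approximation rate \asskernelapprox[$\beta^{\delta}$]{$z$} and the hypercontractive semigroup bounds of \Cref{result:fractional_and_inverse_limits}. Your variance shortcut via $\mathrm{Var}(\delta^{2}I_{\delta})\to0$ can in fact be made rigorous (Hilbert--Schmidt bounds on the cross-covariance kernel, plus \Cref{result:fractional_and_inverse_limits}\ref{enum_bounds:5}) and would mildly simplify the paper's appeal to the slow-fast orthogonality \eqref{eq:emerging_orthogonality_2}, but as written it is only a sketch.
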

\begin{proof}[Proof of \Cref{result:remaining_bias}] The result is proved on page \pageref{proof:bias}. 
\end{proof}
\noindent Note that in the parametric case $\vartheta \equiv \vartheta_0 > 0$, the bias term is just zero, and the asymptotic bias derived through \eqref{eq:asymptotic_bias} is also zero. 
\begin{theorem}[Asymptotic normality of the augmented MLE] Grant Assumptions \asskernel{} and  \assini{}. Then, the augmented MLE \eqref{eq:augmented_MLE} satisfies
\label{result:asymptotic_normality}
\begin{equation*}
\delta^{-1}(\hat{\vartheta}_\delta - \vartheta(0)) \xrightarrow{d} \mathcal{N}\left(  \frac{\langle \nabla K, \nabla \beta^{(0)}\rangle_{L^{2}(\mathbb{R}^{d})}}{\Vert \nabla K \Vert^{2}_{L^{2}(\mathbb{R}^{d})}}, \frac{ 4 \vartheta(0)\Vert K \Vert^{2}_{L^{2}(\mathbb{R}^{d})}}{T^{2} \Vert \nabla K \Vert^{2}_{L^{2}(\mathbb{R}^{d})}} \right), \quad \delta \rightarrow 0,
\end{equation*}
where $\beta^{(0)}$ is defined through \eqref{eq:beta_delta}.
\end{theorem}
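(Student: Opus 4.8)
The plan is to combine the error decomposition \eqref{eq:error_decomposition} with \Cref{result:asymptotics_observed_fisher_information} and \Cref{result:remaining_bias}, treating the martingale and bias contributions separately before concluding via Slutsky's theorem. Scaling \eqref{eq:error_decomposition} by $\delta^{-1}$ gives
\begin{equation*}
\delta^{-1}(\hat{\vartheta}_{\delta} - \vartheta(0)) = \Vert K \Vert_{L^{2}(\mathbb{R}^{d})}\, \delta^{-1} I_{\delta}^{-1} M_{\delta} + \delta^{-1} I_{\delta}^{-1} R_{\delta}.
\end{equation*}
The second summand converges in probability to $\langle \nabla K, \nabla \beta^{(0)}\rangle_{L^{2}(\mathbb{R}^{d})} / \Vert \nabla K \Vert^{2}_{L^{2}(\mathbb{R}^{d})}$ directly by \Cref{result:remaining_bias}, which furnishes the asymptotic mean. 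It therefore remains to show that the first summand converges in distribution to a centred normal with the stated variance.

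For the martingale term I would first observe that $M_{\delta}=\int_{0}^{T}u_{\delta}^{\Delta}(t)\,\mathrm{d}\overline{W}(t)$ is a continuous martingale with quadratic variation $\langle M_{\delta}\rangle_{T}=I_{\delta}$. Abbreviating $\gamma \coloneqq \frac{T^{2}}{4\vartheta(0)}\Vert \nabla K \Vert^{2}_{L^{2}(\mathbb{R}^{d})}$, \Cref{result:asymptotics_observed_fisher_information} yields $\mathbb{E}[\delta^{2}I_{\delta}]\rightarrow \gamma$ and $\mathrm{Var}(\delta^{2}I_{\delta})\rightarrow 0$, so that $\delta^{2}I_{\delta}\xrightarrow{L^{2}}\gamma$ and in particular $\delta^{2}I_{\delta}\xrightarrow{\mathbb{P}}\gamma$. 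Consequently the normalised quadratic variation satisfies $I_{\delta}/\mathbb{E}[I_{\delta}] = \delta^{2}I_{\delta}/\mathbb{E}[\delta^{2}I_{\delta}]\xrightarrow{\mathbb{P}}1$. I would then invoke the central limit theorem for continuous martingales: representing $M_{\delta}=B(I_{\delta})$ through the Dambis--Dubins--Schwarz time change for a Brownian motion $B$ and rescaling, $M_{\delta}/\sqrt{\mathbb{E}[I_{\delta}]}$ equals a standard Brownian motion evaluated at $I_{\delta}/\mathbb{E}[I_{\delta}]$, whence $M_{\delta}/\sqrt{\mathbb{E}[I_{\delta}]}\xrightarrow{d}\mathcal{N}(0,1)$ by path continuity.

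It then remains to assemble the scaling factors. Since $\delta\sqrt{\mathbb{E}[I_{\delta}]}\rightarrow\sqrt{\gamma}$ and $\delta^{2}I_{\delta}\xrightarrow{\mathbb{P}}\gamma$, I would write
\begin{equation*}
\Vert K \Vert_{L^{2}(\mathbb{R}^{d})}\,\delta^{-1}I_{\delta}^{-1}M_{\delta} = \Vert K \Vert_{L^{2}(\mathbb{R}^{d})}\,\frac{M_{\delta}}{\sqrt{\mathbb{E}[I_{\delta}]}}\cdot\frac{\delta\sqrt{\mathbb{E}[I_{\delta}]}}{\delta^{2}I_{\delta}},
\end{equation*}
where the last factor converges in probability to $\sqrt{\gamma}/\gamma = \gamma^{-1/2}$. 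By Slutsky's theorem the martingale term converges in distribution to $\mathcal{N}(0, \Vert K \Vert^{2}_{L^{2}(\mathbb{R}^{d})}/\gamma)$, and a short computation identifies this variance with $4\vartheta(0)\Vert K \Vert^{2}_{L^{2}(\mathbb{R}^{d})}/(T^{2}\Vert \nabla K \Vert^{2}_{L^{2}(\mathbb{R}^{d})})$. A final application of Slutsky's theorem, combining this centred normal limit with the in-probability convergence of the bias term, yields the claimed asymptotic normality. The main obstacle is the martingale central limit step: everything hinges on upgrading the first two moment bounds of \Cref{result:asymptotics_observed_fisher_information} to the convergence $I_{\delta}/\mathbb{E}[I_{\delta}]\xrightarrow{\mathbb{P}}1$ of the normalised quadratic variation and then verifying that the continuous-martingale CLT (equivalently, the time-change representation) applies, while the deterministic rescaling and the bias contribution are comparatively routine.
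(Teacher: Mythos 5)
Your proposal is correct and follows essentially the same route as the paper: the same error decomposition \eqref{eq:error_decomposition}, upgrading \Cref{result:asymptotics_observed_fisher_information} via Chebyshev (equivalently your $L^{2}$-convergence of $\delta^{2}I_{\delta}$) to $I_{\delta}/\mathbb{E}[I_{\delta}]\xrightarrow{\mathbb{P}}1$, a continuous-martingale CLT for $M_{\delta}/\sqrt{\mathbb{E}[I_{\delta}]}$, and Slutsky together with \Cref{result:remaining_bias} for the bias. The only cosmetic difference is that you prove the martingale CLT inline through the Dambis--Dubins--Schwarz time change, where the paper simply cites the standard result (\citet[Theorem A.1]{pasemannParameterEstimationSemilinear2021}); your time-change argument is sound, granting the routine extension of the DDS Brownian motion beyond the finite quadratic variation on an enlarged probability space.
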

\begin{remark}[Rate of convergence]
The rate of convergence of order $\delta$ is the same as in the case of the stochastic heat equation obtained by \citet[Proposition 5.2]{altmeyerNonparametricEstimationLinear2021}.  In the spectral observation scheme based on the first $N$-Fourier modes of the solution process, the rate of convergence is $N^{-3/2}$ for the maximum likelihood estimator analysed by \citet[Theorem 3.1]{liuEstimatingSpeedDamping2008}. Heuristically, the convergence rate $\delta$ of the augmented MLE $\hat{\vartheta}_{\delta}$ can be regarded as a single-mode version of the maximum likelihood estimator in the spectral case. 
\end{remark}
\begin{proof}[Proof of \Cref{result:asymptotic_normality}]
In view of \eqref{eq:error_decomposition}, we begin by introducing the error decomposition 
\begin{align*}
\delta^{-1}\left(\hat{\vartheta}_{\delta}- \vartheta(0) \right) &= \delta^{-1}\Vert K \Vert_{L^{2}(\mathbb{R}^{d})} ({I}_{\delta})^{-1}{M}_{\delta} + \delta^{-1} I_{\delta}^{-1}R_{\delta}\\
&= \left(\frac{{M}_{\delta}}{\sqrt{\mathbb{E}[{I}_{\delta}]}}\right)\left(\frac{{I}_{\delta}}{\mathbb{E}[{I}_{\delta}]}\right)^{-1}\left( \delta^{2}\mathbb{E}[{I}_{\delta}]\right)^{-1/2}\Vert K \Vert_{L^{2}(\mathbb{R}^{d})}+ \delta^{-1} I_{\delta}^{-1}R_{\delta}.
\end{align*}
The quadratic variation of the martingale $Y^{(\delta)}\coloneqq M_{\delta}/({\mathbb{E}[I_{\delta}]})^{1/2}$ is given by $\langle Y^{(\delta)} \rangle = I_{\delta}/\mathbb{E}[I_{\delta}]$. The standard continuous martingale central limit theorem, see for instance \citet*[Theorem A.1]{pasemannParameterEstimationSemilinear2021}, shows that $Y^{(\delta)} \xrightarrow{d} \mathcal{N}(0,1)$ if $\langle Y^{(\delta)}\rangle \xrightarrow{\mathbb{P}} 1$ as $\delta \rightarrow 0$. In view of \Cref{result:asymptotics_observed_fisher_information} and Chebyshev's inequality we obtain
$I_{\delta}/\mathbb{E}[I_{\delta}]\xrightarrow{\mathbb{P}} 1$ and thus $Y^{(\delta)} \xrightarrow{d} \mathcal{N}(0,1)$ as $\delta \rightarrow 0$. The result follows with \Cref{result:remaining_bias} and using Slutsky's theorem.
\end{proof}
\begin{remark}[Dependence of the asymptotic variance on the time horizon]
\label{remark:dependence_on_time_horizon}
In the case of the stochastic heat equation, \citet[Proposition 5.2]{altmeyerNonparametricEstimationLinear2021} shows that increasing the time horizon $T$ will lead to a decrease of the asymptotic variance of the limiting normal distribution in the associated central limit theorem of order $T^{-1}$. This effect is even more prevalent for the stochastic wave equation, as the asymptotic variance scales with $T^{-2}$, which is also visible in the spectral approach. Indeed, the asymptotic variances from \citet[Theorem 1.1]{lototskyStatisticalInferenceStochastic2009} and \citet[Theorem 3.1]{liuEstimatingSpeedDamping2008}  depend on the time horizon through the factors $2T^{-1}$ and $4T^{-2}$, respectively. \par The rate of convergence of the MLE in the case of the ordinary Ornstein-Uhlenbeck process or  the harmonic oscillator is $\sqrt{T}$ in the ergodic and $T$ in the energetically stable case; see \citet*[Proposition 3.46]{kutoyantsStatisticalInferenceErgodic2004} and \citet*{linUndampedHarmonicOscillator2011}. The rate of estimation associated with the Fourier modes of the corresponding SPDE is then inherited by the augmented MLE through the dependence of asymptotic variance on the time horizon.
\end{remark}
\noindent Up to a constant, the asymptotic bias is determined by 
\begin{equation}
\label{eq:partial_integration_property}
\langle \nabla K, \nabla \beta^{(0)} \rangle_{L^{2}(\mathbb{R}^{d})} = -\langle \Delta K, \beta^{(0)}\rangle_{L^{2}(\mathbb{R}^{d})},
\end{equation}
where $\beta^{(0)}$ is defined through \eqref{eq:beta_delta}.
Suppose that $K \in H^{4}(\mathbb{R}^{d})$. Then, in view of \citet[Lemma A.3]{altmeyerNonparametricEstimationLinear2021}, \eqref{eq:partial_integration_property} can be rewritten as 
\begin{equation*}
-\langle \Delta K, \beta^{(0)}\rangle_{L^{2}(\mathbb{R}^{d})} = \langle \langle \nabla \vartheta (0), x\rangle_{\mathbb{R}^{d}}, |\nabla \Delta K(x)|^{2}_{\mathbb{R}^{d}}\rangle_{L^{2}(\mathbb{R}^{d})}.
\end{equation*}
If $\nabla \Delta K$ is symmetric, i.e.\ $| \nabla \Delta K (-x)|_{\mathbb{R}^{d}} = |\nabla \Delta K(x)|_{\mathbb{R}^{d}}$ for $x \in \mathbb{R}^{d}$, then the asymptotic bias vanishes. 
Note that the asymptotic bias is different in the case of the heat equation and involves the term $- \frac{1}{2}\langle K, \beta^{(0)}\rangle_{L^{2}(\mathbb{R}^{d})}$ and not \eqref{eq:partial_integration_property}. As described by \citet[Lemma A.3]{altmeyerNonparametricEstimationLinear2021}, this leads to the requirement that $\nabla K$ is symmetric in contrast to our assumption that $\nabla \Delta K$ is symmetric. 
\begin{corollary}[Confidence interval]
\label{result:confidence_interval}
Assume that the asymptotic bias is zero in the setting of \Cref{result:asymptotic_normality}. For some $\overline{\alpha} \in (0,1)$ the confidence interval around $\vartheta(0)$, given by
\begin{equation*}
I_{1-\overline{\alpha}} \coloneqq \left[\hat{\vartheta}_{\delta} - \delta \sqrt{\hat{\vartheta}_{\delta}} \frac{2 \Vert K \Vert_{L^{2}(\mathbb{R}^{d})}}{T \Vert \nabla K \Vert_{L^{2}(\mathbb{R}^{d})}}q_{1-\overline{\alpha}/2}, \hat{\vartheta}_{\delta} + \delta \sqrt{\hat{\vartheta}_{\delta}} \frac{2 \Vert K \Vert_{L^{2}(\mathbb{R}^{d})}}{T \Vert \nabla K \Vert_{L^{2}(\mathbb{R}^{d})}}q_{1-\overline{\alpha}/2}\right],
\end{equation*}
with the standard normal $(1-\overline{\alpha}/2)$-quantile $q_{1-\overline{\alpha}/2}$, has asymptotic coverage $1-\overline{\alpha}$ for $\delta \rightarrow 0$. 
\end{corollary}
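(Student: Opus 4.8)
The plan is to studentise the central limit theorem of \Cref{result:asymptotic_normality} by substituting the unknown wave speed $\vartheta(0)$ appearing in the asymptotic standard deviation by the consistent estimator $\hat{\vartheta}_{\delta}$, and then to invert the resulting Wald-type pivot. Writing the asymptotic standard deviation as
\begin{equation*}
\sigma \coloneqq \frac{2 \sqrt{\vartheta(0)} \Vert K \Vert_{L^{2}(\mathbb{R}^{d})}}{T \Vert \nabla K \Vert_{L^{2}(\mathbb{R}^{d})}},
\end{equation*}
the vanishing-bias hypothesis turns \Cref{result:asymptotic_normality} into $\delta^{-1}(\hat{\vartheta}_{\delta} - \vartheta(0)) \xrightarrow{d} \mathcal{N}(0, \sigma^{2})$, equivalently $\sigma^{-1}\delta^{-1}(\hat{\vartheta}_{\delta} - \vartheta(0)) \xrightarrow{d} \mathcal{N}(0,1)$, as $\delta \rightarrow 0$.

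First I would record that $\hat{\vartheta}_{\delta}$ is consistent: since $\delta^{-1}(\hat{\vartheta}_{\delta} - \vartheta(0))$ converges in distribution it is bounded in probability, so $\hat{\vartheta}_{\delta} - \vartheta(0) = O_{\mathbb{P}}(\delta)$ and hence $\hat{\vartheta}_{\delta} \xrightarrow{\mathbb{P}} \vartheta(0)$. Because $\vartheta(0) > 0$ by \assth, the map $x \mapsto \sqrt{x}$ is continuous at $\vartheta(0)$, and the continuous mapping theorem yields $\sqrt{\hat{\vartheta}_{\delta}} \xrightarrow{\mathbb{P}} \sqrt{\vartheta(0)}$. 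Introducing the empirical standard deviation
\begin{equation*}
\hat{\sigma}_{\delta} \coloneqq \frac{2 \sqrt{\hat{\vartheta}_{\delta}} \Vert K \Vert_{L^{2}(\mathbb{R}^{d})}}{T \Vert \nabla K \Vert_{L^{2}(\mathbb{R}^{d})}},
\end{equation*}
obtained from $\sigma$ by the plug-in $\vartheta(0) \mapsto \hat{\vartheta}_{\delta}$, we then have $\sigma / \hat{\sigma}_{\delta} = \sqrt{\vartheta(0)/\hat{\vartheta}_{\delta}} \xrightarrow{\mathbb{P}} 1$, and Slutsky's theorem gives
\begin{equation*}
\frac{\delta^{-1}(\hat{\vartheta}_{\delta} - \vartheta(0))}{\hat{\sigma}_{\delta}} = \frac{\sigma}{\hat{\sigma}_{\delta}} \cdot \frac{\delta^{-1}(\hat{\vartheta}_{\delta} - \vartheta(0))}{\sigma} \xrightarrow{d} \mathcal{N}(0,1), \quad \delta \rightarrow 0.
\end{equation*}

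Finally, since the limiting standard normal distribution has an everywhere continuous distribution function, applying the continuous mapping theorem to the absolute value and using the defining property of the quantile $q_{1-\overline{\alpha}/2}$ yields
\begin{equation*}
\mathbb{P}\left( \left| \frac{\delta^{-1}(\hat{\vartheta}_{\delta} - \vartheta(0))}{\hat{\sigma}_{\delta}} \right| \leq q_{1-\overline{\alpha}/2} \right) \rightarrow 1 - \overline{\alpha}, \quad \delta \rightarrow 0.
\end{equation*}
Rearranging the event inside the probability into $\{ |\hat{\vartheta}_{\delta} - \vartheta(0)| \leq \delta \hat{\sigma}_{\delta} q_{1-\overline{\alpha}/2} \}$ and inserting the definition of $\hat{\sigma}_{\delta}$ recovers exactly the event $\{ \vartheta(0) \in I_{1-\overline{\alpha}} \}$, establishing the claimed asymptotic coverage $1-\overline{\alpha}$. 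The only delicate point is the legitimacy of the studentisation, namely that the plug-in ratio $\sigma/\hat{\sigma}_{\delta}$ tends to one in probability; this is precisely where \assth, guaranteeing $\vartheta(0)>0$, is used, since it ensures $\hat{\sigma}_{\delta}$ is eventually bounded away from zero so that division is well defined. Beyond verifying this, the argument is a routine Slutsky inversion and presents no substantial obstacle.
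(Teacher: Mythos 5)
Your proof is correct and follows essentially the same route as the paper: consistency of $\hat{\vartheta}_{\delta}$ extracted from the central limit theorem of \Cref{result:asymptotic_normality}, a Slutsky studentisation replacing $\vartheta(0)$ by $\hat{\vartheta}_{\delta}$ in the asymptotic variance, and inversion of the resulting standard normal pivot. Your write-up merely spells out the intermediate steps (the $O_{\mathbb{P}}(\delta)$ bound, continuous mapping for the square root, continuity of the normal distribution function) that the paper leaves implicit.
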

\begin{proof}[Proof of \Cref{result:confidence_interval}]
By \Cref{result:asymptotic_normality} we not only obtain the asymptotic normality but also $\hat{\vartheta}_{\delta} \xrightarrow{\mathbb{P}} \vartheta(0)$. Thus, we may apply Slutsky's lemma and obtain
\begin{equation*}
\delta^{-1}\left(\hat{\vartheta}_{\delta} \frac{4 \Vert K \Vert^{2}_{L^{2}(\mathbb{R}^{d})}}{T^{2} \Vert \nabla K \Vert^{2}_{L^{2}(\mathbb{R}^{d})}}\right)^{-1/2}(\hat{\vartheta}_{\delta}- \vartheta(0)) \xrightarrow{d} \mathcal{N}(0,1), \quad \delta \rightarrow 0,
\end{equation*}
as we have assumed that the asymptotic bias is zero. Consequently, we obtain 
\begin{equation*}
\mathbb{P}(\vartheta(0) \in I_{1-\overline{\alpha}}) \rightarrow 1- \overline{\alpha}, \quad \delta \rightarrow 0. \qedhere
\end{equation*}
\end{proof}
\section{Numerical illustration}
\label{section:numerical_example}
\begin{figure}[t]
    \centering
    \subfloat[]{\label{subfigure:heatmap_1}\includegraphics[width=\textwidth]{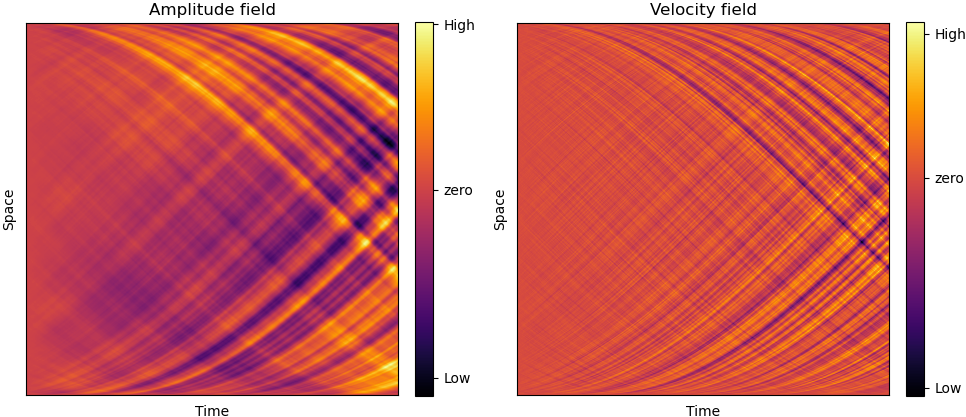}}\\
    \subfloat[]{\label{subfigure:heatmap_2}\includegraphics[width=\textwidth]{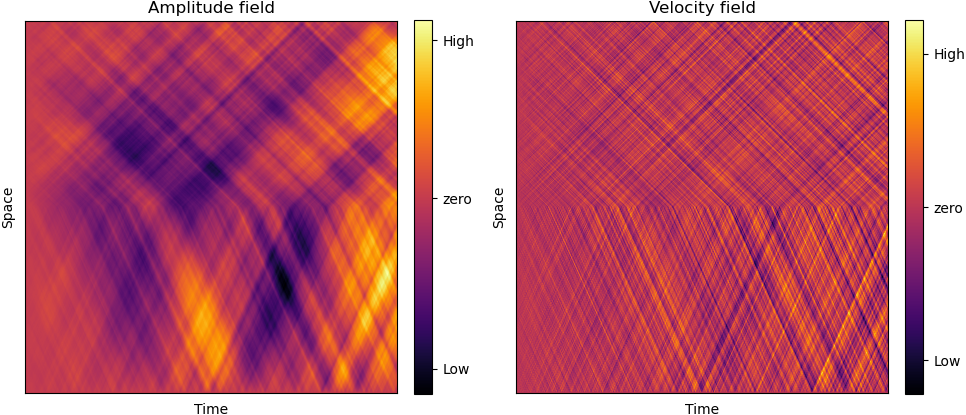}}
    \caption{Simulation of the amplitude and velocity field of \eqref{eq:1d_stochastic_wave} with spatially dependent wave speed \eqref{eq:vartheta_a} in (a) and \eqref{eq:vartheta_b} in (b)}
    \label{figure:heatmaps}
\end{figure}
This section is dedicated to the illustration of the main results. We consider the stochastic wave equations on the 1-dimensional bounded domain $\Lambda = (0,1)$ up to the time horizon $T=1$:
\begin{equation}
\label{eq:1d_stochastic_wave}
\partial_{tt}^{2} u(t,x)= \partial_x(\vartheta(x)\partial_x u(t,x)) + \dot{W}(t,x), \quad t \in [0,1], \quad x \in (0,1). 
\end{equation}
Unless stated otherwise, we assume zero first and second-order initial conditions. Based on the results of \citet[Section 10.5]{lordIntroductionComputationalStochastic2014} and the work of \citet{quer-sardanyonsSpaceSemiDiscretisationsStochastic2006}, we employ a semi-implicit Euler scheme with a finite difference approximation of the second spatial derivative on the uniform grid
\begin{equation*}
\label{eq:resolution}
    \left\{ (t_k, y_j) \colon t_k = k/N, y_j = j/M, \quad k=0, \dots, N, j=0,\dots, M\right\},
\end{equation*}
where the spatial and temporal resolutions are $M=10^3$ and $N=M^2$, respectively. \par
\textit{Smooth wave speed}: In \Cref{subfigure:heatmap_1}, we simulate the stochastic wave equation \eqref{eq:1d_stochastic_wave} with the spatially dependent wave speed
\begin{equation}
\label{eq:vartheta_a}
\vartheta_a (x)=4x(1-x)+0.01, \quad x \in (0,1).
\end{equation}
\begin{figure}
\subfloat[]{ \label{subfigure:loglog}
\includegraphics[scale=0.35]{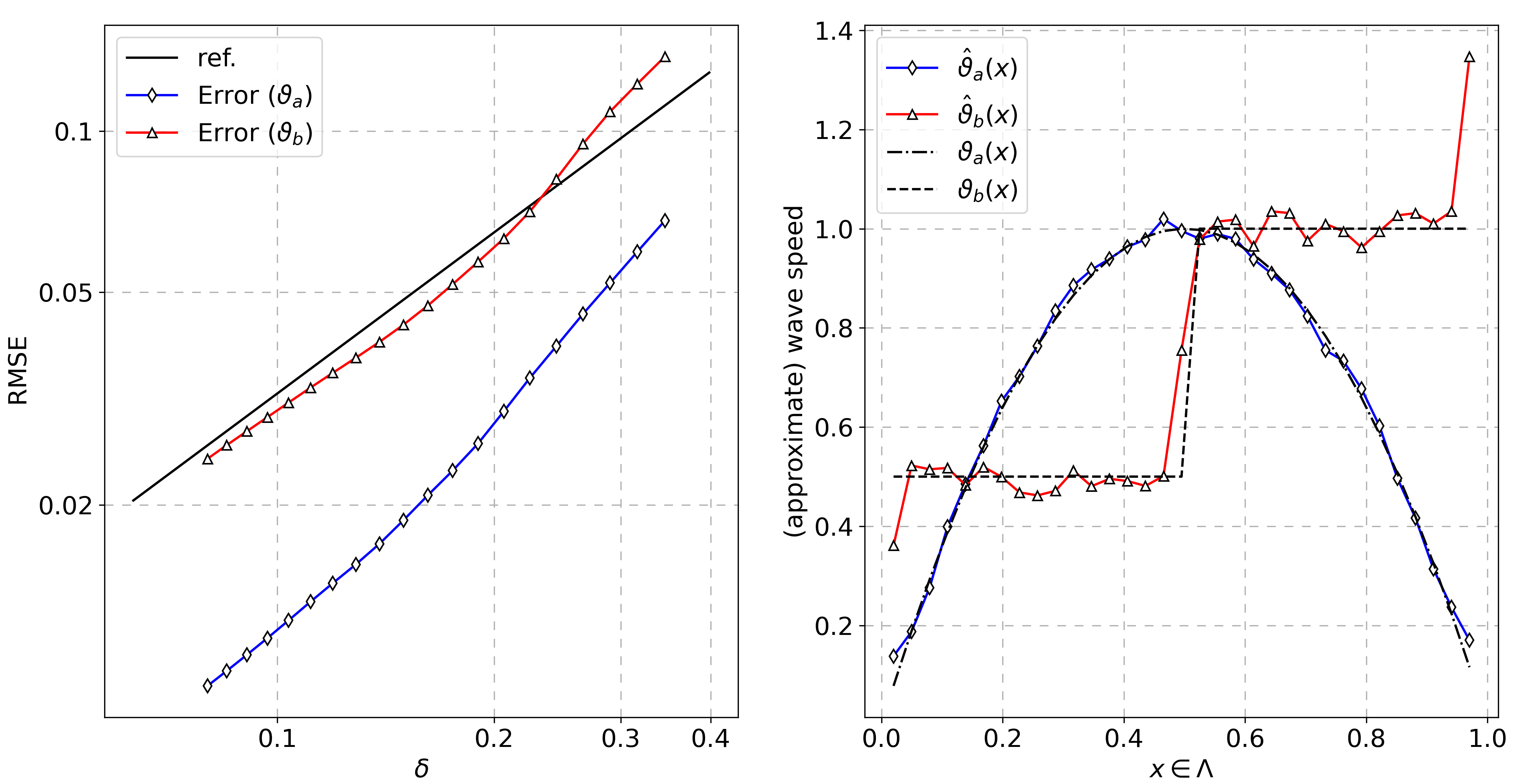}
}\\
\subfloat[]{\label{fig:montecarlo_time}\includegraphics[scale=0.35]{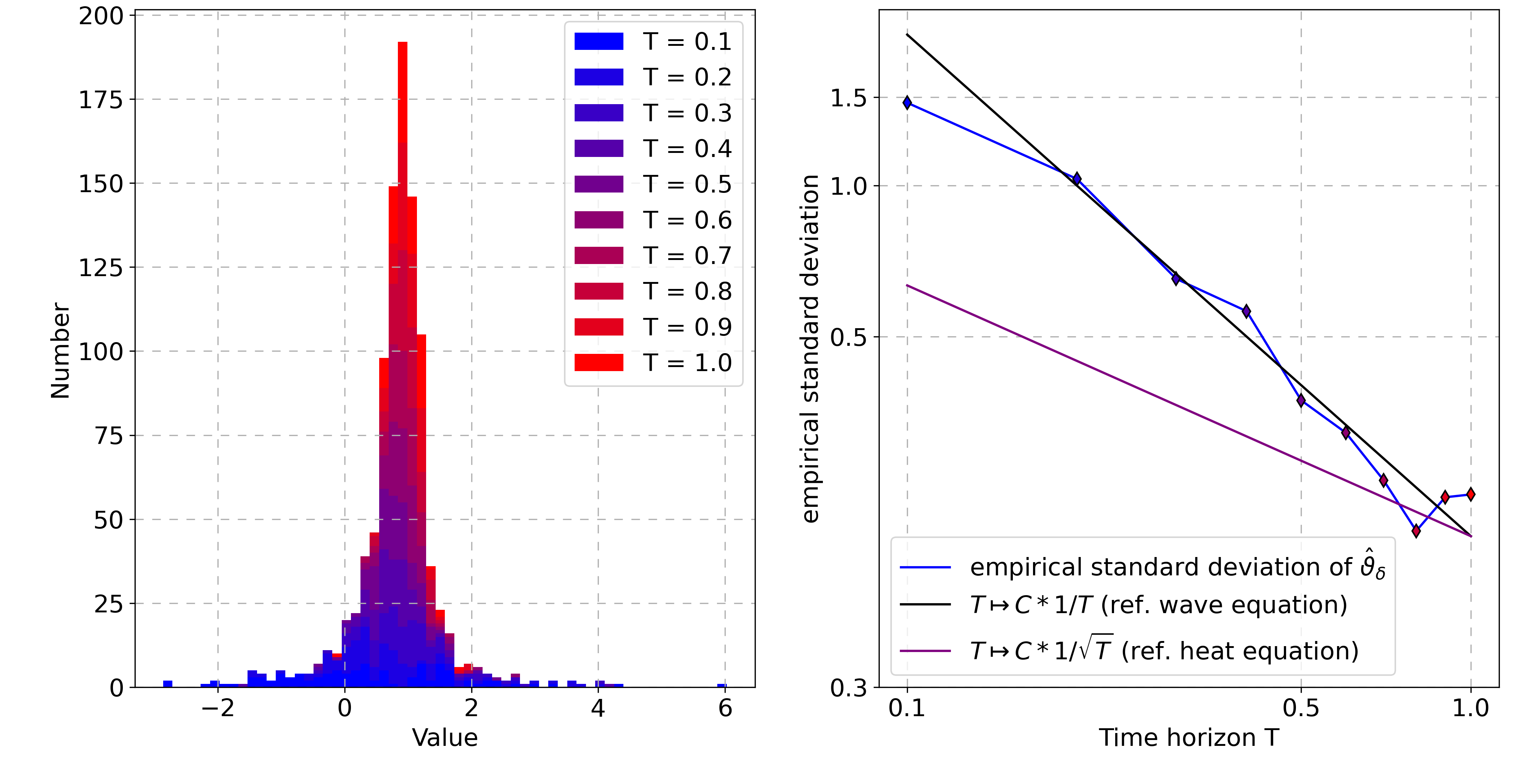} }
     \caption{(a) Monte-Carlo simulation of the augmented MLE. (left) $\log_{10}$-$\log_{10}$ plot of root mean squared estimation error at $x_0 = 0.6$. (b)(left) histogram plot of the estimator for different values of $T$. (right) Monte-Carlo simulation of the asymptotic variance associated with augmented MLE.}
     \label{figure:montecarlo_plots}
\end{figure}
In this case, the wave speed of the stochastic wave equation is increasing towards the centre of the interval. The amplitude field is much smoother than the velocity field as the white noise enters the stochastic wave equation through the velocity field. As both the first and second-order initial conditions are zero, energy is added to the system through the noise, and the observed pattern results from accumulating fluctuations.
\par \textit{Piecewise constant wave speed}: In \Cref{subfigure:heatmap_2}, we simulate the stochastic wave equation \eqref{eq:1d_stochastic_wave} with parameters
\begin{equation}
\label{eq:vartheta_b}
\vartheta_b (x)= \frac{1}{2}\mathbbm{1}_{(0, 1/2]}(x)+ \mathbbm{1}_{(1/2,1)}(x), \quad x \in (0,1).
\end{equation}
This situation arises if a wave travels from one medium (air) into another (water) and back again. The wave is partially transmitted as it passes from one medium into the other medium. Indeed, as the wave speed in both media is different, we observe a change of angle at the interface reminiscent of Snell's law. This case is not covered by the theory as $\vartheta_b$ is not differentiable in $1/2$. Still, it can be considered an intriguing limiting case reminiscent of the situation of change point detection discussed in \citet*{reissChangePointEstimation2023}.
\par As in \citet*{altmeyerNonparametricEstimationLinear2021}, we consider the kernels $K=\varphi'''$ based on the smooth bump function:
\begin{equation}
\varphi (x)=\exp \left(\frac{-12}{(1-x^2)}\right), \quad x \in (-1,1).
\end{equation}
For $\delta \in (0.05,0.5)$ and $x_0 \in (0,1)$, we can empirically approximate the local measurements $u^{\Delta}_{\delta,x_0}$ and $v_{\delta,x_0}$ based on the kernel $K$ and compute the associated augmented MLE accordingly. \par
\Cref{subfigure:loglog} provides a $\log_{10}$-$\log_{10}$ plot of the root mean squared estimation error as $\delta \rightarrow 0$ based on 1000 Monte-Carlo runs. The convergence rate of $\delta$ is achieved for both $\vartheta_a$ and $\vartheta_b$. On the other hand, the (right) side of \Cref{subfigure:loglog} displays the true wave speed compared to its approximation.
When approaching the discontinuity in the wave speed $\vartheta_b$, the accuracy of the estimation shrinks significantly. \par \Cref{fig:montecarlo_time} visualises in the case of \eqref{eq:vartheta_a} the asymptotic normality result of \Cref{result:asymptotic_normality} and the dependence of the asymptotic variance on the time horizon. The right-hand side of \Cref{fig:montecarlo_time} displays the empirical standard deviation of the asymptotic distribution approximated based on 500 Monte-Carlo runs for each displayed time horizon. In contrast, the left-hand side shows how the asymptotic normal distribution concentrates around the true value as the associated time horizon increases. As expected by \Cref{result:asymptotic_normality} and \Cref{remark:dependence_on_time_horizon}, the asymptotic variance in the case of the augmented MLE depends differs from the parabolic case and depends on the time horizon through ${1}/{T^{2}}$ and not through ${1}/{T}$ as in the case of the stochastic heat equation. \par 
We conclude this section with the following table, which illustrates the asymptotic coverage of the confidence interval $I_{1-\overline{\alpha}}$ derived in \Cref{result:confidence_interval} for different values of $\alpha$ and $\delta$ based on $500$ Monte-Carlo simulations.
\begin{center}
\begin{tabular}{ |p{2cm}|p{2cm}|p{2cm}|p{2cm}|  }
\hline
\multicolumn{4}{|c|}{Empirical probability of $\vartheta(0) \in I_{1-\overline{\alpha}}$} \\
\hline
/ & $\delta \geq 0.2$ & $\delta = 0.1$  & $\delta =0.09$ \\
\hline
$\overline{\alpha} = 0.1$ & $\approx 1$ & 0.93 & 0.89\\
\hline
$\overline{\alpha} = 0.05$ & $\approx 1$ & 0.98 & 0.97\\
\hline
\end{tabular}
\end{center}
As $\delta$ decreases, the observations are in line with the converge $\mathbb{P}(\vartheta(0) \in I_{1-\overline{\alpha}}) \rightarrow 1-\overline{\alpha}$ as $\delta \rightarrow 0$. For larger values of $\delta$, the confidence intervals have a greater size, resulting in larger empirical coverage probabilities. 

\section*{Acknowledgment}
This research has been partially funded by Deutsche Forschungsgemeinschaft (DFG)—SFB1294/1-318763901.
\newpage
\begin{appendix}
  \section{Remaining proofs}
  \subsection{Well-posedness proofs}
\label{section:well-posedness}
Based on page 415 ff. in \citet{arendtVectorvaluedLaplaceTransforms2001}, the following result summarises some crucial properties of the isometric isomorphism $A_{\vartheta}$ defined through \eqref{eq:weak_laplace}. 
\begin{lemma}[Some Gelfand properties]
\label{result:gelfand_properties}
\begin{enumerate}
\item[]
\item The triple $(H_{0}^{1}(\Lambda), L^{2}(\Lambda), H_0^{-1}(\Lambda))$ is a Gelfand triple, i.e. the inclusions $H_{0}^{1}(\Lambda) \hookrightarrow L^{2}(\Lambda)$ and $L^{2}(\Lambda) \hookrightarrow H_0^{-1}(\Lambda)$ are bounded linear operators. 
\item For $z_2 \in L^{2}(\Lambda)$ and $z_1 \in H_0^{1}(\Lambda)$, we have
\begin{equation*}
\langle A_{\vartheta}z_1, z_2\rangle_{H_0^{-1}(\Lambda)}=-\langle z_2, z_1\rangle_{L^{2}(\Lambda)}.
\end{equation*}
\item For $z \in L^{2}(\Lambda)$ and $l \in H_0^{-1}(\Lambda)$ we have 
\begin{equation}
\label{eq:dual_pairing_property_2}
\langle l, z\rangle_{H_0^{-1}(\Lambda)}= -\langle A_{\vartheta}^{-1}l, z\rangle_{H_0^{-1}(\Lambda), H_0^{1}(\Lambda)} = -\langle z, A_{\vartheta}^{-1}l\rangle_{L^{2}(\Lambda)}.
\end{equation}
\item For $l \in H_0^{-1}(\Lambda)$ and $z \in H_0^{1}(\Lambda)$, we have
\begin{equation*}
\langle l,-A_{\vartheta}z \rangle_{H_{0}^{-1}(\Lambda)}=\langle l, z\rangle_{H_{0}^{-1}(\Lambda), H_0^{1}(\Lambda)}.
\end{equation*}
\end{enumerate}
\end{lemma}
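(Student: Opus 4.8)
The plan is to dispatch item (i) with standard Sobolev theory and then obtain items (ii)--(iv) as formal consequences of carefully distinguishing the three pairings in play: the $H_0^{-1}(\Lambda)$-inner product \eqref{eq:dual_inner}, the dual pairing $\langle\cdot,\cdot\rangle_{H_0^{-1}(\Lambda),H_0^{1}(\Lambda)}$, and the $L^2(\Lambda)$-inner product, using $L^2(\Lambda)$ as the pivot space. The two facts that make the bookkeeping go through are that, under the identification $L^2(\Lambda)=L^2(\Lambda)^*$, the action of $z_2\in L^2(\Lambda)\subset H_0^{-1}(\Lambda)$ on $z_1\in H_0^1(\Lambda)$ is exactly $\langle z_2,z_1\rangle_{L^2(\Lambda)}$, and that by \Cref{remark:regularity} the $H_0^1(\Lambda)$-inner product may be taken to be the weighted form $\langle\vartheta\nabla\cdot,\nabla\cdot\rangle_{L^2(\Lambda)}$; throughout I use that \assth{} makes $A_\vartheta$ a bijective isometric isomorphism $H_0^1(\Lambda)\to H_0^{-1}(\Lambda)$, so that $A_\vartheta^{-1}$ is well defined. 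For item (i) itself, I would recall that since $\Lambda$ is bounded the inclusion $H_0^1(\Lambda)\hookrightarrow L^2(\Lambda)$ is bounded (indeed compact) and dense; identifying $L^2(\Lambda)$ with its dual and dualising this inclusion yields the bounded embedding $L^2(\Lambda)\hookrightarrow H_0^1(\Lambda)^*=H_0^{-1}(\Lambda)$, which is the standard rigged-space construction on p.\ 415ff of \citet{arendtVectorvaluedLaplaceTransforms2001}.

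For item (ii), I would unwind \eqref{eq:dual_inner} and cancel $A_\vartheta^{-1}A_\vartheta$:
\[
\langle A_\vartheta z_1, z_2\rangle_{H_0^{-1}(\Lambda)}=\langle A_\vartheta^{-1}A_\vartheta z_1, A_\vartheta^{-1}z_2\rangle_{H_0^1(\Lambda)}=\langle z_1, A_\vartheta^{-1}z_2\rangle_{H_0^1(\Lambda)}=\langle\vartheta\nabla z_1,\nabla A_\vartheta^{-1}z_2\rangle_{L^2(\Lambda)}.
\]
Applying the defining relation \eqref{eq:weak_laplace} with the arguments $A_\vartheta^{-1}z_2\in H_0^1(\Lambda)$ and $z_1$ gives $\langle A_\vartheta(A_\vartheta^{-1}z_2), z_1\rangle_{H_0^{-1}(\Lambda),H_0^1(\Lambda)}=-\langle\vartheta\nabla A_\vartheta^{-1}z_2,\nabla z_1\rangle_{L^2(\Lambda)}$, whose left-hand side is precisely the pairing of $z_2\in L^2(\Lambda)$ against $z_1$, namely $\langle z_2, z_1\rangle_{L^2(\Lambda)}$. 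Combining the two displays yields $\langle A_\vartheta z_1, z_2\rangle_{H_0^{-1}(\Lambda)}=-\langle z_2, z_1\rangle_{L^2(\Lambda)}$.

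Items (iii) and (iv) then follow by the same mechanism. For (iii) I would simply apply (ii) with $z_1=A_\vartheta^{-1}l\in H_0^1(\Lambda)$ and $z_2=z$, which gives $\langle l, z\rangle_{H_0^{-1}(\Lambda)}=\langle A_\vartheta(A_\vartheta^{-1}l), z\rangle_{H_0^{-1}(\Lambda)}=-\langle z, A_\vartheta^{-1}l\rangle_{L^2(\Lambda)}$; the middle expression in \eqref{eq:dual_pairing_property_2} coincides because the pairing of $z\in L^2(\Lambda)\subset H_0^{-1}(\Lambda)$ against $A_\vartheta^{-1}l\in H_0^1(\Lambda)$ is again this $L^2$-inner product. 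For (iv) I would unwind \eqref{eq:dual_inner} once more and cancel to get $\langle l, -A_\vartheta z\rangle_{H_0^{-1}(\Lambda)}=-\langle A_\vartheta^{-1}l, z\rangle_{H_0^1(\Lambda)}=-\langle\vartheta\nabla A_\vartheta^{-1}l,\nabla z\rangle_{L^2(\Lambda)}$, and recognise the last term via \eqref{eq:weak_laplace} as $\langle A_\vartheta(A_\vartheta^{-1}l), z\rangle_{H_0^{-1}(\Lambda),H_0^1(\Lambda)}=\langle l, z\rangle_{H_0^{-1}(\Lambda),H_0^1(\Lambda)}$. I expect the only genuine obstacle to be the notational one: keeping the three pairings apart and justifying the pivot identification that turns the dual action of an $L^2$-function into the $L^2$-inner product, together with the consistent use of the $\vartheta$-weighted $H_0^1(\Lambda)$-inner product; once these conventions are fixed, every step is a one-line manipulation.
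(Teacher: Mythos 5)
Your proposal is correct and takes essentially the same route as the paper, which simply defers these manipulations to the rigged-space construction in \citet[p.~415 ff.\ and Proposition 7.1.5]{arendtVectorvaluedLaplaceTransforms2001}: you unwind \eqref{eq:dual_inner}, cancel $A_\vartheta^{-1}A_\vartheta$, apply the defining relation \eqref{eq:weak_laplace} with the $\vartheta$-weighted $H_0^1(\Lambda)$-inner product, and use the pivot identification of $L^2(\Lambda)$ with its dual, obtaining (iii) from (ii) via the substitution $z_1 = A_\vartheta^{-1}l$. Your explicit handling of the notational point in \eqref{eq:dual_pairing_property_2} — that the dual pairing of an $L^2$-element reduces to the $L^2$-inner product — is exactly the convention the paper relies on, so no gap remains.
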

\begin{proof}[Proof of \Cref{result:gelfand_properties}] The result is proved in \citet*[415 ff. and Proposition 7.1.5]{arendtVectorvaluedLaplaceTransforms2001}.
\end{proof}
\begin{lemma}[Properties of the wave generator]    \label{lemma:adjoint_generator_wave_equation}
    The generator $\mathcal{A}_{\vartheta}$ is skew-adjoint, i.e.\ $\mathcal{A}^{*}_{\vartheta}=-\mathcal{A}_{\vartheta}$ and $D(\mathcal{A}^{*}_{\vartheta})=D(\mathcal{A}_{\vartheta})$.
    \end{lemma}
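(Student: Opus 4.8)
The plan is to prove skew-adjointness in two stages: first establish that $\mathcal{A}_{\vartheta}$ is skew-symmetric, which yields the inclusion $-\mathcal{A}_{\vartheta} \subseteq \mathcal{A}_{\vartheta}^{*}$, and then prove the reverse domain inclusion $D(\mathcal{A}_{\vartheta}^{*}) \subseteq D(\mathcal{A}_{\vartheta})$ by computing the adjoint directly, exploiting that $A_{\vartheta}\colon H_0^{1}(\Lambda) \to H_0^{-1}(\Lambda)$ is an isomorphism. Throughout I abbreviate the phase space by $\mathcal{H} = L^{2}(\Lambda)\times H_0^{-1}(\Lambda)$ with its product inner product, where the second factor carries the inner product \eqref{eq:dual_inner}.

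For skew-symmetry I would take $U=(z_1,z_2)$ and $V=(w_1,w_2)$ in $D(\mathcal{A}_{\vartheta}) = H_0^{1}(\Lambda)\times L^{2}(\Lambda)$ and expand $\langle \mathcal{A}_{\vartheta} U, V\rangle_{\mathcal{H}}$ and $\langle U, \mathcal{A}_{\vartheta} V\rangle_{\mathcal{H}}$ using \eqref{eq:matrix_generator}. The only delicate terms are the $H_0^{-1}(\Lambda)$-inner products $\langle A_{\vartheta} z_1, w_2\rangle_{H_0^{-1}(\Lambda)}$ and $\langle A_{\vartheta} w_1, z_2\rangle_{H_0^{-1}(\Lambda)}$, and \Cref{result:gelfand_properties}(ii) rewrites each as a negative $L^{2}$-inner product, namely $-\langle w_2, z_1\rangle_{L^{2}(\Lambda)}$ and $-\langle z_2, w_1\rangle_{L^{2}(\Lambda)}$. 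Collecting the resulting terms and using the symmetry of the real inner products gives $\langle \mathcal{A}_{\vartheta} U, V\rangle_{\mathcal{H}} = -\langle U, \mathcal{A}_{\vartheta} V\rangle_{\mathcal{H}}$, hence $-\mathcal{A}_{\vartheta} \subseteq \mathcal{A}_{\vartheta}^{*}$.

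For the reverse inclusion I would fix $U=(u_1,u_2) \in D(\mathcal{A}_{\vartheta}^{*})$ with $\mathcal{A}_{\vartheta}^{*} U = W = (w_1,w_2)\in\mathcal{H}$ and test the defining identity $\langle \mathcal{A}_{\vartheta} V, U\rangle_{\mathcal{H}} = \langle V, W\rangle_{\mathcal{H}}$ against the two coordinate subspaces separately. Choosing $V=(0,v_2)$ with $v_2 \in L^{2}(\Lambda)$ reduces the identity to $\langle v_2, u_1\rangle_{L^{2}(\Lambda)} = \langle v_2, w_2\rangle_{H_0^{-1}(\Lambda)}$; rewriting the right-hand side by \Cref{result:gelfand_properties}(iii) as $-\langle v_2, A_{\vartheta}^{-1}w_2\rangle_{L^{2}(\Lambda)}$ and letting $v_2$ range over $L^{2}(\Lambda)$ forces $u_1 = -A_{\vartheta}^{-1}w_2$, so that $u_1 \in H_0^{1}(\Lambda)$ since $A_{\vartheta}^{-1}$ maps into $H_0^{1}(\Lambda)$. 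Choosing $V=(v_1,0)$ with $v_1 \in H_0^{1}(\Lambda)$ reduces the identity to $\langle A_{\vartheta} v_1, u_2\rangle_{H_0^{-1}(\Lambda)} = \langle v_1, w_1\rangle_{L^{2}(\Lambda)}$; here \Cref{result:gelfand_properties}(iv) turns the left-hand side into the dual pairing $-\langle u_2, v_1\rangle_{H_0^{-1}(\Lambda), H_0^{1}(\Lambda)}$, and since $w_1 \in L^{2}(\Lambda) \hookrightarrow H_0^{-1}(\Lambda)$ the identity becomes $\langle u_2 + w_1, v_1\rangle_{H_0^{-1}(\Lambda), H_0^{1}(\Lambda)} = 0$ for all $v_1 \in H_0^{1}(\Lambda)$, whence $u_2 = -w_1 \in L^{2}(\Lambda)$. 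Thus $U \in H_0^{1}(\Lambda)\times L^{2}(\Lambda) = D(\mathcal{A}_{\vartheta})$ and $W = -\mathcal{A}_{\vartheta} U$, giving both $D(\mathcal{A}_{\vartheta}^{*}) \subseteq D(\mathcal{A}_{\vartheta})$ and $\mathcal{A}_{\vartheta}^{*} = -\mathcal{A}_{\vartheta}$.

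The main obstacle is essentially bookkeeping: one must consistently distinguish the inner product \eqref{eq:dual_inner} on $H_0^{-1}(\Lambda)$ from the dual pairing $\langle\cdot,\cdot\rangle_{H_0^{-1}(\Lambda),H_0^{1}(\Lambda)}$, and it is precisely the conversion identities of \Cref{result:gelfand_properties}(ii)--(iv), together with the bijectivity of $A_{\vartheta}$, that drive the computation and extract the two domain constraints. As an alternative route, since \eqref{eq:matrix_generator} generates a $C_0$-group on $\mathcal{H}$ by \citet[Theorem 3.14.11]{arendtVectorvaluedLaplaceTransforms2001}, one could combine skew-symmetry with the surjectivity of $I \pm \mathcal{A}_{\vartheta}$ and invoke the range criterion for skew-adjointness via Stone's theorem; I would nonetheless keep the direct computation as the primary argument, as it is self-contained given \Cref{result:gelfand_properties}.
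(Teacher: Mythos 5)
Your proposal is correct, and its first stage coincides exactly with the paper's proof: the paper takes $U,\tilde U \in H_0^1(\Lambda)\times L^2(\Lambda)$, uses \Cref{result:gelfand_properties}(ii) to convert the $H_0^{-1}(\Lambda)$-inner products into negative $L^2$-inner products, obtains $\langle \mathcal{A}_\vartheta U, \tilde U\rangle = \langle U, -\mathcal{A}_\vartheta \tilde U\rangle$, and then immediately concludes $\mathcal{A}_\vartheta^* = -\mathcal{A}_\vartheta$ with $D(\mathcal{A}_\vartheta^*) = H_0^1(\Lambda)\times L^2(\Lambda)$. Where you genuinely go beyond the paper is your second stage: skew-symmetry alone only gives $-\mathcal{A}_\vartheta \subseteq \mathcal{A}_\vartheta^*$, and the paper never spells out why $D(\mathcal{A}_\vartheta^*) \subseteq D(\mathcal{A}_\vartheta)$. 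Your direct computation — testing the adjoint identity against $V=(0,v_2)$ and $V=(v_1,0)$, invoking \Cref{result:gelfand_properties}(iii) and (iv) together with the bijectivity of $A_\vartheta\colon H_0^1(\Lambda)\to H_0^{-1}(\Lambda)$ to force $u_1 = -A_\vartheta^{-1}w_2 \in H_0^1(\Lambda)$ and $u_2 = -w_1 \in L^2(\Lambda)$ — closes exactly this step, and both coordinate computations check out, including the identification of the dual pairing $\langle w_1, v_1\rangle_{H_0^{-1}(\Lambda), H_0^1(\Lambda)}$ with $\langle w_1, v_1\rangle_{L^2(\Lambda)}$ for $w_1 \in L^2(\Lambda)$, which the Gelfand triple structure licenses. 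The alternative you sketch (skew-symmetry plus surjectivity of $I \pm \mathcal{A}_\vartheta$, available since $\mathcal{A}_\vartheta$ generates a $C_0$-group by \citet[Theorem 3.14.11]{arendtVectorvaluedLaplaceTransforms2001}) would fill the same gap via the standard range criterion. In short: same key mechanism as the paper (the conversion identities of \Cref{result:gelfand_properties}), but your version is the more complete argument; the paper's proof buys brevity at the cost of leaving the reverse domain inclusion implicit.
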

    \begin{proof}[Proof of \Cref{lemma:adjoint_generator_wave_equation}] 
    Let $U=(z_1,z_2)^{\top} \in H_{0}^{1}(\Lambda) \times L^{2}(\Lambda) $ and $\tilde{U}=(\tilde{z}_1,\tilde{z}_2)^{\top} \in H_{0}^{1}(\Lambda) \times L^{2}(\Lambda)$. By applying \Cref{result:gelfand_properties} (ii), we immediately obtain
    \begin{align*}
    \langle \mathcal{A}_{\vartheta}U,\tilde{U} \rangle_{L^{2}(\Lambda)\times H^{-1}_0(\Lambda)} &=\langle z_2, \tilde{z}_1\rangle_{L^{2}(\Lambda)}+\langle A_{\vartheta} z_1, {\tilde{z}_2}\rangle_{H^{-1}_0(\Lambda)} \\
    &= -\langle z_2, A_{\vartheta}\tilde{z}_1\rangle_{H_0^{-1}(\Lambda)}-\langle z_1,\tilde{z}_2 \rangle_{L^{2}(\Lambda)} \\
    &= \langle U,-\mathcal{A}_{\vartheta} \tilde{U}\rangle_{L^{2}(\Lambda)\times H_0^{-1}(\Lambda)}.
    \end{align*}
    Thus, $\mathcal{A}^{*}_{\vartheta}=-\mathcal{A}_{\vartheta}$ and $D(\mathcal{A}^{*}_{\vartheta})=H_{0}^{1}(\Lambda) \times L^{2}(\Lambda)$.
\end{proof}

\begin{lemma}[Unitary group]
\label{result:unitary_group}
The group $(\mathcal{J}_{\vartheta}(t), t \in \mathbb{R})$ is unitary and satisfies
\begin{equation*}
\mathcal{J}_{\vartheta}(t)^{*} = \mathcal{J}_{\vartheta}(-t) = \begin{pmatrix}
C_{\vartheta}(t) & -S_{\vartheta}(t)\\
-C'_{\vartheta}(t) & C_{\vartheta}(t)
\end{pmatrix}, \quad t \in \mathbb{R}.
\end{equation*}
\end{lemma}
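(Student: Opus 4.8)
The plan is to deduce both assertions from the skew-adjointness $\mathcal{A}^*_\vartheta = -\mathcal{A}_\vartheta$ established in \Cref{lemma:adjoint_generator_wave_equation}, together with the duality theory of $C_0$-groups and the parity properties of operator cosine and sine families. First I would invoke the general principle that, for a $C_0$-group $(\mathcal{J}_\vartheta(t))_{t\in\mathbb{R}}$ on the Hilbert phase space $L^{2}(\Lambda)\times H_0^{-1}(\Lambda)$ with generator $\mathcal{A}_\vartheta$, the family of adjoints $(\mathcal{J}_\vartheta(t)^{*})_{t\in\mathbb{R}}$ is again a $C_0$-group whose generator is the adjoint $\mathcal{A}^{*}_\vartheta$; see \citet[Chapter II]{engelOneparameterSemigroupsLinear2000}. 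Since $\mathcal{A}^{*}_\vartheta = -\mathcal{A}_\vartheta$ by \Cref{lemma:adjoint_generator_wave_equation}, and $-\mathcal{A}_\vartheta$ generates the time-reversed group $t \mapsto \mathcal{J}_\vartheta(-t)$, uniqueness of the generator forces $\mathcal{J}_\vartheta(t)^{*} = \mathcal{J}_\vartheta(-t)$ for all $t \in \mathbb{R}$. Combining this with the group law yields $\mathcal{J}_\vartheta(t)^{*}\mathcal{J}_\vartheta(t) = \mathcal{J}_\vartheta(-t)\mathcal{J}_\vartheta(t) = \mathcal{J}_\vartheta(0) = I$, and symmetrically $\mathcal{J}_\vartheta(t)\mathcal{J}_\vartheta(t)^{*} = I$, so that each $\mathcal{J}_\vartheta(t)$ is unitary.

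It then remains to identify the matrix form of $\mathcal{J}_\vartheta(-t)$. Starting from the representation \eqref{eq:operator_semigroup_wave_equation}, I would substitute $-t$ for $t$ and use the parity of the trigonometric operator families: the operator cosine $C_\vartheta$ is even, $C_\vartheta(-t)=C_\vartheta(t)$, which follows from d'Alembert's functional equation $C_\vartheta(t+s)+C_\vartheta(t-s)=2C_\vartheta(t)C_\vartheta(s)$ evaluated at $t=0$; the operator sine $S_\vartheta(t)=\int_0^{t} C_\vartheta(s)\,\mathrm{d}s$ is consequently odd, $S_\vartheta(-t)=-S_\vartheta(t)$, by the change of variables $s\mapsto -s$ together with the evenness of $C_\vartheta$; and $C'_\vartheta$ is odd, $C'_\vartheta(-t)=-C'_\vartheta(t)$, either by differentiating $C_\vartheta(-t)=C_\vartheta(t)$ or directly from $C'_\vartheta(t)=A_\vartheta S_\vartheta(t)$ and the oddness of $S_\vartheta$. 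Substituting these identities into \eqref{eq:operator_semigroup_wave_equation} produces exactly the claimed matrix.

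The argument is essentially structural, and I expect no serious obstacle; the only point requiring care is the rigorous justification that the adjoint family of a $C_0$-group is generated by the adjoint generator on the phase space equipped with the weighted inner product \eqref{eq:dual_inner}, so that the identification $\mathcal{J}_\vartheta(t)^{*}=\mathcal{J}_\vartheta(-t)$ is legitimate. Since $L^{2}(\Lambda)\times H_0^{-1}(\Lambda)$ is a genuine Hilbert space under this inner product (by \Cref{remark:regularity}) and $\mathcal{A}_\vartheta$ is skew-adjoint, an equally clean alternative is to appeal directly to Stone's theorem, which characterises skew-adjoint operators as precisely the generators of strongly continuous unitary groups: this gives unitarity at once, whence $\mathcal{J}_\vartheta(t)^{*}=\mathcal{J}_\vartheta(t)^{-1}=\mathcal{J}_\vartheta(-t)$ follows from unitarity and the group property alone, after which the matrix computation proceeds as above.
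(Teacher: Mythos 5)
Your proposal is correct, and the ``alternative'' you give at the end is precisely the paper's own proof: skew-adjointness of $\mathcal{A}_{\vartheta}$ from \Cref{lemma:adjoint_generator_wave_equation} plus Stone's theorem (\citet[Theorem 3.24]{engelOneparameterSemigroupsLinear2000}) yields unitarity, hence $\mathcal{J}_{\vartheta}(t)^{*}=\mathcal{J}_{\vartheta}(t)^{-1}=\mathcal{J}_{\vartheta}(-t)$, after which the matrix form follows from the evenness of $C_{\vartheta}$, the oddness of $S_{\vartheta}$ via the change of variables, and $C'_{\vartheta}=A_{\vartheta}S_{\vartheta}$. Your primary route differs only in the order of deductions: you first identify $\mathcal{J}_{\vartheta}(t)^{*}=\mathcal{J}_{\vartheta}(-t)$ through the adjoint-group duality (the adjoint family of a $C_0$-group on a Hilbert space is the $C_0$-group generated by $\mathcal{A}_{\vartheta}^{*}=-\mathcal{A}_{\vartheta}$, together with uniqueness of generators) and then obtain unitarity from the group law, which is a valid and equally clean variant; your derivation of $C_{\vartheta}(-t)=C_{\vartheta}(t)$ from the d'Alembert functional equation at $t=0$ is also sound, where the paper simply cites \citet[p.209]{arendtVectorvaluedLaplaceTransforms2001} for the same fact.
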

\begin{proof}[Proof of \Cref{result:unitary_group}]
We have already shown in \Cref{lemma:adjoint_generator_wave_equation} that the generator $\mathcal{A}_{\vartheta}$ is skew-adjoint. Thus, we conclude that $(\mathcal{J}_{\vartheta}(t), t \in \mathbb{R})$ is unitary by \citet*[Theorem 3.24]{engelOneparameterSemigroupsLinear2000}. As an immediate consequence, we observe
\begin{equation*}
\mathcal{J}_{\vartheta}(t)^{*}=\mathcal{J}_{\vartheta}(t)^{-1}=\mathcal{J}_{\vartheta}(-t), \quad t \in \mathbb{R}.
\end{equation*}
By definition, we have $C_{\vartheta}(t)=C_{\vartheta}(-t)$ for all $t \in \mathbb{R}$, see \citet*[p.209]{arendtVectorvaluedLaplaceTransforms2001}. With this and the transformation theorem, we also observe $S_{\vartheta}(-t)=-S_{\vartheta}(t)$ and $C'_{\vartheta}(-t)=A_{\vartheta}S_{\vartheta}(-t)=-A_{\vartheta}S_{\vartheta}(t)=-C'_{\vartheta}(t)$ for $t \in \mathbb{R}$. 
\end{proof}
\begin{lemma}[Adjoint of the component inclusion]
\label{result:adjoint_component_inclusion}
The adjoint of the operator $B:L^{2}(\Lambda) \rightarrow L^{2}(\Lambda)\times H_0^{-1}(\Lambda)$, defined through $Bu \coloneqq (0,u)^{\top}$, is given by $B^{*}(z, l)^{\top}\coloneqq -A_{\vartheta}^{-1}l$ for $(z, l)^{\top}\in L^{2}(\Lambda) \times H_0^{-1}(\Lambda)$.
\end{lemma}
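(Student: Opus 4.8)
The plan is to verify the adjoint relation directly from its defining identity, reducing the whole computation to an application of the Gelfand properties collected in \Cref{result:gelfand_properties}. Since $B$ is plainly bounded, its adjoint $B^{*}:L^{2}(\Lambda)\times H_0^{-1}(\Lambda) \rightarrow L^{2}(\Lambda)$ is characterised as the unique operator satisfying
\begin{equation*}
\langle Bu, (z,l)^{\top}\rangle_{L^{2}(\Lambda)\times H_0^{-1}(\Lambda)} = \langle u, B^{*}(z,l)^{\top}\rangle_{L^{2}(\Lambda)}, \quad u \in L^{2}(\Lambda),\ (z,l)^{\top}\in L^{2}(\Lambda)\times H_0^{-1}(\Lambda),
\end{equation*}
so it suffices to compute the left-hand side and recognise it as an $L^{2}(\Lambda)$-pairing against $-A_{\vartheta}^{-1}l$.

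First I would expand the left-hand side using the product structure of the phase space. Because $Bu=(0,u)^{\top}$ has vanishing first component, the $L^{2}(\Lambda)$-contribution drops out and only the second slot survives, giving $\langle u, l\rangle_{H_0^{-1}(\Lambda)}$, where $u\in L^{2}(\Lambda)$ is to be understood via the Gelfand embedding $L^{2}(\Lambda)\hookrightarrow H_0^{-1}(\Lambda)$ and the inner product is the Hilbert space inner product \eqref{eq:dual_inner} on $H_0^{-1}(\Lambda)$, not the dual pairing.

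The key step is then to convert this $H_0^{-1}(\Lambda)$-inner product into an $L^{2}(\Lambda)$-inner product. Using the symmetry of the real inner product together with property (iii) of \Cref{result:gelfand_properties}, namely the identity \eqref{eq:dual_pairing_property_2}, I obtain
\begin{equation*}
\langle u, l\rangle_{H_0^{-1}(\Lambda)} = \langle l, u\rangle_{H_0^{-1}(\Lambda)} = -\langle u, A_{\vartheta}^{-1}l\rangle_{L^{2}(\Lambda)} = \langle u, -A_{\vartheta}^{-1}l\rangle_{L^{2}(\Lambda)}.
\end{equation*}
Since $A_{\vartheta}^{-1}$ maps $H_0^{-1}(\Lambda)$ into $H_0^{1}(\Lambda)\subset L^{2}(\Lambda)$, the element $-A_{\vartheta}^{-1}l$ indeed lies in $L^{2}(\Lambda)$; comparing with the defining identity and using that $u\in L^{2}(\Lambda)$ is arbitrary yields $B^{*}(z,l)^{\top}=-A_{\vartheta}^{-1}l$, as claimed.

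I do not anticipate a genuine obstacle here. The only point requiring care is the bookkeeping between the dual pairing $\langle\cdot,\cdot\rangle_{H_0^{-1}(\Lambda),H_0^{1}(\Lambda)}$ and the Hilbert space inner product $\langle\cdot,\cdot\rangle_{H_0^{-1}(\Lambda)}$ on the second component, which is precisely the distinction that \eqref{eq:dual_pairing_property_2} is designed to resolve.
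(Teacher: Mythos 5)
Your proposal is correct and matches the paper's own proof essentially verbatim: the paper likewise expands $\langle Bu, (z,l)^{\top}\rangle_{L^{2}(\Lambda)\times H_0^{-1}(\Lambda)}$, observes that the vanishing first component of $Bu$ reduces it to $\langle u, l\rangle_{H_0^{-1}(\Lambda)}$, and invokes the Gelfand identity \eqref{eq:dual_pairing_property_2} from \Cref{result:gelfand_properties} to rewrite this as $\langle u, -A_{\vartheta}^{-1}l\rangle_{L^{2}(\Lambda)}$. Your additional remark that $-A_{\vartheta}^{-1}l \in H_0^{1}(\Lambda)\subset L^{2}(\Lambda)$, so the candidate adjoint is well defined, is a point the paper leaves implicit but is entirely in the spirit of its argument.
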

\begin{proof}[Proof of \Cref{result:adjoint_component_inclusion}]
Given \Cref{result:gelfand_properties}, the operator $B:L^{2}(\Lambda) \rightarrow L^{2}(\Lambda)\times H_0^{-1}(\Lambda)$ defined through $z \mapsto (0,z)^{\top}$ satisfies 
    \begin{equation}
    \label{eq:adjoint_B}
    \begin{aligned}
    \langle Bz, U\rangle_{L^{2}(\Lambda) \times H_0^{-1}(\Lambda)} = \langle z, l\rangle_{H_0^{-1}(\Lambda)}=\langle z, -A_{\vartheta}^{-1}l\rangle_{L^{2}(\Lambda)}=\langle z, B^{*} U\rangle_{L^{2}(\Lambda)}
    \end{aligned}
    \end{equation}
    for $U = (\tilde{z},l)^{\top}\in L^{2}(\Lambda) \times H_0^{-1}(\Lambda)$ and $z \in L^{2}(\Lambda)$, where $B^{*}:L^{2}(\Lambda) \times H_0^{-1}(\Lambda) \rightarrow L^{2}(\Lambda)$ defined through $B^{*}(U_1, U_2)^{\top}\coloneqq -A_{\vartheta}^{-1}U_2$ is the adjoint of $B$. 
\end{proof}

\begin{proof}[Proof of \Cref{result:system_of_equations_from_weak_solution}]
\label{proof:system_of_equations_from_weak_solution}
By the definition of the weak solution, we have 
    \begin{equation*}
    \begin{aligned}
        &\langle X(t), U \rangle_{L^{2}(\Lambda)\times H^{-1}_0(\Lambda)}\\
        &=\int_{0}^{t}\langle X(s),\mathcal{A}^{*}_{\vartheta}U \rangle_{L^{2}(\Lambda) \times H^{-1}_0(\Lambda)}\mathrm{d}s+ \langle BW(t), U\rangle_{L^{2}(\Lambda)\times H^{-1}_0(\Lambda)},
    \end{aligned}
    \end{equation*}
    for $U \in L^{2}(\Lambda) \times H_0^{-1}(\Lambda)$ and $ t \in [0,T]$. For functions $z \in H_0^{1}(\Lambda)\cap H^{2}(\Lambda)$, we find $A_{\vartheta} z \in L^{2}(\Lambda) \subset H^{-1}_0(\Lambda)$. In view of \Cref{result:gelfand_properties} and by setting $U = (z, -A_{\vartheta}z) \in L^{2}(\Lambda) \times (L^{2}(\Lambda))^{'}$, we have
    \begin{align}
    \langle X(t), U \rangle_{L^{2}(\Lambda)\times H^{-1}_0(\Lambda)} &= \langle u(t),z \rangle_{L^{2}(\Lambda)}+ \langle v(t), -A_{\vartheta}z \rangle_{H_0^{-1}(\Lambda)} \nonumber\\
    &= \langle u(t),z \rangle_{L^{2}(\Lambda)} + \langle v(t), z \rangle_{H_0^{-1}(\Lambda), H_0^{1}(\Lambda)}, \nonumber\\
    \langle X(s),\mathcal{A}^{*}_{\vartheta}U \rangle_{L^{2}(\Lambda) \times H^{-1}_0(\Lambda)} &= \langle X(s), -\mathcal{A}_{\vartheta}(z, -A_{\vartheta}z)^{\top} \rangle_{L^{2}(\Lambda)\times H_0^{-1}(\Lambda)} \nonumber\\
    &= -\langle u(t), -A_{\vartheta}z \rangle_{L^{2}(\Lambda)}  - \langle v(t), A_{\vartheta}z  \rangle_{H_0^{-1}(\Lambda)}\nonumber\\
    &= \langle u(t),A_{\vartheta}z \rangle_{L^{2}(\Lambda)} + \langle v(t), z \rangle_{H^{-1}_0(\Lambda), H^{1}_0(\Lambda)}, \nonumber\\
    \label{eq:noise_representation}\langle BW(t), U\rangle_{L^{2}(\Lambda)\times H^{-1}_0(\Lambda)} &= \langle {W(t)}, -A_{\vartheta}z \rangle_{H_0^{-1}(\Lambda)}\\
    &= \langle W(t), z \rangle_{L^{2}(\Lambda)}. \nonumber
    \end{align}
    The result is obtained through linear separation using $U = (z, 0)^{\top}$ and $U = (0, -A_{\vartheta}z)^{\top}$, respectively.
\end{proof}
 
\begin{proof}[Proof of \Cref{result:Gaussian_process_solution}]
\label{proof:Gaussian_process_solution}
\begin{step}[The Gaussian process]
We define the Gaussian process $\mathcal{V}(t,U)$ parameterised by $t \in [0,T]$ and  $U \in L^{2}(\Lambda) \times H_0^{-1}(\Lambda)$ through
    \begin{equation}
    \label{eq:gaussian_process_joint_solution}
    \mathcal{V}(t,U)\coloneqq \int_{0}^{t}\langle \mathcal{J}^{*}_{\vartheta}(t-s)U,B \mathrm{d}W(s) \rangle_{L^{2}(\Lambda) \times H_0^{-1}(\Lambda)},
    \end{equation}
for $U \in L^{2}(\Lambda)\times H_0^{-1}(\Lambda)$ and $t \in [0,T]$.
By definition, $\mathcal{V}$ is a well-defined centred Gaussian process, and Itô's isometry (\citet[Proposition 4.28]{dapratoStochasticEquationsInfinite2014}) shows \eqref{eq:covariance_gaussian_process_solution}.
The proof of \citet*[Proposition 6.7]{hairerIntroductionStochasticPDEs2023} reveals that the process \eqref{eq:gaussian_process_joint_solution} emerges when passing from an almost surely integrable mild solution to a weak solution. In particular, the process satisfies the dynamic behaviour
\begin{equation}
\label{eq:weak_dynamic_gaussian_process_solution}
\mathcal{V}(t, U)=\int_{0}^{t} \mathcal{V}(s, \mathcal{A}^{*}_{\vartheta} U) \mathrm{d}s + \langle BW(t), U \rangle_{L^{2}(\Lambda) \times H_0^{-1}(\Lambda)}, \quad U \in D(\mathcal{A}_{\vartheta}^{*}).
\end{equation}
\end{step}
\begin{step}[Decomposition into component processes] Using the adjoint of $B$ and the unitary group $(\mathcal{J}(t),t \in [0,T])$ computed in \Cref{result:adjoint_component_inclusion} and \Cref{result:unitary_group}, we may rewrite the process \eqref{eq:gaussian_process_joint_solution} as
\begin{equation}
\label{eq:computations_for_the_representation_using_component_processes}
\begin{aligned}
\mathcal{V}(t,U) 
&= \int_{0}^{t}\langle \mathcal{J}^{*}_{\vartheta}(t-s)U,B\cdot \rangle_{L^{2}(\Lambda) \times H_0^{-1}(\Lambda)} \mathrm{d}W(s)\\
&=\int_{0}^{t}\langle B^{*}\mathcal{J}^{*}_{\vartheta}(t-s)U,\cdot \rangle_{L^{2}(\Lambda)} \mathrm{d}W(s)\\
&=\int_{0}^{t} \langle -A_{\vartheta}^{-1}(-C'_{\vartheta}(t-s)z + C_{\vartheta}(t-s)l) , \mathrm{d}W(s)\rangle_{L^{2}(\Lambda)}\\
&= \int_{0}^{t}\langle S_{\vartheta}(t-s)z, \mathrm{d}W(s) \rangle_{L^{2}(\Lambda)} + \int_{0}^{t}\langle -A_{\vartheta}^{-1}C_{\vartheta}(t-s) l, \mathrm{d}W(s) \rangle_{L^{2}(\Lambda)},
\end{aligned}
\end{equation}
for any $U = (z, l)^{\top} \in L^{2}(\Lambda) \times H_0^{-1}(\Lambda)$. Thus, for $z \in L^{2}(\Lambda)$, we set
\begin{equation*}
u_{\mathcal{V}} (t, z)\coloneqq \int_{0}^{t}\langle S_{\vartheta}(t-s)z, \mathrm{d}W(s)\rangle_{L^{2}(\Lambda)}, \quad v_{\mathcal{V}}(t, z)=\int_{0}^{t}\langle C_{\vartheta}(t-s)z, \mathrm{d}W(s) \rangle_{L^{2}(\Lambda)},
\end{equation*}
and obtain $\mathcal{V}(t, U)\coloneqq u_{\mathcal{V}}(t, z) + v_{\mathcal{V}}(t, -A_{\vartheta}^{-1}l)$ from \eqref{eq:computations_for_the_representation_using_component_processes}, as the operator cosine commutes with its generator. 
\end{step}
\begin{step}[Dynamic behaviour] By \Cref{lemma:adjoint_generator_wave_equation}, the generator of the unitary group $(\mathcal{J}_{\vartheta}(t), t \in \mathbb{R})$ is skew-adjoint. Thus, representing the weak dynamic \eqref{eq:weak_dynamic_gaussian_process_solution} using the processes $u_{\mathcal{V}}$ and $v_{\mathcal{V}}$, we obtain for $U=(z_1, z_2)^{\top} \in D(\mathcal{A}_{\vartheta})=D(A_{\vartheta}) \times L^{2}(\Lambda)$:
\begin{equation*}
\begin{aligned}
&u_{\mathcal{V}}(t, z_1) + u_{\mathcal{V}}(t, -A^{-1}_{\vartheta}z_2) \\
&= \int_{0}^{t} u_{\mathcal{V}}(s, -z_2)+ v_{\mathcal{V}}(s, z_1)\mathrm{d}s + \langle BW(t), U \rangle_{L^{2}(\Lambda) \times H_0^{-1}(\Lambda)}. 
\end{aligned}
\end{equation*}
Because of \eqref{eq:noise_representation}, setting $U=(z_1, z_2)^{\top} = (0, -A_{\vartheta}z)^{\top}$ and $U=(z_1, z_2)^{\top} = (z, 0)^{\top}$ for $z \in H_0^{1}(\Lambda) \cap H^{2}(\Lambda)$ yields \eqref{eq:system_of_equations_gaussian_process_solution_eq_1} and \eqref{eq:system_of_equations_gaussian_process_solution_eq_2}, respectively.\qedhere
\end{step}
\end{proof}
  \subsection{Proof of spectral asymptotics}
\label{section:spectral_asymptotics}
For an overview or introduction to the theory of spectral measures, we refer to \citet{schmudgenUnboundedSelfadjointOperators2012} and \citet{rudinFunctionalAnalysis1991}. By the spectral theorem for unbounded operators, see for instance \citet[Theorem 5.1]{schmudgenUnboundedSelfadjointOperators2012}, there exists a unique spectral measure $\tilde{E}_{\mathscr{A}}(\cdot)$ associated with an unbounded self-adjoint operator $(\mathscr{A}, D(\mathscr{A}))$ such that 
\begin{equation*}
\mathscr{A} = \int_{\mathbb{R}}\lambda \mathrm{d} E_{\mathscr{A}}(\lambda),
\end{equation*}
with the resolution of the identity $E_{\mathscr{A}}(\lambda)=\tilde{E}_{\mathscr{A}}((-\infty, \lambda])$. By the functional calculus for unbounded operators, we may then define the operator 
\begin{equation*}
f(\mathscr{A})=\int_{\mathbb{R}}f(\lambda) \mathrm{d}E_{\mathscr{A}}(\lambda), \quad D(f(\mathscr{A}))=\Big\{x \in \mathscr{H}\colon \int_{\mathbb{R}}|f(\lambda)|^{2} \mathrm{d} \langle E_{\mathscr{A}}(\lambda)x, x\rangle< \infty\Big\}. 
\end{equation*}
For an overview of the properties of the functional calculus for unbounded operators, see \citet[Theorem 5.9]{schmudgenUnboundedSelfadjointOperators2012} and \citet[Theorem 13.24]{rudinFunctionalAnalysis1991}. 
\par Abbreviate by $(E_\delta(\lambda), \lambda \geq 0)$ and $(E(\lambda), \lambda \geq 0)$ the resolution of identity associated with the operator $-A_{\vartheta,\delta}$ and $-\vartheta(0)\Delta$, respectively. Recall further the orthogonal projection $P_{\delta}:L^{2}(\mathbb{R}^{d}) \rightarrow L^{2}(\Lambda_{\delta})$ defined in \eqref{eq:orthogonal_projection}. 
\begin{remark}[Projections, subspaces and rescaling]
\label{remark:subspace_projections}
\begin{enumerate}
\item[]
\item We can identify the space $L^{2}(\Lambda)$ with the subspace of $L^{2}(\mathbb{R}^{d})$ consisting of those functions which vanish a.e. on the complement of $\Lambda$. Note that this is also possible for Sobolev spaces. Indeed, we can identify $H_0^{1}(\Lambda)$ with a subspace of $H^{1}(\mathbb{R}^{d})$ by extending functions by zero. For $z \in H_0^{1}(\Lambda)$, we define
\begin{equation*}
\tilde{z}(x) \coloneqq \begin{cases}
z(x) \quad &\text{ if } x \in \Lambda,\\
0 \quad &\text{ else }.
\end{cases}
\end{equation*}
Then, in view of \citet*[Chapter 5]{adamsSobolevSpaces2003}, we have $\tilde{z}\in H^{1}(\mathbb{R}^{d})$ and $\nabla \tilde{z}=\widetilde{\nabla z}$. As a consequence, the $\sim$ can be omitted throughout.
\item Notice that by the convexity of $\Lambda$, we have $\Lambda_{\delta'} \subset \Lambda_{\delta}$ for all $\delta \in (0, \delta')$. In particular, if some function $z \in L^{2}(\mathbb{R}^{d})$ has compact support in $\Lambda_{\delta'}$, its support is thus also contained in $\Lambda_{\delta}$ for $0 < \delta < \delta'$. In fact, we have $z \in L^{2}(\Lambda_{\delta})$ and $z_\delta \in L^{2}(\Lambda)$ for $0< \delta < \delta'$. Crucially, the fact that $P_{\delta}z = z$ for $0 < \delta < \delta'$ will allow us to simplify notation considerably as we are starting out with some function in $L^{2}(\mathbb{R}^{d})$, which is already contained in $L^{2}(\Lambda_{\delta})$ for $\delta$ sufficiently small.
\item For any $z \in L^{2}(\mathbb{R}^{d})$, we have
\begin{equation}
\label{eq:strong_convergence_projection}
\begin{aligned}
\Vert z-P_{\delta}z \Vert_{L^{2}(\mathbb{R}^{d})}^{2} &=\int_{\mathbb{R}^{d}}|z(x)-\mathbbm{1}_{\Lambda_{\delta}}(x)z(x)|^{2}\mathrm{d}x \\
&= \int_{\mathbb{R}^{d}\setminus \Lambda_{\delta}}|z(x)|^{2}\mathrm{d}x \rightarrow 0, \quad \delta \rightarrow 0.
\end{aligned}
\end{equation}
\end{enumerate}
\end{remark}
\begin{lemma}[A strong convergence for the resolution of identities]
\label{result:convergence:partition_of_unity}
We have for any $z \in L^{2}(\mathbb{R}^{d})$:
\begin{equation}
\label{eq:spectral_convergence_1}
\Vert E_{\delta}(\lambda)P_{\delta}z - E(\lambda)z \Vert^{2}_{L^{2}(\mathbb{R}^{d})} \rightarrow 0, \quad \delta \rightarrow 0, \quad \lambda \geq 0. 
\end{equation}
\end{lemma}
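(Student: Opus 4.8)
The plan is to deduce \eqref{eq:spectral_convergence_1} from the \emph{generalized strong resolvent convergence} of $-A_{\vartheta,\delta}$ towards $-\vartheta(0)\Delta$, together with the fact that the limiting operator $-\vartheta(0)\Delta$ on $H^{2}(\mathbb{R}^{d})$ has purely absolutely continuous spectrum $[0,\infty)$ (it is diagonalised by the Fourier transform, cf.\ \citet[Corollary 9.4]{schmudgenUnboundedSelfadjointOperators2012}), so that $E(\{\lambda\})=0$ for every $\lambda\ge0$ and every $\lambda$ is a continuity point of the scalar spectral measures of $-\vartheta(0)\Delta$. The passage from resolvent convergence to convergence of the resolutions of identity at continuity points is the abstract mechanism; the genuinely analytic work lies in establishing the resolvent convergence on the growing domains $\Lambda_{\delta}$.

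\emph{Resolvent convergence.} Fix $\mu>0$ and set $u_{\delta}\coloneqq (\mu I-A_{\vartheta,\delta})^{-1}P_{\delta}z$, extended by zero to $\mathbb{R}^{d}$, so that $u_{\delta}\in H_0^{1}(\Lambda_{\delta})$ solves
\[
\langle \vartheta(\delta\cdot)\nabla u_{\delta},\nabla\phi\rangle_{L^{2}(\Lambda_{\delta})}+\mu\langle u_{\delta},\phi\rangle_{L^{2}(\Lambda_{\delta})}=\langle P_{\delta}z,\phi\rangle,\qquad \phi\in H_0^{1}(\Lambda_{\delta}).
\]
Testing with $\phi=u_{\delta}$ and using $\vartheta_{\min}\coloneqq\min_{\overline{\Lambda}}\vartheta>0$ from \assth{} yields $\vartheta_{\min}\Vert\nabla u_{\delta}\Vert^{2}+\mu\Vert u_{\delta}\Vert^{2}\le \Vert z\Vert\,\Vert u_{\delta}\Vert$, hence a uniform bound on $(u_{\delta})$ in $H^{1}(\mathbb{R}^{d})$. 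I would then extract a weak-$H^{1}$ limit point $u_{*}$ along a subnet and pass to the limit in the weak formulation: for $\phi\in C_c^{\infty}(\mathbb{R}^{d})$ one has $\phi\in H_0^{1}(\Lambda_{\delta})$ once $\delta$ is small, $\langle P_{\delta}z,\phi\rangle\to\langle z,\phi\rangle$, and since $\vartheta(\delta\cdot)\nabla\phi\to\vartheta(0)\nabla\phi$ strongly in $L^{2}$ (dominated convergence, $\nabla\phi$ compactly supported, $\vartheta$ continuous at $0$) while $\nabla u_{\delta}\rightharpoonup\nabla u_{*}$ weakly, the weighted term converges to $\vartheta(0)\langle\nabla u_{*},\nabla\phi\rangle$. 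Thus $u_{*}$ solves $\vartheta(0)\langle\nabla u_{*},\nabla\phi\rangle+\mu\langle u_{*},\phi\rangle=\langle z,\phi\rangle$ for all $\phi\in H^{1}(\mathbb{R}^{d})$, so $u_{*}=(\mu I-\vartheta(0)\Delta)^{-1}z=:u_{0}$ by uniqueness; as the limit is subnet-independent, $u_{\delta}\rightharpoonup u_{0}$ weakly in $H^{1}$.

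The hard part will be upgrading this weak convergence to the norm convergence $u_{\delta}\to u_{0}$ in $L^{2}(\mathbb{R}^{d})$, because on the growing domain the coefficient $\vartheta(\delta\cdot)$ does \emph{not} converge uniformly (its range is all of $\vartheta(\Lambda)$ for every $\delta$) and $u_{0}$ is not an admissible Dirichlet test function on $\Lambda_{\delta}$. I would circumvent both issues with a cut-off: choose $\chi_{\delta}\in C_c^{\infty}(\Lambda_{\delta})$ with $\chi_{\delta}\to1$ so that $w_{\delta}\coloneqq\chi_{\delta}u_{0}\in H_0^{1}(\Lambda_{\delta})$ and $w_{\delta}\to u_{0}$ in $H^{1}(\mathbb{R}^{d})$ (possible since $\Lambda_{\delta}\uparrow\mathbb{R}^{d}$). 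Consider the nonnegative energy $b_{\delta}\coloneqq\langle\vartheta(\delta\cdot)\nabla(u_{\delta}-w_{\delta}),\nabla(u_{\delta}-w_{\delta})\rangle+\mu\Vert u_{\delta}-w_{\delta}\Vert^{2}\ge\mu\Vert u_{\delta}-w_{\delta}\Vert^{2}$. Expanding and inserting the weak formulation tested against $u_{\delta}$ and against $w_{\delta}$, the first bracket equals $\langle P_{\delta}z,u_{\delta}\rangle\to\langle z,u_{0}\rangle$ and the cross term equals $-2\operatorname{Re}\langle P_{\delta}z,w_{\delta}\rangle\to-2\langle z,u_{0}\rangle$; for the last bracket one splits $\int\vartheta(\delta x)|\nabla w_{\delta}|^{2}$ into $\int\vartheta(\delta x)(|\nabla w_{\delta}|^{2}-|\nabla u_{0}|^{2})$, controlled by $\vartheta_{\max}\Vert\,|\nabla w_{\delta}|^{2}-|\nabla u_{0}|^{2}\Vert_{L^{1}}\to0$, plus $\int(\vartheta(\delta x)-\vartheta(0))|\nabla u_{0}|^{2}\to0$ by dominated convergence (here $\vartheta(\delta x)\to\vartheta(0)$ only \emph{pointwise} a.e., which already suffices), so the last bracket tends to $\vartheta(0)\Vert\nabla u_{0}\Vert^{2}+\mu\Vert u_{0}\Vert^{2}=\langle z,u_{0}\rangle$. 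Hence $b_{\delta}\to0$, giving $u_{\delta}-w_{\delta}\to0$ and therefore $u_{\delta}\to u_{0}$ in $L^{2}(\mathbb{R}^{d})$. Alternatively, this resolvent convergence is covered by the varying-domain Trotter--Kato framework of \citet{itoTrotterKatoTheoremApproximation1998} under the conditions of \citet{weidmannStrongOperatorConvergence1997}.

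\emph{From resolvents to spectral projections.} Strong resolvent convergence gives $g(-A_{\vartheta,\delta})P_{\delta}z\to g(-\vartheta(0)\Delta)z$ in $L^{2}(\mathbb{R}^{d})$ for every $g\in C_{0}([0,\infty))$, so for the scalar spectral measures $\mu_{\delta}(\cdot)\coloneqq\langle E_{\delta}(\cdot)P_{\delta}z,P_{\delta}z\rangle$ and $\mu_{0}(\cdot)\coloneqq\langle E(\cdot)z,z\rangle$ one has $\int g\,d\mu_{\delta}\to\int g\,d\mu_{0}$, i.e.\ vague convergence; since moreover $\mu_{\delta}([0,\infty))=\Vert P_{\delta}z\Vert^{2}\to\Vert z\Vert^{2}=\mu_{0}([0,\infty))$ by \eqref{eq:strong_convergence_projection}, no mass escapes and $\mu_{\delta}\to\mu_{0}$ weakly. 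As $\mu_{0}(\{\lambda\})=0$ for every $\lambda\ge0$, the distribution functions converge, $\mu_{\delta}([0,\lambda])\to\mu_{0}([0,\lambda])$, that is $\Vert E_{\delta}(\lambda)P_{\delta}z\Vert^{2}\to\Vert E(\lambda)z\Vert^{2}$. The same argument applied to the polarised measures $\langle E_{\delta}(\cdot)P_{\delta}z,P_{\delta}w\rangle$ shows $\langle E_{\delta}(\lambda)P_{\delta}z,w\rangle\to\langle E(\lambda)z,w\rangle$ for all $w\in L^{2}(\mathbb{R}^{d})$, i.e.\ $E_{\delta}(\lambda)P_{\delta}z\rightharpoonup E(\lambda)z$ weakly. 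Norm convergence together with weak convergence yields the strong convergence \eqref{eq:spectral_convergence_1}. The decisive point throughout is that all of $[0,\infty)$ consists of continuity points of $\mu_{0}$, which is exactly the absolute-continuity (Riemann--Lebesgue) feature of $-\vartheta(0)\Delta$ stressed in \Cref{section:asymptotic_energetic_behaviour}; at a genuine eigenvalue the projections would fail to converge.
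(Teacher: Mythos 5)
Your proposal is correct, but it takes a genuinely different route from the paper. The paper's proof is two citations long: it verifies the core condition of \citet[Theorem 1]{weidmannStrongOperatorConvergence1997} — for $\varphi \in C_c^{\infty}(\mathbb{R}^{d})$ one eventually has $\varphi \in H_0^{1}(\Lambda_{\delta}) \cap H^{2}(\Lambda_{\delta})$ and $-A_{\vartheta,\delta}\varphi = -\vartheta(\delta\cdot)\Delta \varphi - \delta (\nabla\vartheta)(\delta\cdot)\cdot\nabla\varphi \rightarrow -\vartheta(0)\Delta\varphi$ in $L^{2}(\mathbb{R}^{d})$ — to obtain the strong resolvent convergence \eqref{eq:strong_resolvent_convergence}, and then invokes \citet[Theorem 2]{weidmannStrongOperatorConvergence1997} to pass to the resolutions of identity, the absence of eigenvalues of $-\vartheta(0)\Delta$ entering exactly as in your last paragraph. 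You instead prove both ingredients by hand: the resolvent convergence via a variational (Mosco-type) argument — uniform $H^{1}$ bound, weak-limit identification against compactly supported test functions, and the cut-off energy expansion $b_{\delta}\to 0$ to upgrade to norm convergence — and the passage to spectral projections via weak convergence of the scalar spectral measures (vague convergence from the $C_{0}$-functional calculus plus conservation of total mass by \eqref{eq:strong_convergence_projection}, portmanteau at continuity points, polarisation, and the Radon--Riesz property). Both routes are sound; yours is self-contained and makes transparent where the purely absolutely continuous spectrum of the limit operator is used, at the cost of length and of a few standard steps left implicit — notably that strong convergence of the resolvents at a single real point $\mu>0$ propagates to $g(-A_{\vartheta,\delta})P_{\delta}z \to g(-\vartheta(0)\Delta)z$ for all $g \in C_{0}([0,\infty))$ in the varying-space setting (Stone--Weierstrass applied to the algebra generated by $x \mapsto (\mu+x)^{-1}$, with the uniform bound $\Vert g(-A_{\vartheta,\delta})\Vert \leq \Vert g \Vert_{\infty}$ handling products). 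Your closing remark that the resolvent convergence is alternatively covered by \citet{itoTrotterKatoTheoremApproximation1998} under the conditions of \citet{weidmannStrongOperatorConvergence1997} is precisely the paper's proof; had you checked the core condition directly, your first two paragraphs would have collapsed to three lines.
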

\begin{proof}[Proof of \Cref{result:convergence:partition_of_unity}] By \citet*[Theorem 2]{weidmannStrongOperatorConvergence1997}, the convergence of the resolutions of identity in \eqref{eq:spectral_convergence_1} follows from the strong resolvent convergence
\begin{equation}
\label{eq:strong_resolvent_convergence}
\Vert (-A_{\vartheta,\delta}-\lambda)^{-1}P_{\delta} z - (-\vartheta(0)\Delta - \lambda)^{-1}z\Vert^{2}_{L^{2}(\mathbb{R}^{d})} \rightarrow 0, \quad \lambda \in \mathbb{C}\setminus \mathbb{R}.
\end{equation}
A sufficient condition for \eqref{eq:strong_resolvent_convergence} is given by \citet*[Theorem 1]{weidmannStrongOperatorConvergence1997}. Indeed, consider some $\varphi \in D_0 = C_c^{\infty}(\mathbb{R}^{d})$ in the core of the Laplace operator. Then, there exists some $\delta_0(z)$ such that for any $0 < \delta < \delta_0(z)$, we have $\varphi \in H_0^{1}(\Lambda_\delta) \cap H^{2}(\Lambda_\delta)$ and $-A_{\vartheta,\delta}\varphi \rightarrow -\vartheta(0) \Delta \varphi$ in $L^{2}(\mathbb{R}^{d})$. In particular, the assumptions of \citet*[Theorem 1 and Theorem 2]{weidmannStrongOperatorConvergence1997} are satisfied and \eqref{eq:spectral_convergence_1} follows. 
\end{proof}

\begin{corollary}
\label{result:weak_convergence_partitions_of_unity}
For $z \in L^{2}(\mathbb{R}^{d})$, we have
\begin{equation}
\label{eq:weak_convergence_distribution}
\int_{0}^{\infty}g(\lambda)\mathrm{d}\langle E_\delta(\lambda)P_{\delta}z - E(\lambda)z, z \rangle_{L^{2}(\mathbb{R}^{d})} \rightarrow 0, \quad \delta \rightarrow 0, 
\end{equation}
for any bounded and continuous function $g: \mathbb{R} \rightarrow \mathbb{C}$.
\end{corollary}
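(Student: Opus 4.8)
The plan is to recognise the quantity in \eqref{eq:weak_convergence_distribution} as the integral of $g$ against the difference of two positive finite Borel measures on $[0,\infty)$, and then to deduce the convergence from weak convergence of these measures. First I would observe that, since $E_\delta(\lambda)P_\delta z \in L^{2}(\Lambda_\delta)$ and any $w \in L^{2}(\Lambda_\delta)$ vanishes a.e.\ outside $\Lambda_\delta$, one has $\langle E_\delta(\lambda)P_\delta z, z\rangle_{L^{2}(\mathbb{R}^{d})} = \langle E_\delta(\lambda)P_\delta z, P_\delta z\rangle_{L^{2}(\Lambda_\delta)}$. As $-A_{\vartheta,\delta}$ is non-negative and self-adjoint, $\lambda \mapsto \langle E_\delta(\lambda)P_\delta z, P_\delta z\rangle$ is the distribution function of a positive finite measure $\mu_\delta$ with total mass $\Vert P_\delta z\Vert_{L^{2}(\Lambda_\delta)}^{2}$; likewise $\lambda \mapsto \langle E(\lambda)z, z\rangle$ is the distribution function of a positive finite measure $\mu$ with total mass $\Vert z\Vert_{L^{2}(\mathbb{R}^{d})}^{2}$. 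Thus \eqref{eq:weak_convergence_distribution} is exactly $\int_{0}^{\infty} g\,\mathrm{d}\mu_\delta - \int_{0}^{\infty} g\,\mathrm{d}\mu$, and it suffices to prove the weak convergence $\mu_\delta \Rightarrow \mu$.

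Next I would extract the two ingredients of a Helly--Bray / Portmanteau argument. Pointwise convergence of the distribution functions at every $\lambda \geq 0$ follows directly from \Cref{result:convergence:partition_of_unity} together with Cauchy--Schwarz, since
\begin{equation*}
|\langle E_\delta(\lambda)P_\delta z - E(\lambda)z, z\rangle_{L^{2}(\mathbb{R}^{d})}| \leq \Vert E_\delta(\lambda)P_\delta z - E(\lambda)z\Vert_{L^{2}(\mathbb{R}^{d})}\,\Vert z\Vert_{L^{2}(\mathbb{R}^{d})} \to 0, \quad \delta \to 0.
\end{equation*}
Convergence of the total masses, $\mu_\delta([0,\infty)) = \Vert P_\delta z\Vert_{L^{2}(\Lambda_\delta)}^{2} \to \Vert z\Vert_{L^{2}(\mathbb{R}^{d})}^{2} = \mu([0,\infty))$, is supplied by the strong convergence \eqref{eq:strong_convergence_projection} of the projections $P_\delta$ in \Cref{remark:subspace_projections}.

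Finally I would combine these facts: for finite positive measures on $[0,\infty)$, pointwise convergence of the distribution functions together with convergence of the total masses yields weak convergence $\mu_\delta \Rightarrow \mu$, whence $\int_{0}^{\infty} g\,\mathrm{d}\mu_\delta \to \int_{0}^{\infty} g\,\mathrm{d}\mu$ for every bounded continuous $g$, which is the claim. The one point that genuinely requires the mass convergence---and which I expect to be the main obstacle---is tightness: pointwise convergence of the increasing functions alone does not rule out mass escaping to $+\infty$. This is controlled precisely because the tail mass $\mu_\delta([\Lambda,\infty))$ converges to $\mu([\Lambda,\infty))$ for every fixed $\Lambda$, and the latter can be made uniformly small using the total-mass convergence; with tightness in hand the integral convergence against bounded continuous $g$ is standard.
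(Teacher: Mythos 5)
Your proposal is correct and takes essentially the same route as the paper: pointwise convergence of the spectral distribution functions via \Cref{result:convergence:partition_of_unity} and Cauchy--Schwarz, followed by weak convergence of the associated finite measures to conclude for bounded continuous $g$. The only difference is that you make explicit the two points the paper leaves implicit, namely the identification $\langle E_\delta(\lambda)P_\delta z, z\rangle_{L^{2}(\mathbb{R}^{d})} = \langle E_\delta(\lambda)P_\delta z, P_\delta z\rangle_{L^{2}(\Lambda_\delta)}$ and the total-mass convergence $\Vert P_\delta z\Vert^{2}_{L^{2}(\Lambda_\delta)} \rightarrow \Vert z\Vert^{2}_{L^{2}(\mathbb{R}^{d})}$ from \eqref{eq:strong_convergence_projection}, which rules out escape of mass to infinity and is indeed needed to pass from pointwise convergence of the distribution functions to weak convergence against bounded continuous test functions.
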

\begin{proof}[Proof of \Cref{result:weak_convergence_partitions_of_unity}]
Consider the finite measures
\begin{equation}
\label{eq:spectral_measures_delta}
E_{z,\delta}(B) \coloneqq \langle E_{\delta}(B)P_{\delta}z, P_{\delta}z\rangle_{L^{2}(\Lambda_{\delta})}, \quad B \in \mathcal{B}(\mathbb{R}), \quad \delta \geq 0,
\end{equation}  
with the distribution functions $E_{z, \delta}(\lambda) = \langle E_{\delta}(\lambda)P_{\delta}z, P_{\delta}z\rangle_{L^{2}(\Lambda_{\delta})}$ for $\delta \geq 0$. Using \eqref{eq:spectral_convergence_1} and the fact that $\Delta$ has fully absolutely continuous spectrum, we observe for any $\lambda \geq 0$ that 
\begin{equation}
\label{eq:convergence_distribution_functions}
\begin{aligned}
|E_{z,\delta}(\lambda)-E_{z,0}(\lambda)| &= |\langle E_{\delta}(\lambda)P_{\delta}z - E(\lambda)z,P_{\delta }z \rangle_{L^{2}(\mathbb{R}^{d})}| \\
&\leq \Vert E_{\delta}(\lambda)P_{\delta}z - E(\lambda)z \Vert_{L^{2}(\mathbb{R}^{d})} \Vert z \Vert_{L^{2}(\mathbb{R}^{d})} \rightarrow 0, \quad \delta \rightarrow 0,
\end{aligned}
\end{equation}
where the convergence follows immediately from \Cref{result:convergence:partition_of_unity}. 
The convergence of the distribution function yields a weak convergence of the associated measures \eqref{eq:spectral_measures_delta}. For any bounded and continuous function $g:\mathbb{R} \rightarrow \mathbb{C}$ the convergence \eqref{eq:convergence_distribution_functions} implies \eqref{eq:weak_convergence_distribution}. 
\end{proof}
\begin{proof}[Proof of \Cref{result:approximate_riemann_lebesgue}]
\label{proof:asymptotic_riemann_lebesgue}
We wish to show that for any $z_1, z_2 \in L^{2}(\mathbb{R}^{d})$, we have 
\begin{equation*}
\langle e^{i \delta^{-1}t(-A_{\vartheta,\delta})^{1/2}}P_{\delta} z_1, P_{\delta} z_2 \rangle_{L^{2}(\Lambda_{\delta})} \rightarrow 0, \quad \delta \rightarrow 0, \quad t \in \mathbb{R},
\end{equation*}
where the orthogonal projection $P_{\delta}$ is defined through \eqref{eq:orthogonal_projection}.
By polarisation, we may assume that $z=z_1=z_2$. \par The assumptions of \Cref{result:convergence:partition_of_unity} are satisfied and \Cref{result:weak_convergence_partitions_of_unity} is applicable. With $|e^{i\delta^{-1}t \sqrt{\lambda}}| \leq 1$ and the fact that the Laplace operator is a Riemann-Lebesgue operator, c.f. \eqref{eq:riemann_lebesque_laplace_operator} in \Cref{result:riemann_lebesgue_lemma}, we observe 
\begin{align*}
\langle e^{i \delta^{-1}t(-A_{\vartheta,\delta})^{1/2}}P_{\delta}z,P_{\delta}z \rangle_{L^{2}(\Lambda_{\delta})} &= \int_{0}^{\infty}e^{i\delta^{-1}t \sqrt{\lambda}}\mathrm{d}\langle E_{\delta}(\lambda)P_{\delta}z, P_{\delta}z\rangle_{L^{2}(\Lambda_{\delta})}\\
&=\int_{0}^{\infty}e^{i\delta^{-1}t \sqrt{\lambda}}\mathrm{d}\langle E_{\delta}(\lambda)P_{\delta}z, P_{\delta}z\rangle_{L^{2}(\mathbb{R}^{d})}\\
&=  \int_{0}^{\infty}e^{i\delta^{-1}t \sqrt{\lambda}} \mathrm{d}\langle E_{\delta}(\lambda)P_{\delta}z-E(\lambda)z, P_{\delta}z\rangle_{L^{2}(\mathbb{R}^{d})} \\
&+ \langle e^{i\delta^{-1}t(-\Delta)^{1/2}} P_{\delta}z, P_{\delta}z\rangle_{L^{2}(\mathbb{R}^{d})} \rightarrow 0, \quad \delta \rightarrow 0,
\end{align*}
where we have used \eqref{eq:strong_convergence_projection} in \Cref{remark:subspace_projections} and \eqref{eq:weak_convergence_distribution} from \Cref{result:weak_convergence_partitions_of_unity} in the last step.
\end{proof}
\begin{proof}[Proof of \Cref{result:Limits_operator_cosine_sine}]
\label{proof:Limits_operator_cosine_sine}
As $(C_{\vartheta,\delta}(t), t \in [0,T])$ and $(S_{\vartheta,\delta}(t), t \in [0,T])$ are the operator cosine and sine functions associated with the operator $A_{\vartheta,\delta}$ for $\delta \geq 0$, we can represent them using the resolutions of identities associated with  $-A_{\vartheta,\delta}$ and $-\vartheta(0)\Delta$, respectively:
\begin{equation}
\label{eq:difference_scaling_limit_objects}
\begin{aligned}
\Vert S_{\vartheta,\delta}(\tau) P_{\delta} z &- S_{\vartheta(0)}(\tau)z  \Vert_{L^{2}(\mathbb{R}^{d})}^{2} \\
&= \int_{0}^{\infty} \left| \frac{\sin(\tau \sqrt{\lambda})}{\sqrt{\lambda}} \right|^{2} \mathrm{d}\langle (E_{\delta}(\lambda)P_\delta -E(\lambda))z, z\rangle_{L^{2}(\mathbb{R}^{d})},\\
\Vert C_{\vartheta,\delta}(\tau)P_{\delta}z &- C_{\vartheta(0)}(\tau)z  \Vert_{L^{2}(\mathbb{R}^{d})}^{2} \\
&= \int_{0}^{\infty} | \cos(\tau \sqrt{\lambda})|^{2} \mathrm{d}\langle (E_{\delta}(\lambda)P_\delta -E(\lambda))z, z\rangle_{L^{2}(\mathbb{R}^{d})}.
\end{aligned}
\end{equation}
Thus, as both the cosine and the mapping $\lambda \rightarrow \sin(\tau \sqrt{\lambda})/\sqrt{\lambda}$ are bounded, we can apply \Cref{result:weak_convergence_partitions_of_unity} and both expressions in \eqref{eq:difference_scaling_limit_objects} converge to zero. 
\end{proof}

  \subsection{Proof of asymptotic energy results}
\label{section:energy_results}
Unless stated otherwise, all limits are for $\delta \rightarrow 0$. For $z \in L^{1}(\mathbb{R}^{d})\cap L^{2}(\mathbb{R}^{d})$, we define the norm
\begin{equation*}
\Vert z \Vert_{L^{1}\cap L^{2}(\mathbb{R}^{d})}\coloneqq \Vert z \Vert_{L^{1}(\mathbb{R}^{d})}+ \Vert z \Vert_{L^{2}(\mathbb{R}^{d})},
\end{equation*}
and for $z$ with partial derivatives up to second order in $L^{1}(\mathbb{R}^{d})\cap L^{2}(\mathbb{R}^{d})$ set 
\begin{equation*}
\Vert z \Vert_{W_{1,2}^{2}(\mathbb{R}^{d})} \coloneqq \Vert z + |\nabla z| + \Delta z \Vert_{L^{1}\cap L^{2}(\mathbb{R}^{d})}. 
\end{equation*}
In order to use \citet*[Lemma A.6]{altmeyerNonparametricEstimationLinear2021}, we require the following assumption. 
\begin{assumptionApprox}{w^{(\delta)}}{z}
\label{asskernel_approx} Suppose $z$ satisfies \asskernel[$z$].
Let $w^{(\delta)} \in L^{2}(\mathbb{R}^{d})$  have compact support in $\Lambda_{\delta'}$ for some $\delta'>0$ and 
\begin{equation*}
\Vert w^{(\delta)}- \Delta z \Vert_{L^{1} \cap L^{2}(\mathbb{R}^{d})} \leq C \delta^{\alpha} \Vert z \Vert_{W^{2}_{1,2}(\mathbb{R}^{d})}
\end{equation*}
for all $0 < \delta \leq \delta'$.  
\end{assumptionApprox}
We emphasize that we use \asskernelapprox[$w^{\delta}$]{$z$} as a declaration and assertion of a $w^{(\delta)}$ and $z$ such that \asskernelapprox[$w^{\delta}$]{$z$} is satisfied. 
\begin{lemma}[Limits for fractions and inverse]
\label{result:fractional_and_inverse_limits}
Fix $z$ and $w^{(\delta)}$ satisfying \asskernel[$z$] and  \asskernelapprox[$w^{\delta}$]{$z$}. Assume for $\gamma>0$ that $\gamma > 1 - d/4 - \alpha/2$. Then, as $\delta \rightarrow 0$ we have 
\begin{enumerate}
\item \label{enum_limits:1} $\Vert (-A_{\vartheta,\delta})^{-1/2}w^{(\delta)}-(-\vartheta(0)\Delta)^{-1/2}\Delta z \Vert^{2}_{L^{2}(\mathbb{R}^{d})} \rightarrow 0,$
\item \label{enum_limits:2} $\Vert (-A_{\vartheta,\delta})^{-1}\Delta z - (-{\vartheta(0)} \Delta)^{-1}\Delta z\Vert^{2}_{L^{2}(\mathbb{R}^{d})} \rightarrow 0,$
\item \label{enum_limits:3} $\Vert (-A_{\vartheta,\delta})^{-1 + \gamma}w^{(\delta)} - (-\vartheta(0)\Delta)^{-1+ \gamma}z  \Vert^{2}_{L^{2}(\mathbb{R}^{d})} \rightarrow 0,$
\item \label{enum_bounds:4} $\sup_{0 < \delta \leq 1}\Vert (-A_{\vartheta,\delta})^{-1} \Delta z \Vert_{L^{2}(\mathbb{R}^{d})} < \infty,$
\item \label{enum_bounds:5} $\sup_{0 < \delta \leq 1}\Vert (-A_{\vartheta,\delta})^{-1/2}w^{(\delta)} \Vert_{L^{2}(\mathbb{R}^{d})} < \infty.$
\end{enumerate}
\end{lemma}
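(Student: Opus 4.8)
The plan is to reduce all five assertions to the spectral calculus of the operators $-A_{\vartheta,\delta}$ and $-\vartheta(0)\Delta$ and to the convergence of their resolutions of the identity established in \Cref{result:convergence:partition_of_unity} and \Cref{result:weak_convergence_partitions_of_unity}. Since $z$, hence $\Delta z$, and $w^{(\delta)}$ are compactly supported in $\Lambda_{\delta'}$, \Cref{remark:subspace_projections} gives $P_\delta u = u$ for the relevant vectors $u$ once $\delta < \delta'$, so I may work throughout in $L^{2}(\mathbb{R}^{d})$ and suppress the projections. The only functions of the spectral parameter that occur are $\lambda \mapsto \lambda^{-s}$ for $s \in \{1/2, 1, 1-\gamma\}$; these are bounded and continuous away from the origin but singular at the bottom of the spectrum $\lambda = 0$, and taming this singularity uniformly in $\delta$ is the crux of the argument. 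The principal analytic tool for the singular part is the subordination identity $(-A_{\vartheta,\delta})^{-s} = \Gamma(s)^{-1}\int_{0}^{\infty} t^{s-1} e^{tA_{\vartheta,\delta}} \, \mathrm{d}t$ combined with the ultracontractivity bound $\Vert e^{tA_{\vartheta,\delta}} u \Vert_{L^{2}} \lesssim \min(\Vert u \Vert_{L^{2}}, t^{-d/4} \Vert u \Vert_{L^{1}})$, which holds uniformly in $\delta$ because the coefficients $\vartheta(\delta \cdot)$ inherit the ellipticity bounds of \assth{} and the Dirichlet heat kernel on $\Lambda_{\delta}$ is dominated by the free one.

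I would first establish the uniform bounds (iv) and (v). Splitting the subordination integral at $t=1$, using $\Vert e^{tA_{\vartheta,\delta}} u \Vert_{L^{2}} \le \Vert u \Vert_{L^{2}}$ for $t \le 1$ and $\Vert e^{tA_{\vartheta,\delta}} u \Vert_{L^{2}} \lesssim t^{-d/4}\Vert u \Vert_{L^{1}}$ for $t \ge 1$ (with the divergence structure of $\Delta z$ supplying the extra decay needed in low dimensions), the large-time tail is controlled by the uniform bounds $\sup_{\delta}\Vert w^{(\delta)} \Vert_{L^{1}\cap L^{2}(\mathbb{R}^{d})} < \infty$ and $\Vert \Delta z \Vert_{L^{1}\cap L^{2}(\mathbb{R}^{d})} < \infty$ granted by \asskernel[$z$] and \asskernelapprox[$w^{\delta}$]{$z$}, so that the resulting constants are independent of $\delta$. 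This is exactly the content of \citet[Lemma A.6]{altmeyerNonparametricEstimationLinear2021}, whose hypotheses \asskernelapprox[$w^{\delta}$]{$z$} is tailored to verify; once the uniform semigroup estimate is in place I would invoke it directly.

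For the convergences (i)--(iii) I would use the triangle inequality to separate two effects. First, I replace $w^{(\delta)}$ by $\Delta z$, bounding $\Vert (-A_{\vartheta,\delta})^{-s}(w^{(\delta)} - \Delta z) \Vert_{L^{2}}$ by the uniform fractional-power estimate of the previous step applied to $w^{(\delta)} - \Delta z$; since $\Vert w^{(\delta)} - \Delta z \Vert_{L^{1}\cap L^{2}(\mathbb{R}^{d})} \le C\delta^{\alpha}\Vert z \Vert_{W^{2}_{1,2}(\mathbb{R}^{d})} \to 0$, this term vanishes, and the interplay between the rate $\delta^{\alpha}$ and the large-time divergence of the subordination integral is precisely what forces the threshold $\gamma > 1 - d/4 - \alpha/2$ in (iii). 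Second, for the genuinely spectral term $\Vert (-A_{\vartheta,\delta})^{-s}\Delta z - (-\vartheta(0)\Delta)^{-s}\Delta z \Vert_{L^{2}}$ I split the spectrum at a level $\varepsilon > 0$: on $[\varepsilon,\infty)$ the truncated weight $\lambda \mapsto \lambda^{-s}\mathbbm{1}_{[\varepsilon,\infty)}(\lambda)$ is bounded and continuous, so convergence to zero as $\delta \to 0$ follows exactly as in the proof of \Cref{result:Limits_operator_cosine_sine} via \Cref{result:weak_convergence_partitions_of_unity}; on $[0,\varepsilon)$ the limiting contribution is controlled by the Fourier-side estimate $\mathrm{d}\langle E(\lambda)\Delta z, \Delta z \rangle \asymp \lambda^{d/2+1}\,\mathrm{d}\lambda$ near zero, which makes $\int_{0}^{\varepsilon}\lambda^{-2s}\,\mathrm{d}\langle E(\lambda)\Delta z, \Delta z\rangle \to 0$ as $\varepsilon \to 0$, while the contribution of $A_{\vartheta,\delta}$ is rendered uniformly small through a uniform bound on a slightly higher inverse power of $-A_{\vartheta,\delta}$ (so that $\lambda^{-2s} \le \varepsilon^{\eta}\lambda^{-2s-\eta}$ on $[0,\varepsilon)$), again supplied by the ultracontractivity and $L^{1}$-estimates above.

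The main obstacle is precisely the behaviour at the bottom of the spectrum: the spectral measures $\mathrm{d}\langle E_{\delta}(\lambda)\Delta z, \Delta z\rangle$ converge only weakly by \Cref{result:weak_convergence_partitions_of_unity}, whereas the weights $\lambda^{-s}$ are unbounded there, so the low-frequency mass must be shown to be uniformly small in $\delta$ rather than merely convergent. This is where the uniform ultracontractivity and the $L^{1}$-control built into \asskernelapprox[$w^{\delta}$]{$z$} are indispensable, and where the quantitative condition $\gamma > 1 - d/4 - \alpha/2$ originates; once the low-frequency tail is tamed, the remaining steps are routine consequences of the spectral convergence already established.
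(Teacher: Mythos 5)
Your proposal is correct in substance, and for the uniform bounds (iv)--(v) it coincides with the paper: the subordination formula $(-A_{\vartheta,\delta})^{-s}=\Gamma(s)^{-1}\int_{0}^{\infty}t^{s-1}T_{\delta}(t)\,\mathrm{d}t$, the $\delta$-uniform heat-semigroup estimates of \citet[Lemma A.6]{altmeyerNonparametricEstimationLinear2021}, and an integrability check that reproduces the paper's case distinction ($d=1$ needing $\alpha>1/2$, $d>1$ not). For the convergences (i)--(iii), however, you take a genuinely different route. The paper never leaves the time side: it invokes the pointwise-in-$t$ convergences $\Vert T_{\delta}(t)w^{(\delta)}-T_{0}(t)\Delta z\Vert_{L^{2}}\rightarrow 0$ and $\Vert T_{\delta}(t)\Delta z-T_{0}(t)\Delta z\Vert_{L^{2}}\rightarrow 0$ from \citet[Proposition 3.5]{altmeyerNonparametricEstimationLinear2021} and concludes by dominated convergence in the subordination integral, with \eqref{eq:semigroup_bound_1}--\eqref{eq:semigroup_bound_3} as dominants --- no spectral splitting and no separate treatment of the bottom of the spectrum. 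You instead argue on the spectral side, reusing \Cref{result:convergence:partition_of_unity} and \Cref{result:weak_convergence_partitions_of_unity} with an $\varepsilon$-truncation and a uniform higher-inverse-power bound at low frequencies; this buys independence from the cited Proposition 3.5 and recycles machinery the paper already needs for \Cref{result:Limits_operator_cosine_sine}, at the cost of exactly the low-frequency bookkeeping you identify.

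Two steps in your sketch need more care, though both are repairable with tools you already invoke. First, the high-frequency step requires \emph{strong} convergence $g(-A_{\vartheta,\delta})P_{\delta}\Delta z\rightarrow g(-\vartheta(0)\Delta)\Delta z$ in $L^{2}(\mathbb{R}^{d})$, not merely the quadratic-form convergence of \Cref{result:weak_convergence_partitions_of_unity}: the squared norm you must control contains the cross term $\langle g(-A_{\vartheta,\delta})\Delta z,\, g(-\vartheta(0)\Delta)\Delta z\rangle_{L^{2}(\mathbb{R}^{d})}$, which is not an integral against a single spectral measure. After smoothing your discontinuous truncation $\lambda^{-s}\mathbbm{1}_{[\varepsilon,\infty)}(\lambda)$, strong convergence for bounded continuous $g$ does follow from the strong resolvent convergence established via \citet{weidmannStrongOperatorConvergence1997}, so this is presentational rather than mathematical. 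Second, the replacement step $\Vert(-A_{\vartheta,\delta})^{-s}(w^{(\delta)}-\Delta z)\Vert_{L^{2}}\rightarrow 0$ cannot follow from a $\delta$-uniform linear estimate in $\Vert\cdot\Vert_{L^{1}\cap L^{2}(\mathbb{R}^{d})}$ alone: with only the free decay $1\wedge t^{-d/4}$ the subordination integral diverges at infinity whenever $s\geq d/4$ (so for $s=1/2$ in $d\leq 2$, and for $s=1-\gamma$ in low dimensions). One must trade the rate $\delta^{\alpha}$ against the $t$-growth, cutting the integral at $t\sim\delta^{-2}$ and using the Dirichlet spectral gap $\lambda_{1}(\Lambda_{\delta})\sim\delta^{2}$ beyond; this gives a contribution of order $\delta^{\alpha-2s+d/2}$, which vanishes exactly when $\alpha>1/2$ for $s=1/2$, $d=1$, and exactly when $\gamma>1-d/4-\alpha/2$ for $s=1-\gamma$. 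Your remark that this interplay ``forces the threshold'' shows you intend precisely this argument, but as written you attribute it to a uniform fractional-power estimate applied linearly to $w^{(\delta)}-\Delta z$, which does not exist in low dimension; make the $\delta$-dependent cut explicit (or note that the proof of \citet[Lemma A.6]{altmeyerNonparametricEstimationLinear2021} packages it) and the argument closes.
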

\begin{proof}[Proof of \Cref{result:fractional_and_inverse_limits}]
\begin{step}[Semigroup representations]
Using the semigroup representation for fractional powers of operators in \citet[Chapter 2.6]{pazySemigroupsLinearOperators2010}, we observe
\begin{align*}
(-A_{\vartheta,\delta})^{-1/2}w^{(\delta)}&=\frac{1}{\Gamma(1/2)}\int_{0}^{\infty}t^{-1/2}T_{\delta}(t)w^{(\delta)} \mathrm{d}t,\\
(-A_{\vartheta,\delta})^{-1} \Delta z &= \frac{1}{\Gamma(1)} \int_{0}^{\infty} T_{\delta}(t) \Delta z
\mathrm{d}t,\\
(-A_{\vartheta,\delta})^{-1 + \gamma}w^{(\delta)} &= \frac{1}{\Gamma(1-\gamma)} \int_{0}^{\infty}t^{-\gamma}T_{\delta}(t)w^{(\delta)}\mathrm{d}t,
\end{align*}
where $(T_{\delta}(t), t \geq 0)$ is the strongly continuous semigroup generated by $A_{\vartheta,\delta}$ on $L^{2}(\Lambda_{\delta})$.
\end{step}
\begin{step}[Pointwise convergence of the integrand]
Using \citet[Proposition 3.5]{altmeyerNonparametricEstimationLinear2021}, we immediately obtain
\begin{align*}
\Vert T_{\delta}(t)w^{(\delta)} - T_{0}(t) \Delta z\Vert_{L^{2}(\mathbb{R}^{d})} \rightarrow 0, \quad \delta \rightarrow 0,\\
\Vert T_{\delta}(t)\Delta z - T_{0}(t) \Delta z\Vert_{L^{2}(\mathbb{R}^{d})} \rightarrow 0, \quad \delta \rightarrow 0.
\end{align*}
\end{step}
\begin{step}[Upper bounds for dominated convergence]
Given \asskernel[$z$] and \citet[Lemma A.6]{altmeyerNonparametricEstimationLinear2021} and the absence of a first and zeroth order perturbation term, we observe
\begin{align}
\label{eq:semigroup_bound_1}
\Vert t^{-1/2}T_{\delta}(t)w^{(\delta)} \Vert_{L^{2}(\Lambda_{\delta})} &\lesssim (t^{-1/2} \land t^{-1/2-d/4-\alpha/2}) \Vert z \Vert_{W_{1,2}^{2}(\mathbb{R}^{d})},\\
\label{eq:semigroup_bound_2}
\Vert T_{\delta}(t) \Delta z \Vert_{L^{2}(\Lambda_{\delta})} &\lesssim  (1 \land t^{-1/2-d/4-\alpha/2})\Vert z \Vert_{W_{1,2}^{2}(\mathbb{R}^{d})},\\
\label{eq:semigroup_bound_3}
\Vert t^{-\gamma}T_{\delta}(t) w^{(\delta)} \Vert_{L^{2}(\Lambda_{\delta})} &\lesssim (1 \land t^{-\gamma - d/4 - \alpha/2})\Vert z \Vert_{W_{1,2}^{2}(\mathbb{R}^{d})} . 
\end{align}
\end{step}
\begin{step}[Upper bounds and dominated convergence]
We will now argue that the latter upper bounds \eqref{eq:semigroup_bound_1}, \eqref{eq:semigroup_bound_2} and \eqref{eq:semigroup_bound_3} are integrable on $(0,\infty)$. 
\begin{case}[$d=1$]
In the one-dimensional case, we have
\begin{equation}
\begin{aligned}
\label{eq:integrability_condition_1}
-1/2-d/4 -\alpha/2 = -3/4 - \alpha/2 &< -1,\\
-\gamma - \alpha/2 - 1/4 &< -1,
\end{aligned}
\end{equation}
because $\alpha>\frac{1}{2}$ by \assth{} and $\gamma>\frac{3}{4}-\alpha/2$.     
\end{case}
\begin{case}[{$d>1$}]
In this case $\alpha>0$ is sufficient since $-1/2-d/4 \leq -1$ implies
\begin{equation}
\begin{aligned}
\label{eq:integrability_condition_2}
-1/2-d/4-\alpha/2 <-1,\\
-\gamma-\alpha/2-\frac{d}{4}<-1.
\end{aligned}
\end{equation}
Note that for $d\geq 4$, the assumption on $\gamma$ is always satisfied.     
\end{case}
By \eqref{eq:integrability_condition_1} and \eqref{eq:integrability_condition_2} the upper bounds \eqref{eq:semigroup_bound_1}, \eqref{eq:semigroup_bound_2} and \eqref{eq:semigroup_bound_3} are integrable on $(0,\infty)$. The upper bounds are independent of the variable $\delta$, which implies \ref{enum_bounds:4} and \ref{enum_bounds:5}. The convergences \ref{enum_limits:1}, \ref{enum_limits:2} and \ref{enum_limits:3} follow by the dominated convergence theorem using the upper bounds \eqref{eq:semigroup_bound_1}, \eqref{eq:semigroup_bound_2} and \eqref{eq:semigroup_bound_3} as dominants. \qedhere
\end{step}
\end{proof}
\begin{proposition}[Asymptotic behaviour of energy for the operator sine]
\label{result:energy_asymptotics}
Grant the Assumptions \asskernel[$z_j$] and \asskernelapprox[$w_j^{(\delta)}$]{$z_j$} for $j=1,2$. 
\begin{enumerate}
\item  \label{enum:asymptotic_equipartition} \textit{Asymptotic equipartition of energy}: For any $r \in \mathbb{R}$, we have
\begin{equation*}
\begin{aligned}
\langle S_{\vartheta,\delta}(\delta^{-1}r)w^{(\delta)}_1&,  S_{\vartheta,\delta}(\delta^{-1}r)w^{(\delta)}_2\rangle_{L^{2}(\Lambda_{\delta})}\rightarrow \frac{1}{2\vartheta(0)}\langle \nabla z_1,\nabla z_2 \rangle_{L^{2}(\mathbb{R}^{d})}, \quad \delta \rightarrow 0.
\end{aligned}
\end{equation*}
\item \label{enum:temporal_orthogonality} \textit{Slow-fast orthogonality}: For any $r_1, r_2 \in \mathbb{R}$ with $r_1 \neq r_2$, we have
\begin{equation*}
\langle S_{\vartheta,\delta}(\delta^{-1}r_1)w^{(\delta)}_1,  S_{\vartheta,\delta}(\delta^{-1}r_2)w^{(\delta)}_2\rangle_{L^{2}(\Lambda_{\delta})} \rightarrow 0, \quad \delta \rightarrow 0.
\end{equation*}
\end{enumerate}
\end{proposition}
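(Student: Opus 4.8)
The plan is to mirror the proof of \Cref{result:presentable_equipartition_of_energy} for the operator cosine, but to first remove the inverse square root that the operator sine carries through the functional-calculus identity $S_{\vartheta,\delta}(\delta^{-1}r) = (-A_{\vartheta,\delta})^{-1/2}\sin(\delta^{-1}r(-A_{\vartheta,\delta})^{1/2})$. Writing $B_\delta := (-A_{\vartheta,\delta})^{1/2}$ and $g^{(\delta)}_j := B_\delta^{-1}w^{(\delta)}_j$, bounded functions of $B_\delta$ commute, so $S_{\vartheta,\delta}(\delta^{-1}r_j)w^{(\delta)}_j = \sin(\delta^{-1}r_j B_\delta)g^{(\delta)}_j$ and the target inner product becomes $\langle \sin(\delta^{-1}r_1 B_\delta)g^{(\delta)}_1, \sin(\delta^{-1}r_2 B_\delta)g^{(\delta)}_2\rangle_{L^2(\Lambda_\delta)}$. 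The crucial point is that, although $B_\delta^{-1}$ is unbounded near the bottom of the spectrum and cannot be applied to an oscillating Riemann--Lebesgue argument directly, part~\ref{enum_limits:1} of \Cref{result:fractional_and_inverse_limits} already furnishes the norm convergence $g^{(\delta)}_j \to g_j := (-\vartheta(0)\Delta)^{-1/2}\Delta z_j$ in $L^2(\mathbb{R}^d)$. The singular factor is thereby absorbed once and for all into a convergent sequence, and the remaining oscillatory analysis only involves the unitary group $R_\delta(t) = e^{itB_\delta}$.

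The key intermediate step is to show that, for every $\tau \in \mathbb{R}$,
\begin{equation*}
\langle R_\delta(\delta^{-1}\tau)g^{(\delta)}_1, g^{(\delta)}_2\rangle_{L^2(\Lambda_\delta)} \to \begin{cases}\langle g_1, g_2\rangle_{L^2(\mathbb{R}^d)}, & \tau = 0,\\ 0, & \tau \neq 0.\end{cases}
\end{equation*}
For $\tau = 0$ this is immediate from $g^{(\delta)}_j \to g_j$. For $\tau \neq 0$ I would split $g^{(\delta)}_j = P_\delta g_j + (g^{(\delta)}_j - P_\delta g_j)$, where $P_\delta g_j \to g_j$ by \eqref{eq:strong_convergence_projection} and hence $\Vert g^{(\delta)}_j - P_\delta g_j\Vert_{L^2(\mathbb{R}^d)} \to 0$. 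Since $R_\delta$ is unitary, the three terms containing a difference are $O(\Vert g^{(\delta)}_1 - P_\delta g_1\Vert + \Vert g^{(\delta)}_2 - P_\delta g_2\Vert)$ and vanish, while the leading term $\langle R_\delta(\delta^{-1}\tau)P_\delta g_1, P_\delta g_2\rangle$ is precisely the quantity governed by the approximate Riemann--Lebesgue property \Cref{result:approximate_riemann_lebesgue} and tends to $0$.

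With this in hand, I insert Euler's formula $\sin(\delta^{-1}r_j B_\delta) = (R_\delta(\delta^{-1}r_j) - R_\delta(-\delta^{-1}r_j))/(2i)$ and expand the inner product into four terms of the form $\tfrac14\langle R_\delta(\delta^{-1}\sigma)g^{(\delta)}_1, g^{(\delta)}_2\rangle$ with $\sigma \in \{r_1-r_2,\, r_1+r_2,\, -(r_1+r_2),\, r_2-r_1\}$, exactly as in \Cref{result:presentable_equipartition_of_energy}. For part~\ref{enum:asymptotic_equipartition} with $r_1 = r_2 = r$ (the relevant regime $r \neq 0$; at $r=0$ the operator sine is identically zero), the arguments $r_1-r_2$ and $r_2-r_1$ vanish and contribute $\langle g_1,g_2\rangle$ each, whereas the $\pm 2r$ arguments are nonzero and drop out, giving the limit $\tfrac12\langle g_1, g_2\rangle$. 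For part~\ref{enum:temporal_orthogonality} with $r_1 \neq r_2$ (and, as holds in the intended application to the observed Fisher information where $r_1,r_2\geq 0$, also $r_1+r_2\neq 0$), all four arguments are nonzero and every term vanishes, following the cosine argument verbatim. It remains to identify the constant: since $(-\vartheta(0)\Delta)^{-1/2}$ is self-adjoint and $(-\vartheta(0)\Delta)^{-1}\Delta = -\vartheta(0)^{-1}\mathrm{Id}$ by the functional calculus for $\Delta$, integration by parts yields $\langle g_1, g_2\rangle = -\vartheta(0)^{-1}\langle z_1, \Delta z_2\rangle = \vartheta(0)^{-1}\langle \nabla z_1, \nabla z_2\rangle$, producing the stated prefactor $1/(2\vartheta(0))$.

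The main obstacle, and the place where this proof genuinely departs from the cosine case, is the unbounded factor $B_\delta^{-1}$: the approximate Riemann--Lebesgue property is only available for bounded functions of the generator acting on projections of fixed $L^2$ functions, so one cannot feed $w^{(\delta)}_j$ into it directly. Everything hinges on first converting $B_\delta^{-1}w^{(\delta)}_j$ into the convergent sequence $g^{(\delta)}_j$ via \Cref{result:fractional_and_inverse_limits}, which is exactly where the regularity and decay encoded in \asskernel{} and \asskernelapprox[$w^{(\delta)}$]{$z$} (and the dimension-dependent integrability exponents) are consumed; once that reduction is secured, the oscillatory cancellation is a routine adaptation of the cosine argument.
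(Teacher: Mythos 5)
Your proposal is correct and essentially reproduces the paper's own proof: the same functional-calculus/Euler representation converting the sine into the unitary group $R_\delta(t)=e^{it(-A_{\vartheta,\delta})^{1/2}}$, the same absorption of the singular factor into the convergent sequence $\xi_j^{(\delta)}=(-A_{\vartheta,\delta})^{-1/2}w_j^{(\delta)}$ via \Cref{result:fractional_and_inverse_limits}\ref{enum_limits:1}, the same four-term expansion with arguments $\pm(r_1-r_2)$ and $\pm(r_1+r_2)$, and the same reduction through norm convergence, unitarity and \Cref{remark:subspace_projections} to the approximate Riemann--Lebesgue property of \Cref{result:approximate_riemann_lebesgue}, including the identical identification of the constant $\tfrac{1}{2\vartheta(0)}\langle\nabla z_1,\nabla z_2\rangle_{L^{2}(\mathbb{R}^{d})}$. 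If anything, you are more careful than the paper at the edge cases: its proof of the slow-fast orthogonality asserts that every summand vanishes ``since $r_1\neq r_2$,'' which fails for $r_1=-r_2\neq 0$ (two summands then have argument zero), and the statement of part \ref{enum:asymptotic_equipartition} is vacuous at $r=0$; your explicit restrictions to $r\neq 0$ and $r_1+r_2\neq 0$ (automatic in the Fisher-information application where $r_1,r_2\geq 0$) are exactly the needed repairs.
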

\begin{proof}[Proof of \Cref{result:energy_asymptotics}]
\begin{step}[Representation using Riemann-Lebesgue operators]
We abbreviate by $R_{\delta}(t)=e^{it(-A_{\vartheta,\delta})^{1/2}}$ the unitary group on the complex Hilbert space $L^{2}(\Lambda_{\delta})$ for $t \in \mathbb{R}$. The operator sine can be represented using $R_\delta(t)$ through
\begin{equation*}
S_{\vartheta, \delta}(\delta^{-1} \tau)w_j^{(\delta)}=\frac{(-A_{\vartheta,\delta})^{-1/2}}{2i}\left(R_{\delta}(\delta^{-1}\tau)- R_{\delta}(-\delta^{-1}\tau)\right)w_j^{(\delta)}, \quad \tau \in \mathbb{R}, \quad j \in \{1,2\}.
\end{equation*}
Setting $\xi_{j}^{(\delta)} = (-A_{\vartheta,\delta})^{-1/2}w_j^{(\delta)}$, we observe
\begin{equation}
\label{eq:Riemann-Lebesgue-Representation}
\begin{aligned}
&4\langle S_{\vartheta,\delta}(\delta^{-1}r_1)\xi^{(\delta)}_1,  S_{\vartheta,\delta}(\delta^{-1}r_2)\xi^{(\delta)}_2\rangle_{L^{2}(\Lambda_{\delta})}\\
&= \langle (R_{\delta}(\delta^{-1}r_1)-R_{\delta}(-\delta^{-1}r_1))\xi_1^{(\delta)} , (R_{\delta}(\delta^{-1}r_2)-R_{\delta}(-\delta^{-1}r_2))\xi_2^{(\delta)} \rangle_{L^{2}(\Lambda_{\delta})}\\
&=\langle (R_{\delta}(\delta^{-1}r_2)-R_{\delta}(-\delta^{-1}r_2))^{*} \circ(R_{\delta}(\delta^{-1}r_1)-R_{\delta}(-\delta^{-1}r_1))\xi_1^{(\delta)} , \xi_2^{(\delta)} \rangle_{L^{2}(\Lambda_{\delta})}\\
&=\langle (R_{\delta}(-\delta^{-1}r_2)-R_{\delta}(\delta^{-1}r_2)) \circ (R_{\delta}(\delta^{-1}r_1)-R_{\delta}(-\delta^{-1}r_1))\xi_1^{(\delta)} , \xi_2^{(\delta)} \rangle_{L^{2}(\Lambda_{\delta})}\\
&= \langle R_{\delta}(\delta^{-1}(r_1-r_2))\xi_1^{(\delta)},\xi_2^{(\delta)} \rangle_{L^{2}(\Lambda_{\delta})}+ \langle R_{\delta}(\delta^{-1}(r_2-r_1))\xi_1^{(\delta)},\xi_2^{(\delta)} \rangle_{L^{2}(\Lambda_{\delta})}\\
& \quad - \langle R_\delta(\delta^{-1}(r_1+r_2))\xi_1^{(\delta)},\xi_2^{(\delta)}\rangle_{L^{2}(\Lambda_{\delta})}-\langle R_\delta(-\delta^{-1}(r_1+r_2))\xi_1^{(\delta)},\xi_2^{(\delta)}\rangle_{L^{2}(\Lambda_{\delta})}.
\end{aligned}
\end{equation}
\end{step}
\begin{step}[Convergences]
We have \asskernel[$z_j$], \asskernelapprox[$w^{(\delta)}_j$]{$z_j$} and \assth{}. Thus, all assumptions of \Cref{result:fractional_and_inverse_limits}\ref{enum_limits:1} are satisfied, and we obtain
\begin{equation}
\label{eq:convergence_in_norm}
\Vert \xi_j^{(\delta)} -  (-{\vartheta(0)^{-1/2}}(-\Delta)^{1/2})z_j\Vert^{2}_{L^{2}(\mathbb{R}^{d})} \rightarrow 0, \quad \delta \rightarrow 0, \quad j=1,2.
\end{equation}
Given \Cref{remark:subspace_projections} and using the convergence in norm \eqref{eq:convergence_in_norm}, we can reduce the desired convergence of the expression
\begin{equation*}
\langle R_\delta(\delta^{-1}\tau)\xi_{1}^{(\delta)}, \xi_2^{(\delta)}\rangle_{L^{2}(\Lambda_{\delta})} \rightarrow 0, \quad \delta \rightarrow 0,
\end{equation*}
for any real number $\tau \in \mathbb{R}\setminus \{0\}$ to the convergence
\begin{equation}
\label{eq:asymptotic_RL_principle_in_action}
\langle R_{\delta}(\delta^{-1}\tau) (-\Delta)^{1/2}z_{j_1}, (-\Delta)^{1/2}z_{j_2} \rangle_{L^{2}(\Lambda_{\delta})} \rightarrow 0, \quad \delta \rightarrow 0, \quad j_1, j_2 \in \{1,2\}.
\end{equation}
Clearly, \eqref{eq:asymptotic_RL_principle_in_action} follows from the asymptotic Riemann-Lebesgue principle  (\Cref{result:approximate_riemann_lebesgue}).
For \ref{enum:temporal_orthogonality} every summand in \eqref{eq:Riemann-Lebesgue-Representation} vanishes since $r_1 \neq r_2$. Similarly, for $r=r_1=r_2$, we obtain the remaining term
\begin{align*}
\frac{1}{2}\langle \xi_1^{(\delta)}, \xi_2^{(\delta)} \rangle_{L^{2}(\Lambda_{\delta})} &\rightarrow \frac{1}{2\vartheta(0)}\langle (-\Delta)^{1/2}z_1,(-\Delta)^{1/2}z_2 \rangle_{L^{2}(\mathbb{R}^{d})}\\
&= \frac{1}{2 \vartheta(0)}\langle \nabla z_1, \nabla z_2 \rangle_{L^{2}(\mathbb{R}^{d})},\quad \delta \rightarrow 0. 
\qedhere
\end{align*}
\end{step}
\end{proof}
For the analysis of the bias, we define the functions:
\begin{equation}
\label{eq:beta_delta}
\begin{aligned}
\beta^{(\delta)}(x) &\coloneqq \delta^{-1}(A_{\vartheta,\delta}- \vartheta(0)\Delta)K(x), \quad \delta > 0,\\ 
\beta^{(0)}(x)& \coloneqq \Delta( \langle \nabla\vartheta(0),x \rangle_{\mathbb{R}^{d}} K ) (x)- \langle \nabla\vartheta(0), \nabla K(x)\rangle_{\mathbb{R}^{d}},\quad x \in \mathbb{R}^{d}.
\end{aligned}
\end{equation}
Using \eqref{eq:beta_delta}, we determine the asymptotic behaviour of expressions emerging in the analysis of the observed Fisher information \eqref{eq:observed_fisher_information} and the bias \eqref{eq:remaining_bias}. 
\begin{proposition}[Asymptotics for the emerging energetic expressions]
\label{result:asymptotics_emerging_energetic_expressions}
Grant Assumption \asskernel{}. Let $\beta^{(0)}$ and $\beta^{(\delta)}$ be defined through \eqref{eq:beta_delta}. As $\delta \rightarrow 0$ we obtain the following convergences. 
\begin{enumerate}
\item \label{enum:concrete_asymptotic_equipartition} For $t \in \mathbb{R}\setminus \{0\}$, we have the asymptotic equipartitions
\begin{align}
\label{eq:emerging_equipartition_1}
\Vert S_{\vartheta, \delta}(\delta^{-1}t) \Delta K \Vert^{2}_{L^{2}(\Lambda_{\delta})} &\rightarrow \frac{1}{2\vartheta(0)}\Vert \nabla K \Vert^{2}_{L^{2}(\mathbb{R}^{d})},\\
\label{eq:emerging_equipartition_2}
\langle S_{\vartheta, \delta}(\delta^{-1}t)\Delta K,S_{\vartheta, \delta}(\delta^{-1}t)\beta^{(\delta)} \rangle_{L^{2}(\Lambda_{\delta})} &\rightarrow \frac{1}{2 \vartheta(0)}\langle \nabla K, \nabla\beta^{(0)} \rangle_{L^{2}(\mathbb{R}^{d})}.
\end{align}
\item \label{enum:concrete_asymptotic_orthogonality} For $s,t  \in \mathbb{R}$ with $s \neq t$ fixed, we have the slow-fast orthogonality
\begin{align}
\label{eq:emerging_orthogonality_1}
\langle S_{\vartheta,\delta}(\delta^{-1}t)\Delta K, S_{\vartheta,\delta}(\delta^{-1}s)\Delta K\rangle_{L^{2}(\Lambda_{\delta})} &\rightarrow 0, \\
\label{eq:emerging_orthogonality_2}
\langle S_{\vartheta,\delta}(\delta^{-1}t)\Delta K,S_{\vartheta, \delta}(\delta^{-1}s)\beta^{(\delta)} \rangle_{L^{2}(\Lambda_{\delta})} &\rightarrow 0,
\\
\label{eq:emerging_orthogonality_3}
\langle S_{\vartheta,\delta}(\delta^{-1}t)\beta^{(\delta)},S_{\vartheta, \delta}(\delta^{-1}s)\beta^{(\delta)} \rangle_{L^{2}(\Lambda_{\delta})} &\rightarrow 0.
\end{align}
\end{enumerate}
\end{proposition}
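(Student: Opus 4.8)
The plan is to read off each of the six convergences as a single specialisation of \Cref{result:energy_asymptotics}: its part~\ref{enum:asymptotic_equipartition} delivers the equipartition value in the diagonal case $r_1=r_2$, and its part~\ref{enum:temporal_orthogonality} the vanishing in the off-diagonal case $r_1\neq r_2$. The only two families appearing inside the operator sine are $\Delta K$ and $\beta^{(\delta)}$, so the whole argument amounts to equipping each of them with a potential $z$ for which the kernel and approximation assumptions \asskernel{} and \asskernelapprox[$w^{(\delta)}$]{$z$} are in force; the inner products and constants in the stated limits are then exactly the outputs $\frac{1}{2\vartheta(0)}\langle\nabla z_1,\nabla z_2\rangle_{L^2(\mathbb{R}^d)}$ of \Cref{result:energy_asymptotics}.

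First I would pair $\Delta K$ with the potential $K$ itself: $K$ satisfies \asskernel{} by assumption, and \asskernelapprox[$\Delta K$]{$K$} holds with identically vanishing remainder since $\Delta K=\Delta K$. The substantive step is to pair the $\delta$-dependent family $\beta^{(\delta)}$ with the potential $\beta^{(0)}$. This requires checking \asskernel[$\beta^{(0)}$] --- compact support, the requisite Sobolev regularity (which in general costs more smoothness of $K$ than \asskernel{} alone), and the vanishing-integral condition in the case $d=1$ --- and, above all, the approximation \asskernelapprox[$\beta^{(\delta)}$]{$\beta^{(0)}$}. Starting from the explicit identity $\beta^{(\delta)}=\langle(\nabla\vartheta)(\delta\,\cdot),\nabla K\rangle_{\mathbb{R}^d}+\delta^{-1}(\vartheta(\delta\,\cdot)-\vartheta(0))\Delta K$, I would expand $\vartheta(\delta x)$ and $(\nabla\vartheta)(\delta x)$ about the origin and bound the remainder in $L^1\cap L^2(\mathbb{R}^d)$ by $C\delta^{\alpha}$, the exponent arising precisely from the $C^{1+\alpha}$ H\"older regularity of $\vartheta$ supplied by \assth{}; the one-dimensional case is the most delicate and is where the sharper requirement $\alpha>1/2$ is consumed.

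Granting the two pairs $(\Delta K;K)$ and $(\beta^{(\delta)};\beta^{(0)})$, every assertion is then one invocation of \Cref{result:energy_asymptotics}. The equipartition \eqref{eq:emerging_equipartition_1} is part~\ref{enum:asymptotic_equipartition} with both slots $(\Delta K;K)$, yielding $\frac{1}{2\vartheta(0)}\Vert\nabla K\Vert^2_{L^2(\mathbb{R}^d)}$, while \eqref{eq:emerging_equipartition_2} is the same part with slots $(\Delta K;K)$ and $(\beta^{(\delta)};\beta^{(0)})$, yielding $\frac{1}{2\vartheta(0)}\langle\nabla K,\nabla\beta^{(0)}\rangle_{L^2(\mathbb{R}^d)}$. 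The three orthogonality statements \eqref{eq:emerging_orthogonality_1}--\eqref{eq:emerging_orthogonality_3} are the three distributions of these two families over the two slots, fed into part~\ref{enum:temporal_orthogonality} with $r_1=t\neq s=r_2$; they are easier, needing only the asymptotic Riemann-Lebesgue property (\Cref{result:approximate_riemann_lebesgue}) together with the uniform $L^2$-bound on $(-A_{\vartheta,\delta})^{-1/2}\beta^{(\delta)}$ from \Cref{result:fractional_and_inverse_limits}, and not the precise limit of $\langle\xi_1^{(\delta)},\xi_2^{(\delta)}\rangle$. The main obstacle is therefore entirely localised in establishing \asskernelapprox[$\beta^{(\delta)}$]{$\beta^{(0)}$} and the regularity of $\beta^{(0)}$; once these are secured, the reduction to \Cref{result:energy_asymptotics} is automatic.
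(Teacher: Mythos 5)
Your overall architecture is exactly the paper's: all six limits are read off from \Cref{result:energy_asymptotics} with the two families $\Delta K$ and $\beta^{(\delta)}$ in the slots, and your pair $(\Delta K;K)$ with vanishing remainder is precisely the paper's ``trivially satisfied'' case. The genuine gap is in your second pairing. The assumption \asskernelapprox[$w^{(\delta)}$]{$z$} compares $w^{(\delta)}$ with $\Delta z$, not with $z$: it demands $\Vert w^{(\delta)}-\Delta z\Vert_{L^{1}\cap L^{2}(\mathbb{R}^{d})}\le C\delta^{\alpha}\Vert z\Vert_{W^{2}_{1,2}(\mathbb{R}^{d})}$. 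Your Taylor expansion (starting from your correct identity $\beta^{(\delta)}=\langle(\nabla\vartheta)(\delta\,\cdot),\nabla K\rangle_{\mathbb{R}^{d}}+\delta^{-1}(\vartheta(\delta\,\cdot)-\vartheta(0))\Delta K$) establishes the true bound $\Vert\beta^{(\delta)}-\beta^{(0)}\Vert_{L^{1}\cap L^{2}(\mathbb{R}^{d})}\lesssim\delta^{\alpha}$, since $\beta^{(0)}=\langle\nabla\vartheta(0),x\rangle\Delta K+\langle\nabla\vartheta(0),\nabla K\rangle$ is exactly the limit of $\beta^{(\delta)}$. But this verifies the approximation assumption only for a potential $z$ with $\Delta z=\beta^{(0)}$, not for $z=\beta^{(0)}$ itself: $\beta^{(\delta)}$ certainly does not converge to $\Delta\beta^{(0)}$, so \asskernelapprox[$\beta^{(\delta)}$]{$\beta^{(0)}$} as you invoke it is false. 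And an admissible potential is not free, because the assumption also requires $z$ to satisfy \asskernel[$z$] (compact support, $H^{2}$-regularity, zero integral if $d=1$), while the Newtonian potential $\Delta^{-1}\beta^{(0)}$ is in general not compactly supported. This existence question is exactly the non-trivial content the paper outsources to \citet[Lemma A.5]{altmeyerNonparametricEstimationLinear2021}; your expansion does not recover it. Relatedly, your route needs $\beta^{(0)}\in H^{2}(\mathbb{R}^{d})$, which costs roughly $K\in H^{4}(\mathbb{R}^{d})$ — you flag this cost yourself, but it is not granted by the proposition's hypotheses, which is a second reason the paper argues by citation rather than by taking $\beta^{(0)}$ as the potential.

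A secondary inaccuracy: the orthogonality limits \eqref{eq:emerging_orthogonality_1}--\eqref{eq:emerging_orthogonality_3} are not obtainable from \Cref{result:approximate_riemann_lebesgue} plus the uniform bound on $(-A_{\vartheta,\delta})^{-1/2}\beta^{(\delta)}$ alone. The approximate Riemann--Lebesgue property is stated for \emph{fixed} $z_1,z_2\in L^{2}(\mathbb{R}^{d})$, so in the proof of \Cref{result:energy_asymptotics} the $\delta$-dependent vectors $\xi_j^{(\delta)}=(-A_{\vartheta,\delta})^{-1/2}w_j^{(\delta)}$ must first be replaced in $L^{2}$-norm by fixed limits via \Cref{result:fractional_and_inverse_limits}\ref{enum_limits:1} — a step that again runs through the approximation assumption. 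A weakly vanishing family tested against merely bounded, $\delta$-dependent vectors need not vanish, so uniform boundedness cannot substitute for this norm convergence; part \ref{enum:concrete_asymptotic_orthogonality} therefore needs the valid potential for $\beta^{(\delta)}$ just as part \ref{enum:concrete_asymptotic_equipartition} does. In short, the reduction to \Cref{result:energy_asymptotics} is the right (and the paper's) move, but your treatment of $\beta^{(\delta)}$ — the choice of potential and the claim that the orthogonality statements need less — must be repaired along the lines of the paper's citation of Lemma A.5.
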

\begin{proof}[Proof of \Cref{result:asymptotics_emerging_energetic_expressions}]
\label{proof:asymptotics_emerging_energetic_expressions}
The result is a corollary of \Cref{result:energy_asymptotics} by setting $v^{(\delta)}_{j_1} = \beta^{(\delta)}$ or $v^{(\delta)}_{j_2} = \Delta K$ for $j_1, j_2 \in \{1,2\}$. In particular, the convergences \eqref{eq:emerging_equipartition_1} and \eqref{eq:emerging_equipartition_2}, and \eqref{eq:emerging_orthogonality_1}, \eqref{eq:emerging_orthogonality_2} and \eqref{eq:emerging_orthogonality_3} follow from the asymptotic equipartition \ref{enum:asymptotic_equipartition} and the asymptotic orthogonality \ref{enum:temporal_orthogonality} in \Cref{result:energy_asymptotics}, respectively. The conditions of \Cref{result:energy_asymptotics} are trivially satisfied for $\Delta K$ in view of \asskernel{}. On the other, we refer to \citet[Lemma A.5]{altmeyerNonparametricEstimationLinear2021} for the existence of a function $z$ such that \asskernelapprox[$\beta^{\delta}$]{$z$} is satisfied. 
\end{proof}

\begin{lemma}[Uniform upper bounds operator sine]
\label{result:uniform_upperbounds_operator_sine} 
Suppose that $K$ satisfies \asskernel{}. Then, we have 
\begin{align}
\label{eq:energy_upperbound_1}
\Vert S_{\vartheta,\delta}(\delta^{-1}r)\Delta K \Vert^{2}_{L^{2}(\Lambda_{\delta})} \leq \sup_{0 < \delta \leq 1} \Vert (-A_{\vartheta,\delta})^{-1/2} \Delta K \Vert^{2}_{L^{2}(\Lambda_{\delta})} < \infty,\\
\label{eq:energy_upperbound_2}
\Vert S_{\vartheta,\delta}(\delta^{-1}r)\beta^{(\delta)} \Vert^{2}_{L^{2}(\Lambda_{\delta})} \leq \sup_{0 < \delta \leq 1} \Vert (-A_{\vartheta,\delta})^{-1/2} \beta^{(\delta)} \Vert^{2}_{L^{2}(\Lambda_{\delta})} < \infty,
\end{align}
where $\beta^{(\delta)}$ is defined through \eqref{eq:beta_delta}.
\end{lemma}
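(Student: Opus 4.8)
The plan is to reduce both estimates to the functional-calculus representation of the operator sine and then to invoke the uniform bound already recorded in \Cref{result:fractional_and_inverse_limits}\ref{enum_bounds:5}. Recall from the functional calculus for the self-adjoint operator $-A_{\vartheta,\delta}$ on $L^{2}(\Lambda_{\delta})$ that
\begin{equation*}
S_{\vartheta,\delta}(\delta^{-1}r) = (-A_{\vartheta,\delta})^{-1/2}\sin\big(\delta^{-1}r(-A_{\vartheta,\delta})^{1/2}\big), \quad r \geq 0.
\end{equation*}
Writing $(E_{\delta}(\lambda),\lambda \geq 0)$ for the resolution of the identity of $-A_{\vartheta,\delta}$, I would express, for any admissible $z$,
\begin{equation*}
\Vert S_{\vartheta,\delta}(\delta^{-1}r)z \Vert^{2}_{L^{2}(\Lambda_{\delta})} = \int_{0}^{\infty} \frac{\sin^{2}(\delta^{-1}r\sqrt{\lambda})}{\lambda}\,\mathrm{d}\langle E_{\delta}(\lambda)z,z\rangle_{L^{2}(\Lambda_{\delta})}.
\end{equation*}

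The key elementary estimate is the uniform pointwise bound $\sin^{2}(\delta^{-1}r\sqrt{\lambda}) \leq 1$, valid for all $\lambda, r$ and $\delta$, so that the integrand is dominated by $1/\lambda$; the spectral integral of $\lambda \mapsto 1/\lambda$ is precisely $\Vert (-A_{\vartheta,\delta})^{-1/2}z\Vert^{2}_{L^{2}(\Lambda_{\delta})}$. This yields the pointwise-in-$r$ domination
\begin{equation*}
\Vert S_{\vartheta,\delta}(\delta^{-1}r)z \Vert^{2}_{L^{2}(\Lambda_{\delta})} \leq \Vert (-A_{\vartheta,\delta})^{-1/2}z\Vert^{2}_{L^{2}(\Lambda_{\delta})}, \quad r \in \mathbb{R},
\end{equation*}
where the case $r<0$ follows since $S_{\vartheta,\delta}$ is odd (being the antiderivative of the even operator cosine). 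Taking the supremum over $0<\delta \leq 1$ on the right-hand side then furnishes both displayed inequalities in \eqref{eq:energy_upperbound_1} and \eqref{eq:energy_upperbound_2}, once the respective suprema are shown to be finite.

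Finiteness of the two suprema follows by feeding the correct choices of $w^{(\delta)}$ into \Cref{result:fractional_and_inverse_limits}\ref{enum_bounds:5}. For \eqref{eq:energy_upperbound_1} I take $z=K$ and $w^{(\delta)}=\Delta K$; since $\Vert \Delta K - \Delta K \Vert_{L^{1}\cap L^{2}(\mathbb{R}^{d})}=0$, the pair trivially satisfies \asskernelapprox[$\Delta K$]{$K$}, so \Cref{result:fractional_and_inverse_limits}\ref{enum_bounds:5} gives $\sup_{0<\delta\leq 1}\Vert (-A_{\vartheta,\delta})^{-1/2}\Delta K\Vert^{2}_{L^{2}(\Lambda_{\delta})}<\infty$. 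For \eqref{eq:energy_upperbound_2} I invoke the existence, following \citet[Lemma A.5]{altmeyerNonparametricEstimationLinear2021}, of a function $z$ for which $\beta^{(\delta)}$ obeys \asskernelapprox[$\beta^{(\delta)}$]{$z$}, and apply the same part of \Cref{result:fractional_and_inverse_limits} with $w^{(\delta)}=\beta^{(\delta)}$. The argument is in essence a one-line spectral domination; the only point requiring care is the bookkeeping that certifies both $\Delta K$ and $\beta^{(\delta)}$ as admissible $w^{(\delta)}$, which is exactly the verification already carried out in the proof of \Cref{result:asymptotics_emerging_energetic_expressions}, so no genuine obstacle is expected.
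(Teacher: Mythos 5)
Your proposal is correct and follows essentially the same route as the paper: the paper likewise bounds $\Vert S_{\vartheta,\delta}(\delta^{-1}r)z\Vert_{L^{2}(\Lambda_{\delta})}$ by $\Vert(-A_{\vartheta,\delta})^{-1/2}z\Vert_{L^{2}(\Lambda_{\delta})}$ via the functional calculus and the boundedness of the sine (your spectral-measure display is just this argument written out at the resolution-of-identity level), and then obtains finiteness of both suprema from parts (iv) and (v) of \Cref{result:fractional_and_inverse_limits}. Your explicit bookkeeping — taking $w^{(\delta)}=\Delta K$, $z=K$ for \eqref{eq:energy_upperbound_1} and invoking \citet[Lemma A.5]{altmeyerNonparametricEstimationLinear2021} for $\beta^{(\delta)}$ — matches the verification the paper carries out in \Cref{result:asymptotics_emerging_energetic_expressions}.
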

\begin{proof}[Proof of \Cref{result:uniform_upperbounds_operator_sine}]
By \asskernel{} and \assth{}, both suprema are finite by (iv) and (v) in \Cref{result:fractional_and_inverse_limits}. The result now follows immediately using the functional calculus from the boundedness of the sine through
\begin{equation*}
\begin{aligned}
\Vert S_{\vartheta, \delta}(\delta^{-1}r)z \Vert^{2}_{L^{2}(\Lambda_{\delta})} &= \Vert \sin(\delta^{-1}r (-A_{\vartheta,\delta})^{1/2}) (-A_{\vartheta,\delta})^{-1/2}z \Vert^{2}_{L^{2}(\Lambda_{\delta})} \\
&\leq \Vert (-A_{\vartheta,\delta})^{-1/2}z \Vert_{L^{2}(\Lambda_{\delta})}^{2}
\end{aligned}
\end{equation*}
for any $z \in L^{2}(\Lambda_{\delta})$.
\qedhere
\end{proof}

  \subsection{Asymptotics for the augmented MLE}
\label{section:augmented_MLE}
We decompose the mild solution to the stochastic wave equation with non-zero initial conditions into a deterministic and a stochastic part given by 
\begin{align}
\label{eq:initial_decomp_1}
{u}(t) &= C_{\vartheta}(t)u_0 + S_{\vartheta}(t)v_0 + \tilde{u}(t), \quad t \in [0,T], \\
{v}(t) &= C'_{\vartheta}(t)u_0 + C_{\vartheta}(t)v_0 + \tilde{v}(t), \quad t \in [0,T],
\end{align}
where $(\tilde{u}(t), \tilde{v}(t))$ is a solution to the stochastic wave equation with zero-initial conditions. Based on the solution with zero-initial conditions, we introduce the notation
\begin{align*}
\tilde{u}_{\delta}^{\Delta}(t)&\coloneqq \langle \tilde{u}(t),\delta^{-2}(\Delta K)_{\delta} \rangle_{L^{2}(\Lambda)}, \\
\tilde{I _{\delta}} &\coloneqq \int_{0}^{T}\tilde{u}_{\delta}^{\Delta}(t)^{2}\mathrm{d}t,\\
\tilde{R}_{\delta} & \coloneqq \int_{0}^{T}\tilde{u}_{\delta}^{\Delta}(t)\langle \tilde{u}(t),(A_{\vartheta}-\vartheta(0)\Delta)K_{\delta} \rangle_{L^{2}(\Lambda)} \mathrm{d}t.
\end{align*}
Notice that the local measurements depend on the initial conditions through
\begin{equation}
\label{eq:initial_decomp_2}
{u}_{\delta}^{\Delta}(t)= \tilde{u}_{\delta}^{\Delta}(t) + \locu(t) + \locv(t), \quad t \in [0,T],
\end{equation}
where 
\begin{align*}
\locu(t) &\coloneqq \mathcal{L}_{\delta}^{C}(u_0, \Delta K)(t) \coloneqq \langle C(t)u_0, \delta^{-2}(\Delta K)_{\delta}\rangle_{L^{2}(\Lambda)}\\
\locv(t) &\coloneqq \mathcal{L}_{\delta}^{S}(v_0, \Delta K)(t)  \coloneqq  \langle S(t)v_0, \delta^{-2}(\Delta K)_{\delta}\rangle_{L^{2}(\Lambda)}.
\end{align*}
\begin{lemma}[Dependence of the observed Fisher information on the initial conditions]
\label{result:dependence_of_observed_fisher_information_on_initial_conditions} For any fixed $\delta>0$ and deterministic $(u_0,v_0) \in L^{2}(\Lambda) \times H_0^{-1}(\Lambda)$, we have:
\begin{enumerate}
\item The observed Fisher information ${I}_{\delta}$ depends on the initial conditions through 
\begin{equation}
\label{eq:observed_fisher_representation_with_initials}
\begin{aligned}
{I}_{\delta} = \tilde{I}_{\delta} &+  2(\langle \tilde{u}_\delta^{\Delta},\locu \rangle_{L^{2}([0,T])} 
+ \langle \tilde{u}_\delta^{\Delta},\locv \rangle_{L^{2}([0,T])}) \\
&+ \Vert \locu \Vert^{2}_{L^{2}([0,T])} + \Vert \locv \Vert^{2}_{L^{2}([0,T])} + 2 \langle \locu, \locv\rangle_{L^{2}([0,T])}.
\end{aligned}
\end{equation}
\item The expectation of the observed Fisher information ${I}_{\delta}$ satisfies \begin{equation}
\label{eq:expectation_observed_fisher}
\mathbb{E}[{I}_{\delta}]= \mathbb{E}[\tilde{I}_{\delta}]+ \Vert \locu \Vert^{2}_{L^{2}([0,T])} + \Vert \locv \Vert^{2}_{L^{2}([0,T])} + 2 \langle \locu, \locv\rangle_{L^{2}([0,T])}.
\end{equation}
\end{enumerate}
\end{lemma}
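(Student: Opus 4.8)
The plan is to derive part (i) as a purely algebraic expansion of the decomposition \eqref{eq:initial_decomp_2} and then obtain part (ii) by exploiting the centredness of the stochastic component together with the deterministic nature of the boundary terms $\locu$ and $\locv$.

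For part (i), I would begin from the definition $I_{\delta} = \int_0^T u_{\delta}^{\Delta}(t)^2\,\mathrm{d}t$ in \eqref{eq:observed_fisher_information} and insert $u_{\delta}^{\Delta}(t) = \tilde{u}_{\delta}^{\Delta}(t) + \locu(t) + \locv(t)$ from \eqref{eq:initial_decomp_2}. Squaring the sum of three terms produces three squares and three cross terms; integrating each summand over $[0,T]$ and identifying $\int_0^T \tilde{u}_{\delta}^{\Delta}(t)^2\,\mathrm{d}t$ with $\tilde{I}_{\delta}$, together with the corresponding $L^2([0,T])$ norms and inner products, yields exactly \eqref{eq:observed_fisher_representation_with_initials}.

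For part (ii), I would take expectations in \eqref{eq:observed_fisher_representation_with_initials}. Since $\locu$ and $\locv$ are deterministic, being obtained by applying the deterministic operator cosine and sine families to the fixed initial data $u_0$ and $v_0$, the three terms $\Vert \locu \Vert^2_{L^2([0,T])}$, $\Vert \locv \Vert^2_{L^2([0,T])}$ and $2\langle \locu, \locv\rangle_{L^2([0,T])}$ are left unchanged by the expectation, while the two remaining cross terms are handled by showing that $\mathbb{E}[\langle \tilde{u}_{\delta}^{\Delta}, \locu\rangle_{L^2([0,T])}] = 0$ and likewise for $\locv$. Using Fubini's theorem, whose application is justified because the second moments of the Gaussian random variable $\tilde{u}_{\delta}^{\Delta}(t)$ are bounded uniformly on the compact interval $[0,T]$ and $\locu$ is bounded there, the expectation may be moved inside the time integral to give $\int_0^T \mathbb{E}[\tilde{u}_{\delta}^{\Delta}(t)]\locu(t)\,\mathrm{d}t$. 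This vanishes because $\tilde{u}$, and hence the real-valued functional $\tilde{u}_{\delta}^{\Delta}$, is a centred Gaussian process by \Cref{result:Gaussian_process_solution}. Combining these observations gives \eqref{eq:expectation_observed_fisher}.

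The argument involves no genuine difficulty: it rests entirely on the linearity of the decomposition \eqref{eq:initial_decomp_2} and the vanishing mean of the zero-initial-condition solution. The only step warranting explicit justification is the interchange of expectation and time integration in the cross terms, which is a routine application of Fubini given the finite horizon and the Gaussianity of $\tilde{u}_{\delta}^{\Delta}$.
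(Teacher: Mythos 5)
Your proposal is correct and follows essentially the same route as the paper: the paper's proof likewise expands $u_{\delta}^{\Delta} = \tilde{u}_{\delta}^{\Delta} + \locu + \locv$ via the binomial formula and observes $\mathbb{E}[\langle \tilde{u}_\delta^{\Delta},\locu \rangle_{L^{2}([0,T])}] = \mathbb{E}[\langle \tilde{u}_\delta^{\Delta},\locv \rangle_{L^{2}([0,T])}]=0$, using that $\tilde{u}_{\delta}^{\Delta}$ is centred while $\locu,\locv$ are deterministic. Your explicit Fubini justification for interchanging expectation and time integral merely spells out a step the paper leaves implicit.
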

\begin{proof}[Proof of \Cref{result:dependence_of_observed_fisher_information_on_initial_conditions}] The result follows immediately using the Binomial formula's and noticing 
\begin{equation*}
\mathbb{E}[\langle \tilde{u}_\delta^{\Delta},\locu \rangle_{L^{2}([0,T])}] = \mathbb{E}[\langle \tilde{u}_\delta^{\Delta},\locv \rangle_{L^{2}([0,T])}]=0. \qedhere
\end{equation*}
\end{proof}
\begin{lemma}[Bounds for the deterministic parts]
\label{result:boundedness_initial_object}
Grant \assini{}. Then, we have 
\begin{equation}
\label{eq:boundedness_initial_object}
\Vert \mathcal{L}_{\delta}^{C} + \mathcal{L}_{\delta}^{S} \Vert^{2}_{L^{2}([0,T])} \in \mathcal{O}(1), \quad \delta \rightarrow 0.
\end{equation}
\end{lemma}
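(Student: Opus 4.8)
The plan is to bound $\locu$ and $\locv$ separately in the supremum norm over $t\in[0,T]$, uniformly in $\delta$ for $\delta$ small, and then to conclude by the triangle inequality in $L^{2}([0,T])$. The starting observation is that the factor $\delta^{-2}$ is spurious: a direct computation with the chain rule gives $\delta^{-2}(\Delta K)_{\delta}=\Delta K_{\delta}$, so that $\locu(t)=\langle C_{\vartheta}(t)u_{0},\Delta K_{\delta}\rangle_{L^{2}(\Lambda)}$ and $\locv(t)=\langle S_{\vartheta}(t)v_{0},\Delta K_{\delta}\rangle_{L^{2}(\Lambda)}$. For $\delta$ small enough, $K_{\delta}$ has compact support in the interior of $\Lambda$ (by the support part of \asskernel{} together with the convexity of $\Lambda$, cf.\ \Cref{remark:subspace_projections}), so Green's second identity carries no boundary contribution and I may move the Laplacian onto the other factor: $\locu(t)=\langle \Delta C_{\vartheta}(t)u_{0},K_{\delta}\rangle_{L^{2}(\Lambda)}$ and $\locv(t)=\langle \Delta S_{\vartheta}(t)v_{0},K_{\delta}\rangle_{L^{2}(\Lambda)}$, which is legitimate once $C_{\vartheta}(t)u_{0}$ and $S_{\vartheta}(t)v_{0}$ are known to lie in $H^{2}(\Lambda)$.

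Next I would establish that these two vectors lie in $D(A_{\vartheta})=H^{2}(\Lambda)\cap H_{0}^{1}(\Lambda)$ with $L^{2}$-norms of $A_{\vartheta}(\cdot)$ bounded uniformly in $t\in[0,T]$, using the functional-calculus representations $C_{\vartheta}(t)=\cos(t(-A_{\vartheta})^{1/2})$ and $S_{\vartheta}(t)=(-A_{\vartheta})^{-1/2}\sin(t(-A_{\vartheta})^{1/2})$. Since \assini{} gives $u_{0}\in D(A_{\vartheta})$ and $v_{0}\in H_{0}^{1}(\Lambda)=D((-A_{\vartheta})^{1/2})$, the identities $A_{\vartheta}C_{\vartheta}(t)u_{0}=C_{\vartheta}(t)A_{\vartheta}u_{0}$ and $A_{\vartheta}S_{\vartheta}(t)v_{0}=-\sin(t(-A_{\vartheta})^{1/2})(-A_{\vartheta})^{1/2}v_{0}$, together with $\Vert C_{\vartheta}(t)\Vert_{L^{2}\to L^{2}}\le 1$ and $\Vert S_{\vartheta}(t)\Vert_{L^{2}\to L^{2}}\le T$ (the latter from $S_{\vartheta}(t)=\int_{0}^{t}C_{\vartheta}(s)\,\mathrm{d}s$), yield
\[
\sup_{t\in[0,T]}\Vert A_{\vartheta}C_{\vartheta}(t)u_{0}\Vert_{L^{2}(\Lambda)}\le\Vert A_{\vartheta}u_{0}\Vert_{L^{2}(\Lambda)},\qquad \sup_{t\in[0,T]}\Vert A_{\vartheta}S_{\vartheta}(t)v_{0}\Vert_{L^{2}(\Lambda)}\le\Vert (-A_{\vartheta})^{1/2}v_{0}\Vert_{L^{2}(\Lambda)},
\]
both of which are finite; the uniform bounds $\Vert C_{\vartheta}(t)u_{0}\Vert_{L^{2}}\le\Vert u_{0}\Vert_{L^{2}}$ and $\Vert S_{\vartheta}(t)v_{0}\Vert_{L^{2}}\le T\Vert v_{0}\Vert_{L^{2}}$ follow from the same operator-norm estimates.

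To pass from the weighted operator $A_{\vartheta}$ to the plain Laplacian produced by the integration by parts, I would use that for $z\in H^{2}(\Lambda)\cap H_{0}^{1}(\Lambda)$ the pointwise identity $A_{\vartheta}z=\vartheta\Delta z+\langle\nabla\vartheta,\nabla z\rangle_{\mathbb{R}^{d}}$ gives $\Delta z=\vartheta^{-1}(A_{\vartheta}z-\langle\nabla\vartheta,\nabla z\rangle_{\mathbb{R}^{d}})$. Because \assth{} makes $\vartheta$ bounded below by some $\vartheta_{\min}>0$ and $\nabla\vartheta$ bounded, and because $\Vert\nabla z\Vert_{L^{2}}^{2}\le\vartheta_{\min}^{-1}\langle\vartheta\nabla z,\nabla z\rangle=-\vartheta_{\min}^{-1}\langle A_{\vartheta}z,z\rangle\le\vartheta_{\min}^{-1}\Vert A_{\vartheta}z\Vert_{L^{2}}\Vert z\Vert_{L^{2}}$, I obtain $\Vert\Delta z\Vert_{L^{2}(\Lambda)}\lesssim\Vert A_{\vartheta}z\Vert_{L^{2}(\Lambda)}+\Vert z\Vert_{L^{2}(\Lambda)}$. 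Applying this with $z=C_{\vartheta}(t)u_{0}$ and $z=S_{\vartheta}(t)v_{0}$ and invoking the previous step shows that $\sup_{t\in[0,T]}\Vert\Delta C_{\vartheta}(t)u_{0}\Vert_{L^{2}}$ and $\sup_{t\in[0,T]}\Vert\Delta S_{\vartheta}(t)v_{0}\Vert_{L^{2}}$ are finite and independent of $\delta$.

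Finally, Cauchy--Schwarz together with the normalisation $\Vert K_{\delta}\Vert_{L^{2}(\Lambda)}=\Vert K\Vert_{L^{2}(\mathbb{R}^{d})}$ gives $|\locu(t)|+|\locv(t)|\le C\Vert K\Vert_{L^{2}(\mathbb{R}^{d})}$ uniformly in $t\in[0,T]$ and in small $\delta$; squaring, integrating over $[0,T]$, and using the triangle inequality then yields \eqref{eq:boundedness_initial_object}. The one genuinely delicate point is the regularity bookkeeping, namely confirming that $C_{\vartheta}(t)u_{0}$ and $S_{\vartheta}(t)v_{0}$ remain in $H^{2}$ uniformly in $t$ (so the integration by parts is licit) and that the unweighted Laplacian is dominated by the weighted generator; both rely on \assth{} and \assini{} through the functional calculus, and are where I expect most of the care to go.
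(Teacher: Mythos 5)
Your proof is correct, but it takes a genuinely different route from the paper's. The paper rescales everything to $\Lambda_{\delta}$ via \Cref{result:rescaling_trigonometric_operator_families}, rewrites $\locu(t)=\langle (-A_{\vartheta}u_0)_{\delta^{-1}}, C_{\vartheta,\delta}(\delta^{-1}t)(-A_{\vartheta,\delta})^{-1}\Delta K\rangle_{L^{2}(\Lambda_{\delta})}$ and $\locv(t)=\langle ((-A_{\vartheta})^{1/2}v_0)_{\delta^{-1}}, S_{\vartheta,\delta}(\delta^{-1}t)(-A_{\vartheta,\delta})^{-1/2}\Delta K\rangle_{L^{2}(\Lambda_{\delta})}$, and then controls the kernel factors uniformly in $\delta$ through \Cref{result:fractional_and_inverse_limits} (iv)--(v), whose proof rests on heat-semigroup decay and hence on \assth{} and, for $d=1$, on the mean-zero part of \asskernel{}. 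You instead stay on the fixed domain $\Lambda$: you absorb the $\delta^{-2}$ through the identity $\delta^{-2}(\Delta K)_{\delta}=\Delta(K_{\delta})$, integrate by parts (licit since $K_{\delta}\in H^{2}$ is compactly supported away from $\partial\Lambda$ for $\delta<\delta'$, by \asskernel{} and \Cref{remark:subspace_projections}), and dominate $\Vert \Delta C_{\vartheta}(t)u_0\Vert_{L^{2}(\Lambda)}$ and $\Vert \Delta S_{\vartheta}(t)v_0\Vert_{L^{2}(\Lambda)}$ uniformly in $t$ via the functional calculus together with the elliptic comparison $\Vert \Delta z\Vert_{L^{2}(\Lambda)}\lesssim \Vert A_{\vartheta}z\Vert_{L^{2}(\Lambda)}+\Vert z\Vert_{L^{2}(\Lambda)}$; your regularity bookkeeping is sound, since $D(A_{\vartheta})=H^{2}(\Lambda)\cap H_0^{1}(\Lambda)$ and $D((-A_{\vartheta})^{1/2})=H_0^{1}(\Lambda)$ are available by the elliptic regularity granted in \Cref{remark:regularity}, and your interpolation bound $\Vert \nabla z\Vert^{2}_{L^{2}(\Lambda)}\leq \vartheta_{\min}^{-1}\Vert A_{\vartheta}z\Vert_{L^{2}(\Lambda)}\Vert z\Vert_{L^{2}(\Lambda)}$ is exactly right for $z\in D(A_{\vartheta})$. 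As for what each approach buys: yours is more elementary and self-contained --- it bypasses the semigroup estimates behind \Cref{result:fractional_and_inverse_limits} and the $d=1$ cancellation condition on $K$, needing only the support and $H^{2}$ parts of \asskernel{}, and it even delivers the stronger uniform-in-$t$ bound on $\locu+\locv$ --- whereas the paper's version recycles machinery required anyway for the equipartition results and produces the bound in precisely the rescaled form $\Vert(-A_{\vartheta,\delta})^{-1}\Delta K\Vert_{L^{2}(\Lambda_{\delta})}$, $\Vert(-A_{\vartheta,\delta})^{-1/2}\Delta K\Vert_{L^{2}(\Lambda_{\delta})}$ that is reused verbatim in the proofs of \Cref{result:asymptotics_observed_fisher_information} and \Cref{result:remaining_bias}.
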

\begin{proof}[Proof of \Cref{result:boundedness_initial_object}]
\begin{step}[Rewriting $\mathcal{L}_{\delta}^{C}$ and $\mathcal{L}_{\delta}^{S}$]
Using properties of the rescaling (\Cref{result:rescaling_trigonometric_operator_families}), we observe
\begin{align*}
&\langle C_{\vartheta}(t)u_0, \delta^{-2}(\Delta K)_{\delta}\rangle_{L^{2}(\Lambda)}\\ &= \delta^{-2}\langle u_0, (C_{\vartheta,\delta}(\delta^{-1}t) \Delta K)_{\delta} \rangle_{L^{2}(\Lambda)}\\
&= \delta^{-2}\langle (u_0)_{\delta^{-1}}, C_{\vartheta,\delta}(\delta^{-1}t) \Delta K \rangle_{L^{2}(\Lambda_{\delta})}\\
&= \delta^{-2} \langle (-A_{\vartheta,\delta}) (u_0)_{\delta^{-1}}, C_{\vartheta,\delta}(\delta^{-1}t)(-A_{\vartheta,\delta})^{-1}\Delta K \rangle_{L^{2}(\Lambda_{\delta})}\\
&= \langle (-A_{\vartheta}u_0)_{\delta^{-1}}, C_{\vartheta,\delta}(\delta^{-1}t)(-A_{\vartheta,\delta})^{-1}\Delta K \rangle_{L^{2}(\Lambda_{\delta})} 
\end{align*}
and
\begin{align*}
&\langle S_{\vartheta}(t)v_0, \delta^{-2}(\Delta K)_{\delta} \rangle_{L^{2}(\Lambda)} \\&= \delta^{-2} \langle v_0, \delta(S_{\vartheta,\delta}(\delta^{-1}t) \Delta K)_{\delta} \rangle_{L^{2}(\Lambda)}\\
&= \delta^{-1}\langle (-A_{\vartheta,\delta})^{1/2}(v_0)_{\delta^{-1}}, S_{\vartheta,\delta}(\delta^{-1}t)  (-A_{\vartheta,\delta})^{-1/2}\Delta K\rangle_{L^{2}(\Lambda_{\delta})}\\
&= \langle ((-A_{\vartheta})^{1/2}v_0)_{\delta^{-1}},  S_{\vartheta,\delta}(\delta^{-1}t)(-A_{\vartheta,\delta})^{-1/2}\Delta K\rangle_{L^{2}(\Lambda_{\delta})}. 
\end{align*}
\end{step}
\begin{step}[Upper bound using Cauchy-Schwarz]
The Cauchy-Schwarz inequality yields 
\begin{align}
&|\langle C_{\vartheta}(t)u_0, \delta^{-2}(\Delta K)_{\delta} \rangle_{L^{2}(\Lambda)}| \nonumber\\&\leq \Vert (-A_{\vartheta} u_0)_{\delta^{-1}} \Vert_{L^{2}(\Lambda_{\delta})} \Vert C_{\vartheta,\delta}(\delta^{-1}t) \Vert_{\mathcal{L}(L^{2}(\Lambda_{\delta}), L^{2}(\Lambda_{\delta}))} \Vert (-A_{\vartheta,\delta})^{-1}\Delta K \Vert_{L^{2}(\Lambda_{\delta})} \nonumber\\
& \leq \Vert A_{\vartheta}u_0 \Vert_{L^{2}(\Lambda)} \Vert (-A_{\vartheta,\delta})^{-1}\Delta K \Vert_{L^{2}(\Lambda_{\delta})} \label{eq:upperbound_initial_delta_K_1}\\
& |\langle S_{\vartheta}(t)v_0, \delta^{-2}(\Delta K)_{\delta} \rangle_{L^{2}(\Lambda)}| \nonumber\\&\leq \Vert ((-A_{\vartheta})^{1/2}v_0)_{\delta^{-1}} \Vert_{L^{2}(\Lambda_{\delta})} \Vert S_{\vartheta,\delta}(\delta^{-1}t) \Vert_{\mathcal{L}(L^{2}(\Lambda_{\delta}), L^{2}(\Lambda_{\delta}))} \Vert (-A_{\vartheta,\delta})^{-1/2}\Delta K \Vert_{L^{2}(\Lambda_{\delta})} \nonumber\\
& \leq \Vert (-A_{\vartheta})^{1/2}v_0 \Vert_{L^{2}(\Lambda)} \Vert (-A_{\vartheta,\delta})^{-1/2}\Delta K \Vert_{L^{2}(\Lambda_{\delta})} T. \label{eq:upperbound_initial_delta_K_2} 
\end{align}
\end{step}
\begin{step}[Upper bounds remain finite]
Clearly both $A_{\vartheta}u_0$ and $(-A_{\vartheta})^{1/2}v_0$ are elements of the Hilbert space $L^{2}(\Lambda)$. Thus, their $\Vert \cdot \Vert_{L^{2}(\Lambda)}$ norm remains finite. Furthermore, by \Cref{result:fractional_and_inverse_limits} both $(-A_{\vartheta,\delta})^{-1}\Delta K$ and  $(-A_{\vartheta,\delta})^{-1/2}\Delta K$ converge in $L^{2}(\mathbb{R})$, implying
\begin{equation*}
\sup_{0 < \delta < \delta'} \Vert (-A_{\vartheta,\delta})^{-1}\Delta K \Vert_{L^{2}(\Lambda_{\delta})} < \infty, \quad \sup_{0 < \delta < \delta'} \Vert (-A_{\vartheta,\delta})^{-1/2}\Delta K \Vert_{L^{2}(\Lambda_{\delta})} < \infty.
\end{equation*}
The result follows from \eqref{eq:upperbound_initial_delta_K_1} and \eqref{eq:upperbound_initial_delta_K_2} as the time integral of the squared upper bound remains finite on a finite time horizon. \qedhere
\end{step}
\end{proof}
\label{section:asymptotics_augmentedMLE}
\begin{lemma}[Expectation and variance of observed Fisher information $\tilde{I}_{\delta}$]
\label{result:variance_and_covariance_representations}
Grant \asskernel{}. The expectation of the observed Fisher information satisfies
\begin{align}
\label{eq:rep_observed_fisher_information_expectation}
\delta^{2}\mathbb{E}[\tilde{I}_{\delta}] &= \int_{0}^{T} \int_{0}^{t}\Vert S_{\vartheta, \delta}(\delta^{-1}r) \Delta K \Vert^{2}_{L^{2}(\Lambda_{\delta})} \mathrm{d}r \mathrm{d}t, \quad \delta > 0,
\end{align}
and the variance of the observed Fisher information is given by
\begin{equation*}
\begin{aligned}
\label{eq:rep_observed_fisher_information_variance}
&\mathrm{Var}(\delta^{2}\tilde{I}_{\delta}) \\&= 2\int_{0}^{T}\int_{0}^{T}\left(\int_{0}^{t \land s} \langle S_{\vartheta,\delta}(\delta^{-1}(t-r))\Delta K,S_{\vartheta,\delta}(\delta^{-1}(s-r))\Delta K \rangle_{L^{2}(\Lambda_{\delta})} \mathrm{d}r\right)^{2}\mathrm{d}s\mathrm{d}t. 
\end{aligned}
\end{equation*}
\end{lemma}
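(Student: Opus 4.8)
The plan is to exploit that, under zero initial conditions, $\tilde{u}_\delta^\Delta$ is a centred Gaussian process (in the sense of \Cref{result:Gaussian_process_solution}), so that both the expectation and the variance of $\tilde I_\delta = \int_0^T \tilde u_\delta^\Delta(t)^2\,\mathrm{d}t$ are governed entirely by its covariance kernel. First I would record that kernel. Applying \Cref{result:covariance_structure} with the test function $z = \delta^{-2}(\Delta K)_\delta$, which lies in $L^2(\Lambda)$ by \asskernel{}, gives
\begin{equation*}
\mathrm{Cov}(\tilde u_\delta^\Delta(t), \tilde u_\delta^\Delta(s)) = \delta^{-4}\int_0^{t\wedge s}\langle S_\vartheta(t-r)(\Delta K)_\delta, S_\vartheta(s-r)(\Delta K)_\delta\rangle_{L^2(\Lambda)}\,\mathrm{d}r.
\end{equation*}
Then I would invoke the rescaling identity \Cref{result:rescaling_trigonometric_operator_families}(ii), namely $S_\vartheta(\tau)(\Delta K)_\delta = \delta(S_{\vartheta,\delta}(\delta^{-1}\tau)\Delta K)_\delta$, together with the fact that $z \mapsto z_\delta$ is an $L^2$-isometry from $L^2(\Lambda_\delta)$ onto $L^2(\Lambda)$ (the change of variables $y = \delta^{-1}x$), to collapse the ambient $L^2(\Lambda)$ inner product to an $L^2(\Lambda_\delta)$ one. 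This yields the clean form
\begin{equation*}
\delta^2\,\mathrm{Cov}(\tilde u_\delta^\Delta(t), \tilde u_\delta^\Delta(s)) = \int_0^{t\wedge s}\langle S_{\vartheta,\delta}(\delta^{-1}(t-r))\Delta K, S_{\vartheta,\delta}(\delta^{-1}(s-r))\Delta K\rangle_{L^2(\Lambda_\delta)}\,\mathrm{d}r =: \Phi_\delta(t,s),
\end{equation*}
which is exactly the quantity appearing inside the two assertions.

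For the expectation, since $\tilde u_\delta^\Delta(t)$ is centred Gaussian, $\mathbb{E}[\tilde u_\delta^\Delta(t)^2] = \mathrm{Var}(\tilde u_\delta^\Delta(t))$. Interchanging expectation and the time integral by Tonelli (the integrand is nonnegative) and substituting $r \mapsto t - r$ in $\Phi_\delta(t,t)$ transforms $\delta^2\mathbb{E}[\tilde I_\delta] = \int_0^T \Phi_\delta(t,t)\,\mathrm{d}t$ into the claimed double integral of $\Vert S_{\vartheta,\delta}(\delta^{-1}r)\Delta K\Vert^2_{L^2(\Lambda_\delta)}$.

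For the variance, I would use the quadratic Gaussian (Isserlis/Wick) identity: for jointly centred Gaussian $(X,Y)$ one has $\mathrm{Cov}(X^2, Y^2) = 2\,\mathrm{Cov}(X,Y)^2$. Applying this pointwise with $X = \tilde u_\delta^\Delta(t)$ and $Y = \tilde u_\delta^\Delta(s)$ and integrating,
\begin{equation*}
\mathrm{Var}(\tilde I_\delta) = \int_0^T\int_0^T \mathrm{Cov}(\tilde u_\delta^\Delta(t)^2, \tilde u_\delta^\Delta(s)^2)\,\mathrm{d}t\,\mathrm{d}s = 2\int_0^T\int_0^T \mathrm{Cov}(\tilde u_\delta^\Delta(t), \tilde u_\delta^\Delta(s))^2\,\mathrm{d}t\,\mathrm{d}s,
\end{equation*}
and multiplying by $\delta^4$ and inserting $\delta^2\,\mathrm{Cov} = \Phi_\delta$ produces precisely the stated expression for $\mathrm{Var}(\delta^2\tilde I_\delta)$.

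The algebraic steps are essentially bookkeeping; the two points needing genuine care are the Fubini/Tonelli interchanges and the use of the Gaussian fourth-moment identity under the integral sign. Both are justified once I verify that $(t,s)\mapsto \Phi_\delta(t,s)$ is bounded on $[0,T]^2$, which follows from the uniform operator-sine bounds of \Cref{result:uniform_upperbounds_operator_sine} via Cauchy--Schwarz; this guarantees that $\Phi_\delta(t,s)^2$ is integrable so the double integral is finite. I expect this boundedness/integrability check, rather than any of the manipulations with $\Phi_\delta$, to be the only substantive obstacle.
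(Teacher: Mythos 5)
Your proposal is correct and follows essentially the same route as the paper: the covariance kernel from \Cref{result:covariance_structure} (It\^o isometry), the rescaling identity \Cref{result:rescaling_trigonometric_operator_families}(ii) combined with the $L^2$-isometry of $z \mapsto z_\delta$, Fubini for the expectation, and the Gaussian fourth-moment (Wick) identity for the variance — the paper cites \citet[Theorem 1.28]{jansonGaussianHilbertSpaces1997} for exactly the step you justify via Isserlis. Your closing remarks on boundedness of $\Phi_\delta$ for the Fubini interchanges are sound and slightly more careful than the paper's implicit treatment.
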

\begin{proof}[Proof of \Cref{result:variance_and_covariance_representations}]
Using Fubini's theorem and \Cref{result:unitary_group}, we observe by \citet[Proposition 4.28]{dapratoStochasticEquationsInfinite2014}:
\begin{equation}
\label{eq:expectation_observed_fisher_zero_initial}
\begin{aligned}
\mathbb{E}[\delta^{2} \tilde{I}_{\delta}]&= \delta^{2}\int_{0}^{T} \mathrm{Var}(\tilde{u}_\delta^{\Delta}(t))\mathrm{d}t 
= \delta^{2} \int_{0}^{T} \int_{0}^{t}\Vert S_{\vartheta}(t-s) \Delta K_{\delta} \Vert^{2}_{L^{2}(\Lambda)} \mathrm{d}s \mathrm{d}t\\
&=  \delta^{-2} \int_{0}^{T} \int_{0}^{t}\Vert S_{\vartheta}(t-s) (\Delta K)_{\delta} \Vert^{2}_{L^{2}(\Lambda)} \mathrm{d}s \mathrm{d}t.
\end{aligned}
\end{equation}
Applying the rescaling of the operator sine function in \Cref{result:rescaling_trigonometric_operator_families} \ref{enum:rescaling} to \eqref{eq:expectation_observed_fisher_zero_initial} yields
\begin{align*}
\mathbb{E}[\delta^{2} \tilde{I}_{\delta}] &= \int_{0}^{T} \int_{0}^{t}\Vert (S_{\vartheta, \delta}(\delta^{-1}(t-s)) \Delta K)_{\delta} \Vert^{2}_{L^{2}(\Lambda)} \mathrm{d}s \mathrm{d}t\\
&=\int_{0}^{T} \int_{0}^{t}\Vert S_{\vartheta, \delta}(\delta^{-1}r) \Delta K \Vert^{2}_{L^{2}(\Lambda_{\delta})} \mathrm{d}r \mathrm{d}t.
\end{align*}
Similarly, using Wick's formula (\citet*[Theorem 1.28]{jansonGaussianHilbertSpaces1997}), we obtain for the covariance
\begin{equation}
\label{eq:covariance_observed_fisher_zero_initial}
\begin{aligned}
&\mathrm{Var}(\delta^{2}\tilde{I}_{\delta})\\ &= 2\delta^{4} \int_{0}^{T}\int_{0}^{T} \mathrm{Cov}(\tilde{u}_{\delta}^{\Delta}(s), \tilde{u}_{\delta}^{\Delta}(t))^{2} \mathrm{d}s \mathrm{d}t\\
&= 2\delta^{4}\int_{0}^{T}\int_{0}^{T} \left( \int_{0}^{t \land s} \langle S_{\vartheta}(t-r) \Delta K_{\delta}, S_{\vartheta}(s-r) \Delta K_{\delta}\rangle_{L^{2}(\Lambda)}\mathrm{d}r\right)^{2}\mathrm{d}s \mathrm{d}t\\
&= 2\delta^{-4}\int_{0}^{T}\int_{0}^{T} \left( \int_{0}^{t \land s} \langle S_{\vartheta}(t-r) (\Delta K)_{\delta}, S_{\vartheta}(s-r)  (\Delta K)_{\delta}\rangle_{L^{2}(\Lambda)}\mathrm{d}r\right)^{2}\mathrm{d}s \mathrm{d}t\\
\end{aligned}
\end{equation}
Another application of the rescaling of the operator sine function in \Cref{result:rescaling_trigonometric_operator_families} \ref{enum:rescaling} to \eqref{eq:covariance_observed_fisher_zero_initial} amounts to
\begin{align*}
&\mathrm{Var}(\delta^{2}\tilde{I}_{\delta}) \\
&= 2\delta^{-4}\int_{0}^{T}\int_{0}^{T} \left( \int_{0}^{t \land s} \langle S_{\vartheta}(t-r) (\Delta K)_{\delta}, S_{\vartheta}(s-r)  (\Delta K)_{\delta}\rangle_{L^{2}(\Lambda)}\mathrm{d}r\right)^{2}\mathrm{d}s \mathrm{d}t\\
&= 2\int_{0}^{T}\int_{0}^{T} \left( \int_{0}^{t \land s} \langle S_{\vartheta, \delta }( \delta^{-1}(t-r)) \Delta K, S_{\vartheta, \delta}(\delta^{-1}(s-r))\Delta K\rangle_{L^{2}(\Lambda_{\delta})}\mathrm{d}r\right)^{2}\mathrm{d}s \mathrm{d}t.
\end{align*}
\end{proof}

\begin{lemma}[Expectation and variance of the remaining bias $\tilde{R}_{\delta}$]
\label{result:variance_and_covariance_representations_remaining_bias}
Grant \asskernel{}. The expectation of the remaining bias satisfies
\begin{equation}
\label{eq:expectation_representation_remaining_bias}
\mathbb{E}[\delta \tilde{R}_{\delta}]= \int_{0}^{T}\int_{0}^{t}\langle S_{\vartheta,\delta}(\delta^{-1}s)\Delta K, S_{\vartheta, \delta}(\delta^{-1}s)\beta^{(\delta)} \rangle_{L^{2}(\Lambda_{\delta})}\mathrm{d}s\mathrm{d}t,
\end{equation}
and its variance is given by
\begin{equation}
\label{eq:variance_representation_remaining_bias}
\begin{aligned}
&\mathrm{Var}(\delta \tilde{R}_{\delta})=
\int_{0}^{T}\int_{0}^{T} (\mathrm{Cov}(\tilde{u}_{\delta}^{\Delta}(t), \langle \tilde{u}(s),\beta^{(\delta)} \rangle_{L^{2}(\Lambda)}))^{2} \\
&\quad+ \mathrm{Cov}(\tilde{u}_{\delta}^{\Delta}(t), \langle \tilde{u}(s),\beta^{(\delta)} \rangle_{L^{2}(\Lambda)})\mathrm{Cov}(\tilde{u}_{\delta}^{\Delta}(s), \langle \tilde{u}(t),\beta^{(\delta)} \rangle_{L^{2}(\Lambda)}) \mathrm{d}t\mathrm{d}s,
\end{aligned}
\end{equation}
where
\begin{align*}
&\mathrm{Cov}(\tilde{u}_{\delta}^{\Delta}(t), \langle \tilde{u}(s), \beta^{(\delta)} \rangle_{L^{2}(\Lambda)})\\
&= \int_{0}^{t \land s}\langle S_{\vartheta,\delta}(\delta^{-1}(t-r))\Delta K,S_{\vartheta,\delta}(\delta^{-1}(s-r))\beta^{(\delta)} \rangle_{L^{2}(\Lambda_{\delta})}\mathrm{d}r,
\end{align*}
with $\beta^{(\delta)}$ is defined through \eqref{eq:beta_delta}.
\end{lemma}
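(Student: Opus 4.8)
The plan is to reduce the bias to the zero--initial--condition covariance structure of \Cref{result:covariance_structure} by means of the rescaling identities of \Cref{result:rescaling_trigonometric_operator_families}, and then to read off the variance from Wick's formula. First I would rewrite the bias weight $(A_{\vartheta}-\vartheta(0)\Delta)K_{\delta}$ in terms of $\beta^{(\delta)}$. Using $\Delta (z)_{\delta}=\delta^{-2}(\Delta z)_{\delta}$ together with \Cref{result:rescaling_trigonometric_operator_families}(i) and the definition \eqref{eq:beta_delta}, I obtain
\begin{equation*}
(A_{\vartheta}-\vartheta(0)\Delta)K_{\delta}=\delta^{-2}\big((A_{\vartheta,\delta}-\vartheta(0)\Delta)K\big)_{\delta}=\delta^{-1}(\beta^{(\delta)})_{\delta}.
\end{equation*}
Hence $\langle \tilde{u}(t),(A_{\vartheta}-\vartheta(0)\Delta)K_{\delta}\rangle_{L^{2}(\Lambda)}=\delta^{-1}\langle \tilde{u}(t),(\beta^{(\delta)})_{\delta}\rangle_{L^{2}(\Lambda)}$, so that $\delta\tilde{R}_{\delta}=\int_{0}^{T}\tilde{u}_{\delta}^{\Delta}(t)\,\langle \tilde{u}(t),(\beta^{(\delta)})_{\delta}\rangle_{L^{2}(\Lambda)}\,\mathrm{d}t$ is an integrated product of two linear functionals of the same centred Gaussian field $\tilde{u}$.

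Next I would compute the covariance kernel. Since $\tilde{u}$ has zero initial conditions, \Cref{result:covariance_structure} applied with $z=\delta^{-2}(\Delta K)_{\delta}$ and $z'=(\beta^{(\delta)})_{\delta}$ gives
\begin{equation*}
\mathrm{Cov}\big(\tilde{u}_{\delta}^{\Delta}(t),\langle \tilde{u}(s),(\beta^{(\delta)})_{\delta}\rangle_{L^{2}(\Lambda)}\big)=\delta^{-2}\int_{0}^{t\wedge s}\langle S_{\vartheta}(t-r)(\Delta K)_{\delta},S_{\vartheta}(s-r)(\beta^{(\delta)})_{\delta}\rangle_{L^{2}(\Lambda)}\,\mathrm{d}r.
\end{equation*}
Applying the operator-sine rescaling in \Cref{result:rescaling_trigonometric_operator_families} \ref{enum:rescaling} replaces each $S_{\vartheta}(\cdot)(\cdot)_{\delta}$ by $\delta\,(S_{\vartheta,\delta}(\delta^{-1}\cdot)\,\cdot\,)_{\delta}$; the two factors of $\delta$ cancel the prefactor $\delta^{-2}$, and the isometry $\langle (w)_{\delta},(w')_{\delta}\rangle_{L^{2}(\Lambda)}=\langle w,w'\rangle_{L^{2}(\Lambda_{\delta})}$ produces exactly the stated kernel. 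Taking $s=t$, noting that both functionals are centred so that $\mathbb{E}[\delta\tilde{R}_{\delta}]=\int_{0}^{T}\mathrm{Cov}(\tilde{u}_{\delta}^{\Delta}(t),\langle \tilde{u}(t),(\beta^{(\delta)})_{\delta}\rangle_{L^{2}(\Lambda)})\,\mathrm{d}t$, and substituting $s\mapsto t-r$ yields the expectation representation \eqref{eq:expectation_representation_remaining_bias}.

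For the variance, write $X_t=\tilde{u}_{\delta}^{\Delta}(t)$ and $Y_t=\langle \tilde{u}(t),(\beta^{(\delta)})_{\delta}\rangle_{L^{2}(\Lambda)}$, so that $\delta\tilde{R}_{\delta}=\int_{0}^{T}X_tY_t\,\mathrm{d}t$ and $\mathrm{Var}(\delta\tilde{R}_{\delta})=\int_{0}^{T}\int_{0}^{T}\mathrm{Cov}(X_tY_t,X_sY_s)\,\mathrm{d}s\,\mathrm{d}t$. The quadruple $(X_t,Y_t,X_s,Y_s)$ is jointly centred Gaussian, being linear functionals of $\tilde{u}$, so Wick's formula (\citet[Theorem 1.28]{jansonGaussianHilbertSpaces1997}, as already used for $\tilde{I}_{\delta}$) expands $\mathbb{E}[X_tY_tX_sY_s]$ over the three pairings of its four factors. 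The pairing $\{X_t,Y_t\}\{X_s,Y_s\}$ reproduces $\mathbb{E}[X_tY_t]\mathbb{E}[X_sY_s]$ and cancels against the subtracted product of means, leaving the two remaining pairings $\{X_t,X_s\}\{Y_t,Y_s\}$ and $\{X_t,Y_s\}\{X_s,Y_t\}$, i.e.\ a product of auto-covariances and a product of cross-covariances. Substituting the kernel of the previous step (and its analogues for the pairs $\Delta K,\Delta K$ and $\beta^{(\delta)},\beta^{(\delta)}$) for each covariance and integrating then gives the asserted variance representation.

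Since no limits are taken and, under \asskernel{}, all objects are finite for fixed $\delta$, the only care needed lies in tracking the $\delta$-powers through the first step---where $\delta^{-2}\cdot\delta=\delta^{-1}$ for the weight and $\delta^{-2}\cdot\delta^{2}=1$ for the kernel---and in correctly matching each Gaussian pairing to its covariance kernel. I expect the bookkeeping in the rescaling step and the clean organisation of the Wick expansion, rather than any analytic difficulty, to be the main obstacle.
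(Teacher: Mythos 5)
Your route is the same as the paper's: you rewrite the weight as $(A_{\vartheta}-\vartheta(0)\Delta)K_{\delta}=\delta^{-1}(\beta^{(\delta)})_{\delta}$ via \Cref{result:rescaling_trigonometric_operator_families}(i) and \eqref{eq:beta_delta}, read off the covariance kernel from the zero-initial-condition covariance structure (\Cref{result:covariance_structure}), cancel the $\delta$-powers through the sine rescaling of \Cref{result:rescaling_trigonometric_operator_families}(ii), and invoke Wick's formula for the variance; the expectation part of your argument reproduces the paper's proof verbatim, including the bookkeeping $\delta^{-2}\cdot\delta^{2}=1$.

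One point deserves care in the variance step. Your Wick expansion is the correct one: writing $X_t=\tilde{u}_{\delta}^{\Delta}(t)$ and $Y_t=\langle\tilde{u}(t),\beta^{(\delta)}_{\delta}\rangle_{L^{2}(\Lambda)}$, the two surviving pairings give
\begin{equation*}
\mathrm{Cov}(X_tY_t,X_sY_s)=\mathrm{Cov}(X_t,X_s)\,\mathrm{Cov}(Y_t,Y_s)+\mathrm{Cov}(X_t,Y_s)\,\mathrm{Cov}(X_s,Y_t),
\end{equation*}
whose first term involves the auto-covariance kernels $\langle S_{\vartheta,\delta}(\delta^{-1}(t-r))\Delta K,S_{\vartheta,\delta}(\delta^{-1}(s-r))\Delta K\rangle_{L^{2}(\Lambda_{\delta})}$ and $\langle S_{\vartheta,\delta}(\delta^{-1}(t-r))\beta^{(\delta)},S_{\vartheta,\delta}(\delta^{-1}(s-r))\beta^{(\delta)}\rangle_{L^{2}(\Lambda_{\delta})}$ --- exactly the ``analogues'' you mention. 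This does \emph{not} coincide with the first term of the displayed formula \eqref{eq:variance_representation_remaining_bias}, which is the \emph{square} of the cross-covariance $(\mathrm{Cov}(X_t,Y_s))^{2}$; in general $\mathrm{Cov}(X_t,X_s)\mathrm{Cov}(Y_t,Y_s)\neq(\mathrm{Cov}(X_t,Y_s))^{2}$. The paper's own proof asserts the same display without expanding the pairings, so the lemma as stated appears to mis-record the first Wick term, and your closing claim that your expansion ``gives the asserted variance representation'' silently identifies two different expressions --- you should flag the discrepancy rather than absorb it. The mismatch is immaterial downstream: the correct first term also vanishes as $\delta\rightarrow 0$ by the slow-fast orthogonality \eqref{eq:emerging_orthogonality_1} and \eqref{eq:emerging_orthogonality_3} together with the uniform bounds of \Cref{result:uniform_upperbounds_operator_sine}, so the use of the lemma in \Cref{result:remaining_bias} is unaffected; but as an exact identity your product-of-auto-covariances form is the one to state.
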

\begin{proof}[Proof of \Cref{result:variance_and_covariance_representations_remaining_bias}]
With \eqref{eq:beta_delta}, we observe
\begin{equation}
\label{eq:beta_delta_representation_bias_zero_initial}
\begin{aligned}
\mathbb{E}[\delta \tilde{R}_{\delta}] &= \mathbb{E}\left(\delta \int_{0}^{T}\tilde{u}_{\delta}^{\Delta}(t)\langle \tilde{u}(t),(A_{\vartheta}-\vartheta(0)\Delta)K_{\delta} \rangle_{L^{2}(\Lambda)} \mathrm{d}t\right)\\
&= \mathbb{E}\left(\delta \int_{0}^{T}\tilde{u}_{\delta}^{\Delta}(t)\langle \tilde{u}(t),\delta^{-2}((A_{\vartheta}-\vartheta(0)\Delta)K)_{\delta} \rangle_{L^{2}(\Lambda)} \mathrm{d}t\right)\\
&= \mathbb{E}\left( \int_{0}^{T}\tilde{u}_{\delta}^{\Delta}(t)\langle \tilde{u}(t),\delta^{-1}((A_{\vartheta}-\vartheta(0)\Delta)K)_{\delta} \rangle_{L^{2}(\Lambda)} \mathrm{d}t\right)\\
&= \mathbb{E}\left( \int_{0}^{T}\tilde{u}_{\delta}^{\Delta}(t)\langle \tilde{u}(t),\beta^{(\delta)}_{\delta} \rangle_{L^{2}(\Lambda)} \mathrm{d}t\right).
\end{aligned}
\end{equation}
As before, an application of the rescaling of the operator sine function in \Cref{result:rescaling_trigonometric_operator_families} \ref{enum:rescaling} to the representation \eqref{eq:beta_delta_representation_bias_zero_initial} implies
\begin{align*}
&\mathbb{E}[\delta \tilde{R}_{\delta}] \\
&= \int_{0}^{T}\int_{0}^{t}\langle S_{\vartheta}(t-s) \Delta K_{\delta}, S_{\vartheta}(t-s)\beta^{(\delta)}_{\delta} \rangle_{L^{2}(\Lambda)}\mathrm{d}s \mathrm{d}t\\
&= \delta^{-2}\int_{0}^{T}\int_{0}^{t}\langle S_{\vartheta}(t-s) (\Delta K)_{\delta}, S_{\vartheta}(t-s)\beta^{(\delta)}_{\delta} \rangle_{L^{2}(\Lambda)}\mathrm{d}s \mathrm{d}t\\
&= \delta^{-2}\int_{0}^{T}\int_{0}^{t}\langle \delta (S_{\vartheta,\delta}(\delta^{-1}(t-s)) \Delta K)_{\delta}, \delta (S_{\vartheta, \delta}(\delta^{-1}(t-s))\beta^{(\delta)})_{\delta} \rangle_{L^{2}(\Lambda)}\mathrm{d}s \mathrm{d}t\\
&= \int_{0}^{T}\int_{0}^{t} \langle S_{\vartheta,\delta}(\delta^{-1}(t-s))\Delta K,S_{\vartheta,\delta}(\delta^{-1}(t-s))\beta^{(\delta)} \rangle_{L^{2}(\Lambda_{\delta})} \mathrm{d}s \mathrm{d}t.
\end{align*}
Using Wick's formula (\citet*[Theorem 1.28]{jansonGaussianHilbertSpaces1997}), we further have
\begin{align*}
\mathrm{Var}(\delta \tilde{R}_{\delta})&= \mathrm{Var}\left( \int_{0}^{T}\tilde{u}_{\delta}^{\Delta}(t) \langle \tilde{u}(t), \beta_{\delta}^{(\delta)} \rangle_{L^{2}(\Lambda)} \mathrm{d}t\right)\\
&= \int_{0}^{T}\int_{0}^{T} (\mathrm{Cov}(\tilde{u}_{\delta}^{\Delta}(t), \langle \tilde{u}(s),\beta^{(\delta)} \rangle_{L^{2}(\Lambda)}))^{2} \\
&\quad + \mathrm{Cov}(\tilde{u}_{\delta}^{\Delta}(t), \langle \tilde{u}(s),\beta^{(\delta)} \rangle_{L^{2}(\Lambda)})\mathrm{Cov}(\tilde{u}_{\delta}^{\Delta}(s), \langle \tilde{u}(t),\beta^{(\delta)} \rangle_{L^{2}(\Lambda)}) \mathrm{d}s\mathrm{d}t.
\end{align*}
Using the same rescaling arguments for the operator sine function in \Cref{result:rescaling_trigonometric_operator_families} \ref{enum:rescaling}, we obtain 
\begin{align*}
&\mathrm{Cov}(\tilde{u}_{\delta}^{\Delta}(t), \langle \tilde{u}(s), \beta^{(\delta)} \rangle_{L^{2}(\Lambda)}) \\&= \int_{0}^{t \land s}\langle S_{\vartheta,\delta}(\delta^{-1}(t-r))\Delta K,S_{\vartheta,\delta}(\delta^{-1}(s-r))\beta^{(\delta)} \rangle_{L^{2}(\Lambda_{\delta})}\mathrm{d}r. 
\qedhere
\end{align*}
\end{proof}
\begin{proof}[Proof of \Cref{result:asymptotics_observed_fisher_information}] \phantomsection\label{proof:obs_fisher}
Given \asskernel{} and \assini{}, we will show that the expectation and covariance of the observed Fisher information satisfies
\begin{align*}
\mathbb{E}[\delta^{2}I_{\delta}] &\rightarrow \frac{T^{2} }{4 \vartheta(0)}\Vert \nabla K \Vert_{L^{2}(\mathbb{R}^{d})}^{2}, \quad
\quad\mathrm{Var}(\delta^{2}I_{\delta}) \rightarrow 0, \quad \delta \rightarrow 0. 
\end{align*}
We begin by considering the case of zero initial conditions. 
\begin{step}[Zero initial conditions]
If we assume zero initial conditions, we need to show
\begin{align}
\label{eq:convergence_obs_fisher_zero_initial}
\mathbb{E}[\delta^{2}\tilde{I}_{\delta}] &\rightarrow \frac{T^{2} }{4 \vartheta(0)}\Vert \nabla K \Vert_{L^{2}(\mathbb{R}^{d})}^{2}, \quad
\quad\mathrm{Var}(\delta^{2}\tilde{I}_{\delta}) \rightarrow 0, \quad \delta \rightarrow 0. 
\end{align}
The representations \eqref{eq:rep_observed_fisher_information_expectation} and \eqref{eq:rep_observed_fisher_information_variance} in \Cref{result:variance_and_covariance_representations} are given by 
\begin{align*}
\delta^{2}\mathbb{E}[\tilde{I}_{\delta}] = \int_{0}^{T} \int_{0}^{t}\Vert S_{\vartheta, \delta}(\delta^{-1}r) \Delta K \Vert^{2}_{L^{2}(\Lambda_{\delta})} \mathrm{d}r \mathrm{d}t, \quad \delta > 0,
\end{align*}
and
\begin{align*}
&\mathrm{Var}(\delta^{2}\tilde{I}_{\delta}) \\&= 2\int_{0}^{T}\int_{0}^{T} \left(\int_{0}^{t \land s} \langle S_{\vartheta,\delta}(\delta^{-1}(t-r))\Delta K,S_{\vartheta,\delta}(\delta^{-1}(s-r))\Delta K \rangle_{L^{2}(\Lambda_{\delta})} \mathrm{d}r\right)^{2}\mathrm{d}s\mathrm{d}t.
\end{align*}
The limits \eqref{eq:emerging_equipartition_1} and \eqref{eq:emerging_orthogonality_1} in \Cref{result:asymptotics_emerging_energetic_expressions} \ref{enum:concrete_asymptotic_equipartition} and \ref{enum:concrete_asymptotic_orthogonality} show the pointwise convergence of the innermost integrands provided that $t \neq s$. Note that the diagonal $\{(t,s)\colon t=s \text{ for } t,s \in [0,T]\}$ is a set of Lebesgue measure zero, and we have almost sure convergence of the integrand. By \eqref{eq:energy_upperbound_1} in \Cref{result:uniform_upperbounds_operator_sine}, we have
\begin{align}
\label{eq:DCT_1}
&\sup_{0 < \delta \leq 1} \Vert S_{\vartheta,\delta}(\delta^{-1}r)\Delta K \Vert^{2}_{L^{2}(\Lambda_{\delta})} \leq \sup_{0 < \delta \leq 1} \Vert (-A_{\vartheta,\delta})^{-1/2} \Delta K \Vert^{2}_{L^{2}(\Lambda_{\delta})} < \infty,
\end{align}
and 
\begin{align}
&\sup_{0 < \delta \leq 1}|\langle S_{\vartheta,\delta}(\delta^{-1}(t-r))\Delta K,S_{\vartheta,\delta}(\delta^{-1}(s-r))\Delta K \rangle_{L^{2}(\Lambda_{\delta})}| \nonumber\\
&\leq \sup_{0 < \delta \leq 1}\Vert S_{\vartheta,\delta}(\delta^{-1}(s-r))\Delta K \Vert_{L^{2}(\Lambda_{\delta})}\Vert S_{\vartheta,\delta}(\delta^{-1}(t-r)) \Delta K\Vert_{L^{2}(\Lambda_{\delta})}\nonumber\\
\label{eq:DCT_2}
& \leq \left(\sup_{0 < \delta \leq 1} \Vert (-A_{\vartheta,\delta})^{-1/2} \Delta K \Vert^{2}_{L^{2}(\Lambda_{\delta})}\right)^{2}<\infty.
\end{align}
As both \eqref{eq:DCT_1} and \eqref{eq:DCT_2} are finite and independent of all time variables, the convergences in \eqref{eq:convergence_obs_fisher_zero_initial} follow using the dominated convergence theorem.
\end{step}
\begin{step}[Non-zero initial conditions]
The expectation of the observed Fisher information given by \eqref{eq:expectation_observed_fisher} in \Cref{result:dependence_of_observed_fisher_information_on_initial_conditions} satisfies
\begin{equation*}
\delta^{2}\mathbb{E}[I_{\delta}]=\delta^{2}\mathbb{E}[\tilde{I}_{\delta}] + \delta^{2}\Vert \mathcal{L}_{\delta}^{C}+ \mathcal{L}_{\delta}^{S} \Vert^{2}_{L^{2}([0,T])} \rightarrow \frac{T^{2} }{4 \vartheta(0)}\Vert \nabla K \Vert_{L^{2}(\mathbb{R}^{d})}^{2}, \quad \delta \rightarrow 0,
\end{equation*}
where the convergence follows immediately from the convergence of $\mathbb{E}[\delta^{2}\tilde{I}_{\delta}]$ in \eqref{eq:convergence_obs_fisher_zero_initial}  and with \eqref{eq:boundedness_initial_object} in \Cref{result:dependence_of_observed_fisher_information_on_initial_conditions}.  By \eqref{eq:observed_fisher_representation_with_initials} we obtain for the variance:
\begin{align*}
\mathrm{Var}(\delta^{2}I_{\delta})&=\mathrm{Var}(\delta^{2} \tilde{I}_{\delta}+  2\delta^{2} \langle \tilde{u}_{\delta}^{\Delta},\mathcal{L}_{\delta}^{C}+\mathcal{L}_{\delta}^{S} \rangle_{L^{2}([0,T])})\\
&= \mathrm{Var}(\delta^{2}\tilde{I}_{\delta}) + 2\mathrm{Cov}(\delta^{2}\tilde{I}_{\delta}, 2\delta^{2} \langle \tilde{u}_{\delta}^{\Delta},\mathcal{L}_{\delta}^{C}+\mathcal{L}_{\delta}^{S}\rangle_{L^{2}([0,T])}))\\&+\mathrm{Var}(2\delta^{2} \langle \tilde{u}_{\delta}^{\Delta},\mathcal{L}_{\delta}^{C} +\mathcal{L}_{\delta}^{S} \rangle_{L^{2}([0,T])})).
\end{align*}
By the first step, we already know that $\mathrm{Var}(\delta^{2}\tilde{I}_{\delta})$ converges to zero. Thus, as the covariance term can be bounded in terms of the variances, it suffices to observe
\begin{align*}
\mathrm{Var}(2\delta^{2}\langle \tilde{u}_\delta^{\Delta},\mathcal{L}_{\delta}^{C}+\mathcal{L}_{\delta}^{S} \rangle_{L^{2}([0,T])})&=\mathbb{E}\left(\left(2\delta^{2}\langle \tilde{u}_\delta^{\Delta},\mathcal{L}_{\delta}^{C}+\mathcal{L}_{\delta}^{S} \rangle_{L^{2}([0,T])}\right)^{2}\right)\\
&\leq 4\mathbb{E}[\delta^{2}\tilde{I}_{\delta}] \delta^{2} \Vert \mathcal{L}_{\delta}^{C} + \mathcal{L}_{\delta}^{S} \Vert_{L^{2}([0,T])}^{2} \rightarrow 0, \quad \delta \rightarrow 0,
\end{align*}
where we have used the Cauchy-Schwarz inequality, \eqref{eq:boundedness_initial_object} in \Cref{result:boundedness_initial_object} and \textit{Step 1}. \qedhere
\end{step}
\end{proof}
\begin{proof}[Proof of \Cref{result:remaining_bias}] 
\phantomsection\label{proof:bias} Throughout this proof, recall the notation for $\beta^{(\delta)}$ and $\beta^{(0)}$ defined in \eqref{eq:beta_delta}:
\begin{equation*}
\begin{aligned}
\beta^{(\delta)}(x) &=\delta^{-1}(A_{\vartheta,\delta}- \vartheta(0)\Delta)K(x), \quad \delta > 0,\\ 
\beta^{(0)}(x) &= \Delta( \langle \nabla\vartheta(0),x \rangle_{\mathbb{R}^{d}} K ) (x)- \langle \nabla\vartheta(0), \nabla K(x)\rangle_{\mathbb{R}^{d}},\quad x \in \mathbb{R}^{d}.
\end{aligned}
\end{equation*}
We wish to show under the Assumptions \asskernel{} and \assini{} that the remaining bias satisfies
\begin{equation*}
\delta^{-1}(I_{\delta})^{-1}R_{\delta} \xrightarrow{\mathbb{P}}  \frac{\langle \nabla K, \nabla\beta^{(0)}\rangle_{L^{2}(\mathbb{R}^{d})}}{\Vert \nabla K \Vert^{2}_{L^{2}(\mathbb{R}^{d})}}, \quad \delta \rightarrow 0.
\end{equation*}
\begin{step}[Zero initial conditions]
By \Cref{result:asymptotics_observed_fisher_information} we have $\tilde{I}_{\delta}/\mathbb{E}[\tilde{I}_{\delta}]\xrightarrow{\mathbb{P}} 1$. Applying the dominated convergence theorem as well as \eqref{eq:emerging_equipartition_2} and \eqref{eq:emerging_orthogonality_2} in \Cref{result:asymptotics_emerging_energetic_expressions} to \eqref{eq:expectation_representation_remaining_bias} and \eqref{eq:variance_representation_remaining_bias} in \Cref{result:variance_and_covariance_representations} respectively, yields the convergences
\begin{equation*}
\mathbb{E}[\delta \tilde{R}_{\delta}] \rightarrow \frac{T^{2}}{4\vartheta(0)}\langle \nabla K, \nabla \beta^{(0)} \rangle_{L^{2}(\mathbb{R}^{d})}, \quad \mathrm{Var}(\delta \tilde{R}_{\delta}) \rightarrow 0, \quad \delta \rightarrow 0.
\end{equation*}
Note that we have used that both \eqref{eq:DCT_1} and \eqref{eq:DCT_2} remain valid if $\Delta K$ is replaced by $\beta^{(\delta)}$ given \Cref{result:fractional_and_inverse_limits} \ref{enum_bounds:5} and the dominated convergence theorem is applicable. 
Thus, with Chebyshev's inequality, we also have $\tilde{R}_{\delta}/\mathbb{E}[\tilde{R}_{\delta}]\xrightarrow{\mathbb{P}}1$ as $\delta \rightarrow 0$. An application of Slutsky's lemma and the continuous mapping theorem shows
\begin{equation*}
\delta^{-1}(\tilde{I}_{\delta})^{-1}\tilde{R}_{\delta} = (\mathbb{E}[\tilde{I}_{\delta}]\tilde{I}_{\delta}^{-1}) (\delta^{2}\mathbb{E}[\tilde{I}_{\delta}])^{-1}\mathbb{E}[\delta \tilde{R}_{\delta}] \tilde{R}_{\delta}(\mathbb{E}[\tilde{R}_{\delta}])^{-1} \xrightarrow{\mathbb{P}} \frac{\langle \nabla K, (\beta^{(0)})'\rangle_{L^{2}(\mathbb{R}^{d})}}{\Vert \nabla K \Vert^{2}_{L^{2}(\mathbb{R}^{d})}}.
\end{equation*}
\end{step}
Next, we consider the situation with non-zero initial conditions.
\begin{step}[Decomposition and connection to localisation]
Observe that
\begin{align*}
\delta {R}_{\delta}&= \delta \int_{0}^{T}{u}_{\delta}^{\Delta}(t)\langle {u}(t), (A_{\vartheta}-\vartheta(0)\Delta)K_{\delta} \rangle_{L^{2}(\Lambda)}\mathrm{d}t\\
&= \int_{0}^{T}{u}_{\delta}^{\Delta}(t) \langle {u}(t),\delta^{-1}((A_{\vartheta}-\vartheta(0)\Delta )K)_{\delta} \rangle_{L^{2}(\Lambda)}\mathrm{d}t\\
&= \int_{0}^{T}{u}_{\delta}^{\Delta}(t) \langle {u}(t), \beta^{(\delta)}_{\delta} \rangle_{L^{2}(\Lambda)}\mathrm{d}t.
\end{align*}
Using \eqref{eq:initial_decomp_1} and \eqref{eq:initial_decomp_2}, we decompose $\delta {R}_{\delta} = \delta \tilde{R}_{\delta} + V_{1,\delta} + V_{2,\delta} + V_{3,\delta}$, where
\begin{align*}
V_{1,\delta} &\coloneqq \int_{0}^{T}(\locu(t)+\locv(t))\langle \tilde{u}(t),\beta_{\delta}^{(\delta)} \rangle_{L^{2}(\Lambda)} \mathrm{d}t\\
V_{2,\delta} & \coloneqq \int_{0}^{T}(\locu(t)+\locv(t)) \langle C_{\vartheta}(t)u_0 + S_{\vartheta}(t)v_0, \beta_{\delta}^{(\delta)} \rangle_{L^{2}(\Lambda)} \mathrm{d}t
\\
V_{3, \delta} &\coloneqq \int_{0}^{T} \tilde{u}_{\delta}^{\Delta}(t)\langle C_{\vartheta}(t)u_0 + S_{\vartheta}(t)v_0, \beta_{\delta}^{(\delta)} \rangle_{L^{2}(\Lambda)}\mathrm{d}t. 
\end{align*}
We immediately obtain the decomposition
\begin{equation*}
\delta^{-1}({I}_{\delta})^{-1}{R}_{\delta}= \delta^{-1}({I}_{\delta})^{-1} \tilde{R}_{\delta} + \delta^{-2} ({I}_{\delta})^{-1} (V_{1,\delta}+ V_{2, \delta} + V_{3, \delta}). 
\end{equation*}
In particular, by \Cref{result:asymptotics_observed_fisher_information}, we have
\begin{equation*}
\delta^{-1} ({I_{\delta}})^{-1}\tilde{R}_{\delta} \xrightarrow{\mathbb{P}}  \frac{\langle \nabla K, \nabla\beta^{(0)}\rangle_{L^{2}(\mathbb{R}^{d})}}{\Vert \nabla K \Vert^{2}_{L^{2}(\mathbb{R}^{d})}}, \quad \delta \rightarrow 0.
\end{equation*}
Thus, as $\delta^{-2}({I}_{\delta})^{-1} \in \mathcal{O}_{\mathbb{P}}(1)$ in view of \Cref{result:asymptotics_observed_fisher_information}, we have to show
\begin{equation*}
V_{1,\delta} + V_{2, \delta} + V_{3,\delta} \xrightarrow{\mathbb{P}} 0, \quad \delta \rightarrow 0.
\end{equation*}
\end{step}
\begin{step}[Convergence of $V_{3,\delta}$]
By the linearity of the integral, it suffices to show the convergence in probability for both individual terms in the sum
\begin{equation*}
V_{3,\delta}= \int_{0}^{T}\tilde{u}_{\delta}^{\Delta}(t)\langle C_{\vartheta}(t)u_0, \beta^{(\delta)}_{\delta} \rangle_{L^{2}(\Lambda)}\mathrm{d}t + \int_{0}^{T}\tilde{u}_{\delta}^{\Delta}(t)\langle S_{\vartheta}(t)v_0, \beta^{(\delta)}_{\delta} \rangle_{L^{2}(\Lambda)}\mathrm{d}t.
\end{equation*}
By applying the Cauchy-Schwarz inequality to the time integral, we have
\begin{equation}
\label{eq:cosine_and_sine_initial_beta_delta_parts}
\begin{aligned}
&\mathbb{E}\left[\left(\int_{0}^{T}\tilde{u}_{\delta}^{\Delta}(t)\langle C_{\vartheta}(t)u_0, \beta^{(\delta)}_{\delta} \rangle_{L^{2}(\Lambda)}\mathrm{d}t\right)^{2}\right] \\
&\leq \mathbb{E}[\delta^{2}\tilde{I}_{\delta}]\int_{0}^{T}\delta^{-2}\langle C_{\vartheta}(t)u_0, \beta^{(\delta)}_{\delta}\rangle^{2}_{L^{2}(\Lambda)}\mathrm{d}t
\end{aligned}
\end{equation}
and
\begin{equation*}
\begin{aligned}
&\mathbb{E}\left[\left(\int_{0}^{T}\tilde{u}_{\delta}^{\Delta}(t)\langle S_{\vartheta}(t)v_0, \beta^{(\delta)}_{\delta} \rangle_{L^{2}(\Lambda)}\mathrm{d}t\right)^{2}\right] \\
&\leq \mathbb{E}[\delta^{2}\tilde{I}_{\delta}]\int_{0}^{T}\delta^{-2}\langle S_{\vartheta}(t)v_0, \beta^{(\delta)}_{\delta}\rangle^{2}_{L^{2}(\Lambda)}\mathrm{d}t.
\end{aligned}
\end{equation*}
By the functional calculus, both the operator cosine and sine are self-adjoint on $L^{2}(\Lambda)$, and the scaling property \Cref{result:rescaling_trigonometric_operator_families} of the operator sine and cosine yields
\begin{align*}
\delta^{-1}\langle C_{\vartheta}(t)u_0, \beta_{\delta}^{(\delta)} \rangle_{L^{2}(\Lambda)} &= \delta^{-1}\langle u_0, C_{\vartheta}(t)\beta_{\delta}^{(\delta)} \rangle_{L^{2}(\Lambda)} \\
&= \delta^{-1}\langle u_0, (C_{\vartheta,\delta}(\delta^{-1}t)\beta^{(\delta)})_{\delta} \rangle_{L^{2}(\Lambda)}\\
&= \delta^{-1}\langle (u_0)_{\delta^{-1}}, C_{\vartheta,\delta}(\delta^{-1}t)\beta^{(\delta)} \rangle_{L^{2}(\Lambda_{\delta})},
\end{align*}
and
\begin{align*}
\delta^{-1}\langle S_{\vartheta}(t)v_0, \beta^{(\delta)}_{\delta} \rangle_{L^{2}(\Lambda)} &= \langle (v_0)_{\delta^{-1}}, S_{\vartheta,\delta}(\delta^{-1}t) \beta^{(\delta)} \rangle_{L^{2}(\Lambda_{\delta})}. 
\end{align*}
Choose $1/2 > \gamma > 3/4 - \alpha/2$ and observe by the fractional rescaling property, for instance in \citet*[Lemma 16]{altmeyerParameterEstimationSemilinear2023}, that
\begin{align*}
&\delta^{-1}\langle C_{\vartheta}(t)u_0, \beta_{\delta}^{(\delta)} \rangle_{L^{2}(\Lambda)} \\&= \delta^{-1} \langle (-A_{\vartheta,\delta})^{1-\gamma} (u_0)_{\delta^{-1}}, C_{\vartheta,\delta}{(\delta^{-1}t)} (-A_{\vartheta,\delta})^{-1+\gamma} \beta^{(\delta)}\rangle_{L^{2}(\Lambda_{\delta})}\\
&= \delta^{2(1-\gamma)-1} \langle ((-A_{\vartheta})^{1-\gamma} u_0)_{\delta^{-1}}, C_{\vartheta,\delta}{(\delta^{-1}t)} (-A_{\vartheta,\delta})^{-1+\gamma} \beta^{(\delta)}\rangle_{L^{2}(\Lambda_{\delta})},
\end{align*}
and
\begin{align*}
&\delta^{-1}\langle S_{\vartheta}(t)v_0, \beta^{(\delta)}_{\delta} \rangle_{L^{2}(\Lambda)} \\&= \langle (-A_{\vartheta,\delta})^{1/2}(v_0)_{\delta^{-1}}, S_{\vartheta,\delta}(\delta^{-1}t) (-A_{\vartheta,\delta})^{-1/2}\beta^{(\delta)} \rangle_{L^{2}(\Lambda_{\delta})}\\
&= \delta \langle ((-A_{\vartheta})^{1/2}v_0)_{\delta^{-1}}, S_{\vartheta,\delta}(\delta^{-1}t) (-A_{\vartheta,\delta})^{-1/2}\beta^{(\delta)}\rangle_{L^{2}(\Lambda_{\delta})}.
\end{align*}
Another application of the Cauchy-Schwarz inequality amounts to
\begin{equation}
\label{eq:operator_cosine_delta_trick}
\begin{aligned}
&|\delta^{-1}\langle C_{\vartheta}(t)u_0, \beta_{\delta}^{(\delta)} \rangle_{L^{2}(\Lambda)}|\\ &\leq \delta^{1-2\gamma} \Vert ((-A_{\vartheta})^{1-\gamma} u_0)_{\delta^{-1}} \Vert_{L^{2}(\Lambda_{\delta})} \Vert C_{\vartheta}(\delta^{-1}t) \Vert_{\mathcal{L}(L^{2}(\Lambda_{\delta}))} \Vert (-A_{\vartheta,\delta})^{-1+\gamma} \beta^{(\delta)} \Vert_{L^{2}(\Lambda_{\delta})},
\end{aligned}
\end{equation}
and
\begin{equation}
\label{eq:operator_sine_delta_trick}
\begin{aligned}
&|\delta^{-1}\langle S_{\vartheta}(t)v_0, \beta^{(\delta)}_{\delta} \rangle_{L^{2}(\Lambda)}|\\
& \leq \delta \Vert (-A_{\vartheta})^{1/2}v_0 \Vert_{L^{2}(\Lambda)} \Vert S_{\vartheta,\delta}(\delta^{-1}t) \Vert_{\mathcal{L}(L^{2}(\Lambda_{\delta}))} \Vert (-A_{\vartheta,\delta})^{-1/2}\beta^{(\delta)} \Vert_{L^{2}(\Lambda_{\delta})}. 
\end{aligned}
\end{equation}
All norms in \eqref{eq:operator_cosine_delta_trick} and \eqref{eq:operator_sine_delta_trick} remain bounded as $\delta \rightarrow 0$ by \Cref{result:fractional_and_inverse_limits} and the rescaling property. As $\gamma<\frac{1}{2}$, we notice that $1-2\gamma>0$. Thus, both terms \eqref{eq:operator_cosine_delta_trick} and \eqref{eq:operator_sine_delta_trick} and subsequently in \eqref{eq:cosine_and_sine_initial_beta_delta_parts} converge to zero.
\end{step}
\begin{step}[Convergence of $V_{2,\delta}$]
Using Cauchy-Schwarz, we obtain for the deterministic part:
\begin{equation}
\label{eq:V2_bound}
\begin{aligned}
&|V_{2,\delta}|^{2} \\ 
&= \left|\int_{0}^{T}\langle C_{\vartheta}(t)u_0 + S_{\vartheta}(t)v_0, \delta^{-2}(\Delta K)_{\delta} \rangle_{L^{2}(\Lambda)} \langle C_{\vartheta}(t)u_0 + S_{\vartheta}(t)v_0, \beta_{\delta}^{(\delta)} \rangle_{L^{2}(\Lambda)} \mathrm{d}t \right|^{2}\\
& \leq {\int_{0}^{T} \langle C_{\vartheta}(t)u_0 + S_{\vartheta}(t)v_0, \delta^{-1}(\Delta K)_{\delta} \rangle^{2}\mathrm{d}t}  {\int_{0}^{T} \langle C_{\vartheta}(t)u_0 + S_{\vartheta}(t)v_0, \delta^{-1}\beta_{\delta}^{(\delta)} \rangle^{2}\mathrm{d}t}. 
\end{aligned}
\end{equation}
By \eqref{eq:boundedness_initial_object} in \Cref{result:boundedness_initial_object} the first factor in  \eqref{eq:V2_bound} remains asymptotically bounded. Thus, the convergence follows exactly as for \eqref{eq:cosine_and_sine_initial_beta_delta_parts} from the upper bounds \eqref{eq:operator_sine_delta_trick} and \eqref{eq:operator_cosine_delta_trick}.
\end{step}

\begin{step}[Convergence of $V_{1,\delta}$]
Using Cauchy-Schwarz we obtain
\begin{align*}
&\mathbb{E}[|V_{1,\delta}|^{2}] = \mathrm{Var}(V_{1,\delta}) \\
&= \int_{0}^{T}\int_{0}^{T}\mathrm{Cov}\Big((\locu(t)+\locv(t))\langle \tilde{u}(t),\beta^{(\delta)}_{\delta} \rangle_{L^{2}(\Lambda)},\\
&\hspace{79pt}(\locu(s)+\locv(s))\langle \tilde{u}(s),\beta^{(\delta)}_{\delta} \rangle_{L^{2}(\Lambda)}\Big)\mathrm{d}s\mathrm{d}t\\
&\lesssim \left(\int_{0}^{T}\int_{0}^{T} |\mathrm{Cov}(\langle \tilde{u}(t),\beta^{(\delta)}_{\delta} \rangle_{L^{2}(\Lambda)}, \langle \tilde{u}(s),\beta^{(\delta)}_{\delta} \rangle_{L^{2}(\Lambda)})|\mathrm{d}s\mathrm{d}t\right)^{1/2} \Vert \locu + \locv \Vert^{2}_T.
\end{align*}
We may upper bound this covariance through
\begin{align}
&|\mathrm{Cov}(\langle \tilde{u}(t), \beta_{\delta}^{(\delta)} \rangle_{L^{2}(\Lambda)}, \langle \tilde{u}(s), \beta_{\delta}^{(\delta)} \rangle_{L^{2}(\Lambda)})| \nonumber\\
&\leq \int_{0}^{t \land s} | \langle S_{\vartheta}(t-r)\beta^{(\delta)}_{\delta}, S_{\vartheta}(s-r)\beta^{(\delta)}_{\delta}\rangle_{L^{2}(\Lambda)}| \mathrm{d}r \nonumber\\
\label{eq:covaraince_integral_1}
&= \delta^{2} \int_{0}^{t \land s}|\langle S_{\vartheta,\delta}(\delta^{-1}(t-r))\beta^{(\delta)}, S_{\vartheta,\delta}(\delta^{-1}(s-r))\beta^{(\delta)}\rangle_{L^{2}(\Lambda_{\delta})}|\mathrm{d}r.
\end{align}
Using \eqref{eq:energy_upperbound_2} in \Cref{result:uniform_upperbounds_operator_sine} and the Cauchy-Schwarz inequality, we obtain the upper bound
\begin{equation}
\label{eq:DCT_3}
\begin{aligned}
\sup_{0 < \delta \leq 1} |\langle S_{\vartheta,\delta}(\delta^{-1}(t-r))\beta^{(\delta)}&, S_{\vartheta,\delta}(\delta^{-1}(s-r))\beta^{(\delta)}\rangle_{L^{2}(\Lambda_{\delta})}| 
\\&\leq \left(\sup_{0 < \delta \leq 1} \Vert (-A_{\vartheta,\delta})^{-1/2}\beta^{(\delta)} \Vert_{L^{2}(\mathbb{R}^{d})}\right)^{2}<\infty.
\end{aligned}
\end{equation}
Using the upper bound \eqref{eq:DCT_3} the dominated convergence theorem and \eqref{eq:emerging_orthogonality_3} in \Cref{result:asymptotics_emerging_energetic_expressions}, the integral \eqref{eq:covaraince_integral_1} converges to zero. Consequently, the bound \Cref{result:boundedness_initial_object} shows $\mathrm{Var}(V_{1,\delta}) \rightarrow 0$ as $\delta \rightarrow 0$ since the other factor remains bounded as $\delta \rightarrow 0$ by \eqref{eq:boundedness_initial_object} in \Cref{result:boundedness_initial_object}.  \qedhere
\end{step}
\end{proof}
\end{appendix}
\bibliographystyle{plainnat}
\bibliography{zotero_22.04.2024}
\end{document}